\DeclareMathAlphabet{\mathpzc}{OT1}{pzc}{m}{it}
\newcommand{\textcyr}[1]{%
 {\fontencoding{OT2}\fontfamily{wncyr}\fontseries{m}\fontshape{n}\selectfont #1}}
\newcommand{\Sha}{{\mbox{\textcyr{Sh}}}}
\newcommand{\yuan}[1]{{\color{Orange} \sf $\clubsuit\clubsuit\clubsuit$ Yuan: [#1]}}
\def\@tocline#1#2#3#4#5#6#7{\relax
  \ifnum #1>\c@tocdepth 
  \else
    \par \addpenalty\@secpenalty\addvspace{#2}%
    \begingroup \hyphenpenalty\@M
    \@ifempty{#4}{%
      \@tempdima\csname r@tocindent\number#1\endcsname\relax
    }{%
      \@tempdima#4\relax
    }%
    \parindent\z@ \leftskip#3\relax \advance\leftskip\@tempdima\relax
    \rightskip\@pnumwidth plus4em \parfillskip-\@pnumwidth
    #5\leavevmode\hskip-\@tempdima
      \ifcase #1
       \or\or \hskip 1em \or \hskip 2em \else \hskip 3em \fi%
      #6\nobreak\relax
    \hfill\hbox to\@pnumwidth{\@tocpagenum{#7}}\par
    \nobreak
    \endgroup
  \fi}
\newtheorem{lemma}{Lemma}[section]
\newtheorem{theorem}[lemma]{Theorem}
\newtheorem{proposition}[lemma]{Proposition}
\newtheorem{corollary}[lemma]{Corollary}
\newtheorem{claim*}{Claim}
\newtheorem{definition}[lemma]{Definition}
\newtheorem{example}[lemma]{Example}
\theoremstyle{definition}
\newtheorem{remark}[lemma]{Remark}
\newcommand{\C}{{\mathbb C}}
\newcommand{\F}{{\mathbb F}}
\newcommand{\Q}{{\mathbb Q}}
\newcommand{\R}{{\mathbb R}}
\newcommand{\Z}{{\mathbb Z}}
\newcommand{\calF}{{\mathcal F}}
\newcommand{\calG}{{\mathcal G}}
\newcommand{\calO}{{\mathcal O}}
\newcommand{\calS}{{\mathcal S}}
\newcommand{\calT}{{\mathcal T}}
\newcommand{\frakp}{{\mathfrak p}}
\newcommand{\frakq}{{\mathfrak q}}
\newcommand{\frakP}{{\mathfrak P}}
\DeclareMathOperator{\Char}{char}
\DeclareMathOperator{\im}{im}
\DeclareMathOperator{\Hom}{Hom}
\DeclareMathOperator{\Ext}{Ext}
\DeclareMathOperator{\Gal}{Gal}
\DeclareMathOperator{\Cl}{Cl}
\DeclareMathOperator{\B}{{\mbox{\textcyr{B}}}}
\DeclareMathOperator{\gr}{gr}
\DeclareMathOperator{\Rad}{Rad}
\DeclareMathOperator{\ab}{ab}
\DeclareMathOperator{\Nm}{Nm}
\DeclareMathOperator{\Ram}{Ram}
\DeclareMathOperator{\el}{el}
\DeclareRobustCommand\longtwoheadrightarrow
\newcommand{\Conv}{\mathop{\scalebox{1.5}{\raisebox{-0.2ex}{$\ast$}}}}
\newcommand{\cHom}{\mathcal{H}\!\!om}
\numberwithin{equation}{section}
\numberwithin{table}{section}
\title{On the $p$-rank of class groups of $p$-extensions}
\author{Yuan Liu}
\address{Department of Mathematics\\
University of Illinois Urbana-Champaign \\ 1409 W Green St \\ Urbana, IL 61801 USA}  
\email{yyyliu@illinois.edu}
\begin{document}

	 \begin{abstract}
	 	We prove a local-global principle for the embedding problems of global fields with restricted ramification. By this local-global principle, for a global field $k$, we use only the local information to give a presentation of the maximal pro-$p$ Galois group of $k$ with restricted ramification, when some Galois cohomological conditions are satisfied. For a Galois $p$-extension $K/k$, we use our presentation result for $k$ to study the structure of pro-$p$ Galois groups of $K$. Then for $k=\Q$ and $k=\F_q(t)$ with $p\nmid q$, we give upper and lower bounds for the rank of $p$-torsion group of the class group of $K$, and these bounds depend only on the structure of the Galois group and the inertia subgroups of $K/k$. Finally, we study the $p$-rank of class groups of cyclic $p$-extensions of $\Q$ and the $2$-rank of class groups of multiquadratic extensions of $\Q$, for a fixed ramification type. 
	 
	 \end{abstract}

	\maketitle
	
	\hypersetup{linkcolor=black}
	\tableofcontents
	\hypersetup{linkcolor=blue}

\section{Introduction}

	For a quadratic extension $K/\Q$, by Gauss's genus theory, the 2-torsion subgroup of the class group of $K$, denoted by $\Cl(K)[2]$, is determined by the number of primes ramified in $K/\Q$. Similarly, for a cyclic degree-$p$ extension $K/\Q$, it is well-known that one can use the number of ramified primes to compute the $\Gal(K/\Q)$-coinvariant of $\Cl(K)/p\Cl(K)$, because it corresponds to the genus field of $K$. Gerth (\cite{Gerth84, Gerth86}) conjectured that the distribution of the $p$-primary part of $\Cl(K)$ that is not determined by the genus theory is given by a probability measure similar to the ones in the Cohen--Lenstra heuristics. Recently, Gerth's conjecture is completely proven by Smith, Koymans and Pagano (\cite{Smith, Koymans-Pagano-Gerth, Smith2022}). One of the reasons that Gerth's conjecture is less difficult than the Cohen--Lenstra heuristics is that the $p$-part of $\Cl(K)$ of a cyclic degree-$p$ extension $K/\Q$ can be described using only the local information (more precisely, the Frobenius elements at the ramified primes). In this paper, we will extend this local-global principle to study the maximal pro-$p$ extension with restricted ramification of a Galois $p$-extension of global fields.
	
	Let $k$ be a global field, $S$ and $T$ two finite sets of primes of $k$, and $p$ a prime number not equal to $\Char(k)$. Let $k_S^T$ denote the maximal extension of $k$ that is unramified outside $S$ and splitting completely at primes in $T$, and $G_S^T(k)$ denote the Galois group $\Gal(k_S^T/k)$. 
	Let $G_S^T(k)(p)$ denote the pro-$p$ completion of $G_S^T(k)$, and $k_S^T(p)$ denote maximal pro-$p$ extension of $k$ inside $k_S^T$.
	It is well-known that $G_S^T(k)(p)$ is a finitely generated pro-$p$ group, whose generator rank and relator rank can be computed by formulas given by Shafarevich and Koch \cite{Koch}. Let $d$ be the generator rank of $G_S^T(k)$ and $F_d$ the free pro-$p$ group on $d$ generators, and consider a surjection of pro-$p$ groups
	\begin{equation}\label{eq:pres-intro}
		\varphi: F_d \longtwoheadrightarrow G_S^T(k)(p).
	\end{equation}
	For each prime $\frakp$ of $k$, a decomposition subgroup of $G_S^T(k)(p)$ at $\frakp$ is naturally a quotient of $\calG_{\frakp}(p)$, the pro-$p$ completion of the absolute Galois group of the local completion $k_{\frakp}$ of $k$ at $\frakp$. The restriction of $\varphi$ gives a presentation of a decomposition subgroup at $\frakp$. So a presentation of $\calG_{\frakp}(p)$, which is well-studied, provides some relators for the presentation \eqref{eq:pres-intro}, and we call such type of relators \emph{the local relators at $\frakp$}.
	
	For example, let $k$ be $\Q$. For a nonarchimedean prime $\frakp\in S$ of $\Q$ (so $\frakp$ is a prime number) such that $\frakp \neq p$, if $\frakp$ is ramified in $\Q_S^T(p)/\Q$, then it must be tamely ramified. By the result of Iwasawa \cite{Iwasawa}, the Galois group of the maximal tamely ramified extension of $\Q_\frakp$ is topologically generated by two elements $\tau$ and $\sigma$ with only one relator $\tau^\frakp=\sigma \tau \sigma^{-1}$. So a decomposition subgroup of $G_S^T(\Q)(p)$ at $\frakp$ is generated by two elements $t$ and $s$ such that $t^{\frakp}=s t s^{-1}$, and hence all the elements of $\varphi^{-1}(t^{\frakp}s t^{-1} s^{-1})$ are contained in $\ker \varphi$. For any choice of lifts $x \in \varphi^{-1}(t)$ and $y \in \varphi^{-1}(t)$, we call the element $x^{\frakp} y x^{-1} y^{-1}$ a \emph{local relator} at $\frakp$.
	Similarly, at the archimedean prime of $\Q$, a local relator is $x^2$, where $x$ is a lift of a complex conjugation of $\Q_S^T(p)$, and such local relators are automatically contained in $\ker \varphi$.
	
	In this paper, we first study the question: \emph{for which $k$, $S$ and $T$, is $\ker \varphi$ the closed normal subgroup generated by all the local relators?} We show this question is related to the local-global principle of embedding problems with restricted ramification. The local-global principle of embedding problems is a crucial technique used in the inverse Galois problems for solvable groups \cite{Scholz, Reichardt, Iwasawa-solvable, Sha-190}, and the local-global principle for central embedding problems with no further ramification over $\Q$ and $\F_q(t)$ is proven in the work of Wood, Zureick-Brown and the author on non-abelian Cohen--Lenstra heuristics \cite{LWZB}.
	There exists a finite abelian group $\B_{S\backslash T}^{S \cup T}(k,\F_p)$, which was defined to determine the generator rank and relator rank of $G_S^T(k)(p)$ (see \cite{Koch} for the case when $T=\O$). 
	We prove that the above question has a positive answer when this group $\B_{S\backslash T}^{S \cup T}(k, \F_p)$ is trivial. 
	
	\begin{theorem}[Theorem~\ref{thm:pres-ST}\eqref{item:pres-ST-2} and Remark~\ref{rmk:pres-ST}] \label{thm:main-1}
		Let $k$ be a global field and $p$ a prime number not equal to $\Char(k)$. If $S$ and $T$ are two finite sets of primes of $k$ such that $\B_{S\backslash T}^{S \cup T}(k, \F_p)=0$, then for any surjection $\varphi$ described as \eqref{eq:pres-intro}, $\ker \varphi$ is the closed normal subgroup of $F_d$ generated by all local relators at primes in $S \backslash T$. 
	\end{theorem}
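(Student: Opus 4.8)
The plan is to realize $H:=G_S^T(k)(p)$ as a quotient of $G:=F_d/N$, where $N\subseteq F_d$ is the closed normal subgroup topologically generated by all local relators at the primes of $S\setminus T$, and to prove the induced surjection $\bar\varphi\colon G\twoheadrightarrow H$ --- well defined since every local relator lies in $\ker\varphi$, as in the introduction --- is an isomorphism. Writing $R:=\ker\varphi\supseteq N$, so that $\ker\bar\varphi=R/N$, the goal is $R/N=1$. Because $F_d\twoheadrightarrow G\twoheadrightarrow H$ and $H$ has pro-$p$ generator rank exactly $d$, necessarily $\dim_{\F_p}H^1(G,\F_p)=d$, so inflation $H^1(H,\F_p)\xrightarrow{\sim}H^1(G,\F_p)$ is an isomorphism; the five-term inflation--restriction sequence of $1\to R/N\to G\to H\to 1$ with $\F_p$-coefficients then gives an exact sequence
\[
0\longrightarrow H^1(R/N,\F_p)^{H}\longrightarrow H^2(H,\F_p)\xrightarrow{\ \mathrm{inf}\ }H^2(G,\F_p).
\]
Since $R/N$ is pro-$p$ and a nonzero discrete $\F_p[H]$-module (with $H$ pro-$p$) always has nonzero $H$-invariants, $R/N=1$ if and only if $\mathrm{inf}\colon H^2(H,\F_p)\to H^2(G,\F_p)$ is injective; so the whole statement reduces to this injectivity.

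To obtain it I would localize. For each $\frakp\in S\setminus T$, the construction of the local relators --- lifting to $F_d$ the relators of a minimal presentation of $\calG_\frakp(p)$ mapping onto the decomposition subgroup $D_\frakp\le H$ --- shows that $F_d\twoheadrightarrow G$ sends those lifts to a homomorphism $\calG_\frakp(p)\to G$ whose composite with $\bar\varphi$ is the canonical $\calG_\frakp(p)\twoheadrightarrow D_\frakp\hookrightarrow H$. By functoriality of $H^2(-,\F_p)$ this yields a commuting triangle with vertices $H^2(H,\F_p)$, $H^2(G,\F_p)$ and $\bigoplus_{\frakp\in S\setminus T}H^2(\calG_\frakp(p),\F_p)$ whose edge from $H^2(H,\F_p)$ to $H^2(G,\F_p)$ is $\mathrm{inf}$. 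Hence it suffices to prove that the localization map $\lambda\colon H^2(H,\F_p)\to\bigoplus_{\frakp\in S\setminus T}H^2(\calG_\frakp(p),\F_p)$ is injective.

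I would deduce the injectivity of $\lambda$ from $\B_{S\setminus T}^{S\cup T}(k,\F_p)=0$ by passing to the full profinite groups and using Poitou--Tate duality. Inflation embeds $H^2(G_S^T(k)(p),\F_p)$ into $H^2(G_S^T(k),\F_p)$: its kernel is $H^1(\Gal(k_S^T/k_S^T(p)),\F_p)^{H}$, which vanishes because a nonzero invariant homomorphism would have a normal kernel in $G_S^T(k)$ with pro-$p$ quotient, contradicting maximality of $k_S^T(p)$; the same argument gives $H^2(\calG_\frakp(p),\F_p)\hookrightarrow H^2(\calG_\frakp,\F_p)=H^2(k_\frakp,\F_p)$. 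Under these embeddings $\lambda$ is compatible with the Poitou--Tate localization map $H^2(G_S^T(k),\F_p)\to\bigoplus_{\frakp\in S\setminus T}H^2(k_\frakp,\F_p)$, whose kernel $\Sha^2$ is, by the nine-term Poitou--Tate sequence attached to the $(S,T)$-ramification condition, the Pontryagin dual of the corresponding $\Sha^1$ with $\mu_p$-coefficients; by Kummer theory this $\Sha^1$ coincides with $\B_{S\setminus T}^{S\cup T}(k,\F_p)$, hence is zero. Therefore the Poitou--Tate localization map, and with it $\lambda$, is injective, which finishes the proof. The real work lies in this last step: writing the $T$-modified Poitou--Tate sequence correctly, observing that the decomposition subgroups at the primes of $T$ are trivial so that only $S\setminus T$ appears, identifying the dual $\Sha^1$ with the concretely defined group $\B_{S\setminus T}^{S\cup T}(k,\F_p)$, and checking that the localization maps in the pro-$p$ and profinite pictures match up. (Equivalently, the localization and duality steps above can be packaged as the local--global principle for embedding problems with restricted ramification, the vanishing of $\B_{S\setminus T}^{S\cup T}(k,\F_p)$ being exactly the hypothesis that makes that principle applicable.)
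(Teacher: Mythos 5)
Your argument is correct, and it takes a genuinely different route from the paper's. You reduce the claim, via the five-term inflation--restriction sequence for $1\to R/N\to G\to H\to 1$ and the observation that a nonzero discrete $\F_p[H]$-module has nonzero invariants, to the injectivity of the localization map $H^2(G_S^T(k)(p),\F_p)\to\prod_{\frakp\in S\setminus T}H^2(\calG_\frakp(p),\F_p)$, which you then extract from the Poitou--Tate machinery for $G_S^T(k)$. The paper instead goes through a local--global principle for \emph{embedding problems} with restricted ramification (Lemma~\ref{lem:embedding-ST}): supposing $N\subsetneq\ker\varphi$ it picks $M$ with $\ker\varphi/M\simeq\F_p$, checks local solvability at each $\frakp\in S\setminus T$ directly from the shape of the local relators, invokes the embedding-problem Hasse principle (whose proof uses only $\Sha^2(G_k,A)=0$ for $G_k$ itself plus a cocycle-twisting argument via Lemma~\ref{lem:B-es} to pass from $G_k$ to $G_S^T(k)$), and then derives a contradiction from Burnside's basis theorem. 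Your route is closer to Koch's classical computation of $H^2(G_S^T(k)(p),\F_p)$; the paper deliberately avoids it because the embedding-problem formulation works uniformly for an arbitrary finite simple $G_k$-module $A$ (so later for solvable rather than merely pro-$p$ extensions), not only for $\F_p$. One small inaccuracy in your last step: $\Sha^1$ for the $(S,T)$-Poitou--Tate sequence is \emph{not} literally $\B_{S\setminus T}^{S\cup T}(k,\F_p)^\vee=V_{S\setminus T}^{S\cup T}(k)$; an element of that $\Sha^1$ must in addition be split at $T$, whereas $V_{S\setminus T}^{S\cup T}(k)$ imposes no condition at $T$. So the two groups need not coincide; what is true is the inclusion $\Sha^1\subseteq V_{S\setminus T}^{S\cup T}(k)$, which is all you need (the vanishing of $\B$ forces $V=0$, hence $\Sha^1=0$, hence $\Sha^2=0$, hence the injectivity of your $\lambda$). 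With that correction, your proof goes through.
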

	
	We also show in Theorem~\ref{thm:pres-ST}\eqref{item:pres-ST-3} that, when some additional conditions are satisfied, we can give a generator set for $F_d$ using the preimages of the generators of inertia subgroups at $\frakp$ in some subset of $S\backslash T$, and therefore, we can give a presentation of $G_S^T(k)(p)$ using only the local information.
	The condition in Theorem~\ref{thm:main-1} is satisfied in many situations: for each $k$, there are infinitely many choices of $S$ and $T$ such that $\B_{S\backslash T}^{S \cup T}(k, \F_p)=0$ (see Lemmas~\ref{lem:B=0} and \ref{lem:B=0-empty}). In particular, this together with all the additional conditions in Theorem~\ref{thm:pres-ST} are satisfied in two important cases:
	\begin{enumerate}
		\item\label{item:intro-Q} $k=\Q$, $T=\O$, and $S$ is any finite set of primes that contains the archimedean prime.
		\item\label{item:intro-Fq(t)} $k=\F_q(t)$, $T=\{\infty\}$, and $S$ is any finite set of primes.
	\end{enumerate}
	
	Suppose that $K/k$ is a Galois subextension of $k_S^T(p)/k$. For a subset $S'$ of $S$, we let $K_{S'}^T(p)$ denote the maximal pro-$p$ extension of $K$ that is unramified at all primes not lying above $S'$ and splitting completely at each prime above $T$. Then $K_{S'}^T(p)$ is a subfield of $k_S^T(p)$, and hence we have the following surjections
	\[
		G_S^T(k)(p) \longtwoheadrightarrow \Gal(K_{S'}^T(p)/k) \longtwoheadrightarrow \Gal(K/k).
	\]
	When $G_S^T(k)(p)$ can be presented using only the local information, we ask if $\Gal(K_{S'}^T(p)/k)$ can also be presented using the local information. For a prime $\frakp$ of $k$, we let $\calT_{\frakp}(K/k)$ denote an inertia subgroup of $\Gal(K/k)$ at $\frakp$. When $k$ is a number field, we let $S_p$ denote the set of primes of $k$ lying above $p$, and for $\frakp\in S_p$, $\calT_{\frakp}^{\circ}(K/k)$ is a (non-canonical) subgroup of $\calT_{\frakp}(K/k)$ that will be defined in \S~\ref{S:notation}. For two pro-$p$ groups $G$ and $H$, we let $G \ast H$ denote the free pro-$p$ product of $G$ and $H$. 
	
	\begin{theorem}[Theorem~\ref{thm:pres}]\label{thm:main-2} 
		Let $k$ be a global field and $p$ a prime number not equal to $\Char(k)$. Let $K/k$ be a Galois extension of global fields such that $K$ is a subfield of $k_S^T(p)$ for some finite sets $S$ and $T$ of primes of $k$. Suppose there exists a subset $S' \subset S$ such that $G_{S'}^T(k)(p)=1$ and $\B_{S'\backslash T}^{S'\cup T}(k, \F_p)=0$. Then there exists a surjection of pro-$p$ groups
		\[
			\phi: \left(\Conv_{\frakp \in S\backslash (T\cup S' \cup S_p)} \calT_{\frakp}(K/k) \right) \ast \left(\Conv_{\frakp \in (S \cap S_p) \backslash (T \cup S')} \calT_{\frakp}^{\circ}(K/k) \right) \longtwoheadrightarrow \Gal(K/k)
		\]
		such that $\Gal(K_{S'}^T(p)/k)$ is the quotient of the domain of $\phi$ modulo all the local relators at primes in $S\backslash T$.
	\end{theorem}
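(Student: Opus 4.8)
My plan is to obtain the two surjections by pushing a presentation of the ambient group $G_S^T(k)(p)$ --- the one supplied by Theorem~\ref{thm:pres-ST} --- down the quotient maps $G_S^T(k)(p)\twoheadrightarrow\Gal(K_{S'}^T(p)/k)\twoheadrightarrow\Gal(K/k)$; these make sense because $K_{S'}^T(p)$ is a pro-$p$ extension of $k$ unramified outside $S$ and split completely at $T$, hence a subfield of $k_S^T(p)$. To apply Theorem~\ref{thm:pres-ST} to $(k,S,T)$ I first need $\B_{S\backslash T}^{S\cup T}(k,\F_p)=0$; I would deduce this from the hypotheses $\B_{S'\backslash T}^{S'\cup T}(k,\F_p)=0$ and $G_{S'}^T(k)(p)=1$ by the comparison long exact sequence relating the $S'$- and $S$-ramified situations, whose extra terms on enlarging $S'$ to $S$ are local contributions at the primes of $S\backslash S'$ together with $H^1(G_{S'}^T(k),\F_p)=\Hom(G_{S'}^T(k)(p),\F_p)=0$ (cf.\ Lemma~\ref{lem:B=0}). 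The other consequence of $G_{S'}^T(k)(p)=1$ that I would extract is that $k_{S'}^T(p)=k$, so the maximal subextension of $k_S^T(p)/k$ that is unramified over $k$ outside $S'$ is trivial; equivalently, $G_S^T(k)(p)$ is topologically generated by the conjugates of the inertia subgroups $\calT_\frakp(k_S^T(p)/k)$ for $\frakp\in S\backslash(S'\cup T)$ (primes in $T$ split in $k_S^T$, so their inertia is trivial).

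With this in hand I would invoke Theorem~\ref{thm:pres-ST}\eqref{item:pres-ST-2} and \eqref{item:pres-ST-3}: take $F_d$ in \eqref{eq:pres-intro} to be free on a set of lifts of generators of $\calT_\frakp(k_S^T(p)/k)$ for $\frakp\in S\backslash(S'\cup T\cup S_p)$ and of $\calT_\frakp^\circ(k_S^T(p)/k)$ for $\frakp\in(S\cap S_p)\backslash(S'\cup T)$, so that $\ker(\varphi\colon F_d\twoheadrightarrow G_S^T(k)(p))$ is the closed normal subgroup generated by the local relators at primes of $S\backslash T$. Composing $\varphi$ with $G_S^T(k)(p)\twoheadrightarrow\Gal(K/k)$, every chosen generator goes into the corresponding inertia subgroup $\calT_\frakp(K/k)$ (resp.\ $\calT_\frakp^\circ(K/k)$) of $\Gal(K/k)$, so $F_d\twoheadrightarrow\Gal(K/k)$ factors through the canonical quotient $F_d\twoheadrightarrow\Gamma$ onto the free pro-$p$ product $\Gamma$ in the statement; this yields $\phi\colon\Gamma\twoheadrightarrow\Gal(K/k)$ (onto because $F_d\twoheadrightarrow\Gal(K/k)$ is), restricting to the inclusion on each free factor. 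Because a Frobenius element conjugates a tame inertia generator to its $(N\frakp)$-th power --- and by the analogous local facts at the primes above $p$ and the archimedean ones --- each local relator at a prime of $S\backslash T$ already lies in $\ker\phi$, so $\phi$ factors through $\Gamma/\frakR$, where $\frakR$ is the closed normal subgroup of $\Gamma$ generated by those local relators.

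It remains to see $\Gamma/\frakR\cong\Gal(K_{S'}^T(p)/k)$. The kernel of $F_d\twoheadrightarrow\Gamma$ is the closed normal subgroup generated by the ``internal'' relations of the finite groups $\calT_\frakp(K/k)$ and $\calT_\frakp^\circ(K/k)$, and the local relators of $\Gamma$ are the images of those of $F_d$ (for compatible choices of Frobenius lifts); hence $\Gamma/\frakR$ is the quotient of $F_d$ by the normal closure of the local relators together with these internal relations, which --- by Theorem~\ref{thm:pres-ST}\eqref{item:pres-ST-2} --- is the quotient of $G_S^T(k)(p)$ by the closed normal subgroup generated by the images of the internal relations. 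At a tame prime $\frakp\in S\backslash(S'\cup T)$ the internal relation killing $t_\frakp^{|\calT_\frakp(K/k)|}$ maps to the $|\calT_\frakp(K/k)|$-th power of the inertia generator in the procyclic group $\calT_\frakp(k_S^T(p)/k)$, i.e.\ to a generator of the inertia subgroup $\calT_\frakp(k_S^T(p)/K)$ of the subextension $k_S^T(p)/K$, and the subgroups $\calT_\frakp^\circ$ above $p$ are designed so that the same conclusion holds there. Thus $\Gamma/\frakR$ is $G_S^T(k)(p)$ modulo the closed normal subgroup generated by the inertia subgroups of $k_S^T(p)/K$ at all primes above $S\backslash(S'\cup T)$; its fixed field is the largest subextension of $k_S^T(p)/k$ that is unramified over $K$ away from the primes above $S'$ (the $T$-primes already being split), which is exactly $K_{S'}^T(p)$. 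This gives $\Gamma/\frakR\cong\Gal(K_{S'}^T(p)/k)$, compatibly with $\phi$ and with the restriction $\Gal(K_{S'}^T(p)/k)\twoheadrightarrow\Gal(K/k)$.

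The step I expect to be the main obstacle is this last identification: one must control what extra relations, if any, are forced when the ambient presentation is pushed through the free pro-$p$ \emph{product} $\Gamma$ rather than the free pro-$p$ group $F_d$, that is, show that inside $\Gamma$ the local relators cut $G_S^T(k)(p)$ down to exactly $\Gal(K_{S'}^T(p)/k)$ and no further. The vanishing $\B_{S\backslash T}^{S\cup T}(k,\F_p)=0$ (fed in through Theorem~\ref{thm:pres-ST}) is precisely what rules out further \emph{global} relations among the inertia generators, and the precise definition of the non-canonical subgroups $\calT_\frakp^\circ$ at the primes above $p$ is what one must use to show that --- because the local Galois groups there are Demushkin (one-relator) pro-$p$ groups --- killing $\calT_\frakp^\circ(k_S^T(p)/K)$ as a normal subgroup is, modulo the local relator at $\frakp$, the same as killing the full inertia $\calT_\frakp(k_S^T(p)/K)$. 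A subsidiary technical point is checking that the local relators occurring in the presentations of $F_d$ and of $\Gamma$ really do correspond up to closed normal closure, independently of the chosen Frobenius lifts.
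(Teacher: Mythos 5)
Your proposal is correct and follows essentially the same strategy as the paper: build the presentation of $G_S^T(k)(p)$ via Theorem~\ref{thm:pres-ST}\eqref{item:pres-ST-2}--\eqref{item:pres-ST-3}, push it down through the free pro-$p$ product $\Gamma$ and the natural surjections onto $\Gal(K_{S'}^T(p)/k)$ and $\Gal(K/k)$, and identify the field cut out by the internal relations together with the local relators by observing that the images of the ``internal'' relations $t_\frakp^{|\calT_\frakp(K/k)|}$ generate exactly the inertia subgroups of $k_S^T(p)/K$ above $R$, whose normal closure fixes $K_{S'}^T(p)$; this is the content of the paper's fibre-product diagram and the Claim $\Gal(K_{S'}^T(p)/k)=F_d/(\ker\varphi\ker\pi)$. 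One small inaccuracy: the implication $\B_{S'\backslash T}^{S'\cup T}(k,\F_p)=0\Rightarrow\B_{S\backslash T}^{S\cup T}(k,\F_p)=0$ does not require a comparison long exact sequence or the vanishing of $H^1(G_{S'}^T(k),\F_p)$; it is the plain monotonicity statement in Lemma~\ref{lem:basic-B}\eqref{item:basic-B-1} (enlarging $S$ enlarges the source of the defining map, hence only shrinks the cokernel), and the lemma you cite (Lemma~\ref{lem:B=0}) is about existence of sets $S$ with $\B=0$, not this comparison. The worries you flag at the end --- the Demushkin case above $p$ and independence of Frobenius lifts --- are indeed the technical pressure points; the paper sidesteps the first by the hypothesis $(S\cap S_p)\setminus T=\emptyset$ when $\mu_p\subset k$ (Remark~\ref{rmk:pres-ST} covers the general case), and the second is handled automatically because Theorem~\ref{thm:pres-ST}\eqref{item:pres-ST-2} is proved via an embedding-problem argument that is insensitive to the particular lifts chosen.
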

	
	Theorem~\ref{thm:main-2} is very useful for the study of the structure of $G_{S'}^T(K)(p)$. For instance, an immediate consequence is that, by applying the Kurosh subgroup theorem, one can obtain an upper bound for the generator rank of $G_{S'}^T(K)$. A formula of this upper bound is stated in Proposition~\ref{prop:upper}, and the following theorem about the $p$-rank of class group of $K$ is obtained by applying Proposition~\ref{prop:upper} to the cases \eqref{item:intro-Q} and \eqref{item:intro-Fq(t)} above, with $S'=\{\text{the archimedean prime of }\Q\}$ if $p=2$ and $k=\Q$, and with $S'=\O$ otherwise. Let $\Ram(K/k)$ denote the set of primes of $k$ that are ramified in $K/k$, and $\Ram^f(K/k)$ denote the set consisting of all the nonarchimedean primes in $\Ram(K/k)$. 
	
	\begin{theorem}[Corollaries~\ref{cor:Q-rank} and \ref{cor:Fq(t)-rank}]\label{thm:main-3}
		\begin{enumerate}
			\item Let $K/\Q$ be a finite Galois $p$-extension, and $\Cl^+(K)$ denote the narrow class group of $K$. Then 
			\begin{eqnarray*}
				\dim_{\F_p}\Cl^+(K)[p] &\leq& (\# \Ram^f(K/\Q) -1) [K:\Q] + 1 \\
				&& - \sum_{\frakp \in \Ram^f(K/\Q) \backslash S_p} \frac{[K:\Q]}{|\calT_{\frakp}(K/\Q)|} - \sum_{\frakp \in \Ram(K/\Q) \cap S_p} \frac{[K:\Q]}{|\calT^{\circ}_{\frakp}(K/\Q)|}.
			\end{eqnarray*}
			\item Assume $q$ is a prime power such that $p \nmid q$. Let $K/\F_q(t)$ be a finite Galois $p$-extension that is splitting completely at $\infty$. Then
			\[
				\dim_{\F_p}\Cl(K)[p] \leq (\# \Ram(K/\F_q(t)) -1) [K:\F_q(t)] + 1
				 - \sum_{\frakp \in \Ram(K/\F_q(t))} \frac{[K:\F_q(t)]}{|\calT_{\frakp}(K/\F_q(t))|}.
			\]
		\end{enumerate}
	\end{theorem}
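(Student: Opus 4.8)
The plan is to deduce both statements from Theorem~\ref{thm:main-2} together with the unpacking of the Kurosh subgroup theorem recorded in Proposition~\ref{prop:upper}, applied to the two concrete families in cases \eqref{item:intro-Q} and \eqref{item:intro-Fq(t)}. The first step is to translate the class-group quantity into a Galois-theoretic one. For (2), class field theory identifies $\Cl(K)[p]$ (or rather $\dim_{\F_p}\Cl(K)/p$, which has the same dimension) with the generator rank of $\Gal(K_{\emptyset}^{\{\infty\}}(p)/K)$, the maximal unramified-everywhere, split-at-$\infty$ pro-$p$ extension of $K$; for (1), $\Cl^+(K)/p$ corresponds to the maximal everywhere-unramified-away-from-the-archimedean-place pro-$p$ extension, i.e.\ $\dim_{\F_p}\Cl^+(K)[p]$ is the generator rank of $G_{S'}^{\emptyset}(K)(p)$ with $S' = \{\text{archimedean prime}\}$ when $p=2$ and $S'=\emptyset$ when $p$ is odd. (The narrow class group already handles the archimedean contribution, so one must be a little careful matching the role of $S'$ to the definition of $\Cl^+$; for $p$ odd there is no distinction and $S'=\emptyset$ works.) In the function field case, $\infty$ plays the role of $T$ and $S'=\emptyset$.

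The second step is to choose $S$, $T$, $S'$ so that the hypotheses of Theorem~\ref{thm:main-2} hold. Take $S = \Ram(K/k) \cup S_p \cup \{\text{archimedean places}\}$ (with $T=\emptyset$ over $\Q$, $T=\{\infty\}$ over $\F_q(t)$), and take $S'$ as above. By cases \eqref{item:intro-Q} and \eqref{item:intro-Fq(t)} (which the introduction asserts satisfy all the hypotheses of Theorem~\ref{thm:pres-ST}), and by Lemmas~\ref{lem:B=0} and~\ref{lem:B=0-empty}, we have $G_{S'}^T(k)(p)=1$ and $\B_{S'\backslash T}^{S'\cup T}(k,\F_p)=0$: over $\Q$ with $S'=\emptyset$ this is the statement that $\Q$ has no unramified $p$-extension, and with $S'=\{v_\infty\}$ it is that $\Q$ has no everywhere-unramified pro-$p$ extension split at $\infty$ either (both trivial); over $\F_q(t)$ split at $\infty$ with $S'=\emptyset$ it is the analogous vanishing. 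Then Theorem~\ref{thm:main-2} gives the surjection $\phi$ from the free pro-$p$ product of the inertia groups $\calT_\frakp(K/k)$ (for $\frakp\in S\backslash(T\cup S'\cup S_p)$) and the $\calT_\frakp^\circ(K/k)$ (for $\frakp\in (S\cap S_p)\backslash(T\cup S')$) onto $\Gal(K/k)$, realizing $\Gal(K_{S'}^T(p)/k)$ as the quotient of that free product by the local relators.

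The third step is the rank count. Apply Proposition~\ref{prop:upper} — the Kurosh-type bound — to the surjection $\phi$. The domain $\frakG := \bigl(\Conv_\frakp \calT_\frakp(K/k)\bigr)\ast\bigl(\Conv_\frakp \calT_\frakp^\circ(K/k)\bigr)$ is a free pro-$p$ product of finite cyclic-type pieces, and the Kurosh subgroup theorem computes the generator rank of the open subgroup corresponding to $\Gal(K_{S'}^T(p)/K)$: each factor $\calT_\frakp$ contributes $[K:k]/|\calT_\frakp(K/k)|$ conjugates (one per coset of the decomposition group, but after accounting for the full quotient one gets $[K:k]/|\calT_\frakp|$ copies of the trivial group contributing $[K:k]/|\calT_\frakp| - 1$ to the rank via the "free part") plus a free part of rank $([K:k]-1)$ for the ambient free product structure — combining these over all $\frakp\in\Ram^f$ (resp.\ $\Ram$) and summing $([K:k]-1)$ once yields exactly $(\#\Ram^f(K/k)-1)[K:k]+1 - \sum [K:k]/|\calT_\frakp|$, and the $S_p$ primes enter with $\calT_\frakp^\circ$ in place of $\calT_\frakp$. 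Finally, since $G_{S'}^T(K)(p)$ is a quotient of $\Gal(K_{S'}^T(p)/K)$, its generator rank is at most this value, and by Step~1 this generator rank is $\dim_{\F_p}\Cl^+(K)[p]$ (resp.\ $\dim_{\F_p}\Cl(K)[p]$).

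The main obstacle I expect is Step~3: carefully running the Kurosh subgroup theorem to get the precise numerics, in particular handling the bookkeeping of how many conjugates of each inertia factor meet the subgroup $\Gal(K_{S'}^T(p)/K)$ and why the "free parts" assemble into the coefficient $(\#\Ram - 1)$ rather than $\#\Ram$ — this is where the $+1$ and the single subtraction of $[K:k]$ come from, and it is exactly the content that Proposition~\ref{prop:upper} should already package, so the real work is invoking it correctly. A secondary subtlety is the archimedean/narrow-class-group matching in part (1) for $p=2$: one must check that the choice $S'=\{v_\infty\}$ (so that complex conjugations are killed via $S'$ rather than via local relators $x^2$) produces precisely the narrow class group and not the ordinary one, which is why the statement uses $\Cl^+$ and why $\calT_\frakp^\circ$ rather than $\calT_\frakp$ appears at primes above $2$ when $p=2$.
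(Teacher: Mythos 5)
Your proposal follows the same route as the paper: translate the class-group $p$-rank into the generator rank of $G_{S'}^T(K)(p)$ (via $d(G_{\O}(K)(p))=\dim_{\F_p}\Cl(K)/p\Cl(K)$ and its variants), invoke Theorem~\ref{thm:pres} to get the surjection $\phi$ from a free pro-$p$ product of inertia factors, and then read off the bound from the Kurosh computation packaged in Proposition~\ref{prop:upper}. Deferring the Kurosh bookkeeping to Proposition~\ref{prop:upper} is exactly what the paper does, so your Step~3 is fine even though your verbal sketch of the count is garbled.

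The one substantive issue is your choice $S=\Ram(K/k)\cup S_p\cup\{\text{archimedean places}\}$. Over $\Q$ with $p=2$ this always puts $2$ into $S\cap S_p$, hence into the set $R$ of Theorem~\ref{thm:pres}, and the domain of $\phi$ would then contain a factor $\calT_2^{\circ}(K/\Q)$. But $\calT_{\frakp}^{\circ}$ is only defined when $\mu_p\not\subset k_{\frakp}$; since $\mu_2\subset\Q_2$, the prime $2$ falls in the Demu\v{s}kin case that \S~\ref{S:notation} explicitly excludes. The paper sidesteps this by taking the minimal $S$ (namely $\Ram(K/\Q)$, with $S_{\R}(\Q)$ adjoined and set equal to $S'$ when $p=2$; $T=S_{\infty}$ over $\F_q(t)$), so the only $\frakp\mid p$ that can enter $R$ is one that actually ramifies. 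Your observation that adding an unramified prime to $S$ leaves the bound unchanged (because $\calT_{\frakp}^{\circ}(K/k)$ is then trivial) is correct whenever $\calT_{\frakp}^{\circ}$ exists, but the clean fix is simply not to add it. Finally, a peripheral slip in your last paragraph: $\calT_{\frakp}^{\circ}$ replaces $\calT_{\frakp}$ for every $\frakp\mid p$ in $R$ regardless of whether $p=2$, and it has nothing to do with the narrow class group; $\Cl^{+}$ enters only through the choice $S'=S_{\R}(\Q)$, which moves the archimedean place out of the local-relator budget.
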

	
	The above upper bound is known in some cases. For a quadratic extension $K/\Q$, the equality above holds, and it is well-known by the Gauss genus theory \cite{Gauss}. For a cyclic degree-$p$ extension $K/k$, a similar upper bound is given by Kl\"uners and Wang \cite{Kluners-Wang}. For a tamely ramified totally real multiquadratic extension $K/\Q$, the same upper bound is given by Koymans and Pagano \cite{Koymans-Pagano-genus}.
	
	Theorem~\ref{thm:main-2} can also be applied to find the lower bounds for the generator rank of $G_{S'}^T(K)$, by studying all the possibilities of local relators. Such a lower bound very much depends on the structure of $\Gal(K/k)$, and we give in Proposition~\ref{prop:lower} a lower bound that is not very good but works for all Galois groups $\Gal(K/k)$. In the case of $k=\Q$, to the best of the author's knowledge, the only known lower bound for the $p$-rank of class group of $K$ is given by the genus field (the maximal extension of $K$ which is obtained by composing $K$ with an abelian extension of $\Q$ and which is unramified over $K$). The lower bound in Proposition~\ref{prop:lower} is better than the one obtained from the genus field.
	
	To show the strength of Theorem~\ref{thm:main-2}, we study the $p$-rank of class groups of cyclic degree-$p^d$ extensions of $\Q$ and 2-rank of class groups of multiquadratic extensions of $\Q$, for a fixed ramification type. A \emph{ramification type} is a tuple $(\Gamma, n, \{I_i\}_{i=1}^n, I_{\infty})$ where $\Gamma$ is a finite $p$-group, $I_i$'s are cyclic subgroups of $\Gamma$ that generate $\Gamma$, and $I_{\infty}$ is a subgroup of $\Gamma$ of order at most 2. For each ramification type, define $\calS(\Gamma, n, \{I_i\}_{i=1}^n, I_{\infty})$ to be the set of tamely ramified extensions $K/\Q$ such that $\Gal(K/\Q)\simeq \Gamma$, $K/\Q$ is ramified at $n$ nonarchimedean primes with inertia subgroups given by $I_i$'s, and the inertia subgroup of $K/\Q$ at the archimedean prime is $I_{\infty}$ (a thorough definition is given in Definition~\ref{def:not}). For the case of $\Gamma=C_{p^d}$ and the case of $\Gamma=(C_2)^{\oplus d}$, we study the following questions:
	\begin{enumerate}[label=(\Roman*)]
		\item \label{item:Q-I} Is there a better lower bound of $\dim_{\F_p}\Cl(K)[p]$ that works for all $K \in \calS(\Gamma, n, \{I_i\}_{i=1}^n, I_{\infty})$?
		\item \label{item:Q-II} Is the upper bound in Theorem~\ref{thm:main-3} sharp for fields in $\calS(\Gamma, n, \{I_i\}_{i=1}^n, I_{\infty})$?
	\end{enumerate}
	
	For the case $\Gamma=C_{p^d}$, we show in Theorem~\ref{thm:lb-cyclic} that the lower bound obtained from the genus field is sharp. Although we do not know the answer to Question~\ref{item:Q-II}, we show in Proposition~\ref{prop:ub-cyclic} that there exist infinitely many fields in $\calS(C_{p^d}, n, \{I_i\}_{i=1}^n, I_{\infty})$ such that 
	\[
		\dim_{\F_p} \Cl(K)[p] \geq \sum_{i=1}^n \frac{|\Gamma|}{|I_i|} -1.
	\]
	In this case, the upper bound in Theorem~\ref{thm:main-3} is $(n-1)|\Gamma|+1-\sum_{i=1}^n \frac{|\Gamma|}{|I_i|}$.
	
	For the case $\Gamma=(C_2)^{\oplus d}$, in Theorem~\ref{thm:lb-multiquad}, we give a better lower bound for $\dim_{\F_2} \Cl(K)[2]$, that is
	\begin{equation}\label{eq:lb-multiquad-intro}
		n-d+\sum_{i=2}^{d-1}\max\left\{0, (i-1)\cdot \binom{d}{i} +(n-d) \cdot \binom{d-1}{i-1} -n \cdot \binom{d-1}{i-2}- \binom{d}{i-\alpha_{\infty}} \right\}
	\end{equation}
	where $\alpha_{\infty}$ is 1 if $I_{\infty}=1$ and 2 if $I_{\infty}\simeq C_2$. Note that this lower bound is much better than the one from the genus theory, which is $n-d$. Koymans and Pagano studied Question~\ref{item:Q-II} in \cite{Koymans-Pagano}, and their method uses their previous work \cite{Koymans-Pagano-genus} and some combinatorial ideas from the work of Smith \cite{Smith}. In Proposition~\ref{prop:K-P}, we use Theorem~\ref{thm:main-2} to recover an important step in their work.

\subsection{Distribution of class groups}
	We choose these two types of $p$-groups as our examples because the distribution of $p$-part of the class groups for such $p$-extensions of $\Q$ has been actively studied in the recent years. For a quadratic extension $K$ of $\Q$, $\Cl(K)/2\Cl(K)$ is completely determined by the genus field. Gerth \cite{Gerth84} conjectured that, away from the part determined by genus theory, the distribution of the 2-part of $2\Cl(K)$ is predicted by a Cohen--Lenstra type of probability measures. Fouvry and Kl\"uners \cite{Fouvry-Kluners-1, Fouvry-Kluners-2} proved that the distribution of $(2\Cl(K))[2]$ agrees with Gerth's conjecture for both real and imaginary quadratic fields. 
	In \cite{Gerth86}, Gerth extended his conjecture to predict the distribution of $p$-part of class groups of cyclic degree-$p$ extensions of $\Q$ for an odd prime $p$. Klys \cite{Klys} proved that the distribution of $((1-\zeta)\Cl(K))[1-\zeta]$ (which is the analogue of $(2\Cl(K))[2]$ for odd $p$ and where $\zeta$ is a generator of $\Gal(K/\Q)$) agrees with this conjecture of Gerth. The remarkable work \cite{Smith} of Smith proves Gerth's conjecture for $p=2$. Developing Smith's idea, Koymans and Pagano \cite{Koymans-Pagano-Gerth} proved Gerth's conjecture for odd $p$, conditionally on GRH. In \cite{Smith2022}, Smith strengthened his method to prove an analogous result for cyclic degree-$p$ extensions of any base number field. There are works attempting to generalize the results for cyclic degree-$p$ extensions to other types of $p$-extensions, such like biquadratic extensions, multiquadratic extensions and cyclic $p$-extensions of larger degree (for example, see \cite{Fouvry-Koymans} and \cite{Fouvry-Koymans-Pagano}). 
	
	We expect that the method in this paper will provide another tool to tackle this type of problems. Let's first explain how to understand Gerth's conjecture using our results. Assume $K/\Q$ is a tamely ramified quadratic extension ramified at $d$ nonarchimedean primes $\{\frakp_1, \ldots, \frakp_d\}$. By Theorem~\ref{thm:main-2}, there exists a surjection 
	\[
		\pi: \Conv_{i=1}^d C_2 \longtwoheadrightarrow \Gal(K/\Q)=C_2,
	\]
	and $\Gal(K_{\O}(2)/\Q)$ is the quotient of $\Conv_{i=1}^d C_2$ modulo the local relators. At each $\frakp_i$, let $x_i$ denote the generator of the $i$-th copy of $C_2$ in the free product, then because $x_i$ has order 2 and $\frakp_i$ is odd, the local relators are in the form of 
	\[
		x_i^{\frakp_i} y_i x_i^{-1} y_i^{-1} = x_i y_i x_i^{-1} y_i^{-1}
	\]
	for some element $y_i$, and hence the relator must be contained in the commutator subgroup of $\Conv_{i=1}^d C_2$. Note that the abelianization of $\Conv_{i=1}^d C_2$ is isomorphic to $(C_2)^{\oplus d}$, its quotient modulo the local relator at the archimedean prime corresponds the genus field, and the 2-primary part of the class group of $K$ is the quotient of the abelianization of $\ker \pi$ modulo all the local relators. Because the local relator at $\frakp_i$ is completely determined by $y_i$, the lifts of Frobenius elements, if one want study the distribution of $2$-part of class group as $K$ varies among quadratic fields, it suffices to study the distribution of Frobenius elements at the ramified primes. Smith showed that such Frobenius elements distribute randomly, which gives rise to the probability measure in Gerth's conjecture.  
	
	For a finite $p$-group $\Gamma$ and a $\Gamma$-extension $K/\Q$, the argument in the preceding paragraph still works, and the surjection $\pi$ is determined by the ramification type of $K/\Q$. To understand the distribution of $p$-part of class group of $\Gamma$-extensions over $\Q$, one may wonder if the distribution of Frobenius elements at ramified elements is still random, and if there is any unknown constraints on those Frobenius elements. To understand these problems, answering Questions \ref{item:Q-I} and \ref{item:Q-II} is the first step, because if the bounds of $p$-rank of class groups given in this paper are not sharp, then there are some restrictions that we haven't taken into account. Moreover, if the Frobenius elements do distribute randomly, then one can construct a random group model using the presentation results in this paper to give the conjectural distribution of class groups, in the same vein as previous works on the non-abelian Cohen--Lenstra heuristics \cite{BBH-imaginary, BBH-real, LWZB, Liu-rou}.

\subsection{Embedding Problems and Presentations of $G_S^T(k)$}
	
	For a pro-$p$ group $G$, the dimension of cohomology groups $\dim_{\F_p}H^1(G, \F_p)$ and $\dim_{\F_p}H^2(G, \F_p)$ are the generator rank and relator rank of $G$, which are the minimal number of generators and the minimal number of relators used in a presentation of $G$ respectivley. Koch and Shafarevich showed that, for a global field $K$, a finite set $S$, and a prime number $p\neq \Char(k)$, $G_S(k)(p)$ is finitely presented, and they gave estimates of the ranks of $G_S(k)(p)$. The abelian group $\B_S(k, \F_p)$ was defined in their work in order to relate the cohomology of $G_S(k)$ to the Galois cohomology of local fields $k_{\frakp}$. Hajir, Maire and Ramakrishna further developed this cohomological method and applied it to some interesting algebraic number theoretical questions (see \cite{Maire96}, \cite{HMR20}, \cite{HMR21}, etc.).
	
	The explicit presentation of $G_S(k)(p)$ in certain cases was previous discussed by Koch \cite[Thm.11.16]{Koch}. The presentation result Theorem~\ref{thm:main-1} in this paper broadly extend Koch's theorem and utilizes different methods in the proof. Our proof uses embedding problems with restricted ramification, which ask whether a given global field extension can be embedded into a larger extension that satisfies the prescribed restrictions on ramification. To study the presentation of $G_S^T(k)(p)$, one only need to consider embedding problems associated to central group extensions with kernel $\F_p$. However, our embedding problem results work for any solvable group extensions, using the abelian group $\B_{S\backslash T}^{S \cup T}(k, A)$ defined in the author's previous work \cite{Liu2020}, where $A$ is any finite simple $G_k$-module and $\B_{S\backslash T}^{S \cup T}(k, A)$ is a generalization of $\B_S(k, \F_p)$ from work of Koch and Shafarevich. Although, unlike the pro-$p$ situation, the embedding problems associated to solvable group extensions do not directly give us information about the presentation of the maximal solvable quotient of $G_S(k)$, we expect that our embedding results can be applied to study the generalization of Cohen--Lenstra--Martinet and Gerth conjectures, and we plan to work on this in forthcoming work.
	
\subsection{Outline of the paper}

	In \S~\ref{S:notation}, we list the notation and background knowledge that will be used throughout the paper. In \S~\ref{S:embedding}, we discuss the embedding problems with restricted ramification. Although only the embedding problems associated to $p$-group extensions are used in later sections, we give our results in \S~\ref{S:embedding} in a generalized setting so that one can apply them to study embedding problems associated to solvable group extensions. The main result of \S~\ref{S:embedding} is Lemma~\ref{lem:embedding-ST}, which shows that, when some cohomological condition is satisfied, the embedding problem with restricted ramification is solvable if and only if it is locally solvable at a set of primes. The cohomological condition is stated using the finite abelian group $\B_{S\backslash T}^{S\cup T}(k, \F_p)$ defined by Shafarevich and Koch, and its generalization $\B_{S \backslash T}^{S\cup T}(k, A)$ defined in the author's previous work \cite{Liu2020} and \S~\ref{def:B}.
	
	In \S~\ref{S:pres}, we use our embedding problem result to study presentations of $G_S^T(k)(p)$, and prove Theorem~\ref{thm:main-1} and Theorem~\ref{thm:main-2}. In particular, we apply Theorem~\ref{thm:main-2} to the special cases \eqref{item:intro-Q} and \eqref{item:intro-Fq(t)}, and in Corollaries \ref{cor:Q} and \ref{cor:Fq(t)} we explicitly give a presentation of $\Gal(K_{\O}(p)/\Q)$ and $\Gal(K_{\O}^{S_{\infty}}(p)/\Q)$ when $K$ is a $p$-extension of $\Q$ or $\F_q(t)$. In \S~\ref{S:rank-general}, we explain how to use the presentation result Theorem~\ref{thm:main-2} to compute the generator rank of $G_{S'}^T(K)(p)$, and study upper bounds for this generator rank in \S~\ref{SS:rank-ub} and lower bounds in \S~\ref{SS:rank-lb}.
	
	Finally in \S~\ref{S:Cyclic} and \S~\ref{S:multiquad}, we study the $p$-rank of class groups of $C_{p^d}$-extensions and the $2$-rank of class groups of $(C_2)^{\oplus d}$-extensions of $\Q$ respectively. We give the definition of ramification type and outline our method at the beginning of \S~\ref{S:Cyclic}.
	
\subsection*{Acknowledgement}
	The author thanks Peter Koymans and Christian Maire for helpful comments on an early draft of this paper. The author is partially supported by NSF Grant DMS-2200541.
\section{Notation and preliminary}\label{S:notation}
	
	In this paper, groups are always finite groups and profinite groups, and subgroups are topologically closed subgroups. For a group $G$, we let $G^{\ab}$ denote the abelianization of $G$, and $[G,G]$ denote the commutator subgroup. For two elements $a, b \in G$, we denote $a^b:=bab^{-1}$ and $[a,b]:=aba^{-1}b^{-1}$. If $H$ is a group with a continuous $G$-action, then the semidirect product $H \rtimes G$ is the group with the underlying set $\{(h, g) \mid h \in H, g\in G\}$ and the multiplication defined by $(h_1, g_1)(h_2, g_2)= (h_1 g_1(h_2), g_1g_2)$. The wreath product $H \wr G$ is the semidirect product $(\prod_{g \in G} H) \rtimes G$ where each $\gamma \in G$ acts on $\prod_{g\in G} H$ by mapping the $g$-coordinate to the $\gamma g$-coordinate. For elements $x_1, x_2 , \ldots$ of $G$, we let $\langle x_1, x_2, \ldots \rangle$ denote the subgroup of $G$ generated by $x_1, x_2, \ldots$. 
	
	For a group $G$ and a prime number $p$, we let $G(p)$ denote the pro-$p$ completion of $G$. For a finitely generated pro-$p$ group, the generator rank $d(G)$ of $G$ is the minimal number of generators of $G$, the Frattini subgroup $\Phi(G)$ of $G$ is the subgroup $[G,G]G^p$, and there is an isomorphism $G/\Phi(G) \simeq (\F_p)^{\oplus d(G)}$. The Burnside's basis theorem says that, when $G$ is pro-$p$, a subgroup $H$ of $G$ is proper if and only if the image of $H$ under the quotient map $G \to G/\Phi(G)$ is a proper subgroup of $G/\Phi(G)$. For two pro-$p$ groups $G$ and $H$, we let $G \ast H$ denote the free pro-$p$ product of $G$ and $H$ (see \cite[Def.(4.1.1)]{NSW}).
	
	For a global field $k$, we let $G_k$ denote the absolute Galois group of $k$. For sets $S$ and $T$ of primes of $k$, let $k_S$ denote the maximal extension of $k$ that is unramified outside $S$, let $k_S^T$ denote the maximal subextension of $k_S/k$ that is splitting completely at primes in $T$, and denote $G_S(k):=\Gal(k_S/k)$ and $G_S^T(k):=\Gal(k_S^T/k)$. Similarly, we let $k_S(p)$ and $k_S^T(p)$ to be the maximal pro-$p$ subextensions of $k_S/k$ and $k_S^T/k$, and then $G_S(k)(p)=\Gal(k_S(p)/k)$ and $G_S^T(k)(p)=\Gal(k_S^T(p)/k)$.
	Let $\calO_k$ and $\calO_{k, S}$ denote the ring of integers and the ring of $S$-integers of $k$. Let $\Cl(k)$ and $\Cl_S(k)$ denote the class group and the $S$-class group of $k$. For a number field $k$, we let $S_p(k)$ denote the set of all primes of $k$ lying above $p$, let $S_{\R}(k)$ and $S_{\C}(k)$ denote the sets of all real and complex places of $k$ respectively, and let $S_{\infty}(k)$ denote $S_{\R}(k) \cup S_{\C}(k)$. For the function field case, there is a prime $\infty$ of $\F_p(t)$, and then for each extension $k/\F_p(t)$, we let $S_{\infty}(k)$ denote the set of all primes of $k$ lying above $\infty$. When the choice of $k$ is clear, we will omit $k$ and just write $S_{\R}$, $S_{\C}$, $S_{\infty}$ and $S_p$. For a global field extension $K/k$, we let $\Ram(K/k)$ denote the set of all primes of $k$ that is ramified in $K/k$, and when $k$ is a number field we let $\Ram^f(K/k)$ denote the set of all the nonarchimedean primes in $\Ram(K/k)$. For sets $S$ and $T$ of primes of $k$, let $K_S^T$ (resp. $K_S^T(p)$) denote the maximal extension (resp. maximal pro-$p$ extension) of $K$ that is ramified only at primes above $S$ and splitting completely at each prime above $T$.
	
	For a prime $\frakp$ of $k$, we let $k_{\frakp}$ denote the local completion of $k$ at $\frakp$ and let $U_{\frakp}$ denote the unit group of the ring of integers of $k_{\frakp}$. 
	Let $\calG_\frakp$ and $\calT_{\frakp}$ denote the absolute Galois group $\Gal(\overline{k_{\frakp}}/k_{\frakp})$ and the inertia subgroup of the extension $\overline{k_{\frakp}}/k_{\frakp}$ respectively. 
	We choose a distinguished prime $\frakP_K$ of $K$ lying above $\frakp$ for each Galois global field extension $K/k$, such that for any two Galois extensions $K$ and $L$ of $k$, the chosen primes $\frakP_K$ and $\frakP_L$ lie below the same prime of $\overline{k}$. Then we let $\calG_{\frakp}(K/k)$ and $\calT_{\frakp}(K/k)$ denote the decomposition subgroup and the inertia subgroup of $K/k$ at $\frakP_K$ respectively, and we call them \emph{the} decomposition subgroup and \emph{the} inertia subgroup of $K/k$ at $\frakp$. (Practically, the choice of $\frakP_K$ will be implicit.) Then there are natural embeddings $\calT_{\frakp} \hookrightarrow \calG_{\frakp}\hookrightarrow G_k$ defined by identifying $\calT_{\frakp}$ and $\calG_{\frakp}$ with $\calT_{\frakp}(\overline{k}/k)$ and $\calG_{\frakp}(\overline{k}/k)$ respectively. Throughout this paper, when we use embeddings $\calT_{\frakp} \hookrightarrow G_k$ and $\calG_{\frakp} \hookrightarrow G_k$, we always mean the embeddings defined as above.
	
	Let $k$ be a global field, $\frakp$ a prime of $k$, and $p$ a prime number such that $p\neq \Char(k)$. Let $\Nm(\frakp)$ denote the cardinality of the residue field of $k_{\frakp}$. Presentations of $\calG_\frakp(p)$ are well-known, and we define our notation as follows.

	\begin{enumerate}
		\item\label{item:case-1} If $p\nmid \Nm(\frakp)$, then by the result of Iwasawa \cite[Thm.(7.5.3)]{NSW}, $\calG_{\frakp}(p)$ has a pro-$p$ presentation
	\[
		\calG_{\frakp}(p) \simeq \langle t_{\frakp}, s_{\frakp} \mid s_{\frakp} t_{\frakp} s_{\frakp}^{-1} = t_{\frakp}^{\Nm(\frakp)} \rangle.
	\]
	This isomorphsm identifies $t_{\frakp}$ and $s_{\frakp}$ as two elements of $\calG_{\frakp}(p)$, so with a slight abuse of notation, we view $t_{\frakp}$ and $s_{\frakp}$ as elements of $\calG_{\frakp}(p)$. The isomorphism above is not unique, but we choose and then fix such an isomorphism, so that $t_{\frakp}$ and $s_{\frakp}$ are fixed elements of $\calG_{\frakp}(p)$ throughout the paper. In particular, $\calT_{\frakp}(p)$ is the subgroup of $\calG_{\frakp}(p)$ generated by $t_{\frakp}$. 	
		\item\label{item:case-2} Assume that $k$ is a number field. If $p \mid \Nm(\frakp)$ and $\mu_p \not\subset k_{\frakp}$, then $\calG_{\frakp}(p)$ is a free pro-$p$ group on $[k_{\frakp}: \Q_p]+1$ elements \cite[Thm.(7.5.11)]{NSW}.
		So the inertia subgroup of $\calG_{\frakp}(p)$, denoted by $\calT_{k_{\frakp}}(p)$ (note that this is not equal to $\calT_{\frakp}(p)$)
		 is a normal subgroup of $\calG_{\frakp}(p)$ with quotient isomorphic to $\Z_p$. We choose and then fix elements $t_{\frakp;\,i} \in \calG_{\frakp}(p)$ for $i=1, \ldots, [k_{\frakp}: \Q_p]$, such that $\calT_{k_{\frakp}}(p)$ is the normal subgroup generated by them. We define $\calT^{\circ}_{\frakp}$ to be the subgroup of $\calG_{\frakp}(p)$ generated by these elements $t_{\frakp;\,i}$'s. So $\calT^{\circ}_{\frakp}$ is a free pro-$p$ group on $[k_{\frakp}: \Q_p]$ generators, and its normal closure in $\calG_{\frakp}(p)$ is $\calT_{k_{\frakp}}(p)$. If $p \mid \Nm(\frakp)$ and $\mu_p \subset k_{\frakp}$, then $\calG_{\frakp}(p)$ is a Demu\v{s}kin group of rank $[k_{\frakp}: \Q_p]+2$. In this case, to the best of the author's knowledge, there is no result in the literature that gives a presentation of $\calG_{\frakp}(p)$ from which $\calT_{k_{\frakp}}(p)$ can be described using the generators. So we will avoid this case when we want to describe the local relators in a presentation of $G_S^T(k)$ in \S\ref{S:pres}.
	\end{enumerate}
	For a pro-$p$ Galois extension $K/k$, we let $t_{\frakp}(K/k)$, $s_{\frakp}(K/k)$ and $t_{\frakp;\, i}(K/k)$ be elements of $\Gal(K/k)$ that are defined as the images of $t_{\frakp}$, $s_{\frakp}$ and $t_{\frakp;\, i}$ under the composite map $\calG_{\frakp}(p) \rightarrow \calG_{\frakp}(K/k) \hookrightarrow \Gal(K/k)$. We let $\calT^{\circ}_{\frakp}(K/k)$ denote the image of $\calT_{\frakp}^{\circ}$ under the composite map. For convenience, we sometimes also denote $\calT_{\frakp}(K/k)$ by $\calT^{\circ}_{\frakp}(K/k)$ in the case that $p\nmid \Nm(\frakp)$, when we want to uniformly deal with $\calT_{\frakp}^{\circ}$ and $\calT_{\frakp}$ for all $\frakp$.

	For a finite $G_k$-module $A$, we denote $A':=\Hom(A, \overline{k}^{\times})$ and $A^{\vee}:=\Hom(A, \Q/\Z)$, and we let $k(A)$ denote the minimal trivializing extension of $k$ for $A$. 
	For a set $S$ of primes of $k$, we denote 
	\[
		\prod'_{\frakp \in S} H^1(\calG_{\frakp}, A):= \left\{ (f_{\frakp})_{\frakp \in S} \in \prod_{\frakp \in S} H^1(\calG_{\frakp}, A) \,\bigg{|}\, \text{$f_{\frakp}$ is unramified for all but finitely many primes in $S$}\right\}.
	\]
	For a $\calG_{\frakp}$-module $A$, we let $H^1_{nr}(\calG_{\frakp}, A)$ denote the kernel of the restriction map $H^1(\calG_{\frakp}, A) \to H^1(\calT_{\frakp}, A)^{\calG_{\frakp}}$, and note that this restriction map is surjective because $\calG_{\frakp}/\calT_{\frakp} \simeq \hat{\Z}$.

\section{Embedding problems with restricted ramification}\label{S:embedding}

\subsection{Definition and properties of $\B_S^T(k,A)$}	
	
	\begin{definition}\label{def:B}
		Let $k$ be a global field, $S_0 \subseteq S$ two sets of primes of $k$, and $A$ a finite simple $\F_p[G_k]$-module such that $p\neq \Char(k)$. We define $\B_{S_0}^S(k,A)$ to be the cokernel of the following map
		\[
			\prod'_{\frakp \in S_0} H^1(\calG_{\frakp}, A) \times \prod_{\frakp \not\in S} H_{nr}^1(\calG_{\frakp}, A) \hookrightarrow \prod'_{\frakp} H^1(\calG_{\frakp}, A) \overset{\sim}{\longrightarrow} \prod'_{\frakp} H^1(\calG_{\frakp}, A')^{\vee} \longrightarrow H^1(G_k, A')^{\vee}.
		\]
		Here the first map is the natural embedding, the second map is the isomorphism obtained by the local Tate duality theorem \cite[Thm.(7.2.6) and Thm.(7.2.17)]{NSW}, and the last map is the Pontryagin dual of the product of restriction map.
	\end{definition}
	
	When the module $A$ is $\F_p$, $\B_{S\backslash T}^{S\cup T}(k, A)$ is exactly $\B_{S\backslash T}^{S \cup T}(k)$ defined in \cite[Def.(10.7.8)]{NSW}, which is the Pontryagin dual of 
		\begin{equation}\label{eq:def-V}
			V_{S\backslash T}^{S \cup T}(k):= \left\{ a \in k^{\times} \mid a \in k_{\frakp}^{\times p} \text{ for } \frakp \in S \backslash T \text{ and } a \in U_{\frakp} k_{\frakp}^{\times p} \text{ for } \frakp \not \in S \cup T  \right\} / k_{\frakp}^{\times p}.
		\end{equation}
		
	In several results in this section, we assume that $\Gal(k(A)/k)$ is solvable, so that the Hasse principle for $k$ and $A$ holds in dimension 1 and 2. 
	
	\begin{lemma}\label{lem:B-es}
		Let $S$ and $T$ be two sets of primes of $k$. Let $A$ be a finite simple $\F_p[G_S^T(k)]$-module such that $\Gal(k(A)/k)$ is solvable and $p\neq \Char(k)$. Then there is an exact sequence
		\[ 
			H^1(G_k, A) \longrightarrow \bigoplus_{\frakp \not\in S \cup T} H^1(\calT_{\frakp}, A)^{\calG_{\frakp}} \oplus \prod'_{\frakp \in T} H^1(\calG_{\frakp}, A) \longrightarrow \B_{S \backslash T}^{S \cup T} (k, A) \longrightarrow 0,
		\]
		where the first map is the product of maps $H^1(G_k, A) \to H^1(\calG_{\frakp}, A)$ at $p\in T$ and the composite maps $H^1(G_k,A) \to H^1(\calG_{\frakp}, A) \to H^1(\calT_{\frakp}, A)^{\calG_{\frakp}}$ at $\frakp \not\in S \cup T$. 
	\end{lemma}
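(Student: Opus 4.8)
The plan is to unwind Definition~\ref{def:B} and to recognize the map defining $\B_{S\backslash T}^{S\cup T}(k,A)$ as the restriction, to a convenient subgroup, of the global-to-local map appearing in the Poitou--Tate nine-term exact sequence. The solvability hypothesis on $\Gal(k(A)/k)$ will enter only through the vanishing $\Sha^2(k,A)=0$.

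First I would introduce the abbreviations $P:=\prod'_{\frakp}H^1(\calG_{\frakp},A)$ and
\[
W:=\prod'_{\frakp\in S\backslash T}H^1(\calG_{\frakp},A)\times\prod_{\frakp\not\in S\cup T}H^1_{nr}(\calG_{\frakp},A),
\]
where $W$ is regarded as a subgroup of $P$ via the natural embedding (with zero component at every $\frakp\in T$). By Definition~\ref{def:B}, $\B_{S\backslash T}^{S\cup T}(k,A)$ is the cokernel of the composite $W\hookrightarrow P\overset{\sim}{\longrightarrow}\prod'_{\frakp}H^1(\calG_{\frakp},A')^{\vee}\to H^1(G_k,A')^{\vee}$. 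I would then note that, by the very construction of the Poitou--Tate maps (local Tate duality followed by the Pontryagin dual of the restriction map), the composite $P\to H^1(G_k,A')^{\vee}$ coincides, up to sign, with the map $\gamma$ in the exact sequence
\[
H^1(G_k,A)\xrightarrow{\res}P\xrightarrow{\gamma}H^1(G_k,A')^{\vee}\longrightarrow H^2(G_k,A)\xrightarrow{\res}\prod'_{\frakp}H^2(\calG_{\frakp},A)
\]
of \cite[Thm.~(8.6.10)]{NSW}. Exactness gives $\ker\gamma=\res(H^1(G_k,A))$ and $\coker\gamma\cong\Sha^2(k,A)$. Since $\Gal(k(A)/k)$ is solvable, the Hasse principle holds in dimension $2$, i.e.\ $\Sha^2(k,A)=0$, so $\gamma$ is surjective and $H^1(G_k,A')^{\vee}\cong P/\res(H^1(G_k,A))$; hence $\B_{S\backslash T}^{S\cup T}(k,A)\cong P/(W+\res(H^1(G_k,A)))$.

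It would then remain to compute $P/W$ and to transport $\res(H^1(G_k,A))$ through the resulting isomorphism. Working one prime at a time: for $\frakp\in S\backslash T$ the factor $H^1(\calG_{\frakp},A)$ is killed entirely; for $\frakp\not\in S\cup T$ one has $H^1(\calG_{\frakp},A)/H^1_{nr}(\calG_{\frakp},A)\cong H^1(\calT_{\frakp},A)^{\calG_{\frakp}}$ via the restriction map, which is surjective with kernel $H^1_{nr}(\calG_{\frakp},A)$ by definition; and for $\frakp\in T$ the factor is untouched. The restricted-product condition on $P$ becomes, after passing to the quotient, the requirement that almost all coordinates at primes $\frakp\not\in S\cup T$ vanish, so
\[
P/W\cong\bigoplus_{\frakp\not\in S\cup T}H^1(\calT_{\frakp},A)^{\calG_{\frakp}}\oplus\prod'_{\frakp\in T}H^1(\calG_{\frakp},A),
\]
and under this identification the composite $H^1(G_k,A)\xrightarrow{\res}P\to P/W$ is exactly the map described in the statement. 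Dividing by its image yields the asserted exact sequence. The only non-formal ingredient is $\Sha^2(k,A)=0$, which is where the solvability hypothesis is used and which I expect to be the main point; everything else is bookkeeping with restricted products and with the identification of the map of Definition~\ref{def:B} with the Poitou--Tate localization map $\gamma$.
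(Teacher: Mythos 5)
Your proposal is correct and takes essentially the same approach as the paper: both proofs reduce to the Poitou--Tate nine-term exact sequence, use the solvability hypothesis to invoke $\Sha^2(G_k,A)=0$ (so the localization map $\gamma:P\to H^1(G_k,A')^{\vee}$ is surjective with kernel the image of the global restriction), and identify $P/W$ with $\bigoplus_{\frakp\notin S\cup T}H^1(\calT_\frakp,A)^{\calG_\frakp}\oplus\prod'_{\frakp\in T}H^1(\calG_\frakp,A)$ via the local inflation--restriction sequences. The paper packages this bookkeeping as a $3\times3$ diagram and applies the snake lemma, while you compute the cokernel directly; the ingredients and the key vanishing are the same.
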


	\begin{proof}
		Consider the following diagram
		\[\begin{tikzcd}
			 & \bigoplus\limits_{\frakp \not\in S \cup T} H^1(\calT_\frakp, A)^{\calG_{\frakp}} \times \prod'\limits_{\frakp \in T} H^1(\calG_{\frakp}, A) & & \\
			 H^1(G_k, A) 
			 \arrow[hook]{r} & \prod'\limits_{\frakp} H^1(\calG_{\frakp}, A) \arrow[two heads]{u} \arrow[two heads]{r} & H^1(G_k, A')^{\vee} \arrow[equal]{d} & \\
			 & \prod\limits_{\frakp \not \in S \cup T} H^1_{nr} (\calG_{\frakp}, A) \times \prod'\limits_{\frakp \in S \backslash T} H^1(\calG_{\frakp}, A) \arrow["\alpha"]{r} \arrow[hook]{u} &H^1(G_k, A')^{\vee}  \arrow[two heads]{r} & \B_{S \backslash T}^{S\cup T} (k, A). 
		\end{tikzcd}\]

		The exactness of the middle row follows from the long exact sequence of Poitou--Tate \cite[Thm.(8.6.10)]{NSW} together with the fact that $\Sha^1(G_k, A)=\Sha^2(G_k,A)=0$ \cite[Thm.(9.1.15)(i) and Cor.(9.1.16)(i)]{NSW}. The lower row comes from the definition of $\B_{S\backslash T}^{S\cup T}(k, A)$. 
		The exactness of the second column follows by the inflation-restriction exact sequence for local cohomology at each prime together with $H_{nr}^2(\calG_p, A)=0$ (as $\calG_{\frakp}/\calT_{\frakp}\simeq \hat{\Z}$). So by the snake lemma, we have 
		\[
			0 \longrightarrow \ker\alpha \longrightarrow H^1(G_k, A) \longrightarrow \bigoplus_{\frakp \not\in S \cup T} H^1(\calT_{\frakp}, A)^{\calG_{\frakp}} \times \prod'_{\frakp \in T} H^1(\calG_{\frakp}, A) \longrightarrow \B_{S\backslash T}^{S\cup T} (k, A) \longrightarrow 0,
		\]
		so the lemma follows.
	\end{proof}

	\begin{lemma}\label{lem:basic-B}
		\begin{enumerate}
			\item\label{item:basic-B-1} If $S_1 \subseteq S_2$, then there is a natural surjection $\B_{S_1\backslash T}^{S_1\cup T}(k,A) \twoheadrightarrow \B_{S_2\backslash T}^{S_2 \cup T}(k,A)$.
			
			\item\label{item:basic-B-2} If $T_1 \subseteq T_2$, then there is a natural surjection $\B_{S\backslash T_2}^{S\cup T_2}(k,A) \twoheadrightarrow \B_{S\backslash T_1}^{S \cup T_1}(k,A)$.
			
			\item\label{item:basic-B-3} If $T$ is finite, then $\B_{S\backslash T}^{S\cup T}(k,A)$ is finite.
			
		\end{enumerate}
	\end{lemma}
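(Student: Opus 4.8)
The plan is to view all four groups in the statement through the single map that defines them. Write $\iota\colon \prod'_{\frakp} H^1(\calG_{\frakp},A) \to H^1(G_k,A')^{\vee}$ for the composite appearing in Definition~\ref{def:B} (local Tate duality followed by the dual of the global restriction map), and for a pair of sets $S_0 \subseteq S$ put $W(S_0,S) := \prod'_{\frakp \in S_0} H^1(\calG_{\frakp},A) \times \prod_{\frakp \notin S} H^1_{nr}(\calG_{\frakp},A)$, regarded as the subgroup of $\prod'_{\frakp} H^1(\calG_{\frakp},A)$ whose $\frakp$-component is $0$ for $\frakp \in S\backslash S_0$; then $\B_{S_0}^{S}(k,A) = \coker(\iota|_{W(S_0,S)})$. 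With this notation, parts \eqref{item:basic-B-1} and \eqref{item:basic-B-2} reduce to pure bookkeeping: whenever $W(S_0,S) \subseteq W(S_0',S')$ inside $\prod'_{\frakp} H^1(\calG_{\frakp},A)$, the identity of $H^1(G_k,A')^{\vee}$ descends to a canonical surjection $\B_{S_0}^{S}(k,A) \twoheadrightarrow \B_{S_0'}^{S'}(k,A)$. For \eqref{item:basic-B-1}, taking $S_0 = S_i\backslash T$, $S = S_i\cup T$ with $S_1 \subseteq S_2$, one checks $W(S_1\backslash T, S_1 \cup T) \subseteq W(S_2\backslash T, S_2 \cup T)$ component by component: at $\frakp \in S_1\backslash T$ both sides carry all of $H^1(\calG_{\frakp},A)$; at $\frakp \in (S_2 \backslash S_1)\backslash T$ the left side carries $H^1_{nr}(\calG_{\frakp},A) \subseteq H^1(\calG_{\frakp},A)$ and the right side all of $H^1(\calG_{\frakp},A)$; at $\frakp \notin S_2 \cup T$ both carry $H^1_{nr}(\calG_{\frakp},A)$; and at $\frakp \in T$ both components are $0$. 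Part \eqref{item:basic-B-2} is the mirror image: for $T_1 \subseteq T_2$, enlarging $T$ both deletes the $T$-factors and turns them into the zero subgroup, so the same case analysis gives $W(S\backslash T_2, S\cup T_2) \subseteq W(S\backslash T_1, S\cup T_1)$ and hence the surjection. There is no real difficulty here; one only has to keep straight the double role played by $T$.

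For \eqref{item:basic-B-3} the plan is to identify (a group containing) the Pontryagin dual of $\B_{S\backslash T}^{S\cup T}(k,A)$ with a Selmer-type subgroup of $H^1(G_k,A')$ cut out by local conditions that force it to be unramified outside a finite set. Dualizing $\B_{S\backslash T}^{S\cup T}(k,A) = \coker(\iota|_W)$ with $W = W(S\backslash T, S\cup T)$, and unwinding $\iota$ as the dual of the restriction map, one gets
\[
	\B_{S\backslash T}^{S\cup T}(k,A)^{\vee} = \Bigl\{ x \in H^1(G_k,A') \;:\; \textstyle\sum_{\frakp} \langle w_{\frakp}, \res_{\frakp}(x)\rangle_{\frakp} = 0 \text{ for all } (w_{\frakp})_{\frakp} \in W \Bigr\},
\]
the sum being finite because two unramified local classes have trivial pairing. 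Testing $x$ against elements $(w_{\frakp})_{\frakp}$ supported at a single prime, and invoking the two standard facts of local duality — that the pairing $H^1(\calG_{\frakp},A) \times H^1(\calG_{\frakp},A') \to \Q/\Z$ is perfect, and that $H^1_{nr}(\calG_{\frakp},A)^{\perp} = H^1_{nr}(\calG_{\frakp},A')$ at the nonarchimedean primes with $\frakp \nmid p$ — one deduces $\res_{\frakp}(x) = 0$ for $\frakp \in S\backslash T$ and $\res_{\frakp}(x)$ unramified for $\frakp \notin S\cup T$ with $\frakp\nmid p$. Hence every $x \in \B_{S\backslash T}^{S\cup T}(k,A)^{\vee}$ is unramified at every prime outside the finite set $\Sigma := T \cup S_p \cup S_{\infty} \cup \Ram(k(A')/k)$, so $x$ is inflated from $H^1(G_{\Sigma}(k),A')$; and this last group is finite since $\Sigma$ is finite, $A'$ is a finite $p$-torsion module unramified outside $\Sigma$, and $\Sigma \supseteq S_p \cup S_{\infty}$ — the classical finiteness of cohomology of Galois groups of restricted ramification (see, e.g., \cite{NSW}). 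Therefore $\B_{S\backslash T}^{S\cup T}(k,A)^{\vee}$, and hence $\B_{S\backslash T}^{S\cup T}(k,A)$, is finite.

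The only substantive point — the step I expect to require the most care — is pinning down the local input in \eqref{item:basic-B-3}: perfectness of local Tate duality at the primes of $S\backslash T$, and self-duality of the unramified local condition, which holds at the tame primes but can fail at $\frakp \mid p$; this is exactly why $S_p$ must be thrown into $\Sigma$ (so that those primes impose no ramification constraint, which is harmless since there are only finitely many of them). Once these are in hand, \eqref{item:basic-B-3} is just the classical restricted-ramification finiteness theorem. I note that if one additionally assumes $\Gal(k(A)/k)$ solvable, Lemma~\ref{lem:B-es} gives a shorter route to \eqref{item:basic-B-3}: it presents $\B_{S\backslash T}^{S\cup T}(k,A)$ as the cokernel of $H^1(G_k,A) \to \bigoplus_{\frakp\notin S\cup T} H^1(\calT_{\frakp},A)^{\calG_{\frakp}} \oplus \prod_{\frakp\in T} H^1(\calG_{\frakp},A)$, and the same finiteness input applied to $A$ (together with finiteness of the $T$-part, $T$ being finite) bounds this cokernel directly.
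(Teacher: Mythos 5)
Your handling of \eqref{item:basic-B-1} and \eqref{item:basic-B-2} coincides with the paper's (both ``follow directly from Definition~\ref{def:B}''); you simply spell out the inclusion of local-condition subgroups prime by prime, which is fine.

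For \eqref{item:basic-B-3} you take a genuinely different route. The paper fits $\B_{S\backslash T}^{S\cup T}(k,A)$ into a three-column diagram comparing it to $\B_S^S(k,A)$, observes that the extra local-condition factors form the finite group $\prod_{\frakp\in T\backslash S} H^1_{nr}(\calG_{\frakp},A) \times \prod'_{\frakp\in S\cap T} H^1(\calG_{\frakp},A)$ (finite because $T$ is finite and each local $H^1$ is finite), and then invokes the finiteness of $\B_S^S(k,A)$ as established in the author's earlier work \cite{Liu2020} via a snake-lemma argument. You instead dualize at the outset, identify $\B_{S\backslash T}^{S\cup T}(k,A)^{\vee}$ as the Selmer subgroup of $H^1(G_k,A')$ annihilating $W$ under the sum of local pairings, and deduce from perfectness of local duality and the self-orthogonality of $H^1_{nr}$ at tame primes that any such class is unramified outside the finite set $\Sigma = T\cup S_p\cup S_{\infty}\cup\Ram(k(A')/k)$, hence lies in the image of the (injective) inflation $H^1(G_{\Sigma}(k),A')\hookrightarrow H^1(G_k,A')$, which is finite by the classical restricted-ramification finiteness theorem. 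The input is ultimately the same classical finiteness result, but your argument is self-contained where the paper's outsources the key step to \cite{Liu2020}; on the other hand the paper's reduction to $\B_S^S$ is shorter given that reference. You are also right to flag that the self-duality $H^1_{nr}(\calG_{\frakp},A)^{\perp}=H^1_{nr}(\calG_{\frakp},A')$ can fail at $\frakp\mid p$, and that throwing $S_p$ into $\Sigma$ exactly sidesteps this; the only step you leave slightly implicit is that a class unramified outside $\Sigma$ vanishes on $\Gal(\overline{k}/k_{\Sigma})$ (and hence is inflated from $H^1(G_{\Sigma}(k),A')$), which uses that $\Gal(\overline{k}/k_{\Sigma})$ is the smallest closed normal subgroup of $G_k$ containing all inertia subgroups outside $\Sigma$, together with the $G_k$-equivariance of the restricted cocycle — standard, but worth a sentence. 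Your closing remark that Lemma~\ref{lem:B-es} gives a shortcut under the solvability hypothesis is correct; note, however, that Lemma~\ref{lem:basic-B} as stated carries no such hypothesis, and your direct Selmer-group argument has the virtue of not needing it either.
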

	
	\begin{proof}
		The statements \eqref{item:basic-B-1} and \eqref{item:basic-B-2} follow directly from Definition~\ref{def:B}. Consider the following diagram.
		\[\begin{tikzcd}
			\prod\limits_{\frakp \not \in S \cup T} H^1_{nr} (\calG_{\frakp}, A) \times \prod'\limits_{\frakp \in S \backslash T} H^1(\calG_{\frakp}, A) \arrow[hook]{d} \arrow{r} & H^1(G_k, A')^{\vee} \arrow[equal]{d} \arrow[two heads]{r} & \B_{S\backslash T}^{S\cup T}(k, A) \arrow[two heads]{d}\\ 
			\prod\limits_{\frakp \not \in S} H^1_{nr} (\calG_{\frakp}, A) \times \prod'\limits_{\frakp \in S} H^1(\calG_{\frakp}, A) \arrow[two heads]{d} \arrow{r} & H^1(G_k, A')^{\vee} \arrow[two heads]{r} & \B_{S}^{S}(k, A) \\ 
			\prod\limits_{\frakp \in T\backslash S} H^1_{nr}(\calG_{\frakp},A) \times \prod'\limits_{\frakp \in S \cap T} H^1(\calG_{\frakp}, A) & &
		\end{tikzcd}\]
		Since for each prime $\frakp$ of $k$, the local cohomology groups $H^1_{nr}(\calG_{\frakp}, A)$ and $H^1(\calG_{\frakp}, A)$ are both finite, so the lower-left term in the diagram above is finite. By the argument in the proof of \cite[Cor.8.8]{Liu2020}, $\B_S^S(k,A)$ is finite for any $S$. Therefore, the statement \eqref{item:basic-B-3} follows by applying the snake lemma to the above diagram.
	\end{proof}

		Next, we show that there are infinitely many choices of $S$ and $T$ such that $\B_{S\backslash T}^{S\cup T}(k, A)=0$.
		
		\begin{lemma}\label{lem:B=0}
			Let $T$ be a finite set of primes of $k$, and $A$ a finite simple $\F_p[G_k]$-module such that $\Gal(k(A)/k)$ is solvable and $p\neq \Char(k)$. For any finite set $T'$ of primes of $k$, there are infinitely many finite sets $S$ such that $S \cap T'=\O$ and $\B_{S\backslash T}^{S \cup T}(k,A)=0$.
		\end{lemma}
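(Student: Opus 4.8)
The plan is to pass to the Pontryagin dual of $\B$, reduce the vanishing statement to a separation property for a \emph{finite} family of global cohomology classes, and then produce the separating primes by the Chebotarev density theorem while keeping them away from $T'$.

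First I would record the dual description. Since $T$ is finite, $\B_{\O}^{T}(k,A)$ is finite by Lemma~\ref{lem:basic-B}\eqref{item:basic-B-3} (applied with $S=\O$). Dualizing the composite map of Definition~\ref{def:B} and using the local Tate duality used in that definition, one identifies, for any finite set $S$ of primes,
\[
  \B_{S\backslash T}^{S\cup T}(k,A)^{\vee}=\bigl\{\, f\in H^1(G_k,A')\ :\ \res_{\frakp}(f)=0\ \text{for}\ \frakp\in S\backslash T,\ \ \res_{\frakp}(f)\in H^1_{nr}(\calG_{\frakp},A')\ \text{for}\ \frakp\notin S\cup T\,\bigr\}.
\]
For $S=\O$ this is a finite group $W$, and for $S$ with $S\cap T=\O$ it is simply $\{f\in W:\res_{\frakp}(f)=0\ \text{for all}\ \frakp\in S\}$ (a set $S$ meeting $T$ contributes no new condition, so we may as well take $S$ disjoint from $T$). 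Hence the lemma reduces to finding a finite set $S$ of primes, disjoint from $T\cup T'$, for which the total restriction map $W\to\prod_{\frakp\in S}H^1(\calG_{\frakp},A')$ is injective; and once one such $S$ is found, $S\cup\{\frakq\}$ works for every prime $\frakq\notin S\cup T\cup T'$ (enlarging $S$ only shrinks the group above, which is already $0$), which gives infinitely many admissible $S$.

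The substantive step is a Chebotarev argument: for every nonzero $f\in H^1(G_k,A')$ I claim the set of primes $\frakp$ with $\res_{\frakp}(f)\neq 0$ has positive density, hence is infinite and meets the complement of the finite set $T\cup T'$; granting this, I pick such a prime $\frakp_f\notin T\cup T'$ for each of the finitely many nonzero $f\in W$ and let $S$ be the set of these. For the claim: $\Gal(k(A')/k)$ is solvable (it is a quotient of an extension of the solvable group $\Gal(k(A)/k)$ by an abelian group), so the Hasse principle gives $\Sha^1(G_k,A')=0$, whence $\res_{\frakp}(f)\neq 0$ for at least one $\frakp$; realizing $f$ as inflated from $H^1(\Gal(N/k),A')$ for a suitable finite Galois extension $N$ of $k$ containing $k(A')$, the vanishing of $\res_{\frakp}(f)$ for $\frakp$ unramified in $N$ depends only on the conjugacy class of $\Frob_{\frakp}$ in $\Gal(N/k)$, so the set of detecting primes coincides, up to the finitely many primes ramified in $N$, with a union of Frobenius classes; that union is nonempty (otherwise $\res_{\frakp}(f)=0$ on a set of primes of density one, forcing $f=0$, since a class in $H^1(G_k,A')$ locally trivial on a density-one set of primes is trivial), and a nonempty union of Frobenius classes has positive density.

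The step I expect to be the main obstacle is this last one: I need \emph{positive density} of the detecting set, not merely nonemptiness, precisely so that the separating primes can be chosen outside the prescribed finite set $T\cup T'$. This is what forces the use of the fact that a global Galois cohomology class that becomes locally trivial on a set of primes of density one is already trivial (equivalently, that $H^1$ of the relevant finite Galois group is detected on procyclic subgroups) — a standard but not entirely trivial input. The remaining ingredients — the duality and finiteness in the first step, and the bootstrap from one good $S$ to infinitely many — are purely formal.
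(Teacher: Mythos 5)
Your proof is correct, and it rests on the same essential inputs as the paper's: the cofinite-restriction injectivity \cite[Thm.(9.1.15)(ii)]{NSW} for $H^1(G_k,A')$ (this is where the solvability hypothesis enters) and the finiteness of $\B_{\O}^{T}(k,A)$ from Lemma~\ref{lem:basic-B}\eqref{item:basic-B-3}; the packaging, however, is genuinely different. You dualize $\B$ and, for each of the finitely many nonzero classes in $\B_{\O}^T(k,A)^\vee\subset H^1(G_k,A')$, produce a separating prime outside $T\cup T'$ by an explicit Chebotarev argument. The paper stays on the cokernel side: it first deduces directly from \cite[Thm.(9.1.15)(ii)]{NSW} that $\B_{S\backslash T}^{S\cup T}(k,A)=0$ when $S$ is the cofinite set of all primes outside $T\cup T'$, and then runs a snake-lemma diagram chase showing that from any finite $S_1$ disjoint from $T'$ one may add a single prime $\frakp\notin S_1\cup T\cup T'$ to strictly shrink the (finite) group $\B_{S_1\backslash (T\cup T')}^{S_1\cup T\cup T'}(k,A)$, terminating after finitely many steps. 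Your version is more concrete about where the separating primes come from; the paper's is a touch more economical and also yields the slightly stronger conclusion that \emph{any} finite $S_1$ disjoint from $T'$ can be enlarged to a suitable $S_2\supset S_1$. One small imprecision in your write-up: what you actually need and use is ``locally trivial outside a \emph{finite} set of primes implies trivial'' (the finite set being the ramification locus of your auxiliary $N$), which is precisely the cofinite statement of \cite[Thm.(9.1.15)(ii)]{NSW}; phrasing it as a ``density-one'' statement makes it sound like a further-strengthened input, but your Chebotarev reduction shows the two are equivalent here, so there is no gap.
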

		
		\begin{proof}
			Let $T'$ be an arbitrary finite set of primes of $k$. By \cite[Thm.(9.1.15)(ii)]{NSW}, the map
			\[
				H^1(G_k, A') \longrightarrow \prod_{\frakp \not \in T \cup T'} H^1(\calG_{\frakp}, A')
			\]
			is injective. So by Definition~\ref{def:B}, we have $\B_{S\backslash T}^{S \cup T}(k,A)=0$ with $S=\{\text{all primes of }k\}\backslash (T \cup T')$. 
			
			Suppose $S_1$ is a finite set such that $S_1\cap T' = \O$. We will show that there exists a finite larger set $S_2 \supset S_1$ such that 
			\begin{equation}\label{eq:cond-S2}
				\B_{S_2\backslash T}^{S_2\cup T}(k,A)=0 \quad \text{ and } \quad S_2 \cap T'=\O, 
			\end{equation}
			and then the lemma immediately follows. Consider the diagram
			\[\begin{tikzcd}
				\prod\limits_{\frakp \in S_1 \backslash (T \cup T')} H^1(\calG_{\frakp}, A) \times \prod\limits_{\frakp \not \in S_1 \cup T \cup T'} H^1_{nr}(\calG_{\frakp}, A) \arrow[hook]{d} \arrow{r}& H^1(G_k, A')^{\vee} \arrow[two heads]{r} \arrow[equal]{d}& \B_{S_1\backslash (T\cup T')}^{S_1 \cup T \cup T'} (k,A) \\
				\prod'\limits_{\frakp \not\in T \cup T'} H^1(\calG_{\frakp}, A) \arrow[two heads]{r} \arrow[two heads]{d} & H^1(G_k, A')^{\vee} & \\
				\bigoplus\limits_{\frakp \not \in S_1 \cup T \cup T'} H^1(\calT_{\frakp}, A)^{\calG_{\frakp}} & & 
			\end{tikzcd}\]
			where the upper row comes from Definition~\ref{def:B} and the lower row is a surjection because of the discussion in the preceding paragraph. If $\B_{S_1 \backslash (T \cup T')}^{S_1 \cup T \cup T'}(k,A) \neq 0$, then by the snake lemma, there exists $\frakp \not\in S_1 \cup T \cup T'$ such that the image of $H^1(\calT_{\frakp}, A)^{\calG_{\frakp}}$ in $\B_{S_1 \backslash (T \cup T')}^{S_1 \cup T \cup T'}(k, A)$ is nontrivial, and hence the kernel of 
			\[
				\B_{S_1 \backslash (T \cup T')}^{S_1\cup T \cup T'}(k, A) \longtwoheadrightarrow \B_{(S_1 \cup \{\frakp \}) \backslash (T \cup T')}^{S_1 \cup \{\frakp\} \cup T \cup T'}(k, A)
			\]
			is nontrivial. By Lemma~\ref{lem:basic-B}\eqref{item:basic-B-3}, after repeating the procedure above for finitely many times, we obtain a finite set $S_2 \supset S_1$ such that $\B_{S_2 \backslash (T \cup T')}^{S_2 \cup T \cup T'}(k, A)=0$ and $S_2 \cap T'= \O$. Then \eqref{eq:cond-S2} follows by Lemma~\ref{lem:basic-B}\eqref{item:basic-B-2}.
		\end{proof}

		For the case $A=\F_p$, the following lemma gives an explicit method to determine when $\B_{S\backslash T}^{S \cup T}(k, \F_p)$ is zero.
		
		\begin{lemma}\label{lem:B=0-empty}
			The group $\B_{S\backslash T}^{S\cup T}(k,\F_p)$ is $0$ when the followings hold:
			\begin{enumerate}
				\item\label{item:B=0-empty-1} the $p$-part of the $S\cup T$-class group $\Cl_{S\cup T}(k)$ of $k$ is trivial, and
				\item\label{item:B=0-empty-2}  the following map is injective
				\begin{equation}\label{eq:map-O}
					\calO_{k,T}^{\times}/\calO_{k,T}^{\times p} \longrightarrow \prod_{\frakp \in S\backslash T}U_{\frakp}/U_{\frakp}^p
				\end{equation}
				where the map is induced by the natural embedding $\calO_{k,T}^{\times}\hookrightarrow U_{\frakp}$ for $\frakp \not\in T$.
			\end{enumerate}
		\end{lemma}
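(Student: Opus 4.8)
Since $A=\F_p$, the discussion following Definition~\ref{def:B} identifies $\B_{S\backslash T}^{S\cup T}(k,\F_p)$ with the Pontryagin dual of the finite group $V_{S\backslash T}^{S\cup T}(k)$ of \eqref{eq:def-V}, so the plan is to prove that $V_{S\backslash T}^{S\cup T}(k)=0$; equivalently, that every $a\in k^\times$ with $a\in k_\frakp^{\times p}$ for all $\frakp\in S\backslash T$ and $a\in U_\frakp k_\frakp^{\times p}$ for all $\frakp\notin S\cup T$ already lies in $k^{\times p}$. First I would note that both local conditions force $v_\frakp(a)\equiv 0\pmod p$ for every $\frakp\notin T$, so the fractional ideal $(a)$ factors as $(a)=\frakc^{\,p}\fraka_T$ with $\frakc=\prod_{\frakp\notin T}\frakp^{\,v_\frakp(a)/p}$ and $\fraka_T$ a fractional ideal supported on $T$.

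Next I would use \eqref{item:B=0-empty-1} to normalize $a$ modulo $p$-th powers. Since $\frakc^{\,p}=(a)\fraka_T^{-1}$ and both $(a)$ and $\fraka_T$ are trivial in $\Cl_{S\cup T}(k)$, the class of $\frakc$ is $p$-torsion in $\Cl_{S\cup T}(k)$, hence trivial by \eqref{item:B=0-empty-1}; writing $\frakc=(c)\frakb$ with $\frakb$ supported on $S\cup T$ and replacing $a$ by $ac^{-p}$ (which does not change its class in $V_{S\backslash T}^{S\cup T}(k)$), I may assume $(a)$ is supported on $S\cup T$. Because $a\in k_\frakp^{\times p}$ for $\frakp\in S\backslash T$, the part of $(a)$ supported on $S\backslash T$ is then the $p$-th power of an ideal supported on $S\backslash T$, and a further application of \eqref{item:B=0-empty-1} together with one more multiplication by a $p$-th power lets me arrange $a\in\calO_{k,T}^\times$, still within the same class of $V_{S\backslash T}^{S\cup T}(k)$.

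Finally, with $a\in\calO_{k,T}^\times$ I would invoke \eqref{item:B=0-empty-2}: for each $\frakp\in S\backslash T$ we have $\frakp\notin T$, so $a\in U_\frakp$, and also $a\in k_\frakp^{\times p}$, whence $a\in U_\frakp\cap k_\frakp^{\times p}=U_\frakp^{\,p}$. Thus the image of $a$ in $\prod_{\frakp\in S\backslash T}U_\frakp/U_\frakp^{\,p}$ is trivial, so the injectivity of \eqref{eq:map-O} gives $a\in\calO_{k,T}^{\times p}\subseteq k^{\times p}$; this proves $V_{S\backslash T}^{S\cup T}(k)=0$ and hence $\B_{S\backslash T}^{S\cup T}(k,\F_p)=0$.

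The step I expect to be the main obstacle is the second one, the reduction of an arbitrary representative of a class in $V_{S\backslash T}^{S\cup T}(k)$ to a $T$-unit: this is where \eqref{item:B=0-empty-1} is essential, and where one must check carefully that the ideal $\prod_{\frakp\in S\backslash T}\frakp^{\,v_\frakp(a)/p}$ really becomes principal modulo ideals supported on $T$. The bookkeeping is immediate in the cases $k=\Q$ and $k=\F_q(t)$ of the main applications, where $\Cl(k)$ already vanishes. Condition \eqref{item:B=0-empty-2} then only handles the unit part, in the same spirit in which the contribution of the global units is accounted for in Gauss's genus theory.
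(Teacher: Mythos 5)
Your reduction is essentially the same argument as the paper's, just unwound: the paper's map $f\colon V_{S\backslash T}^{S\cup T}(k)\to\Cl_{S\cup T}(k)[p]$ encodes your first normalization, and the assertion that $\ker f$ coincides with the kernel of \eqref{eq:map-O} encodes your second. You have correctly singled out the crux, namely passing from a representative with $(a)$ supported on $S\cup T$ to an honest $T$-unit, but the justification offered for it does not work. After the first step one has $(a)=\frakd^{\,p}\fraka_T$ with $\frakd$ supported on $S\backslash T$ and $\fraka_T$ supported on $T$; to absorb $\frakd^{\,p}$ by multiplying $a$ by a $p$-th power one must make $\frakd$ principal modulo ideals supported on $T$, i.e.\ show $[\frakd]=0$ in $\Cl_T(k)$. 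Since $p[\frakd]=0$ there, this would follow from $\Cl_T(k)[p]=0$. Condition~\eqref{item:B=0-empty-1} only gives $\Cl_{S\cup T}(k)[p]=0$, and the class of $\frakd$ in $\Cl_{S\cup T}(k)$ vanishes for trivial reasons because $\frakd$ is supported on $S\cup T$, so a further appeal to \eqref{item:B=0-empty-1} yields nothing. The paper's own proof is equally brief about this step and appears to share the lacuna.

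In fact \eqref{item:B=0-empty-1} and \eqref{item:B=0-empty-2} as stated do not seem to suffice in general. Take $k=\Q(\sqrt{-23})$, so that $\Cl(k)\cong\Z/3\Z$ and $\calO_k^\times/(\calO_k^\times)^3$ is trivial, with $p=3$, $T=\O$, and $S=\{\frakq\}$ where $\frakq$ is a degree-one prime of $k$ lying over a rational prime $q\equiv 2\pmod 3$ whose class generates $\Cl(k)$; Chebotarev supplies infinitely many such $\frakq$. Then $\Cl_{S\cup T}(k)=\Cl_{\{\frakq\}}(k)=0$ and $U_\frakq/U_\frakq^3=0$, so \eqref{item:B=0-empty-1} holds and \eqref{item:B=0-empty-2} is vacuous; yet since $U_\frakq=U_\frakq^3$ the local condition at $\frakq$ in the definition of $V$ is automatic, giving $V_{\{\frakq\}}^{\{\frakq\}}(k)=V_{\O}^{\O}(k)\cong\Cl(k)[3]\neq 0$. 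Both your argument and the paper's become correct under the stronger hypothesis $\Cl_T(k)[p]=0$ in place of \eqref{item:B=0-empty-1}, and the paper's applications in Corollaries~\ref{cor:Q} and~\ref{cor:Fq(t)} are unaffected since there $\Cl_T(k)=0$ already, as you observe.
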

		
		\begin{proof}
			The group $\B_{S\backslash T}^{S \cup T}(k, \F_p)$ is the Pontryagin dual of $V_{S\backslash T}^{S\cup T}(k)= W/k^{\times p}$ with 
			\[
				 W:=\left\{ a\in k^{\times} \mid a \in k_{\frakp}^{\times p} \text{ for } \frakp \in S \backslash T \text{ and } a \in U_{\frakp}k_{\frakp}^{\times p} \text{ for } \frakp \not \in S \cup T \right\}.
			\]
			Consider the map
			\begin{eqnarray*}
				W &\longrightarrow& \Cl_{S\cup T}(k)[p] \\
				a &\longmapsto& [I]
			\end{eqnarray*}
			where $[I]$ is the class of an ideal $I$ of $\calO_k$ with $I^p\equiv(a)$ modulo the product of primes in $S \cup T$. This is a well-defined group homomorphism because $\Cl_{S\cup T}(k)$ is the quotient of $\Cl(k)$ modulo the classes of primes in $S \cup T$ and it induces a homomorphism $f: V_{S\backslash T}^{S\cup T}(k) \to \Cl_{S\cup T}(k)[p]$. One can see by definition of $W$ that the kernel of $f$ is exactly the kernel of the map \eqref{eq:map-O} defined in the lemma. Therefore, $V_{S\backslash T}^{S \cup T}(k)=0$ if the conditions \eqref{item:B=0-empty-1} and \eqref{item:B=0-empty-2} both hold, so the lemma follows.
		\end{proof}

\subsection{Embedding problems}

	Let $k$ be a global field.
	An \emph{embedding problem over $k$} is given by a surjection $G_k \twoheadrightarrow G$ and an exact sequence $1 \to A \to \widetilde{G} \to G \to 1$.
	\begin{equation}\label{eq:ep}
	\begin{tikzcd}
		& & &G_k \arrow[two heads, "\varphi"]{d} \arrow[dashed, "\psi", swap]{ld}& \\
		1 \arrow{r} & A \arrow{r} & \widetilde{G} \arrow["\rho"]{r} & G \arrow{r} & 1
	\end{tikzcd}
	\end{equation}
	The embedding problem is \emph{solvable (resp. properly solvable)} if there exists a map (resp. surjection) $\psi$ as describe by the dashed arrow, and the map $\psi$ is called a \emph{solution} to the embedding problem.
	At each prime $\frakp$ of $k$, by taking the restriction $\varphi_{\frakp}:= \varphi|_{\calG_{\frakp}}$, \eqref{eq:ep} induces a local embedding problem:
	\begin{equation}\label{eq:ep-local}
	\begin{tikzcd}
		& & &\calG_{\frakp} \arrow["\varphi_{\frakp}"]{d}  \arrow[dashed, "\psi_{\frakp}", swap]{ld}& \\
		1 \arrow{r} & A \arrow{r} & \widetilde{G} \arrow["\rho"]{r} & G \arrow{r} & 1.
	\end{tikzcd}
	\end{equation}
	We let $\cHom_G(G_k, \widetilde{G})$ and $\cHom_G(\calG_{\frakp}, \widetilde{G})$ denote the set of all solutions to \eqref{eq:ep} and \eqref{eq:ep-local} respectively.
	When $A$ is abelian, the conjugation action of $\widetilde{G}$ on $A$ induces a $G$-action on $A$, and then $A$ can be viewed as a $G_k$-module via $\varphi$. 
	
	The following is the local-global principle for embedding problems that we will use later to study the embedding problems with restricted ramification. Such local-global principles are known in many similar situations, but we cannot find in literature the following version that we need, so we give a complete proof which mostly just repeats the argument in \cite[Lem.(9.5.6)]{NSW}.
	
	\begin{lemma}\label{lem:embedding}
		If the kernel $A$ is a finite simple $G_k$-module such that $\Gal(k(A)/k)$ is solvable and $p\neq \Char(k)$, then 
		\[
			\cHom_{G}(G_k, \widetilde{G}) \neq \O \qquad \Longleftrightarrow \qquad \prod_{\frakp}\cHom_{G}(\calG_{\frakp}, \widetilde{G}) \neq \O,
		\]
		where on the right-hand side the product is over all the primes of $k$ (archimedean and non-archimedean). 
	\end{lemma}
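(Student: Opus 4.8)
The plan is to reduce both the global and the local solvability of the embedding problem \eqref{eq:ep} to the vanishing of a single cohomology class in $H^2$, and then to deduce the equivalence from the degree-two Hasse principle. Since $A$ is a finite simple $G_k$-module it is in particular abelian, so the conjugation action of $\widetilde{G}$ on $A$ factors through $\rho$ to give a $G$-action, and the extension $1 \to A \to \widetilde{G} \to G \to 1$ determines a class $\epsilon \in H^2(G, A)$. Via $\varphi$ this pulls back to $\varphi^*\epsilon \in H^2(G_k, A)$, and via $\varphi_{\frakp} := \varphi|_{\calG_{\frakp}}$ to $\varphi_{\frakp}^*\epsilon \in H^2(\calG_{\frakp}, A)$; by functoriality of the pullback, $\varphi_{\frakp}^*\epsilon = \res_{\frakp}(\varphi^*\epsilon)$ for every prime $\frakp$.

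First I would record the standard obstruction criterion for lifting a homomorphism along a group extension with abelian kernel: a continuous homomorphism $\psi\colon G_k \to \widetilde{G}$ with $\rho\circ\psi = \varphi$ exists if and only if the pulled-back extension $1 \to A \to \widetilde{G}\times_{G} G_k \to G_k \to 1$ splits, which happens if and only if $\varphi^*\epsilon = 0$ in $H^2(G_k, A)$. Applied at each prime, the same criterion gives $\cHom_G(\calG_{\frakp}, \widetilde{G}) \neq \O$ if and only if $\res_{\frakp}(\varphi^*\epsilon) = 0$ in $H^2(\calG_{\frakp}, A)$, including at the archimedean primes (where $\calG_{\frakp}$ is trivial or of order $2$, so there \eqref{eq:ep-local} is solvable automatically). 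This translation is the step where the argument essentially reproduces that of \cite[Lem.(9.5.6)]{NSW}.

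Granting the translation, the implication ``$\Rightarrow$'' is immediate, since a global solution $\psi$ restricts to a solution $\psi|_{\calG_{\frakp}}$ of each local embedding problem \eqref{eq:ep-local}. For ``$\Leftarrow$'', assume $\cHom_G(\calG_{\frakp}, \widetilde{G}) \neq \O$ at every prime $\frakp$. Then $\res_{\frakp}(\varphi^*\epsilon) = 0$ for all $\frakp$, so $\varphi^*\epsilon$ lies in $\Sha^2(G_k, A) := \ker\bigl(H^2(G_k, A) \to \prod_{\frakp} H^2(\calG_{\frakp}, A)\bigr)$. Since $A$ is simple, $\Gal(k(A)/k)$ is solvable, and $p\neq\Char(k)$, the Hasse principle in degree two gives $\Sha^2(G_k, A) = 0$ --- this is exactly the input $\Sha^2(G_k, A)=0$ supplied by \cite[Thm.(9.1.15)(i) and Cor.(9.1.16)(i)]{NSW} that was already invoked in the proof of Lemma~\ref{lem:B-es}. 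Hence $\varphi^*\epsilon = 0$, and the embedding problem \eqref{eq:ep} is solvable.

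I expect the only real subtlety --- and the reason it is worth spelling out a proof rather than merely citing \cite{NSW} --- to be the careful matching of definitions in the profinite/continuous setting: that solvability of the embedding problem over $G_k$ (resp.\ over $\calG_{\frakp}$) corresponds precisely to the vanishing of the continuous $2$-cocycle class $\varphi^*\epsilon$ (resp.\ its restriction), with the $G_k$-module structure on $A$ being throughout the one induced by $\varphi$; and the verification that solvability of $\Gal(k(A)/k)$ is exactly the hypothesis under which the degree-two Hasse principle applies to the simple module $A$ (and to its dual $A'$, which enters through Poitou--Tate). Beyond this bookkeeping there is no analytic or arithmetic obstacle.
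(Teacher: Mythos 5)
Your proof is correct and follows essentially the same route as the paper's: both reduce solvability of the embedding problem (globally and locally) to the vanishing of the class $\varphi^*\epsilon \in H^2(G_k,A)$, resp.\ its local restrictions, and then invoke the degree-two Hasse principle $\Sha^2(G_k,A)=0$ from \cite[Cor.(9.1.16)(i)]{NSW} (which needs the hypotheses that $A$ is simple, $\Gal(k(A)/k)$ is solvable, and $p\neq\Char(k)$). The only cosmetic difference is that the paper phrases the Hasse-principle input as injectivity of the map $H^2(G_k,A)\to\prod_\frakp H^2(\calG_\frakp,A)$ rather than as $\Sha^2=0$.
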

	
	\begin{proof}
		The maps $\varphi$ and $\varphi_\frakp$ induce inflation maps
		\[
			\varphi^*: H^2(G, A) \to H^2(G_k, A) \quad \text{and} \quad \varphi^*_\frakp: H^2(G, A) \to H^2(\calG_{\frakp}, A).
		\]
		Let $c$ denote the class of $H^2(G,A)$ corresponding to the central extension $\rho:\widetilde{G} \to G$. By \cite[Lem.(3.5.9)]{NSW}, the embedding problem \eqref{eq:ep} (resp. \eqref{eq:ep-local}) is solvable if and only if the class $\varphi^*(c) =0$ (resp. $\varphi^*_\frakp(c)=0$). Because $\varphi_{\frakp}$ is obtained from $\varphi$ by the natural embedding $\calG_{\frakp}\hookrightarrow G_k$, the canonical homomorphism 
		\begin{equation}\label{eq:res}
			\phi: H^2(G_k, A) \longrightarrow \prod_{\frakp} H^2(\calG_{\frakp}, A)
		\end{equation}
		is compactible with $\varphi^*$ and $\varphi^*_\frakp$, i.e., $\phi \circ \varphi^*= \prod_{\frakp} \varphi^*_{\frakp}$. 
		So it suffices to show that $\phi$ is injective.
		When $p\neq \Char(k)$, $\phi$ is injective by \cite[Cor.(9.1.16)(i)]{NSW}. 
	\end{proof}

	\begin{lemma}\label{lem:embedding-ST}
		Let $S$ and $T$ be two sets of primes of $k$. Assume that the map $\varphi$ in the embedding problem \eqref{eq:ep} factors through $G_S^T(k)$, $A$ is a finite simple $\F_p[G_S^T(k)]$-module such that $\Gal(k(A)/k)$ is solvable, $p\neq \Char(k)$, and $\B_{S\backslash T}^{S\cup T}(k,A)=0$. We let $\cHom_G(G_S^T(k),\widetilde{G})$ denote the set of solutions $\psi$ to \eqref{eq:ep} that factors through $G_S^T(k)$. Then 
		\[
			\cHom_G(G_S^T(k), \widetilde{G}) \neq \O \quad \Longleftrightarrow \quad \prod_{\frakp \in S\backslash T} \cHom_G(\calG_{\frakp}, \widetilde{G}) \neq \O.
		\]
	\end{lemma}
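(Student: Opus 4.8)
The forward implication is trivial: restricting any solution \(\psi\in\cHom_G(G_S^T(k),\widetilde G)\) to a decomposition subgroup \(\calG_\frakp\) produces an element of \(\cHom_G(\calG_\frakp,\widetilde G)\), in particular for every \(\frakp\in S\backslash T\). The content is the converse, and I plan to prove it in three stages: (i) upgrade the assumed local solvability at \(S\backslash T\) to local solvability at \emph{every} prime; (ii) invoke Lemma~\ref{lem:embedding} to obtain a global solution \(\psi\colon G_k\to\widetilde G\), which a priori need not factor through \(G_S^T(k)\); (iii) use \(\B_{S\backslash T}^{S\cup T}(k,A)=0\) and Lemma~\ref{lem:B-es} to twist \(\psi\) by a global cocycle into a solution with the correct ramification. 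Here I use that, since \(\rho\circ\psi=\varphi\) and \(\varphi\) already factors through \(G_S^T(k)\), a solution \(\psi\) factors through \(G_S^T(k)\) if and only if \(\psi|_{\calT_\frakp}=1\) for all \(\frakp\notin S\) and \(\psi|_{\calG_\frakp}=1\) for all \(\frakp\in T\).

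For stage (i): because \(\varphi\) factors through \(G_S^T(k)\), for \(\frakp\in T\) the local map \(\varphi_\frakp\colon\calG_\frakp\to G\) is trivial (complete splitting at \(T\)), so the trivial homomorphism \(\calG_\frakp\to\widetilde G\) is a local solution; for \(\frakp\notin S\) the map \(\varphi_\frakp\) is unramified, and an unramified local solution exists (when \(\frakp\) is nonarchimedean this is because \(\varphi_\frakp\) factors through \(\calG_\frakp/\calT_\frakp\cong\hat\Z\), which has cohomological dimension \(1\), so the obstruction in \(H^2\) vanishes and a lift exists, pulling back to \(\psi_\frakp\) with \(\psi_\frakp|_{\calT_\frakp}=1\); the archimedean case is immediate since then \(\varphi_\frakp\) is trivial); and for \(\frakp\in S\backslash T\) a local solution exists by hypothesis. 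Thus \(\prod_{\frakp}\cHom_G(\calG_\frakp,\widetilde G)\neq\O\), and Lemma~\ref{lem:embedding} (whose hypotheses are exactly those imposed here) yields a global \(\psi\in\cHom_G(G_k,\widetilde G)\).

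For stage (iii): \(\psi\) factors through a finite quotient of \(G_k\), hence is unramified outside a finite set of primes. Since the \(G_k\)-action on \(A\) is via \(\varphi\), for \(\frakp\in T\) we have \(\psi(\calG_\frakp)\subseteq\ker\rho=A\), so \(\psi|_{\calG_\frakp}\) is a homomorphism \(\calG_\frakp\to A\); and for \(\frakp\notin S\) the homomorphism \(\psi|_{\calT_\frakp}\colon\calT_\frakp\to A\) represents a class in \(H^1(\calT_\frakp,A)\) that automatically lies in \(H^1(\calT_\frakp,A)^{\calG_\frakp}\), because it extends to the map \(\psi|_{\calG_\frakp}\) of \(\calG_\frakp\) into \(\widetilde G\). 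These classes therefore define an element
\[
	\delta\in\bigoplus_{\frakp\notin S\cup T}H^1(\calT_\frakp,A)^{\calG_\frakp}\ \oplus\ \prod_{\frakp\in T}{}' H^1(\calG_\frakp,A),
\]
the direct sum and restricted product being legitimate since \(\psi\) is unramified almost everywhere. Now \(\B_{S\backslash T}^{S\cup T}(k,A)=0\) together with Lemma~\ref{lem:B-es} says the restriction map \(H^1(G_k,A)\to\bigoplus_{\frakp\notin S\cup T}H^1(\calT_\frakp,A)^{\calG_\frakp}\oplus\prod'_{\frakp\in T}H^1(\calG_\frakp,A)\) is surjective, so I pick \([d]\in H^1(G_k,A)\) mapping to \(-\delta\), choose a representing cocycle \(d\in Z^1(G_k,A)\), and set \(\psi'(g):=d(g)\psi(g)\). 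A standard computation shows \(\psi'\) is again a solution of \eqref{eq:ep}. At \(\frakp\in T\) (resp.\ \(\frakp\notin S\)) the \(G_k\)-action on \(A\) is trivial on \(\calG_\frakp\) (resp.\ \(\calT_\frakp\)), so over these groups cohomologous cocycles coincide; hence the condition \(\operatorname{res}[d]=-\delta\) forces \(d|_{\calG_\frakp}\) to be the homomorphism inverse to \(\psi|_{\calG_\frakp}\) for \(\frakp\in T\), and \(d|_{\calT_\frakp}\) inverse to \(\psi|_{\calT_\frakp}\) for \(\frakp\notin S\). Therefore \(\psi'|_{\calG_\frakp}=1\) for \(\frakp\in T\) and \(\psi'|_{\calT_\frakp}=1\) for \(\frakp\notin S\), so \(\psi'\) is unramified outside \(S\) and splits completely at \(T\), i.e.\ \(\psi'\in\cHom_G(G_S^T(k),\widetilde G)\).

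The main obstacle is stage (iii): one must arrange the "defect" of the auxiliary global solution \(\psi\) to lie \emph{exactly} in the group whose obstruction is governed by \(\B_{S\backslash T}^{S\cup T}(k,A)\) in Lemma~\ref{lem:B-es}, and then correct \(\psi\) by an \emph{honest} cocycle — not merely up to coboundaries — at the primes of \(T\) and outside \(S\). Both are possible precisely because the \(G_k\)-action on \(A\) factors through \(\varphi\), which kills \(\calG_\frakp\) for \(\frakp\in T\) and \(\calT_\frakp\) for \(\frakp\notin S\), so that the relevant local \(H^1\)'s are genuine \(\Hom\)-groups with no coboundaries; this is exactly what ties the conclusion to the field \(G_S^T(k)\). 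The remaining verifications (that \(\psi'\) is a homomorphism solving \eqref{eq:ep}, that \(\delta\) lands in the stated direct-sum-and-restricted-product, that \([\psi|_{\calT_\frakp}]\) is \(\calG_\frakp\)-invariant) are routine and essentially repeat well-known arguments.
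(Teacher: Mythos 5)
Your proposal is correct and follows essentially the same three-step argument as the paper: show local solvability extends automatically to all primes outside $S\backslash T$, apply Lemma~\ref{lem:embedding} to obtain a global solution $\psi$ on $G_k$, and then twist $\psi$ by a global $1$-cocycle whose existence is guaranteed by the surjectivity in Lemma~\ref{lem:B-es} under the hypothesis $\B_{S\backslash T}^{S\cup T}(k,A)=0$. The one additional observation you make explicit — that at $\frakp\in T$ and at $\calT_\frakp$ for $\frakp\notin S\cup T$ the $G_k$-action on $A$ is trivial, so local $H^1$'s are genuine $\Hom$-groups and the twist kills the defect on the nose rather than merely up to coboundary — is a correct and useful clarification implicit in the paper's construction.
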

	
	\begin{proof}
		Because $\varphi$ factors through $G_S^T(k)$, $\varphi$ is unramified at each $\frakp \not \in S$ and is split completely at $\frakp \in T$. Since the maximal unramified extension of $k_{\frakp}$ is of Galois group $\hat{\Z}$ over $k_{\frakp}$,  for each $\frakp \not \in S \backslash T$, the map $\varphi_{\frakp}: \calG_{\frakp} \to G$ can be lifted to an unramified homomorphism $\calG_{\frakp} \to \widetilde{G}$, so $\cHom_G(\calG_{\frakp}, \widetilde{G})\neq \O$. So by Lemma~\ref{lem:embedding}, the $\cHom_G(G_k, \widetilde{G})\neq \O$ if and only if $\prod_{\frakp \in S \backslash T} \cHom_G(\calG_{\frakp}, \widetilde{G})\neq \O$,
		then it suffices to show that $\cHom_G(G_k, \widetilde{G})\neq \O$ if and only if $\cHom_G(G_S^T(k), \widetilde{G})\neq \O$. The ``if'' direction is obvious as $G_S^T(k)$ is a quotient of $G_k$.
		For the ``only if'' direction, assume $\psi \in \cHom_G(G_k, \widetilde{G})$, and we will construct a 1-cocycle $f: G_k \to A$ such that the group homomorphism
		\begin{eqnarray}
			^f\psi: G_k & \longrightarrow& \widetilde{G} \label{eq:xpsi}\\
			\sigma &\longmapsto& f(\sigma)\psi(\sigma) \nonumber
		\end{eqnarray}
		factors through $G_S^T(k)$.
		
		Let $\psi_{\frakp}: \calG_\frakp \to \widetilde{G}$ denote the composition of $\calG_\frakp \hookrightarrow G_k$ and $\psi$. For $\frakp \in T$, since $\varphi_{\frakp}$ is the trivial map, we have that $A$ is a trivial $\calG_{\frakp}$-modue and the image of $\psi_{\frakp}$ is contained in $A$, so $\psi_{\frakp}$ is a group homomorphism from $\calG_\frakp$ to $A$; in this case, we define $f_{\frakp}:=(\psi_{\frakp})^{-1} \in H^1(\calG_{\frakp}, A)$. Next, suppose $\frakp \not \in S\cup T$, and we define $g_{\frakp}$ to be the homomorphism $\psi_{\frakp}|_{\calT_{\frakp}}: \calT_{\frakp} \to \widetilde{G}$. Because $\varphi_{\frakp}$ is unramified, we see that $g_{\frakp}$ is a homomorphism from $\calT_{\frakp}$ to $A$, and moreover, it is $\calG_\frakp$-equivariant (the $\calG_{\frakp}$-action on $\calT_\frakp$ is conjugation). Therefore, $g_{\frakp}$ is an element in $\Hom_{\calG_{\frakp}}(\calT_{\frakp}, A)=H^1(\calT_{\frakp},A)^{\calG_{\frakp}}$. We define $f_{\frakp}:= g_{\frakp}^{-1}$ for each $p \not\in S\cup T$. Since all the $f_{\frakp}$ are obtained by restriction of the global homomorphism $\psi$, only finitely many of them are nonzero when restricted to inertia, and hence we obtain 
		\[
			\prod_{\frakp \not \in S\backslash T} f_{\frakp} \in \bigoplus_{\frakp \not \in S \cup T} H^1(\calT_{\frakp}, A)^{\calG_{\frakp}} \times \prod'_{\frakp \in T} H^1(\calG_{\frakp}, A).
		\] 
		By the assumption $\B_{S\backslash T}^{S \cup T}(k, A)=0$ and Lemma~\ref{lem:B-es}, there exists $f\in H^1(G_k, A)$ such that at $\frakp \in T$ the local restriction of $f$ is $f_{\frakp}$ and at $\frakp \not \in S \cup T$ the restriction of $f$ induced by $\calT_{\frakp}\hookrightarrow \calG_{\frakp} \hookrightarrow G_k$ is $f_\frakp$. Then one can check by our construction that the map $^f\psi$ defined in \eqref{eq:xpsi} is a group homomorphism from $G_k$ to $\widetilde{G}$ such that it is a lift of $\varphi$, unramified at $\frakp \not\in S$ and split completely at $\frakp \in T$. Therefore, we obtain an element $^f\psi$ of $\cHom_G(G_S^T(k), \widetilde{G})$, so we proved that if $\cHom_G(G_k,\widetilde{G})\neq \O$ then $\cHom_G(G_S^T(k), \widetilde{G})\neq \O$. 
	\end{proof}
	
	By Lemma~\ref{lem:B=0-empty}, for any odd prime $p$, we have $\B_{\O}^{\O}(\Q, \F_p)=0$ and therefore $\B_S^S(\Q, \F_p)=0$ for any $S$. So Lemma~\ref{lem:embedding-ST} says that the solvability of an embedding problem from $G_S(\Q)$ is equivalent to the solvability of local embedding problems at primes in $S$. However, when considering a nontrivial $G_{\Q}$-module $A$, we usually should not expect $\B_{\O}^{\O}(\Q, A)$ to be zero.
	
	\begin{example}
		The quadratic field $K=\Q(\sqrt{-23})$ has class number 3. Let $L$ denote the Hilbert class field of $K$. Then $\Gal(L/\Q)\simeq S_3$ and we have the following embedding problem
		\[\begin{tikzcd}
			& & & G_{\{23, \infty\}}(\Q) \arrow[two heads]{d} \arrow[dashed]{dl} & \\
			1 \arrow{r} & A \arrow{r} & C_9 \rtimes C_2 \arrow{r} & \Gal(L/\Q) \arrow{r} & 1,
		\end{tikzcd}\]
		where the action of $C_2$ on $C_9$ is taking inversion and $A$ is the nontrivial $C_2$-module that is isomorphic to $C_3$. Because an inertia subgroup of $L/\Q$ at 23 is isomorphic to $C_2$, by the structure of local Galois group of tamely ramified extension, a decomposition subgroup at $23$ is also isomorphic to $C_2$, so the induced local embedding problem at $23$ is solvable, and each solution maps $\calG_{23}$ to a subgroup of $C_9\rtimes C_2$ which is isomorphic to $C_2$. Similarly, the local embedding problem at $\infty$ is also solvable. So, if the global embedding problem is solvable, then the solution defines a $C_9$-extension of $K$ that is unramfied, which contradicts to $|\Cl(K)|=3$.
		So Lemma~\ref{lem:embedding-ST} implies that $\B_{\{23, \infty\}}^{\{23, \infty\}}(\Q, A)\neq 0$. By Lemma~\ref{lem:basic-B}, we obtain $\B_{\O}^{\O}(\Q, A) \neq 0$.
	\end{example}

\section{Pro-$p$ completion of $G_S^T(k)$ and its presentation}\label{S:pres}

	In this section, we formulate and prove the main presentation results: Theorems~\ref{thm:main-1} (Theorem~\ref{thm:pres-ST}) and \ref{thm:main-2} (Theorem~\ref{thm:pres}), and we will apply Theorem~\ref{thm:pres} to the cases of $k=\Q$ and $k=\F_q(t)$ as desribed in Corollaries~\ref{cor:Q} and \ref{cor:Fq(t)}. Let $k$ denote a global field. We define $\delta$ and $\delta_{\frakp}$ for $\frakp$ a prime of $k$ as follows
	\[
		\delta=\begin{cases}
			1 & \text{if $\mu_p \subset k$} \\
			0 & \text{otherwise}
		\end{cases} \quad \text{and} \quad 
		\delta_{\frakp}=\begin{cases}
			1 & \text{if $\mu_p \subset k_{\frakp}$} \\
			0 & \text{otherwise}
		\end{cases}.
	\]
	\begin{theorem}\label{thm:pres-ST}
		Let $k$ be a global field and $p$ a prime such that $p\neq \Char(k)$. Let $S$ and $T$ be two finite sets of primes of $k$. Assume $\B_{S\backslash T}^{S\cup T}(k,\F_p)=0$. 
		\begin{enumerate}
			\item \label{item:pres-ST-1}
			Let 
			\[
				d:=1+\sum_{\frakp \in S \backslash (T \cup S_{\C})} \delta_{\frakp} - \delta + \sum_{\frakp \in (S \backslash T) \cap S_p} [k_{\frakp}: \Q_p] - \# T\cup S_{\infty}.
			\]
			Then, denoting the free pro-$p$ group on $d$ generators by $F_d$, there is a surjection of pro-$p$ groups
			\[
				\varphi: F_d \longrightarrow G_S^T(k)(p).
			\]
			\item \label{item:pres-ST-2}
			For each $\frakp \in S \backslash (T \cup S_p \cup S_{\infty})$, we pick elements $x_{\frakp} \in \varphi^{-1}(t_{\frakp}(k_S^T/k))$ and $y_{\frakp} \in \varphi^{-1}(s_{\frakp}(k_S^T/k))$.
			When $p=2$, we pick $x_{\frakp}\in \varphi^{-1}(t_{\frakp}(k_S^T/k))$ for each $\frakp \in (S\cap S_{\R}) \backslash T$.
			If $(S\cap S_p)\backslash T=\O$ when $\mu_p \subset k$, then $\ker \varphi$ is the normal subgroup of $F_d$ generated by elements of the following types:
			\begin{eqnarray}
				(x_{\frakp})^{\Nm(\frakp)}y_{\frakp} x_{\frakp}^{-1} y_{\frakp}^{-1}&\quad& \text{for }\frakp \in S\backslash (T \cup S_{\infty}) \text{ such that } \delta_\frakp=1, \label{eq:r-type-1}\\
				x_{\frakp}^2 &\quad& \text{for } \frakp \in (S\cap S_{\R})\backslash T,\text{ if $p=2$}. \label{eq:r-type-2}
			\end{eqnarray}
			\item \label{item:pres-ST-3}
				Assume there exists $S' \subset S$ such that $G_{S'}^T(k)(p)=1$. Assume $(S \cap S_p)\backslash (T \cup S')=\O$ when $\mu_p \subset k$.
				Then $F_d$ is generated by the following elements:
			\begin{eqnarray}
				x_{\frakp} &\quad & \text{for } \frakp\in S\backslash(T \cup S_{\C} \cup S') \text{ such that } \delta_{\frakp}=1,  \label{eq:g-type-1}\\
				x_{\frakp;1}, \,x_{\frakp;2}, \,\ldots, \, x_{\frakp;[k_{\frakp}: \Q_p]} &\quad & \text{for } \frakp \in (S\cap S_p) \backslash (T \cup S'), \label{eq:g-type-2}
			\end{eqnarray}
			where in the last type, $x_{\frakp;i}$ is an element in $\varphi^{-1}(t_{\frakp;i}(k_S^T/k))$ for $i=1, \ldots, [k_{\frakp}: \Q_p]$. Moreover, if $\B_{S'\backslash T}^{S'\cup T}(k,\F_p)=0$, then $F_d$ is freely generated by these elements.
		\end{enumerate}
	\end{theorem}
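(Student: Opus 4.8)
The plan is to deduce the three parts of Theorem~\ref{thm:pres-ST} from the embedding-problem machinery of \S\ref{S:embedding}, in particular Lemma~\ref{lem:embedding-ST} with $A = \F_p$, together with the standard cohomological description of generator rank and relator rank of a pro-$p$ group.

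For part \eqref{item:pres-ST-1}, I would first recall that $d(G_S^T(k)(p)) = \dim_{\F_p} H^1(G_S^T(k), \F_p)$ and then compute this dimension. Using the exact sequence of Lemma~\ref{lem:B-es} with $A = \F_p$ (or directly the duality $\B_{S\backslash T}^{S\cup T}(k,\F_p) = V_{S\backslash T}^{S\cup T}(k)^\vee$ and the Poitou--Tate sequence), and the hypothesis $\B_{S\backslash T}^{S\cup T}(k,\F_p) = 0$, one gets a formula for $\dim_{\F_p}H^1$ in terms of the local contributions $\dim_{\F_p}H^1(\calG_\frakp,\F_p)$, $\dim_{\F_p}H^1_{nr}(\calG_\frakp,\F_p)$, and the global unit/class-group correction; this is essentially the Shafarevich--Koch computation (see \cite[\S11]{Koch} or \cite[Thm.(10.7.12)]{NSW}), and specializing the local dimensions to the cases listed in \S\ref{S:notation} yields exactly the stated $d$. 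Since $d$ is the minimal number of generators, any choice of $d$-generator lift gives the surjection $\varphi: F_d \twoheadrightarrow G_S^T(k)(p)$.

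For part \eqref{item:pres-ST-2}, let $N$ be the closed normal subgroup of $F_d$ generated by the elements \eqref{eq:r-type-1} and \eqref{eq:r-type-2}, and set $\widetilde{G} := F_d/N$; by construction these elements do lie in $\ker\varphi$ (they are the images of local relators), so $\varphi$ factors as $F_d \twoheadrightarrow \widetilde{G} \overset{\pi}{\twoheadrightarrow} G_S^T(k)(p)$. The goal is to show $\pi$ is an isomorphism; equivalently that the (finite-$p$-group) quotients $\widetilde{G}/\Phi^n$ all inject. I would do this by an embedding-problem argument: suppose $\ker\pi \neq 1$; pick a maximal proper quotient through which $\pi$ does not factor, producing a central extension $1 \to \F_p \to E \to G \to 1$ with $G$ a finite quotient of $G_S^T(k)(p)$ for which the induced embedding problem for $G_S^T(k)$ is \emph{unsolvable}, yet (by the defining property of $\widetilde{G}$ — the relations in $\widetilde{G}$ are precisely the local relators, and a solution lifting exists locally at each $\frakp \in S\backslash T$ because the local Galois group $\calG_\frakp(p)$ has the presentation recorded in \S\ref{S:notation} and the relator is killed in $\widetilde{G}$) the embedding problem is locally solvable at every prime of $S\backslash T$. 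This contradicts Lemma~\ref{lem:embedding-ST}. The minor hypothesis "$(S\cap S_p)\backslash T = \O$ when $\mu_p \subset k$" is exactly what is needed so that there are no deep-$p$ Demu\v{s}kin relators unaccounted for; with it, the relator rank matches and the count is consistent.

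For part \eqref{item:pres-ST-3}, the vanishing $G_{S'}^T(k)(p) = 1$ forces, via the analogue of the rank formula for $S'$, certain local relators to already generate everything: I would argue that the listed elements $x_\frakp$ and $x_{\frakp;i}$ for $\frakp \in S\backslash(T\cup S_\C\cup S')$ map onto a generating set of $G_S^T(k)(p)/\Phi$ — this is because killing the primes outside $S'$ contributes nothing new to $H^1$ once $G_{S'}^T(k)(p)=1$, so every class in $H^1(G_S^T(k),\F_p)$ is detected by inertia at primes in $S\backslash(T\cup S')$ — hence by Burnside's basis theorem they generate $F_d$. Finally, the count of these elements is $\sum_{\frakp\in S\backslash(T\cup S_\C\cup S'),\ \delta_\frakp=1} 1 + \sum_{\frakp\in(S\cap S_p)\backslash(T\cup S')}[k_\frakp:\Q_p]$; under the extra hypothesis $\B_{S'\backslash T}^{S'\cup T}(k,\F_p)=0$ one shows this number equals $d$ (apply the part \eqref{item:pres-ST-1} rank formula to both $S$ and $S'$ and subtract), so a surjection from a free pro-$p$ group of the same rank $d$ onto $F_d$ must be an isomorphism, i.e.\ the generating set is free.

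The main obstacle I anticipate is the bookkeeping in the rank computation and the careful matching of "local relators at $\frakp$" with the fixed presentations of $\calG_\frakp(p)$ — in particular handling the $\frakp \mid p$ primes (the free case with the subgroup $\calT^\circ_\frakp$) and the archimedean primes uniformly, and making sure the Demu\v{s}kin case is genuinely excluded by the stated hypotheses. The embedding-problem step itself is, given Lemma~\ref{lem:embedding-ST}, fairly formal; the delicate point is verifying \emph{local solvability at every $\frakp \in S\backslash T$} of the obstruction embedding problem, which requires knowing that every relation we have imposed on $F_d$ is, prime by prime, a consequence of the known local presentation — hence nothing local stands in the way.
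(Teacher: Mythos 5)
Your overall plan follows the same route as the paper: part~\eqref{item:pres-ST-1} from the Koch--Shafarevich rank computation in \cite[Thm.(10.7.12)]{NSW}, part~\eqref{item:pres-ST-2} via a local-to-global embedding-problem argument using Lemma~\ref{lem:embedding-ST}, and part~\eqref{item:pres-ST-3} via Burnside's basis theorem plus a rank comparison between $S$ and $S'$. Parts \eqref{item:pres-ST-1} and \eqref{item:pres-ST-3} are essentially fine as you sketch them.

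However, part~\eqref{item:pres-ST-2} contains a genuine gap in the logic, and the direction of the argument is backwards. You assert that your obstruction central extension $1 \to \F_p \to E \to G \to 1$ gives an embedding problem that is \emph{unsolvable}, and then contrast this with local solvability to contradict Lemma~\ref{lem:embedding-ST}. But nothing in your construction justifies the unsolvability; in fact the embedding problem \emph{is} solvable, precisely \emph{because} it is locally solvable and Lemma~\ref{lem:embedding-ST} applies. The contradiction has to come from a different place. What the paper does: choose a normal subgroup $M$ of $F_d$ with $N \subseteq M \subsetneq \ker\varphi$ and $\ker\varphi/M \cong \F_p$, producing the central extension $1 \to \F_p \to F_d/M \to G_S^T(k)(p) \to 1$ and the corresponding embedding problem (the vertical arrow being the quotient $G_S^T(k) \twoheadrightarrow G_S^T(k)(p)$). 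Local solvability at primes in $S\setminus T$ is checked exactly as you describe, and Lemma~\ref{lem:embedding-ST} then yields a \emph{global} solution $\psi: G_S^T(k) \to F_d/M$. Since $F_d/M$ is pro-$p$, $\psi$ factors through $G_S^T(k)(p)$, giving a section of $F_d/M \twoheadrightarrow G_S^T(k)(p)$; the extension is central with kernel $\F_p$, so a section forces $F_d/M \cong \F_p \times G_S^T(k)(p)$, whence $d(F_d/M) = d+1$. But $F_d/M$ is a quotient of $F_d$, so $d(F_d/M) \le d$ by Burnside. Contradiction, and hence $N = \ker\varphi$. So the Burnside/generator-rank comparison is the crux; it is not a corollary of unsolvability but the replacement for it.

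A secondary issue: your passage to a \emph{finite} quotient $G$ of $G_S^T(k)(p)$ is an unnecessary complication that also creates extra work. If $G$ is a proper finite quotient, you have no guarantee that $d(G) = d$, and then $d(\F_p \times G) = d(G)+1$ may still be $\le d$, so no contradiction arises. One would have to choose $G$ so that $\ker(G_S^T(k)(p) \to G) \subseteq \Phi(G_S^T(k)(p))$ to preserve the rank. The paper avoids all of this by working with the full pro-$p$ group $G_S^T(k)(p)$ directly, which Lemma~\ref{lem:embedding-ST} permits. I would recommend dropping the finite-quotient reduction and working with the infinite group throughout.
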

	
	\begin{remark}\label{rmk:pres-ST}
		In \eqref{item:pres-ST-2}, when $\mu_p \subset k$, we assume $(S\cap S_p)\backslash T =\O$ only because the local relators at primes above $p$ are not as simple as \eqref{eq:r-type-1} and \eqref{eq:r-type-2}. 
		Interested readers can read about the local relators at those primes from \cite[pages. 416-417]{NSW}.
		Without this assumption, it is still true that $\ker \varphi$ is the normal subgroup generated by all the local relators at primes in $S\backslash T$, and one can verify it by following proof. 
	\end{remark}
	
	\begin{proof}
		By \cite[Thm.(10.7.12)]{NSW}, $G_S^T(k)(p)$ is a pro-$p$ group of generated by $d$ element, so there is a surjection $\varphi$ as defined in \eqref{item:pres-ST-1}. 
		By abuse of notation, we denote $t_{\frakp}(k_S^T/k)$, $s_{\frakp}(k_S^T/k)$ and $t_{\frakp;\, i}(k_S^T/k)$ by $t_{\frakp}$, $s_{\frakp}$ and $t_{\frakp;\, i}$ respectively.
		Note that by the assumption that $(S\cap S_p)\backslash T= \O$ if $\mu_p\subset k$, we have that any $\frakp$ in \eqref{eq:r-type-1} is not in $S_p$. For any such $\frakp$, $k_S^T(p)/k$ is tamely ramified at $\frakp$, so by the relators of the local Galois group, we have that 
		$(t_{\frakp})^{\Nm(\frakp)} s_{\frakp} t_{\frakp}^{-1} s_{\frakp}^{-1}=1$ in $G_S^T(k)(p)$, and hence $(x_{\frakp})^{\Nm(\frakp)} y_{\frakp} x_{\frakp}^{-1} y_{\frakp}^{-1} \in \ker \varphi$. Similarly, when $p=2$, we have $x_{\frakp}^2 \in \ker \varphi$ for $\frakp \in (S \cap S_{\R}) \backslash T$.
		
		Let $N$ be the normal subgroup of $F_d$ generated by all elements described in \eqref{eq:r-type-1} and \eqref{eq:r-type-2}. By argument above, we see that $N\subseteq \ker \varphi$. Suppose $N\neq \ker \varphi$. 
		Then by the property of pro-$p$ groups \cite[Cor.(3.9.3)]{NSW}, there exists a normal subgroup $M$ of $F_d$ such that $N \subseteq M\subsetneq \ker \varphi$ and $\ker\varphi/M = \F_p$. Then after taking quotient modulo $M$, we obtain a group extension and hence an embedding problem
		\begin{equation}\label{eq:embedding-p}
		\begin{tikzcd}
			& & &G_S^T(k)(p) \arrow[dashed]{dl} \arrow[equal]{d}& \\
			1 \arrow{r} & \F_p \arrow{r} & F_d/M \arrow{r} & G_S^T(k)(p) \arrow{r}& 1.
		\end{tikzcd}
		\end{equation}
		By the structure of tamely ramified local Galois group, a prime $\frakp \not \in S_p \cup S_{\infty}$ can ramified in a pro-$p$ extension only if $\Nm(\frakp) \equiv 1$ mod $p$, i.e., only if $\delta_{\frakp}=1$. Recall we showed in the proof of Lemma~\ref{lem:embedding-ST} that the local embedding problem at $\frakp$ induced by \eqref{eq:embedding-p} is always solvable when $\frakp$ is unramified in $k_S^T(p)/k$. So by Lemma~\ref{lem:embedding-ST}, the embedding problem~\eqref{eq:embedding-p} is solvable if and only if its induced local embedding problem is solvable at each prime in the following cases:
		\begin{enumerate}[label=(\roman*)]
			\item\label{item:c-1} $\frakp \in S\backslash (T \cup S_p \cup S_\infty)$ with $\delta_{\frakp}=1$,
			\item\label{item:c-2} $\frakp \in (S\cap S_p)\backslash T$,
			\item\label{item:c-3} $\frakp \in (S\cap S_{\R})\backslash T$. 
		\end{enumerate}
		For each prime $\frakp$ in Case~\ref{item:c-1}, let $\overline{x}_{\frakp}$ and $\overline{y}_{\frakp}$ denote images in $F_d/M$ of $x_{\frakp}$ and $y_{\frakp}$ respectively, then because $(\overline{x}_{\frakp})^{\Nm(\frakp)} \overline{y}_{\frakp} \overline{x}_{\frakp}^{-1} \overline{y}_{\frakp}^{-1}=1$ in $F_d/M$, the subgroup of $F_d/M$ generated by $\overline{x}_{\frakp}$ and $\overline{y}_{\frakp}$ is a solution to the local embedding problem at $\frakp$. For a prime $\frakp$ in Case~\ref{item:c-2}, since $\mu_p\not\subset k$, the local absolution pro-$p$ Galois group $\calG_{\frakp}(p)$ is free, so the local embedding problem is always solvable. Finally, for a prime $\frakp$ in Case~\ref{item:c-3}, the image of $x_{\frakp}$ in $F_d/M$ has order dividing 2, so it gives a solution at $\frakp$. Thus, \eqref{eq:embedding-p} is solvable.
		
		Recall that $G_S^T(k)(p)$ is a pro-$p$ group with generator rank $d$, so by Burnside's basis theorem, the group extension (buttom row) in \eqref{eq:embedding-p} is non-split. In other words, we showed that, when we suppose $N \neq \ker \varphi$, we can embed $k_S^T(p)$ into a larger field in $k_S^T(p)$, which cannot happen. So the statement \eqref{item:pres-ST-2} follows.
		
		Next, we prove the statement \eqref{item:pres-ST-3}. Because of the assumption $G_{S'}^T(k)(p)=1$, we see that $G_S^T(k)(p)$ is generated by the inertia subgroups at all primes of $k_S^T(p)$ lying above $S\backslash (T \cup S')$. Let $\Phi$ denote the Frattini subgroup of $G_S^T(k)(p)$. Then, by Burnside's basis theorem, those inertia subgroups generate $G_S^T(k)(p)$ if and only if their images generate the Frattini quotient $G_S^T(k)(p)/\Phi$. Because $G_S^T(k)(p)/\Phi$ is abelian (the conjugation action is trivial), it follows that $G_S^T(k)(p)/\Phi$ is generated by \emph{the} inertia subgroups at all primes of $k$ in $S \backslash (T \cup S')$ (here \emph{the} inertia subgroup means the distinguished inertia subgroup defined in \S~\ref{S:notation}). For each $\frakp \not \in S_{\C} \cup S_{p}$ (including the case that $\frakp \in S_{\R}$ and $p=2$), the inertia subgroup of $\calG_{\frakp}(p)$ is cyclic if $\delta_{\frakp}=1$ and is trivial otherwise. For $\frakp \in (S\cap S_{p})\backslash (T\cup S')$, because of the assumption $\mu_p \not \in k$, $\calG_{\frakp}(p)^{\ab}$ is the free abelian group generated by $[k_\frakp:\Q_p]+1$ elements and the inertia subgroup inside $\calG_{\frakp}(p)^{\ab}$ is generated by $[k_{\frakp}: \Q_p]$ elements. Therefore, we showed that $G_S^T(k)(p)$ is generated by the elements $t_{\frakp}$ for $\frakp \in S \backslash (T \cup S_{\C} \cup S')$ with $\delta_{\frakp}=1$ and the elements $t_{\frakp; 1},\, \ldots\, , t_{\frakp;[k_{\frakp}: \Q_p]}$ for $\frakp \in (S \cap S_p) \backslash (T \cup S')$. So $F_d$ is generated by elements in \eqref{eq:r-type-1} and \eqref{eq:r-type-2}.
		When $G_{S'}^T(k)(p)=1$ and $\B_{S'\backslash T}^{S'\cup T}(k,\F_p)=0$, by comparing the generator ranks of $G_{S'}^T(k)(p)$ and $G_S^T(k)(p)$ using \cite[Thm.(10.7.12)]{NSW}, it follows that the generator rank of $G_S^T(k)(p)$ is 
		\begin{equation}\label{eq:g-rank}
			\sum_{\frakp \in S\backslash (T \cup S_{\C} \cup S')} \delta_{\frakp} + \sum_{\frakp \in (S \cap S_p) \backslash (T \cup S')} [k_{\frakp}: \Q_p].
		\end{equation}
		Note that the number of elements in \eqref{eq:g-type-1} and \eqref{eq:g-type-2} is exactly equal to \eqref{eq:g-rank}, so those elements form a minimal generator set of $F_d$.
	\end{proof}
	
	\begin{theorem}\label{thm:pres}
		Let $K/k$ be a Galois extension of global fields such that $\Gal(K/k)$ is a pro-$p$ group with $p\neq \Char(k)$. Assume the following conditions are satisfied.
		\begin{itemize}
			\item $S$ and $T$ are finite sets of primes of $k$ such that $K$ is contained in $k_S^T(p)$. 		
			\item $S' \subset S$ such that $G_{S'}^T(k)(p)=1$. 
		\end{itemize}
		Denote
		\[
			R:=\left\{\frakp \in S\backslash (T \cup S_{\C} \cup S') \mid \delta_{\frakp}=1\right\} \cup \left((S\cap S_p) \backslash (T \cup S')\right).
		\]
		\begin{enumerate}
			\item\label{item:pres-1} Let
			\[
				\phi: \left(\Conv_{\frakp \in R\backslash S_p} \calT_{\frakp}(K/k)\right) \, \ast\,  \left(\Conv_{\frakp \in R\cap S_p} \calT^{\circ}_{\frakp}(K/k)\right)\longrightarrow \Gal(K/k),
			\]
			denote the homomorphism mapping the factor $\calT_{\frakp}(K/k)$ (resp. $\calT^{\circ}_{\frakp}(K/k)$) in the free product identically to the inertia subgroup $\calT_{\frakp}(K/k)$ (resp. $\calT^{\circ}_{\frakp}(K/k)$) of $K/k$ at $\frakp$. Then $\phi$ is a surjection and it factors through $\Gal(K_{S'}^T(p)/k)$.
			\item\label{item:pres-2} Assume $(S \cap S_p) \backslash T=\O$ when $\mu_p \subset k$, and assume $\B_{S'\backslash T}^{S'\cup T}(k, \F_p)=0$. The Galois group $\Gal(K_{S'}^T(p)/k)$ is the quotient of $\phi$ modulo elements of the following types:
			\begin{eqnarray}						(x_{\frakp})^{\Nm(\frakp)} y_{\frakp} x_{\frakp}^{-1} y_{\frakp}^{-1} &\quad & \text{for $\frakp \in S\backslash (T \cup S_{\infty})$ such that $\delta_{\frakp}=1$} \label{eq:pres-r-type-1} \\
				x_{\frakp}^2& \quad & \text{for $\frakp \in (S\cap S_{\R}) \backslash T$, if $p=2$}, \label{eq:pres-r-type-2}
			\end{eqnarray}
			where $x_{\frakp}$ and $y_{\frakp}$ are preimages of $t_{\frakp}(K_{S'}^T(p)/k)$ and $s_{\frakp}(K_{S'}^T(p)/k)$ respectively.
		\end{enumerate}
	\end{theorem}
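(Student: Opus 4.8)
The plan is to deduce both parts from Theorem~\ref{thm:pres-ST} by transporting the presentation of $G_S^T(k)(p)$ along a natural map into the free product. Two preliminary remarks: $\B_{S'\backslash T}^{S'\cup T}(k,\F_p)=0$ forces $\B_{S\backslash T}^{S\cup T}(k,\F_p)=0$ by Lemma~\ref{lem:basic-B}\eqref{item:basic-B-1}; and since $K_{S'}^T(p)/k$ is pro-$p$, unramified outside $S$ and split completely at $T$, we have $K\subseteq K_{S'}^T(p)\subseteq k_S^T(p)$, yielding surjections $G_S^T(k)(p)\xrightarrow{\,q\,}\Gal(K_{S'}^T(p)/k)\xrightarrow{\,\pi\,}\Gal(K/k)$.

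For \eqref{item:pres-1} I would first establish a ``generation by inertia'' principle: for any pro-$p$ Galois subextension $L/k$ of $k_S^T(p)/k$, the group $\Gal(L/k)$ is topologically generated by the distinguished inertia subgroups $\calT_{\frakp}(L/k)$ ($\frakp\in R\setminus S_p$) and $\calT^{\circ}_{\frakp}(L/k)$ ($\frakp\in R\cap S_p$). Indeed, the normal closure of these subgroups contains the full inertia of $L/k$ at every prime over every $\frakp\in R$, so the corresponding fixed field is Galois and pro-$p$ over $k$, split at $T$, and unramified outside $S'$ --- here one uses that $L/k$ is already unramified at every $\frakp\in S\setminus(R\cup T\cup S')$, since $(S\cap S_p)\subseteq R\cup T\cup S'$ --- hence it equals $k$ because $G_{S'}^T(k)(p)=1$; Burnside's basis theorem then upgrades ``normally generated'' to ``generated'', as $\Gal(L/k)$ is pro-$p$. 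Taking $L=K$ gives the surjectivity of $\phi$. The factorization through $\Gal(K_{S'}^T(p)/k)$ rests on the fact that each $\frakp\in R$ satisfies $\frakp\notin S'$, so $K_{S'}^T(p)/K$ is unramified at $\frakP_K$; therefore the canonical surjection $\calT_{\frakp}(K_{S'}^T(p)/k)\to\calT_{\frakp}(K/k)$ (resp.\ $\calT^{\circ}_{\frakp}(K_{S'}^T(p)/k)\to\calT^{\circ}_{\frakp}(K/k)$) has trivial kernel and so is an isomorphism. Composing its inverse with the inclusion into $\Gal(K_{S'}^T(p)/k)$ and using the universal property of the free pro-$p$ product assembles a lift $\tilde\phi$ of $\phi$ through $\pi$, and applying the generation principle to $L=K_{S'}^T(p)$ shows $\tilde\phi$ is onto.

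For \eqref{item:pres-2}, let $\mathcal{F}$ be the domain of $\phi$ and let $M\subseteq\mathcal{F}$ be the closed normal subgroup generated by the elements \eqref{eq:pres-r-type-1}, \eqref{eq:pres-r-type-2}; the relations $s_{\frakp}t_{\frakp}s_{\frakp}^{-1}=t_{\frakp}^{\Nm(\frakp)}$ in $\calG_{\frakp}(p)$, together with the fact that a complex conjugation squares to $1$, give $M\subseteq\ker\tilde\phi$. Next I would invoke Theorem~\ref{thm:pres-ST}\eqref{item:pres-ST-2}--\eqref{item:pres-ST-3} (legitimate, since $\B_{S\backslash T}^{S\cup T}(k,\F_p)=0$, $G_{S'}^T(k)(p)=1$, $\B_{S'\backslash T}^{S'\cup T}(k,\F_p)=0$, and the assumed condition on $(S\cap S_p)$ and $\mu_p$ holds) to get a free pro-$p$ group $F_d$ on generators $\xi_{\frakp}$ ($\frakp\in S\setminus(T\cup S_{\C}\cup S')$, $\delta_{\frakp}=1$) and $\xi_{\frakp;i}$ ($\frakp\in(S\cap S_p)\setminus(T\cup S')$), with $\varphi\colon F_d\twoheadrightarrow G_S^T(k)(p)$ whose kernel is the normal closure of the corresponding $F_d$-relators. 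Define $\psi\colon F_d\to\mathcal{F}$ by $\xi_{\frakp}\mapsto t_{\frakp}(K/k)$ (the generator of the factor $\calT_{\frakp}(K/k)$) and $\xi_{\frakp;i}\mapsto t_{\frakp;i}(K/k)$. Then $\psi$ is surjective, $\tilde\phi\circ\psi=q\circ\varphi$ (check on generators), and $\psi$ carries each $F_d$-relator to a relator of type \eqref{eq:pres-r-type-1} or \eqref{eq:pres-r-type-2} (for the natural choice of preimages), so $\psi(\ker\varphi)=M$ and $\psi$ descends to a surjection $\bar\psi\colon G_S^T(k)(p)\twoheadrightarrow\mathcal{F}/M$ with $\overline{\tilde\phi}\circ\bar\psi=q$. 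Hence $\mathcal{F}/M\cong\Gal(M_0/k)$ for some field $K_{S'}^T(p)\subseteq M_0\subseteq k_S^T(p)$ which is pro-$p$ over $k$, unramified outside $S$, and split at $T$; so everything comes down to showing that $M_0/K$ is unramified outside $S'$, for then $M_0\subseteq K_{S'}^T(p)$ forces $M_0=K_{S'}^T(p)$, i.e.\ $\ker\tilde\phi=M$, which is the assertion.

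Showing $M_0/K$ is unramified outside $S'$ means showing $\bar\psi$ kills the inertia subgroup of $k_S^T(p)/K$ at $\frakP_K$ for every $\frakp\in R$ (the only primes over $S\setminus S'$ where ramification can occur), i.e.\ that $\psi$ maps a lift to $F_d$ of a generator of that inertia subgroup into $M$. For $\frakp\in R\setminus S_p$ this is immediate: $k_S^T(p)/k$ is tamely ramified at $\frakp$, the inertia of $k_S^T(p)/K$ at $\frakP_K$ is generated by $t_{\frakp}(k_S^T(p)/k)^{m}$ with $m=\lvert\calT_{\frakp}(K/k)\rvert$ (and is trivial if $\calT_{\frakp}(K/k)$ is infinite, so there is nothing to check), a lift is $\xi_{\frakp}^{\,m}$, and $\psi(\xi_{\frakp}^{\,m})=t_{\frakp}(K/k)^{m}=1$. \textbf{The main obstacle is the case $\frakp\in R\cap S_p$.} Here $\mu_p\not\subset k_{\frakp}$ (implicit in the use of $\calT^{\circ}_{\frakp}$), so $\calG_{\frakp}(p)$ is free pro-$p$ and equals the free pro-$p$ product $\calT^{\circ}_{\frakp}\ast\overline{\langle\sigma_{\frakp}\rangle}$ for a Frobenius lift $\sigma_{\frakp}$, with inertia subgroup the normal closure of $\calT^{\circ}_{\frakp}=\langle t_{\frakp;1},\dots,t_{\frakp;[k_{\frakp}:\Q_p]}\rangle$. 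One must describe the inertia subgroup of $k_S^T(p)/K$ at $\frakP_K$ as the intersection of the inertia of $k_S^T(p)/k$ at $\frakp$ with $\Gal(k_S^T(p)/K)$, exhibit generators of it as $\varphi$-images of conjugates of the $t_{\frakp;i}$ by powers of a lift of $\sigma_{\frakp}$, lift these to $F_d$ through the $\xi_{\frakp;i}$ and a lift of $\sigma_{\frakp}$, and verify their $\psi$-images lie in $M$ (in fact are trivial, as $M$ involves no relator at a prime over $p$). The guiding principle is that $\psi$ restricted to the free subgroup $\langle\xi_{\frakp;i}\rangle\subseteq F_d$ is the canonical quotient onto the factor $\calT^{\circ}_{\frakp}(K/k)$, the image of $\calT^{\circ}_{\frakp}$ in $\Gal(K/k)$, so that the part of the local inertia surviving in $\Gal(K_{S'}^T(p)/k)$ is precisely the part $\psi$ does not kill. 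Carrying out this local bookkeeping at primes above $p$ --- making explicit how the free-product complement $\calT^{\circ}_{\frakp}$ interacts with the Frobenius direction --- is the only serious difficulty; with it in hand, $\bar\psi$ annihilates all the required inertia subgroups, $M_0=K_{S'}^T(p)$, and \eqref{item:pres-2} follows.
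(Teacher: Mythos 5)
Your argument for part~\eqref{item:pres-1} is correct and follows the same route as the paper: both establish surjectivity from $G_{S'}^T(k)(p)=1$ together with the Frattini/Burnside argument, and both use that $K_{S'}^T(p)/K$ is unramified at every $\frakp\in R$ to identify $\calT^\circ_\frakp(K_{S'}^T(p)/k)$ with $\calT^\circ_\frakp(K/k)$ and thus lift $\phi$ through $\Gal(K_{S'}^T(p)/k)$. For part~\eqref{item:pres-2} your overall plan also matches the paper's: define $\psi$ (the paper calls it $\pi$), check $\psi(\ker\varphi)=M$, deduce $\mathcal{F}/M\cong \Gal(M_0/k)$ for some $K_{S'}^T(p)\subseteq M_0\subseteq k_S^T(p)$ (the paper phrases this as the Claim that $\Gal(K_{S'}^T(p)/k)=F_d/(\ker\varphi\,\ker\pi)$), and reduce to showing $M_0/K$ is unramified above every $\frakp\in R$; your treatment of the tame primes $\frakp\in R\setminus S_p$ is fine.

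The gap you flag at $\frakp\in R\cap S_p$ is genuine and your own diagnosis of how you might close it is not yet adequate. Observing that ``$M$ involves no relator at a prime over $p$'' does not make the verification trivial: the inertia subgroup of $\Gal(k_S^T(p)/K)$ at the distinguished prime over such a $\frakp$ is the normal closure in the decomposition group of the image of $\calT^\circ_\frakp$, i.e.\ it is generated by \emph{Frobenius conjugates} of the $t_{\frakp;i}$, and lifts of those conjugates to $F_d$ do not lie in $\langle\xi_{\frakp;1},\dots,\xi_{\frakp;[k_\frakp:\Q_p]}\rangle$, so their $\pi$-images need not sit in the free factor, much less be trivial. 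What you actually need is that the image $\calT^\circ_\frakp(M_0/k)$ of the free factor in $\Gal(M_0/k)$ already \emph{is} the full inertia $\calT_\frakp(M_0/k)$ (equivalently, is normal in $\calG_\frakp(M_0/k)$), since a priori you only know $\calT^\circ_\frakp(M_0/k)\cap \Gal(M_0/K)=1$, and a normal closure can grow when intersected with $\Gal(M_0/K)$. The paper's Claim asserts precisely this in a single sentence (``the inertia subgroup of $G_S^T(k)(p)$ at a prime $\frakp$ in $R$ is mapped to the image in $H$ of the factor $\calT^\circ_\frakp(K/k)$''), without spelling out the wild case; so the local bookkeeping you postpone is exactly the content that must be supplied, either by proving that assertion directly or by rerunning the embedding-problem/Burnside argument of Theorem~\ref{thm:pres-ST}\eqref{item:pres-ST-2} one level down for the extension $\mathcal{F}/N\to \Gal(K_{S'}^T(p)/k)$.
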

	
	\begin{proof}
		For convenience, we denote $\calT^{\circ}_{\frakp}(K/k):=\calT_{\frakp}(K/k)$ for every $\frakp \in R\backslash S_p$. By the universal property of the free product, the homomorphism $\phi$ is well-defined. 
		Because $K_{S'}^T(p)/K$ does not further ramify at primes not in $S'$, we have $\calT^{\circ}_{\frakp}(K_{S'}^T(p)/k)\simeq \calT^{\circ}_{\frakp}(K/k)$ for every $\frakp \in R$. So similarly, there is a homomorphism $\varphi'$ from the domain of $\phi$ to $\Gal(K_{S'}^T(p)/k)$, which $\varphi$ factors through. Note that $K_{S'}^T(p)/k$ can ramify only at primes in $R\cup S'$. 
		
		The map $\varphi'$ is surjective. Indeed, if $\varphi'$ is not surjective, then $\im \varphi'$ is contained in a proper maximal subgroup of $\Gal(K_{S'}^T(p)/k)$, where the maximal subgroup has to be normal by the property of pro-$p$ groups. Then the quotient of $\Gal(K_{S'}^T(p)/k)$ modulo that maximal subgroup gives a nontrivial $C_p$-extension of $k$ that is ramified only at primes in $S'$, which contradicts to the assumption $G_{S'}^T(k)(p)=1$. So the statement~\eqref{item:pres-1} then follows.
		
		For the rest of the proof, we prove the statement \eqref{item:pres-2}. Because $\B_{S' \backslash T}^{S'\cup T}(k, \F_p)=0$ implies $\B_{S \backslash T}^{S\cup T}(k, \F_p)=0$ by Lemma~\ref{lem:basic-B}\eqref{item:basic-B-1}.
		We consider the diagram		
		\begin{equation}\label{eq:fiber}\begin{tikzcd}
			F_d \arrow["\pi"]{d} \arrow[two heads, "\varphi"]{r} & G_{S}^T(k)(p) \arrow[two heads]{d} \arrow[two heads]{dr} &\\
			\Conv\limits_{\frakp \in R} \calT^{\circ}_{\frakp}(K/k) \arrow[two heads,"\varphi'"]{r} & \Gal(K_{S'}^T(p) /k ) \arrow[two heads]{r} &\Gal(K/k).
		\end{tikzcd}\end{equation}
		The top row is the surjection $\varphi$ in Theorem~\ref{thm:pres-ST}. The left vertical map $\pi$ is defined by sending the generators of $F_d$ (described in \eqref{eq:g-type-1} and \eqref{eq:g-type-2}) at each prime $\frakp \in R$ to the factor $\calT^{\circ}_{\frakp}(K/k)$ in the free product. The triangle on the right is induced by $K_{S'}^T(p) \hookrightarrow k_{S}^T(p)$.

		\emph{Claim: } $\Gal(K_{S'}^T(p)/k)=F_d/(\ker \varphi \ker \pi)$.
		
		\emph{Proof of the claim:} First, it's clear that both $\ker \varphi$ and $\ker \pi$ are contained in the kernel of $F_d \to \Gal(K_{S'}^T(p)/k)$. Let $H$ denote $F_d/(\ker \varphi \ker \pi)$, and consider the quotient map $H \twoheadrightarrow \Gal(K_{S'}^T(p)/k)$. Then $G_{S}^T(k)(p) \twoheadrightarrow \Gal(K_{S'}^T(p)/k)$ factors through $H$, and the inertia subgroup of $G_{S}^T(k)(p)$ at a prime $\frakp$ in $R$ is mapped to the image in $H$ of the factor $\calT^{\circ}_{\frakp}(K/k)$ in the free product. Thus, $H$ corresponds to an extension of $K_{S'}^T(p)$ in $k_{S}^T(p)$ that is unramified at all primes above $R$. Recall that in the proof of Theorem~\ref{thm:pres-ST}, we classified all primes that could ramify in $k_{S}^T(p)/k$, and from which it follows that $K_{S'}^T(p)/K$ is the maximal subextension of $k_{S}^T(p)/K$ that is ramified only at all primes above $S'$. Therefore, $H=\Gal(K_{S'}^T(p)/k)$ and we proved the claim.
		
		By the claim, we have $\ker \varphi' = \pi(\ker \varphi)$. When $\B_{S' \backslash T}^{S' \cup T}(k, \F_p)=0$, the map $\pi$ in the diagram above is surjective by Theorem~\ref{thm:pres-ST}\eqref{item:pres-ST-3}. Then the statement \eqref{item:pres-2} in the theorem follows by results about $\ker\varphi$ in Theorem~\ref{thm:pres-ST}\eqref{item:pres-ST-2}, and by the surjectivity of $\pi$.
	\end{proof}
	
	\begin{corollary}\label{cor:Q}
		Let $K/\Q$ be a finite Galois $p$-extension. Then there is a surjection
		\begin{eqnarray*}
			\phi: \left(\Conv_{\frakp \in \Ram^f(K/\Q)\backslash S_p} \left\langle x_{\frakp} \, \Big{|}\, x_{\frakp}^{|t_{\frakp}(K/\Q)|} \right\rangle \right) \ast \calT^{\circ}_p(K/\Q)&\longrightarrow& \Gal(K_{S_{\infty}}(p)/\Q) \\
		\text{defined by} \quad	x_{\frakp} &\longmapsto& t_{\frakp}(K_{S_{\infty}}(p)/\Q) \\
			\text{and} \quad \calT^{\circ}_p(K/\Q) &\overset{=}{\longrightarrow}& \calT^{\circ}_p(K/\Q)
		\end{eqnarray*}
		For each $\frakp \in \Ram^f(K/\Q)$, we choose $y_{\frakp} \in \phi^{-1}(s_{\frakp}(K_{S_{\infty}}(p)/\Q))$. 
		We choose $x_{\infty} \in \phi^{-1}(t_{\frakp}(K_{S_{\infty}}(p)/\Q)$ for $\frakp$ the unique prime in $S_{\infty}(\Q)$. 
If $K$ is tamely ramified when $p=2$, then $\ker \phi$ is the normal subgroup generated by the following elements:
		\begin{eqnarray*}
			(x_{\frakp})^{\Nm(\frakp)} y_{\frakp} x_{\frakp}^{-1} y_{\frakp}^{-1} &\quad& \text{for $\frakp \in \Ram^f(K/\Q)\backslash S_p$}, \\
			x_{\infty}^2 &\quad& \text{if $p=2$}.
		\end{eqnarray*}
			
	\end{corollary}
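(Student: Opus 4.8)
The plan is to obtain this as the special case of Theorem~\ref{thm:pres} for $k=\Q$ corresponding to a suitable choice of the auxiliary data $(S,T,S')$, treating $p$ odd and $p=2$ separately, and then to match the resulting free product, target field and relators against the statement. In both cases $T=\O$, and the only genuinely substantive checks are $G_{S'}^T(\Q)(p)=1$ and $\B_{S'\backslash T}^{S'\cup T}(\Q,\F_p)=0$; everything else is a verification that the sets $R$, the relator families \eqref{eq:pres-r-type-1}--\eqref{eq:pres-r-type-2} and the field $K_{S'}^T(p)$ produced by Theorem~\ref{thm:pres} are the ones named in the Corollary.

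For $p$ odd I would take $S'=\O$ and $S=\Ram^f(K/\Q)\cup S_p(\Q)$. Since $\Gal(K/\Q)$ is a finite $p$-group with $p$ odd, every decomposition group at an archimedean place is trivial, so $K$ is totally real, $\Ram(K/\Q)=\Ram^f(K/\Q)\subseteq S$, and $K\subseteq\Q_S^\O(p)$. Then $G_{S'}^T(\Q)(p)=G_\O^\O(\Q)(p)=1$ by Minkowski's theorem ($\Q$ has no nontrivial unramified extension), and $\B_{S'\backslash T}^{S'\cup T}(\Q,\F_p)=\B_\O^\O(\Q,\F_p)=0$ by Lemma~\ref{lem:B=0-empty}, because $\Cl(\Q)=1$ and $\Z^\times/\Z^{\times p}$ is trivial for odd $p$; the hypothesis ``$(S\cap S_p)\backslash T=\O$ when $\mu_p\subset k$'' of Theorem~\ref{thm:pres}(2) is vacuous. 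Unwinding the set $R$: a finite prime $\frakp\neq p$ ramified in $K/\Q$ automatically has $\delta_\frakp=1$ (tame ramification in a pro-$p$ extension forces $p\mid\Nm(\frakp)-1$), while $\delta_p=0$, so $R\backslash S_p=\Ram^f(K/\Q)\backslash S_p$ and $R\cap S_p=S_p(\Q)$. Hence the free product in Theorem~\ref{thm:pres} is the one in the Corollary once each (cyclic) tame inertia $\calT_\frakp(K/\Q)$ is written as $\langle x_\frakp\mid x_\frakp^{|t_\frakp(K/\Q)|}\rangle$ with $x_\frakp\mapsto t_\frakp(K_{S_\infty}(p)/\Q)$; since $p$ is odd, $K_{S'}^T(p)=K_\O^\O(p)$ coincides with $K_{S_\infty}(p)$, and the relators \eqref{eq:pres-r-type-1}--\eqref{eq:pres-r-type-2} reduce to $(x_\frakp)^{\Nm(\frakp)}y_\frakp x_\frakp^{-1}y_\frakp^{-1}$ for $\frakp\in\Ram^f(K/\Q)\backslash S_p$, with no relator at $\infty$ because $\calG_\infty(p)=1$.

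For $p=2$ I would use the hypothesis that $K/\Q$ is tamely ramified, so that $2$ is unramified in $K/\Q$, and take $S'=S_\infty(\Q)$, $S=\Ram^f(K/\Q)\cup S_\infty(\Q)$; then $K\subseteq\Q_S^\O(2)$ is clear. Here $G_{S'}^T(\Q)(2)=G_{S_\infty(\Q)}^\O(\Q)(2)=1$, since a nontrivial such extension would contain a quadratic field unramified at every finite prime, which is impossible, and ``$(S\cap S_p)\backslash T=\O$ when $\mu_p\subset\Q$'' holds precisely because $2\notin S$. The key input $\B_{S'\backslash T}^{S'\cup T}(\Q,\F_2)=0$ I would check directly from \eqref{eq:def-V}: since the archimedean place lies in $S'\backslash T$, an element of $V_{S'\backslash T}^{S'\cup T}(\Q)$ is represented by a positive rational all of whose valuations are even, hence a square, so the group is $0$. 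Unwinding $R$ exactly as above gives $R=\Ram^f(K/\Q)$ with $R\cap S_p=\O$ and $\calT^{\circ}_2(K/\Q)=1$; so the free product is again the claimed one, the target is $K_{S'}^T(2)=K_{S_\infty}(2)$, and the relators of Theorem~\ref{thm:pres}(2) become $(x_\frakp)^{\Nm(\frakp)}y_\frakp x_\frakp^{-1}y_\frakp^{-1}$ for $\frakp\in\Ram^f(K/\Q)$ together with $x_\infty^2$ at the real place, matching the statement.

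Granted Theorem~\ref{thm:pres}, the argument is essentially bookkeeping; the one point requiring real care is the choice of $S'$ when $p=2$. One is forced to put the archimedean place into $S'$ (so that inertia at $\infty$ is removed from the free product and instead recorded by the single relator $x_\infty^2$), and this is exactly what makes it necessary to assume $K$ tame, so that $2\notin S$ and the hypothesis on $S\cap S_p$ in Theorem~\ref{thm:pres}(2) can hold. I expect the verification that $\B_{S'\backslash T}^{S'\cup T}(\Q,\F_2)=0$ for this $S'$ to be the delicate step: the analogous group computed with $T=S'$ rather than $T=\O$ would be nonzero, and it is precisely the interaction of the archimedean local condition with the normalization $T=\O$ that forces $-1$ into $\Q^{\times 2}$ and kills it.
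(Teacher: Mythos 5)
Your proposal is correct and takes essentially the same route as the paper's proof: specialize Theorem~\ref{thm:pres} with $S'=T=\O$ for odd $p$ and $S'=S_{\R}(\Q)$, $T=\O$ for $p=2$, then unwind $R$ and the relator families. The only cosmetic differences are that you enlarge $S$ to $\Ram^f(K/\Q)\cup S_p$ for odd $p$ (the paper uses $S=\Ram(K/\Q)$, which gives the same $R$ and relators since $\delta_p=0$), and that you verify $\B_{S_{\R}(\Q)}^{S_{\R}(\Q)}(\Q,\F_2)=0$ directly from the definition of $V_{S\backslash T}^{S\cup T}$ where the paper simply invokes Lemma~\ref{lem:B=0-empty}.
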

	
	\begin{remark}\label{rmk:Q}
		When $p=2$ and $K$ is totally real, the corollary gives a presentation of the narrow $2$-class tower group of $K$. Because $\Gal(K_{\O}(2)/\Q)$ is the quotient of $\Gal(K_{S_{\infty}}(2)/\Q)$ modulo the image of $x_{\infty}$, one can also apply the corollary to obtain a presentation of the $2$-class tower group of $K$. 
	\end{remark}
	
	\begin{proof}
		By Lemma~\ref{lem:B=0-empty}, $\B_{\O}^{\O}(\Q, \F_p)=0$ if $p$ is odd, and $\B_{S_{\R}(\Q)}^{S_{\R}(\Q)}(\Q, \F_2)=0$. We let 
		\begin{itemize}
			\item $S=\Ram(K/\Q)$, $S'=\O$ and $T=\O$, if $p$ is odd; and
			\item $S=\Ram(K/\Q) \cup S_{\R}(\Q)$, $S'=S_{\R}(\Q)$ and $T=\O$, if $p=2$.
		\end{itemize}
		Then $\B_{S'\backslash T}^{S'\cup T}(\Q, \F_p)=0$ and by Lemma~\ref{lem:basic-B}\eqref{item:basic-B-1} $\B_{S \backslash T}^{S\cup T}(\Q, \F_p)=0$. All the assumptions in Theorem~\ref{thm:pres} are satisfied, so the corollary follows by Theorem~\ref{thm:pres} and the fact that $\calT_{\frakp}(K/k)$ is a cyclic group of order $|t_{\frakp}(K/\Q)|$ for any $\frakp \in \Ram^f(K/\Q) \backslash S_p$.
	\end{proof}

	\begin{corollary}\label{cor:Fq(t)}
		Let $K/\F_q(t)$ be a finite $p$-extension with $p\nmid q$ that is splitting completely at $\infty$. Then there is a surjection
		\begin{eqnarray*}
			\phi: \Conv_{\frakp \in \Ram(K/\F_q(t))} \left\langle x_{\frakp} \, \Big{|} \, x_{\frakp}^{|t_{\frakp}(K/\F_q(t))|} \right\rangle &\longrightarrow& \Gal(K_{\O}^{S_{\infty}}(p)/\F_q(t)) \\
			x_{\frakp} & \longmapsto & t_{\frakp}(K_{\O}^{S_{\infty}}(p)/\F_q(t)).
		\end{eqnarray*}
		For each $\frakp \in \Ram(K/\F_q(t))$, we choose $y_{\frakp} \in \phi^{-1}(s_{\frakp}(K_{\O}^{S_{\infty}}(p)/\Q))$. Then $\ker \phi$ is the normal subgroup generated the following elements:
		\[
			(x_{\frakp})^{\Nm(\frakp)} y_{\frakp} x_{\frakp}^{-1} y_{\frakp}^{-1} \quad \quad \text{for } \frakp \in \Ram(K/\F_q(t)).
		\]
	\end{corollary}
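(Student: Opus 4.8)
The plan is to derive Corollary~\ref{cor:Fq(t)} as a direct application of Theorem~\ref{thm:pres} in exactly the same way Corollary~\ref{cor:Q} was obtained. First I would verify the hypotheses of Theorem~\ref{thm:pres} with the choices $S = \Ram(K/\F_q(t))$, $T = S_\infty = \{\infty\}$, and $S' = \emptyset$. Since $p \nmid q$, there are no primes above $p$ in the sense relevant to the number field case (the function field $\F_q(t)$ has characteristic different from $p$, so every prime is tamely ramified in a pro-$p$ extension), and $\mu_p \subset \F_q(t)_\frakp$ issues do not interfere; in particular the set $R$ in Theorem~\ref{thm:pres} reduces to $\{\frakp \in \Ram(K/\F_q(t)) : \delta_\frakp = 1\}$, and one checks that any ramified prime in a pro-$p$ extension automatically has $\delta_\frakp = 1$ (by the structure of the tame local Galois group, a prime can ramify in a pro-$p$ extension only if $\Nm(\frakp) \equiv 1 \bmod p$), so $R = \Ram(K/\F_q(t))$.

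Next I would check the two cohomological vanishing conditions: $G_{S'}^T(\F_q(t))(p) = G_\emptyset^{\{\infty\}}(\F_q(t))(p) = 1$ and $\B_{S' \backslash T}^{S' \cup T}(\F_q(t), \F_p) = \B_\emptyset^{\{\infty\}}(\F_q(t), \F_p) = 0$. The first holds because the maximal pro-$p$ extension of $\F_q(t)$ unramified everywhere and split completely at $\infty$ is trivial: such an extension would correspond to an unramified-everywhere (including $\infty$) pro-$p$ extension, and since $\F_q(t)$ has class number $1$ and $S_\infty$-class group trivial, there is nothing. The second, $\B_\emptyset^{\{\infty\}}(\F_q(t), \F_p) = 0$, follows from Lemma~\ref{lem:B=0-empty}: condition~\eqref{item:B=0-empty-1} is that the $p$-part of $\Cl_{\{\infty\}}(\F_q(t))$ is trivial — true since $\Cl(\F_q(t))$ is already trivial — and condition~\eqref{item:B=0-empty-2} is that $\calO_{\F_q(t), \{\infty\}}^\times / \calO_{\F_q(t),\{\infty\}}^{\times p} \to \prod_{\frakp \in S \backslash T} U_\frakp / U_\frakp^p$ is injective; here $\calO_{\F_q(t), \{\infty\}} = \F_q[t]$ has unit group $\F_q^\times$, which is cyclic of order $q-1$ prime to $p$, so the source is trivial and the map is trivially injective. (If $S\backslash T = \emptyset$ the map is to the trivial group and the source is still trivial, so this is fine.) Thus all hypotheses of Theorem~\ref{thm:pres} are met.

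Then I would simply read off the conclusion. Theorem~\ref{thm:pres}\eqref{item:pres-1} gives a surjection $\phi$ from $\Conv_{\frakp \in R \backslash S_p} \calT_\frakp(K/k) = \Conv_{\frakp \in \Ram(K/\F_q(t))} \calT_\frakp(K/\F_q(t))$ onto $\Gal(K_\emptyset^{S_\infty}(p)/\F_q(t)) = \Gal(K_{\O}^{S_\infty}(p)/\F_q(t))$, and since $\calT_\frakp(K/\F_q(t))$ is cyclic of order $|t_\frakp(K/\F_q(t))|$ (the inertia group of a tamely ramified prime in a pro-$p$ extension), we may write each free factor as $\langle x_\frakp \mid x_\frakp^{|t_\frakp(K/\F_q(t))|}\rangle$ with $x_\frakp \mapsto t_\frakp(K_{\O}^{S_\infty}(p)/\F_q(t))$. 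Theorem~\ref{thm:pres}\eqref{item:pres-2} then identifies $\ker\phi$ as the normal closure of the local relators of types \eqref{eq:pres-r-type-1} and \eqref{eq:pres-r-type-2}; type \eqref{eq:pres-r-type-2} is vacuous because $\Char(\F_q(t)) \neq p$ means $p = 2$ is allowed but there is no real place — $\F_q(t)$ has no archimedean primes, so $S_\R = \emptyset$ and no relator $x_\frakp^2$ occurs — and type \eqref{eq:pres-r-type-1} ranges over $\frakp \in S \backslash (T \cup S_\infty) = \Ram(K/\F_q(t))$ (all $\delta_\frakp = 1$), giving exactly the relators $(x_\frakp)^{\Nm(\frakp)} y_\frakp x_\frakp^{-1} y_\frakp^{-1}$ with $y_\frakp \in \phi^{-1}(s_\frakp(K_{\O}^{S_\infty}(p)/\F_q(t)))$.

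The main obstacle, such as it is, is bookkeeping rather than a genuine difficulty: one must be careful that in the function field setting the role played by $S_\infty$ is that of the split-completely set $T$ (not an exception set as for archimedean primes of a number field), and correspondingly that the set $S'$ can be taken to be empty because $G_{\O}^{S_\infty}(\F_q(t))(p) = 1$ — this is the function-field analogue of why, over $\Q$, one needed $S' = S_\R$ for $p = 2$ but not otherwise. Once the dictionary between Theorem~\ref{thm:pres}'s data $(S, T, S')$ and the function field situation is fixed, the proof is a one-line invocation, and I would write it as such, mirroring the proof of Corollary~\ref{cor:Q}.
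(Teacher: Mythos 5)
Your overall approach mirrors the paper's proof exactly: apply Theorem~\ref{thm:pres} with $S = \Ram(K/\F_q(t))$, $T = S_\infty(\F_q(t))$, $S' = \O$ and read off the conclusion, and your reduction of $R$ to $\Ram(K/\F_q(t))$ and the identity $G_{\O}^{S_\infty}(\F_q(t))(p) = 1$ are both correct.

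However, your verification of $\B_{\O}^{S_\infty}(\F_q(t), \F_p) = 0$ has a genuine error. You invoke Lemma~\ref{lem:B=0-empty} and assert that $\F_q^\times$ has order $q-1$ ``prime to $p$,'' so that the source $\F_q^\times/(\F_q^\times)^p$ is trivial. But the stated hypothesis is only $p\nmid q$, which does not imply $p\nmid q-1$ (take $p=2$, $q=3$). When $p\mid q-1$ one has $\F_q^\times/(\F_q^\times)^p\simeq C_p$, while the target of the map in condition~\eqref{item:B=0-empty-2} is the empty product, i.e., trivial; the map is then not injective and the lemma does not apply. In fact, unwinding~\eqref{eq:def-V} one finds $V_{\O}^{\{\infty\}}(\F_q(t)) \cong \F_q^\times/(\F_q^\times)^p$, so $\B_{\O}^{\{\infty\}}(\F_q(t),\F_p)$ is nonzero precisely when $\mu_p\subset\F_q$, and the hypothesis $\B_{S'\backslash T}^{S'\cup T}(k,\F_p)=0$ needed for Theorem~\ref{thm:pres}\eqref{item:pres-2} cannot be satisfied. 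This is also consistent with the generator-rank formula in Theorem~\ref{thm:pres-ST}\eqref{item:pres-ST-1}, which applied to $S=\O$, $T=\{\infty\}$ gives $d=-\delta$, forcing $\delta=0$. The paper's own one-line proof simply asserts $\B_{\O}^{S_\infty}(\F_q(t),\F_p)=0$ without justification, so the gap is inherited from the source; the relator half of the corollary appears to require the additional hypothesis $\mu_p\not\subset\F_q$ (equivalently $p\nmid q-1$), which your argument cannot avoid.
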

	
	\begin{proof}
		Because $\B_{\O}^{S_{\infty}}(\F_q(t), \F_p)=0$ and $G_{\O}^{S_{\infty}}(\F_q(t))(p)=1$, the corollary following from Theorem~\ref{thm:pres} by applying $S=\Ram(K/\F_q(t))$, $S'=\O$ and $T=S_{\infty}(\F_q(t))$.
	\end{proof}

\section{Generator Rank of $G_{S'}^T(K)(p)$}\label{S:rank-general}

	In this section, we use the assumptions and notation in Theorem~\ref{thm:pres} (we do not assume the assumptions in Theorem~\ref{thm:pres}\eqref{item:pres-2} unless otherwise stated). We denote 
	\[
		\calF_{K/k, S, S', T}:= \left( \Conv_{ \frakp \in R \backslash S_p} \calT_{\frakp}(K/k) \right) \ast \left(\Conv_{ \frakp \in R \cap S_p} \calT^{\circ}_{\frakp}(K/k) \right).
	\]
	The surjection $\phi$ in Theorem~\ref{thm:pres}\eqref{item:pres-1} induces the following diagram in which each row is exact, and then we see that $d(G_{S'}^T(K)(p)) \leq d(\ker \phi)$. In Section~\ref{SS:rank-ub}, we will discuss the upper bound of $d(G_{S'}^T(K)(p))$ and give its applications in special cases.
	\[\begin{tikzcd}
		1 \arrow{r} & \ker \phi/\Phi(\ker \phi) \arrow{r} \arrow[two heads]{d} & \calF_{K/k, S, S', T}/\Phi(\ker \phi) \arrow{r} \arrow[two heads]{d} & \Gal(K/k) \arrow{r} \arrow[equal]{d} & 1\\
		1 \arrow{r} &G_{S'}^T(K)(p)/\Phi(G_{S'}^T(K)(p)) \arrow{r} & \Gal(K_{S'}^T(p)/k)/\Phi(G_{S'}^T(K)(p)) \arrow{r} & \Gal(K/k) \arrow{r} & 1
	\end{tikzcd}\]
	
	If we assume $(S \cap S_p)\backslash T = \O$ when $\mu_p \subset k$, Theorem~\ref{thm:pres}\eqref{item:pres-2} gives a description of generators of $\ker( \ker \phi \to G_{S'}^T(K)(p))$, and we will use it to give a lower bound on $d(G_{S'}^T(K)(p))$ in Section~\ref{SS:rank-lb}.

\subsection{Upper bound}\label{SS:rank-ub}

	\begin{proposition}\label{prop:upper}
		With the assumptions and notation in Theorem~\ref{thm:pres}, if $[K:k]$ is finite, then we have the following upper bound for the generator rank of $G_{S'}^T(K)(p)$
		\[
			d(G_{S'}^T(K)(p)) \leq  (\# R-1) [K:k] - \sum_{\frakp \in R\backslash S_p}\frac{[K:k]}{|\calT_{\frakp}(K/k)|} - \sum_{\frakp \in R\cap S_p}\frac{[K:k]}{|\calT^{\circ}_{\frakp}(K/k)|} +1.
		\]
		Moreover, assuming $(S  \cap S_p) \backslash T= \O$ when $\mu_p \subset k$, and assuming $\B_{S'\backslash T}^{S' \cup T}(k, \F_p)=0$, the equality holds if and only if elements in \eqref{eq:pres-r-type-1} and \eqref{eq:pres-r-type-2} are contained in the Frattini subgroup of $\ker \phi$.
	\end{proposition}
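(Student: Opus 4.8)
The plan is to apply the Kurosh subgroup theorem to the surjection $\phi$ of Theorem~\ref{thm:pres}\eqref{item:pres-1} and then pass to Frattini quotients. First I would record what Kurosh gives: the kernel $\ker\phi$ of a surjection from the free pro-$p$ product $\calF_{K/k,S,S',T} = \bigl(\Conv_{\frakp\in R\backslash S_p}\calT_{\frakp}(K/k)\bigr)\ast\bigl(\Conv_{\frakp\in R\cap S_p}\calT^{\circ}_{\frakp}(K/k)\bigr)$ onto the finite $p$-group $\Gal(K/k)$ is itself a free pro-$p$ product of a free pro-$p$ group with a family of conjugates of subgroups of the free factors, indexed by the double cosets of $\Gal(K/k)$ by the images of the factors. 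For each factor $\calT_{\frakp}(K/k)$ (resp.\ $\calT^{\circ}_{\frakp}(K/k)$), whose image in $\Gal(K/k)$ is the cyclic group $\calT_{\frakp}(K/k)$ of order $|\calT_{\frakp}(K/k)|$, the number of double cosets contributing a free factor isomorphic to $\calT_{\frakp}(K/k)\cap(\text{conjugate of }\ker\phi)=1$ — in fact these intersections are trivial since $\phi$ is injective on each factor — so each prime $\frakp\in R$ contributes exactly $[K:k]/|\calT_{\frakp}(K/k)|$ (resp.\ $[K:k]/|\calT^{\circ}_{\frakp}(K/k)|$) trivial free factors, and the remaining free part has rank given by the standard Euler-characteristic bookkeeping. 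Concretely, $\ker\phi$ is free pro-$p$ of rank
\[
	1 - [K:k] + \sum_{\frakp\in R\backslash S_p}\Bigl([K:k] - \tfrac{[K:k]}{|\calT_{\frakp}(K/k)|}\Bigr) + \sum_{\frakp\in R\cap S_p}\Bigl([K:k] - \tfrac{[K:k]}{|\calT^{\circ}_{\frakp}(K/k)|}\Bigr),
\]
which rearranges to exactly the claimed upper bound $(\#R-1)[K:k] - \sum_{\frakp\in R\backslash S_p}\frac{[K:k]}{|\calT_{\frakp}(K/k)|} - \sum_{\frakp\in R\cap S_p}\frac{[K:k]}{|\calT^{\circ}_{\frakp}(K/k)|} + 1$. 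Here I would cite the pro-$p$ Kurosh subgroup theorem (e.g.\ \cite[Thm.~(4.2.1)]{NSW} or Haran's theorem) and the fact that for a free pro-$p$ group $d$ equals the rank. Since $d(G_{S'}^T(K)(p))\le d(\ker\phi)$ from the commutative diagram of exact sequences displayed just before the proposition (the left vertical map is a surjection of $\F_p$-vector spaces), this establishes the inequality.

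For the equality statement, I would use Theorem~\ref{thm:pres}\eqref{item:pres-2}: under the hypotheses $(S\cap S_p)\backslash T=\O$ when $\mu_p\subset k$ and $\B_{S'\backslash T}^{S'\cup T}(k,\F_p)=0$, we have $G_{S'}^T(K)(p) = \ker\phi/N$ where $N$ is the closed normal subgroup of $\ker\phi$ generated by the local relators \eqref{eq:pres-r-type-1} and \eqref{eq:pres-r-type-2}. Then $d(G_{S'}^T(K)(p)) = d(\ker\phi/N) = \dim_{\F_p}\bigl(\ker\phi/(N\cdot\Phi(\ker\phi))\bigr)$, and this equals $d(\ker\phi) = \dim_{\F_p}(\ker\phi/\Phi(\ker\phi))$ if and only if $N\subseteq\Phi(\ker\phi)$, i.e.\ if and only if all the generators of $N$ — the elements in \eqref{eq:pres-r-type-1} and \eqref{eq:pres-r-type-2} — lie in $\Phi(\ker\phi)$. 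I would spell out that $N \subseteq \Phi(\ker\phi)$ iff the generators of $N$ do, using that $\Phi(\ker\phi)$ is normal and $\ker\phi/\Phi(\ker\phi)$ is the maximal elementary abelian quotient.

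The main obstacle is getting the Kurosh bookkeeping exactly right: one must verify that $\phi$ restricted to each free factor $\calT_{\frakp}(K/k)$ (resp.\ $\calT^{\circ}_{\frakp}(K/k)$) is \emph{injective} — which holds by construction since $\phi$ maps each factor identically onto the corresponding inertia subgroup of $\Gal(K/k)$ — so that in the Kurosh decomposition the free factors of $\ker\phi$ coming from a given $\frakp$ are all trivial, and to count the number of such factors correctly as the index $[\Gal(K/k):\phi(\calT_{\frakp}(K/k))] = [K:k]/|\calT_{\frakp}(K/k)|$. A subtlety worth a sentence: the free factors $\calT^{\circ}_{\frakp}(K/k)$ for $\frakp\in R\cap S_p$ need not be cyclic (they are quotients of free pro-$p$ groups on $[k_{\frakp}:\Q_p]$ generators), but Kurosh still applies verbatim, and since $\phi$ is injective on them the intersection with a conjugate of $\ker\phi$ is still trivial, so the accounting is unchanged. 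Once injectivity on factors is in hand the rest is the Euler-characteristic identity for free pro-$p$ products, which is routine.
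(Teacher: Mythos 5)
Your proof follows the same route as the paper: apply the pro-$p$ Kurosh subgroup theorem to compute $d(\ker\phi)$ (using injectivity of $\phi$ on each free factor to conclude $\ker\phi$ is free), observe $d(G_{S'}^T(K)(p)) \leq d(\ker\phi)$, and then use Theorem~\ref{thm:pres}\eqref{item:pres-2} together with Burnside's basis theorem for the equality criterion. The only small slip is calling $N$ the normal closure of the relators in $\ker\phi$; it is actually their normal closure in the ambient free product $\calF_{K/k,S,S',T}$, but this is harmless here since $\Phi(\ker\phi)$ is characteristic in $\ker\phi$, hence normal in $\calF_{K/k,S,S',T}$, so the equivalence ``$N\subseteq\Phi(\ker\phi)$ iff the listed generators of $N$ lie in $\Phi(\ker\phi)$'' holds either way.
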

	
	\begin{remark}
		\begin{enumerate}
			\item Recall that the choice of $\calT^{\circ}_{\frakp}$ for $\frakp \in R \cap S_p$ is not canocial. The above upper bound depends on the choice of $\calT^{\circ}_{\frakp}(K/k)$, because different choices of $t_{\frakp; i}$ could give $\calT^{\circ}(K/k)$ with different sizes.
			\item When the assumption $G_{S'}^T(k)(p)=1$ in Theorem~\ref{thm:pres} is not satisfied, then one can still obtain an upper bound for $d(G_{S'}^T(K)(p))$ by applying the Kurosh subgroup theorem to a presentation defined by the composition $F_d \twoheadrightarrow G_S^T(k)(p) \twoheadrightarrow \Gal(K_{S'}^T(p)/k) \twoheadrightarrow \Gal(K/k)$ with $d=d(G_S^T(k)(p))$.
		\end{enumerate}
	\end{remark}
	
	\begin{proof}
		We apply the Kurosh subgroup theorem \cite[Thm.(4.2.1)]{NSW} to compute the generator rank of $\ker \phi$, and obtain 
		\[
			d(\ker \phi) = \sum_{\frakp \in R}\left( [K:k] -\frac{[K:k]}{|\calT^{\circ}_{\frakp}(K/k)|}\right) -[K:k] +1.
		\]
		Because $G_{S'}^T(K)(p)$ is a quotient of $\ker \phi$, we have $d(G_{S'}^T(K)(p)) \leq d(\ker \phi)$, and hence we have the inequality in the proposition. By Theorem~\ref{thm:pres}\eqref{item:pres-2}, $G_{S'}^T(K)(p)$ is the quotient of $\ker \phi$ modulo the closed $\Conv_{\frakp \in R} \calT^{\circ}_{\frakp}(K/k)$-normal subgroup generated by elements in \eqref{eq:pres-r-type-1} and \eqref{eq:pres-r-type-2}. So by the Burnside's basis theorem, $d(\ker \phi)=d(G_{S'}^T(K)(p))$ if and only if $\ker (\ker \phi \to G_{S'}^T(K)(p))$ is contained in the Frattini subgroup of $\ker \phi$, and hence if and only if elements in \eqref{eq:pres-r-type-1} and \eqref{eq:pres-r-type-2} are contained in the Frattini subgroup of $\ker \phi$.
	\end{proof}
	
	\begin{corollary}\label{cor:Q-rank}
		Let $K/\Q$ be a finite Galois $p$-extension. Then
		\begin{eqnarray*}
			&&\dim_{\F_p} \Cl^+(K)[p] \\
			&\leq& (\#\Ram(K/\Q)-1)[K:\Q] - \sum_{\frakp \in \Ram(K/\Q)\backslash S_p} \frac{[K:\Q]}{|\calT_{\frakp}(K/\Q)|} - \sum_{\frakp \in \Ram(K/\Q)\cap S_p} \frac{[K:\Q]}{|\calT^{\circ}_{\frakp}(K/\Q)|} +1.
		\end{eqnarray*}
	\end{corollary}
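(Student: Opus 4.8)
The strategy is to apply Proposition~\ref{prop:upper} to $k=\Q$ with a triple $(S,S',T)$ chosen so that $K_{S'}^{T}(p)$ is the maximal pro-$p$ extension of $K$ that is unramified at every finite prime; then the generator rank $d\bigl(G_{S'}^{T}(K)(p)\bigr)$ equals $\dim_{\F_p}\Cl^+(K)[p]$, and the asserted inequality is exactly the one in Proposition~\ref{prop:upper} once the set $R$ occurring there is identified with $\Ram^f(K/\Q)$.

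I would first set up the two cases. If $p$ is odd, take $S=\Ram(K/\Q)$ and $S'=T=\O$: since $[K:\Q]$ is odd, $K$ is totally real and $\Ram(K/\Q)=\Ram^f(K/\Q)$, while by Minkowski's bound $\Q$ has no nontrivial everywhere-unramified extension, so $G_{S'}^{T}(\Q)(p)=G_{\O}(\Q)(p)=1$; moreover $\B_{\O}^{\O}(\Q,\F_p)=0$ by Lemma~\ref{lem:B=0-empty}, hence $\B_{S\backslash T}^{S\cup T}(\Q,\F_p)=0$ by Lemma~\ref{lem:basic-B}\eqref{item:basic-B-1}. If $p=2$, take $S=\Ram(K/\Q)\cup S_{\R}(\Q)$, $S'=S_{\R}(\Q)$, $T=\O$: again by Minkowski $G_{S'}^{T}(\Q)(2)=1$, and $\B_{S_{\R}(\Q)}^{S_{\R}(\Q)}(\Q,\F_2)=0$ by Lemma~\ref{lem:B=0-empty}, so $\B_{S\backslash T}^{S\cup T}(\Q,\F_2)=0$. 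In both cases $S$ and $T$ are finite, $\Gal(K/\Q)$ is a finite $p$-group, and $K\subseteq\Q_{S}^{T}(p)$ because $K/\Q$ is a $p$-extension ramified only at primes of $S$; so the hypotheses of Theorem~\ref{thm:pres}, hence those needed for the inequality in Proposition~\ref{prop:upper}, are met. By construction $K_{S'}^{T}(p)$ is the maximal pro-$p$ extension of $K$ unramified at every finite prime, so by class field theory its maximal abelian quotient is $\Cl^+(K)(p)$ when $p=2$ and $\Cl(K)(p)$ when $p$ is odd; since $\Cl^+(K)/\Cl(K)$ is a $2$-group these agree for odd $p$ as well, giving $d\bigl(G_{S'}^{T}(K)(p)\bigr)=\dim_{\F_p}\bigl(\Cl^+(K)(p)/p\bigr)=\dim_{\F_p}\Cl^+(K)[p]$.

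Next I would unwind $R$. For $k=\Q$ we have $S_{\C}(\Q)=\O$ and $S_p(\Q)=\{p\}$, and $\delta_{\frakp}=1$ for every finite prime $\frakp\neq p$ ramifying in a $p$-extension of $\Q$, since such ramification is tame and forces $p\mid\Nm(\frakp)-1$. Consequently, in both cases every finite ramified prime $\neq p$ lies in $\{\frakp\in S\backslash(T\cup S_{\C}(\Q)\cup S')\mid\delta_{\frakp}=1\}$, the prime $p$ (if ramified) lies in $(S\cap S_p)\backslash(T\cup S')$, and the archimedean prime is excluded from $R$ — for $p=2$ because it lies in $S'$, for $p$ odd because it is unramified. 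Hence $R=\Ram^f(K/\Q)$, and Proposition~\ref{prop:upper} yields
\[
	\dim_{\F_p}\Cl^+(K)[p] \le (\#\Ram^f(K/\Q)-1)[K:\Q] - \sum_{\frakp\in\Ram^f(K/\Q)\backslash S_p}\frac{[K:\Q]}{|\calT_{\frakp}(K/\Q)|} - \sum_{\frakp\in\Ram^f(K/\Q)\cap S_p}\frac{[K:\Q]}{|\calT^{\circ}_{\frakp}(K/\Q)|} + 1.
\]
To reach the stated inequality, observe that $\Ram(K/\Q)$ exceeds $\Ram^f(K/\Q)$ by at most the archimedean prime, that $S_p$ contains no archimedean prime, and that adjoining one prime $\frakp$ with $|\calT_{\frakp}(K/\Q)|\le 2$ (here exactly $2$, a complex conjugation) to the index sets changes the right-hand side by $[K:\Q]-[K:\Q]/|\calT_{\frakp}(K/\Q)|\ge 0$; thus replacing $\Ram^f(K/\Q)$ by $\Ram(K/\Q)$ only weakens the bound, and the corollary follows.

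All of this is essentially routine given Proposition~\ref{prop:upper} and class field theory; the only point requiring care is the bookkeeping in the last step — pinning down $R$ exactly and verifying that enlarging it to $\Ram(K/\Q)$ cannot break the inequality — together with tracking archimedean ramification in the $p=2$ case when $K$ is not totally real.
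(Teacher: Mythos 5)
Your proof is correct and takes the same route as the paper: choose $S=\Ram(K/\Q)$, $S'=T=\O$ for odd $p$ (resp.\ $S=\Ram(K/\Q)\cup S_{\R}(\Q)$, $S'=S_{\R}(\Q)$, $T=\O$ for $p=2$), identify $d(G_{S'}^T(K)(p))$ with $\dim_{\F_p}\Cl^+(K)[p]$ via class field theory, and apply Proposition~\ref{prop:upper}. Your closing observation — that $R=\Ram^f(K/\Q)$ and that replacing $\Ram^f$ by $\Ram$ only weakens the bound when $p=2$ and $K$ is imaginary, since adjoining the archimedean prime changes the right-hand side by $[K:\Q]-[K:\Q]/2\ge 0$ — is a detail the paper's proof leaves implicit; it reconciles the corollary as stated with the sharper $\Ram^f$ form given in Theorem~\ref{thm:main-3}.
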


	\begin{proof}
		Because $\dim_{\F_p}\Cl(K)[p]=\dim_{\F_p}\Cl(K)/p\Cl(K)=d(G_{\O}(K)(p))$ and $\dim_{\F_2}\Cl^+(K)[2]=\dim_{\F_2}\Cl^+(K)/2\Cl^+(K)=d(G_{S_\infty}(K)(2))$, the corollary follows from Lemma~\ref{prop:upper} by applying $S=\Ram(K/\Q)$, $S'=T=\O$ if $p$ is odd, and $S=\Ram(K/\Q) \cup S_{\R}(\Q)$, $S'=S_{\R}(\Q)$ and $T=\O$ if $p=2$.
	\end{proof}
	
	\begin{corollary}\label{cor:Fq(t)-rank}
		Let $K/\F_q(t)$ be a finite Galois $p$-extension with $p\nmid q$ that is splitting completely at $\infty$. Then 
		\[
			\dim_{\F_p} \Cl(K)[p] \leq (\#\Ram(K/\F_q(t))-1)[K:\F_q(t)] - \sum_{\frakp \in \Ram(K/\F_q(t))} \frac{[K:\F_q(t)]}{|\calT_{\frakp}(K/\F_q(t))|}+1.
		\]
	\end{corollary}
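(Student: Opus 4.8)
The plan is to identify $\dim_{\F_p}\Cl(K)[p]$ with the generator rank of an unramified pro-$p$ Galois group, exactly as Corollary~\ref{cor:Q-rank} is deduced in the number-field case, and then to read off the bound from Proposition~\ref{prop:upper}. First, since $K/\F_q(t)$ splits completely at $\infty$, every place of $K$ above $\infty$ has residue field $\F_q$; in particular the class group $\Cl(K)$ (attached, as is customary in the function-field setting, to the distinguished place $\infty$) is finite, and by global class field theory it is the Galois group of the maximal everywhere-unramified abelian extension of $K$ in which every place above $\infty$ splits completely. Concretely, writing $S_\infty=S_\infty(\F_q(t))=\{\infty\}$, the abelianization $(G_{\O}^{S_\infty}(K)(p))^{\ab}$ is the pro-$p$ part of $\Cl(K)$: requiring splitting completely at the places above $\infty$ is what removes the constant-field $\Z_p$-extension and makes the group finite, and simultaneously it passes from $\Pic^0(K)$ to the class group attached to $\infty$. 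Hence $\dim_{\F_p}\Cl(K)[p]=\dim_{\F_p}\Cl(K)/p\Cl(K)=d\bigl(G_{\O}^{S_\infty}(K)(p)\bigr)$.

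Next I would apply Proposition~\ref{prop:upper} with $k=\F_q(t)$, $S=\Ram(K/\F_q(t))$, $S'=\O$ and $T=S_\infty(\F_q(t))=\{\infty\}$. The hypotheses of Theorem~\ref{thm:pres}, which Proposition~\ref{prop:upper} inherits, all hold: $\Gal(K/k)$ is a finite $p$-group with $p\neq\Char(k)$ (as $p\nmid q$); $S$ and $T$ are finite, and $K\subseteq k_S^T(p)$ because $K/k$ is a pro-$p$ extension ramified only at primes in $S$ and, by assumption, split completely at $\infty$; and $G_{S'}^T(k)(p)=G_{\O}^{S_\infty}(\F_q(t))(p)=1$, which is precisely the vanishing already used in the proof of Corollary~\ref{cor:Fq(t)} (the rational function field has no nontrivial everywhere-unramified pro-$p$ extension splitting at $\infty$). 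Finally $[K:k]=[K:\F_q(t)]$ is finite.

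It then remains to simplify the set $R$ of Proposition~\ref{prop:upper} in this case. A function field has no archimedean (in particular no complex) places, and there is no set $S_p$ of places above $p$, so $S_\C=S_p=\O$; and since $\infty$ splits completely in $K/k$ it is unramified, so $S\setminus T=S$. Moreover every $\frakp\in\Ram(K/\F_q(t))$ satisfies $\delta_\frakp=1$: as recalled in the proof of Theorem~\ref{thm:pres-ST}, a prime outside $S_p\cup S_\infty$ can ramify in a pro-$p$ extension only when $\Nm(\frakp)\equiv 1\pmod p$. Therefore $R=\Ram(K/\F_q(t))$ and $R\cap S_p=\O$, so $\calT^{\circ}_\frakp(K/k)=\calT_\frakp(K/k)$ for every $\frakp\in R$, and Proposition~\ref{prop:upper} gives
\[
	d\bigl(G_{\O}^{S_\infty}(K)(p)\bigr)\ \leq\ \bigl(\#\Ram(K/\F_q(t))-1\bigr)[K:\F_q(t)]-\sum_{\frakp\in\Ram(K/\F_q(t))}\frac{[K:\F_q(t)]}{|\calT_\frakp(K/\F_q(t))|}+1 .
\]
Combining this with $\dim_{\F_p}\Cl(K)[p]=d(G_{\O}^{S_\infty}(K)(p))$ from the first step yields the asserted inequality.

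The only genuinely nontrivial input is the class-field-theoretic translation in the first step: one must make sure that $(G_{\O}^{S_\infty}(K)(p))^{\ab}$ is \emph{exactly} the pro-$p$ part of the finite class group appearing in the statement, i.e.\ that the ``split completely above $\infty$'' condition both kills the constant-field extension and realizes the $\infty$-normalized class group (rather than $\Pic^0$, which would give a weaker bound). Everything after that is bookkeeping: checking the hypotheses of Theorem~\ref{thm:pres}/Proposition~\ref{prop:upper} and computing $R$. Note that because $p\nmid q$ there are no wild or $p$-adic primes to treat separately, so, unlike the number-field Corollary~\ref{cor:Q-rank}, there is neither a $\calT^{\circ}_\frakp$-versus-$\calT_\frakp$ discrepancy nor any $\delta$-correction, which is why the function-field bound takes the cleaner form stated.
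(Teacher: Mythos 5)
Your proposal is correct and takes essentially the same route as the paper: both reduce the statement to $\dim_{\F_p}\Cl(K)[p]=d\bigl(G_{\O}^{S_\infty}(K)(p)\bigr)$ and then specialize Proposition~\ref{prop:upper} with $S=\Ram(K/\F_q(t))$, $S'=\O$, $T=S_\infty(\F_q(t))$. You simply spell out in more detail the hypothesis-checking and the simplification of $R$ (no archimedean or wild places, $\delta_\frakp=1$ for ramified $\frakp$, $\infty\notin S$) and the class-field-theory identification, all of which the paper leaves implicit in its one-line proof.
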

	
	\begin{proof}
		Note that $\dim_{\F_p} \Cl(K)[p] = \dim_{\F_p} \Cl(K)/p\Cl(K) = d(G_{\O}^{S_{\infty}}(K)(p))$. So the corollary follows from Lemma~\ref{prop:upper} by applying $S=\Ram(K/\F_q(t))$, $S'=\O$ and $T=S_{\infty}(\F_q(t))$.
	\end{proof}

\subsection{Lower bound}\label{SS:rank-lb}

	For a number field $K$, the  genus field of $K$ is the maximal extension of $K$ which is obtained by composing $K$ with an abelian extension of $\Q$ and which is unramified at all finite primes of $K$. Because all the abelian extensions of $\Q$ are classified by the Kronecker--Weber theorem, so one can explicitly write down the genus field and then give a lower bound of the class number of $K$. For a global field extension $K/k$ satisfying the conditions in Theorem~\ref{thm:pres}, we can apply the idea of the genus field to give a lower bound of $d(G_{S'}^T(K)(p))$.

	Consider the Frattini quotient map of $\calF_{K/k, S, S', T}$
	\[
		\psi:  \calF_{K/k, S, S', T} \longrightarrow \prod_{\frakp \in R\backslash S_p} C_p \, \times\,  \prod_{\frakp \in R\cap S_p} C_p^{\oplus d(\calT_{\frakp}^{\circ}(K/k))}.
	\]
	Denote $N:=\ker \psi \cap \ker \phi$ and assume $(S  \cap S_p)\backslash T= \O$ when $\mu_p \subset k$. Note that if $\frakp$ is ramified in the $p$-extension $K/k$ then $\Nm(\frakp)\equiv 1$ mod $p$. So the relators of the type \eqref{eq:pres-r-type-1} are all contained in the commutator subgroup of $\calF_{K/k, S, S', T}$. When $p=2$, the relators in \eqref{eq:pres-r-type-2} are in $\calF_{K/k, S, S', T}^2$. So all the relators in \eqref{eq:pres-r-type-1} and \eqref{eq:pres-r-type-2} are contained in $N$. 
	Therefore, $\ker(\ker \phi \to G_{S'}^T(K)(p))$ is contained in $N$, and hence
	\[
		d(G_{S'}^T(K)(p)) \geq \dim_{\F_p} \ker \phi/N
	\]
	Note that the above is an analogue of the genus theory for $\Q$, because, when $k=\Q$ and $S'=T=\O$, the extension of $\Q$ defined by the quotient $\calF_{K/k, S, S', T}/N$ is exactly the genus field of $K$.

	We can give a better lower bound for $d(G_{S'}^T(K)(p))$ by studying the structure of $\ker\phi/\Phi(\ker \phi)$ more carefully.
	 Denote $M:=[N, \calF_{K/k, S, S', T}] \Phi(\ker \phi)$.
	Then the extension
	\begin{equation}\label{eq:cge}
		1 \longrightarrow N/M \longrightarrow \calF_{K/k, S, S', T}/M \longrightarrow \calF_{K/k, S, S', T}/N \longrightarrow 1
	\end{equation}
	is a central extension and an extension of $\Gal(K/k)$ by an elementary $p$-group, and moreover, this extension group is maximal among all such quotients of $\calF_{K/k, S, S', T}$. We define
	\begin{equation}\label{eq:def-nu}
		\nu_{K/k, S, S', T}:=\min\left(0,  \dim_{\F_p} N/M - \#\{ \frakp \in S \backslash (T \cup S_{\C}) \mid \delta_{\frakp}=1\} \right).
	\end{equation}

	\begin{proposition}\label{prop:lower}
		With the assumptions and notation in Theorem~\ref{thm:pres}\eqref{item:pres-2}, if $[K:k]$ is finite, then we have the following lower bound for the generator rank of $G_{S'}^T(K)(p)$
		\[
			d(G_{S'}^T(K)(p)) \geq \#R\backslash S_p+\left(\sum_{\frakp \in R \cap S_p} d(\calT^{\circ}_{\frakp}(K/k))\right) - d(\Gal(K/k)) +\nu_{K/k, S, S', T}.
		\]
	\end{proposition}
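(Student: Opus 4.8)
\emph{Proof sketch / plan.} Write $\calF := \calF_{K/k, S, S', T}$, $G := \Gal(K/k)$, $H := G_{S'}^T(K)(p)$, and $\rho := \#\{\frakp \in S\setminus(T\cup S_{\C}) \mid \delta_\frakp = 1\}$, which is an upper bound for the number of local relators of types \eqref{eq:pres-r-type-1}--\eqref{eq:pres-r-type-2}. By Theorem~\ref{thm:pres}\eqref{item:pres-2}, $H = \ker\phi / J$, where $J$ is the closed normal subgroup of $\calF$ generated by those relators; since each such relator already lies in $N \subseteq \ker\phi$ (as recalled before the statement), we have $J\subseteq\ker\phi$, so $H$ is a quotient of the finitely generated pro-$p$ group $\ker\phi$ (finitely generated because it is open in $\calF$). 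The plan is to set $V := \ker\phi/\Phi(\ker\phi)$ --- a finite-dimensional $\F_p$-vector space with $\dim_{\F_p} V = d(\ker\phi)$, which is a $G$-module via conjugation (the $\ker\phi$-action being trivial, as $[\ker\phi,\ker\phi]\subseteq\Phi(\ker\phi)$) --- to observe that $d(H) = \dim_{\F_p} V - \dim_{\F_p}\bar R$, where $\bar R\subseteq V$ is the $\F_p[G]$-submodule generated by the images $\bar r_\frakp$ of the relators, and then to bound $\dim_{\F_p}\bar R$ from above by locating $\bar R$ among the $G$-submodules $M/\Phi(\ker\phi)\subseteq N/\Phi(\ker\phi)\subseteq V$.

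I would first record the dimensions that enter. As $\phi$ is surjective, $\phi(\Phi(\calF)) = \Phi(G)$, so the induced map $\calF/\Phi(\calF)\to G/\Phi(G)$ is surjective with kernel $\ker\phi\,\Phi(\calF)/\Phi(\calF)\cong\ker\phi/N$; hence
\[
	\dim_{\F_p} V - \dim_{\F_p}\bigl(N/\Phi(\ker\phi)\bigr) = \dim_{\F_p}\ker\phi/N = d(\calF) - d(G),
\]
and $d(\calF) = \#(R\setminus S_p) + \sum_{\frakp\in R\cap S_p} d(\calT^{\circ}_{\frakp}(K/k))$. Writing $\bar N := N/\Phi(\ker\phi)$, the image of $[\calF, N]$ in $V$ equals $I_G\bar N := \sum_{g\in G}(g-1)\bar N$ (because $[f,n]$ maps to $(\bar f - 1)\bar n$ under the conjugation action), so the definition $M = [N,\calF]\Phi(\ker\phi)$ gives $M/\Phi(\ker\phi) = I_G\bar N$, and therefore $N/M = \bar N/I_G\bar N$, i.e. $\dim_{\F_p}\bigl(\bar N/(M/\Phi(\ker\phi))\bigr) = \dim_{\F_p} N/M$.

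Next I would bound $\bar R$. Each relator maps to $0$ in $\calF/\Phi(\calF)$ --- a type-\eqref{eq:pres-r-type-1} relator because $\Nm(\frakp)\equiv 1\pmod p$, a type-\eqref{eq:pres-r-type-2} relator because $p = 2$ --- and lies in $\ker\phi$, so it lies in $\Phi(\calF)\cap\ker\phi = N$; thus $\bar r_\frakp\in\bar N$ and $\bar R\subseteq\bar N$. Consequently $I_G\bar R\subseteq I_G\bar N = M/\Phi(\ker\phi)$, whence $\bar R\subseteq\langle\bar r_\frakp\rangle_{\F_p} + M/\Phi(\ker\phi)$; so $\bar R/\bigl(\bar R\cap M/\Phi(\ker\phi)\bigr)$ embeds into $N/M$ and is spanned by at most $\rho$ vectors, giving
\[
	\dim_{\F_p}\bar R \le \dim_{\F_p}\bigl(M/\Phi(\ker\phi)\bigr) + \min(\rho, \dim_{\F_p} N/M) = \bigl(\dim_{\F_p}\bar N - \dim_{\F_p} N/M\bigr) + \min(\rho, \dim_{\F_p} N/M).
\]
Combining with $d(H) = \dim_{\F_p} V - \dim_{\F_p}\bar R$ and the first display yields
\[
	d(H) \ge \bigl(d(\calF) - d(G)\bigr) + \dim_{\F_p} N/M - \min(\rho, \dim_{\F_p} N/M) = \bigl(d(\calF) - d(G)\bigr) + \max\bigl(0, \dim_{\F_p} N/M - \rho\bigr),
\]
which is $\ge (d(\calF) - d(G)) + \nu_{K/k, S, S', T}$, the asserted bound.

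I expect the part needing the most care to be the second paragraph: the two bookkeeping identities $\dim_{\F_p}\ker\phi/N = d(\calF) - d(G)$ (a Frattini comparison, parallel to the Kurosh computation used for Proposition~\ref{prop:upper}) and $M/\Phi(\ker\phi) = I_G\bar N$, the latter being exactly what makes $N/M$, rather than all of $\bar N$, the part of $V$ that the relators can kill. The hypotheses inherited from Theorem~\ref{thm:pres}\eqref{item:pres-2} --- no prime over $p$ outside $T$ when $\mu_p\subset k$, and $\B_{S'\setminus T}^{S'\cup T}(k,\F_p) = 0$ --- are used only to make the presentation of $H$ by the relators \eqref{eq:pres-r-type-1}--\eqref{eq:pres-r-type-2} available and to avoid the Demu\v{s}kin local Galois groups, for which no such explicit relators are at hand.
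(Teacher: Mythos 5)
Your proof is correct and follows essentially the same route as the paper's. You work in the Frattini quotient $V = \ker\phi/\Phi(\ker\phi)$ and bound the dimension of the submodule $\bar R$ generated by the relator images from above; the paper instead argues directly with the central extension $1\to N/M\to\calF_{K/k,S,S',T}/M\to\calF_{K/k,S,S',T}/N\to 1$, observing that the normal closure of each relator contributes at most a $C_p$ to $N/M$, so that the cokernel of $\ker(\ker\phi\to G_{S'}^T(K)(p))\to N/M$ has dimension at least $\dim_{\F_p}N/M$ minus the number of relators. These are the same dimension counts read off in two equivalent ways, and both rest on exactly the two bookkeeping identities you isolate: the genus baseline $\dim_{\F_p}\ker\phi/N = d(\calF_{K/k,S,S',T}) - d(\Gal(K/k))$, and the identification of $M/\Phi(\ker\phi)$ with the augmentation-ideal image of $N/\Phi(\ker\phi)$.

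One thing your computation makes visible and is worth flagging: you arrive at the correction $\max\bigl(0,\dim_{\F_p}N/M-\rho\bigr)$, with $\rho = \#\{\frakp\in S\setminus(T\cup S_{\C})\mid\delta_{\frakp}=1\}$, not $\min$. The printed definition \eqref{eq:def-nu} reads $\min$, but that is almost certainly a typo for $\max$: with $\min$ the proposition would give a bound no stronger than the genus baseline it is explicitly advertised as improving on, and the analogous correction terms in Theorem~\ref{thm:lb-multiquad} are written with $\max$. Your argument in fact proves the stronger, intended form of the bound (which of course implies the statement as literally printed).
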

	
	\begin{proof}
		The relators in \eqref{eq:pres-r-type-1} and \eqref{eq:pres-r-type-2} are exactly for the primes in $\{ \frakp \in S \backslash (T \cup S_{\C}) \mid \delta_{\frakp}=1\}$. Because \eqref{eq:cge} is central with elementary abelian-$p$ kernel, the image in $\calF_{K/k, S, S', T}/M$ of the normal subgroup of $\calF_{K/k, S, S', T}$ generated by each relator is either the trivial subgroup or a subgroup of $N/M$ isomorphic to $C_p$. So the cokernel of the projection map from $\ker(\ker \phi \to G_{S'}^T(K)(p))$ to $N/M$ has dimension at least $\nu_{K/k, S, S', T}$. 
		Then the inequality in the proposition follows by
		\begin{eqnarray*}
			&& \dim_{\F_p} \ker(\calF_{K/k, S, S', T}/N \to \Gal(K/k))\\
			&=& d(\calF_{K/k, S, S', T}) - d(\Gal(K/k)) \\
			&=& \#R\backslash S_p+\left(\sum_{\frakp \in R \cap S_p} d(\calT^{\circ}_{\frakp}(K/k))\right) - d(\Gal(K/k)).
		\end{eqnarray*}
	\end{proof}
	
	One could repeat the procedure above by taking the maximal elementary abelian-$p$ central extension of $\calF_{K/k, S, S', T}/M$ under $\calF_{K/k, S, S', T}$ to improve lower bound. However, how much the improvement could be depends on the structure of $\ker \phi / \Phi(\ker \phi)$ (as an $\F_p[\Gal(K/k)]$-module). For example, in Section~\ref{S:Cyclic}, we discuss the case that $k=\Q$, $S'=T=\O$ and $K/k$ is cyclic, and show that the lower bound is actually given by the genus field of $K$. In Section~\ref{S:multiquad}, we discuss the case that $k=\Q$, $S'=T=\O$ and $K/k$ is multiquadratic, and we give a lower bound much better than the one in Proposition~\ref{prop:lower}.

\section{$p$-rank of class groups of $C_{p^d}$-fields}\label{S:Cyclic}

	In this section and the next, we focus on tamely ramified extensions of $\Q$ of Galois group $C_{p^d}$ and $(C_2)^{\oplus d}$, and study the rank of $\Cl(K)[p]$ and $\Cl(K)[2]$ respectively when given a prescribed ramification type.

	\begin{definition}\label{def:not}
		Let $\Gamma$ be a finite group, $n$ a positive integer, $I_i$ for $i \in \{1,\ldots, n\}$ nontrivial subgroups of $\Gamma$ such that all of their conjugates generate $\Gamma$, and $I_{\infty}$ a subgroup of $\Gamma$ of order 1 or 2. Then we say a Galois extension $K/\Q$ is of ramification type $(\Gamma, n, \{I_i\}_{i=1}^n, I_{\infty})$ if there exist distinct nonarchimedean primes $\frakp_1, \ldots, \frakp_n$ of $\Q$ such that 
		\begin{itemize}
			\item $K/\Q$ is of Galois group $\Gamma$ and tamely ramified,
			\item $K/\Q$ is unramified at any nonarchimedean primes outside $\{\frakp_1, \ldots, \frakp_n\}$, and
			\item there exists an isomorphism $\Gal(K/\Q) \simeq \Gamma$ such that: for each $i$ one of the inertia subgroups at $\frakp_i$ is mapped to $I_i$, and one of the inertia subgroups at the archimedean prime is mapped to $I_{\infty}$. (Here we do not require the inertia subgroup that mapped to $I_i$ is the one at the distinguished prime.)
		\end{itemize}
		We let $\calS(\Gamma, n, \{I_i\}_{i=1}^n, I_{\infty})$ denote the set of all fields $K/\Q$ of ramification type $(\Gamma, n, \{I_i\}_{i=1}^n, I_{\infty})$.
	\end{definition}
	
	In the definition, we require that $I_i$, $1\leq i \leq n$ and their conjugates generate $\Gamma$, because otherwise the set $\calS(\Gamma, n, \{I_i\}_{i=1}^n, I_{\infty})$ is empty as $\Q$ does not have any nontrivial Galois extension that is unramified at every finite prime. 
	Given a ramification type, we consider a homomorphism
	\begin{equation}\label{eq:const-pi}
		\pi: \Conv_{i=1}^n I_i \longrightarrow \Gamma
	\end{equation}
	defined by mapping the component $I_i$ identically to the subgroup $I_i \subset \Gamma$. Then for any $K \in \calS(\Gamma, n, \{I_i\}_{i=1}^n, I_{\infty})$, the isomorphism $\Gal(K/\Q)\simeq \Gamma$ identifies $\pi$ with the surjection $\phi$ associated to $K/\Q$ defined in Theorem~\ref{thm:pres}. As we discussed in Section~\ref{S:rank-general}, to study the rank of $\Cl(K)[p]$ (especially its lower bound), it is crucial to understand the module structure of $\ker \pi/ \Phi(\ker\pi)$. Then, since the ramification type already determines all the inertia subgroups, we need to understand all the possibilities of lifts of Frobenius elements in $\Conv I_i$, and use Corollary~\ref{cor:Q} to estimate the $p$-rank of the class group.

	We will study the case of $\Gamma=C_{p^d}$ in this section, and study the case of $\Gamma=(C_2)^{\oplus d}$ in Section~\ref{S:multiquad}.
	
\subsection{Module structure}\label{SS:ms-cyclic}

	Consider a ramification type $(\Gamma, n, \{I_i\}_{i=1}^n, I_{\infty})$ for the case that $\Gamma$ is a cyclic $p$-group, i.e. $\Gamma=C_{p^d}$.
	Without loss of generality, we assume $I_1=\Gamma$. 
	
	We let $\gamma$ denote a generator of $\Gamma$. For each $i$, we let $x_i$ denote a generator of $I_i$ and $a_i$ denote a generator of the cyclic $\F_p[\Gamma]$-module $\F_p[\Gamma]/ (\sum_{g \in I_i} g)$. Then we define a map
	\begin{eqnarray}
		\alpha: \Conv_{i=1}^n I_i & \longrightarrow & \left(\bigoplus_{i=2}^n \F_p[\Gamma] / (\sum_{g \in I_i} g) \right) \rtimes \Gamma \label{eq:alpha}\\
		\text{defined by}\quad x_1 &\longmapsto& (0, \gamma) \nonumber\\
		x_i &\longmapsto& (a_i, \pi(x_i)) \text{ for }i \geq 2. \nonumber
	\end{eqnarray}
	Because $a_i$ is annihilated by $\sum_{g\in I_i} g$, we have 
	\[
		\alpha(x_i)^{|I_i|}=(a_i, \pi(x_i))^{|I_i|} = \left(a_i + \pi(x_i)(a_i)+ \cdots + \pi(x_i^{|I_i|-1})(a_i), \pi(x_i^{|I_i|}) \right) =1,
	\] 
	so $\alpha$ is a well-defined homomorphism.
	
	\begin{lemma}\label{lem:cyc-structure}
		The homomorphism $\alpha$ is surjective and $\ker \alpha = \Phi(\ker\pi)$. So $\alpha$ induces an isomorphism
		\[
			\rho: \, \left( \Conv_{i=1}^n I_i \right)/\Phi(\ker \pi) \overset{\sim}{\longrightarrow}  \left(\bigoplus_{i=2}^n \F_p[\Gamma] / (\sum_{g \in I_i} g) \right) \rtimes \Gamma.
		\]
	\end{lemma}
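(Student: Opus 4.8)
The plan is to verify surjectivity of $\alpha$ directly and then identify $\ker\alpha$ with $\Phi(\ker\pi)$ by a dimension count, matching the generator rank of $\ker\pi$ (computed via the Kurosh formula) against the $\F_p$-dimension of the abelian part of the target. First I would check surjectivity. The image of $\alpha$ contains $\alpha(x_1)=(0,\gamma)$, and since $\Gamma$ fixes the zero element the powers of $(0,\gamma)$ exhaust $\{(0,g)\mid g\in\Gamma\}$. For each $i\geq 2$ the image contains $(a_i,\pi(x_i))$, hence also $(a_i,\pi(x_i))\,(0,\pi(x_i))^{-1}=(a_i,1)$; conjugating $(a_i,1)$ by $(0,g)$ produces $(g\cdot a_i,1)$, and as $g$ ranges over $\Gamma$ these elements $\F_p$-span the cyclic $\F_p[\Gamma]$-module generated by $a_i$, namely the whole $i$-th summand $\F_p[\Gamma]/(\sum_{g\in I_i}g)$. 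Multiplying such elements over $i=2,\dots,n$ yields all of $\bigl(\bigoplus_{i=2}^n\F_p[\Gamma]/(\sum_{g\in I_i}g)\bigr)\times\{1\}$, and together with $\{(0,g)\}$ this is the full semidirect product.

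Next I would reduce the equality $\ker\alpha=\Phi(\ker\pi)$ to a numerical identity. Composing $\alpha$ with the projection $A\rtimes\Gamma\to\Gamma$ recovers $\pi$ on the generators, so $\ker\alpha\subseteq\ker\pi$ and $\alpha$ restricts to a surjection $\ker\pi\twoheadrightarrow\bigoplus_{i=2}^n\F_p[\Gamma]/(\sum_{g\in I_i}g)$ with kernel $\ker\alpha$. Thus $\ker\pi/\ker\alpha$ is elementary abelian; since $\ker\alpha$ is open and normal in $\ker\pi$, this already gives $\Phi(\ker\pi)\subseteq\ker\alpha$. Equality then follows as soon as $d(\ker\pi)=\sum_{i=2}^n\dim_{\F_p}\F_p[\Gamma]/(\sum_{g\in I_i}g)$.

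For that numerical identity I would compute both sides. Since $\ker\pi$ is open of index $p^d$ in the free pro-$p$ product $\Conv_{i=1}^n I_i$, the Kurosh/Nielsen--Schreier formula for free pro-$p$ products (as used in the proof of Proposition~\ref{prop:upper}) gives
\[
d(\ker\pi)=\sum_{i=1}^n\Bigl(p^d-\tfrac{p^d}{|I_i|}\Bigr)-p^d+1=\sum_{i=2}^n\Bigl(p^d-\tfrac{p^d}{|I_i|}\Bigr),
\]
using $I_1=\Gamma$. On the module side, writing $I_i=\langle t\rangle$ of order $p^e$, in $\F_p[I_i]\cong\F_p[t]/((t-1)^{p^e})$ one has $\sum_{g\in I_i}g=(t-1)^{p^e-1}$, so $\F_p[I_i]/(\sum_{g\in I_i}g)$ has $\F_p$-dimension $|I_i|-1$; inducing up to $\Gamma$ (over which $\F_p[\Gamma]$ is free of rank $[\Gamma:I_i]$) yields $\dim_{\F_p}\F_p[\Gamma]/(\sum_{g\in I_i}g)=[\Gamma:I_i]\,(|I_i|-1)=p^d-\tfrac{p^d}{|I_i|}$. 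Summing over $i\geq 2$ matches $d(\ker\pi)$, so $\Phi(\ker\pi)=\ker\alpha$, and $\rho$ is the induced isomorphism.

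I expect the only real friction to be the bookkeeping in the surjectivity step — keeping the diagonal $\Gamma$-action on $\bigoplus_{i}\F_p[\Gamma]/(\sum_{g\in I_i}g)$ straight, together with the characteristic-$p$ reduction $\sum_{g\in I_i}g=(t-1)^{|I_i|-1}$ — rather than anything structurally deep; once $d(\ker\pi)$ and $\dim_{\F_p}A$ are seen to coincide, the identification of $\ker\alpha$ with the Frattini subgroup is automatic.
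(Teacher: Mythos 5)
Your proof is correct and follows essentially the same route as the paper: verify surjectivity of $\alpha$, observe $\pi=\beta\circ\alpha$ with $\ker\beta$ elementary abelian $p$ to get $\Phi(\ker\pi)\subseteq\ker\alpha$, then match $d(\ker\pi)$ (via the Kurosh subgroup theorem) against $\dim_{\F_p}\ker\beta$ to force equality. You simply spell out the surjectivity check and the dimension count $\dim_{\F_p}\F_p[\Gamma]/(\sum_{g\in I_i}g)=|\Gamma|-|\Gamma|/|I_i|$ in more detail than the paper does.
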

	
	\begin{proof}
		The surjectivity of $\alpha$ follows by the observation that $(0, \gamma)$ and all the $(a_i, \pi(x_i))$ generate the codomain of $\alpha$. By definition of $\alpha$, one can check that $\pi$ is the composition of $\alpha$ and the natural projection
		\[
			\beta:  \left(\bigoplus_{i=2}^n \F_p[\Gamma] / (\sum_{g \in I_i} g) \right) \rtimes \Gamma \longrightarrow \Gamma.
		\]
	
	Because $\ker \beta$ is an elementary abelian $p$-group, we have $\ker \alpha \supseteq \Phi(\ker \pi)$. On the other hand, by the Kurosh subgroup theorem, $\ker \pi$ is a free profinite group of rank 
	\begin{equation}\label{eq:rank-comp}
		\sum_{i=1}^n \left(|\Gamma| - \frac{|\Gamma|}{|I_i|}\right) - |\Gamma|+1= \sum_{i=2}^n \left( |\Gamma| - \frac{|\Gamma|}{|I_i|}\right).
	\end{equation}
	Then, since the $\F_p$-rank of $\ker \beta$ equals \eqref{eq:rank-comp}, we have that $\ker \alpha = \Phi(\ker \pi)$. 
	\end{proof}
	
\subsection{Lower bound}

	We consider the Frattini quotient map 
	\[
		\psi: \Conv_{i=1}^n I_i \longrightarrow \bigoplus_{i=1}^n C_p
	\]
	defined by modulo the Frattini subgroup $\Phi(\Conv_{i=1}^n I_i)$. The images $\psi(x_1), \psi(x_2), \ldots, \psi(x_n)$ give a basis of $\im \psi$.

	\begin{lemma}\label{lem:lb-cyclic}
		If $y_2, y_3, \ldots, y_n$ are elements in $\Conv_{i=1}^n I_i$ such that 
		\[
			\langle \psi(y_i) \rangle = \langle \psi(x_1) \rangle \text{ for each $i \geq 2$}
		\]
		as subgroups of $\im \psi$, then the images of 
		\[
			[x_i, y_i] \text{ for all $i\geq 2$}
		\]
		under the map $\alpha$ in \eqref{eq:alpha} generate the commutator subgroup of $\im \alpha$.
	\end{lemma}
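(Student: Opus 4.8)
The plan is to transport everything into the module language of Lemma~\ref{lem:cyc-structure} and finish with Nakayama's lemma. Set $R=\F_p[\Gamma]$; since $\Gamma=C_{p^d}$ is cyclic, $R\cong\F_p[T]/(T^{|\Gamma|})$ with $T=\gamma-1$, a local ring whose maximal ideal is $\mathfrak m=(T)=I_\Gamma$, the augmentation ideal. Write $M_i=R/(\textstyle\sum_{g\in I_i}g)$ for $i\ge 2$ and $M=\bigoplus_{i=2}^n M_i$, so that Lemma~\ref{lem:cyc-structure} identifies $\im\alpha=M\rtimes\Gamma$, with $a_i$ the image of $1$ in $M_i$. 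A direct computation of commutators in $M\rtimes\Gamma$ (both factors abelian), writing $M$ additively, gives $[(m_1,g_1),(m_2,g_2)]=\big((1-g_2)m_1-(1-g_1)m_2,\,1\big)$; in particular $[\im\alpha,\im\alpha]=(\gamma-1)M=I_\Gamma M\subseteq M$, and the conjugation action of $\im\alpha$ on this normal subgroup factors through $\Gamma$, so the subgroup of $\im\alpha$ normally generated by a subset of $I_\Gamma M$ is exactly the $R$-submodule it generates. Thus it suffices to prove that $\alpha([x_i,y_i])$, $2\le i\le n$, generate $I_\Gamma M$ as an $R$-module.

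First I would pin down $\alpha(y_i)=(b_i,\delta_i)$, where $\delta_i:=\pi(y_i)\in\Gamma$. The hypothesis $\langle\psi(y_i)\rangle=\langle\psi(x_1)\rangle$ means that in the Frattini quotient $\bigoplus_{j=1}^n C_p$ of $\Conv_{j=1}^n I_j$ the image of $y_i$ equals $k_i\psi(x_1)$ for some $k_i$ prime to $p$; equivalently the abelianization $\mathrm{ab}(y_i)\in\bigoplus_j I_j$ has first coordinate $\equiv k_i$ and all others $\equiv 0\pmod p$. Pushing this through the abelianization $M\rtimes\Gamma\to M_\Gamma\times\Gamma$ and using $\alpha(x_1)=(0,\gamma)$ and $\alpha(x_j)=(a_j,\pi(x_j))$, one reads off that the $M_\Gamma$-coordinate of $\alpha(y_i)$ is $\sum_{j\ge 2}(\mathrm{ab}(y_i))_j\,\bar a_j=0$ (each $\bar a_j$ has order $p$), hence $b_i\in I_\Gamma M$; and that the image of $\delta_i$ in $\Gamma/\Gamma^p$ is $\gamma^{k_i}$, hence $\delta_i$ is a generator of $\Gamma$.

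Next I would substitute into the commutator formula: $\alpha([x_i,y_i])=(1-\delta_i)a_i-(1-\gamma_i)b_i$, with $\gamma_i:=\pi(x_i)$ a generator of $I_i$. Since $\delta_i$ generates $C_{p^d}$, writing $\delta_i=(1+T)^s$ with $p\nmid s$ gives $1-\delta_i=-T u_i$ with $u_i\in R^\times$ (constant term $s$). One checks $M_i\cong\F_p[T]/(T^{N_i})$ with $N_i=|\Gamma|-|\Gamma|/|I_i|\ge 1$; if some $N_i=1$ then necessarily $|\Gamma|=2$, whence $I_\Gamma M=0$ and the lemma is vacuous, so assume $N_i\ge 2$ for all $i$, so that each $TM_i/T^2M_i$ is one-dimensional, spanned by $\overline{Ta_i}$. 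Then $(1-\delta_i)a_i\equiv sTa_i\pmod{T^2M}$, a generator of the $i$-th summand. For the cross term, $\gamma_i=(1+T)^{(|\Gamma|/|I_i|)v}$ with $p\nmid v$ and $|\Gamma|/|I_i|$ a $p$-power, so $1-\gamma_i=-T^{|\Gamma|/|I_i|}\cdot(\text{unit})\in TR$; combined with $b_i\in TM$ this gives $(1-\gamma_i)b_i\in T^2M$. Hence $\alpha([x_i,y_i])\equiv s_iTa_i\pmod{T^2M}$ with $s_i\ne 0$.

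Finally, $I_\Gamma M/\mathfrak m I_\Gamma M=TM/T^2M=\bigoplus_{i=2}^n TM_i/T^2M_i$ has basis $\{\overline{Ta_i}\}_{i\ge 2}$, and by the previous step the residues of the elements $\alpha([x_i,y_i])$ are $s_i\overline{Ta_i}$ with all $s_i\ne 0$, hence also a basis. Nakayama's lemma for the finitely generated module $I_\Gamma M$ over the local ring $R$ then shows that the $\alpha([x_i,y_i])$ generate $I_\Gamma M=[\im\alpha,\im\alpha]$, as desired. The step I expect to be the most delicate is the second paragraph: because $\alpha$ only factors through $\Phi(\ker\pi)$ and not through the full Frattini subgroup of $\Conv_j I_j$ on which $\psi$ is defined, the hypothesis on $\psi(y_i)$ must be routed through the abelianization of $M\rtimes\Gamma$ to recover simultaneously that $\pi(y_i)$ generates $\Gamma$ and that $b_i\in I_\Gamma M$ — and it is precisely the latter fact that makes the cross terms $(1-\gamma_i)b_i$ vanish modulo $T^2M$, which is what the Nakayama step needs.
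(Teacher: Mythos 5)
Your proof is correct and takes essentially the same route as the paper's: reduce modulo the radical (your $T^2M$, the paper's $\Rad(\ker\ab)$, which coincide since $\ker\ab=\bigoplus_i\ker\psi_i=TM$), use the hypothesis on $\psi(y_i)$ to conclude that $\pi(y_i)$ generates $\Gamma$ and that the $M$-coordinate of $\alpha(y_i)$ lies in $TM$, compute the commutator so that the $b_i$ cross-term dies modulo the radical, and finish with Nakayama. The only stylistic difference is that you make the local-ring picture $R\cong\F_p[T]/(T^{|\Gamma|})$, $M_i\cong\F_p[T]/(T^{N_i})$ completely explicit and therefore see directly that $(1-\gamma_i)b_i\in T^2M$, whereas the paper packages the same vanishing as centrality of $b_i$ in the quotient $\im\alpha/\Rad(\ker\ab)$; you also spell out why the hypothesis on $\psi(y_i)$ forces $b_i\in TM$ and $\delta_i$ a generator, which the paper states without proof.
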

	
	\begin{proof}
		We let 
		\[
			\ab: \im \alpha =\left( \bigoplus_{i=2}^n \F_p[\Gamma]/ (\sum_{g \in I_i} g) \right) \rtimes \Gamma \longrightarrow \bigoplus_{i=2}^n C_p \times \Gamma
		\]
		denote the abelianization map.
		For each $2 \leq i \leq n$, we denote $M_i:= \F_p[\Gamma]/(\sum_{g \in I_i} g)$, and let 
		\[
			\psi_i:  M_i \longrightarrow \F_p
		\]
		denote the natural projection of $\Gamma$-modules (this projection is unique up to an isomorphism of $\F_p$). Then we have 
		\[
			\ker \ab= \bigoplus_{i=2}^n \ker \psi_i.
		\]
		
		Note that the kernel of the natural projection $\F_p[\Gamma] \to \F_p$ is generated by the group ring element $1-\gamma$, and is the unique maximal ideal of the ring $\F_p[\Gamma]$. Because $M_i$ is a quotient of $\F_p[\Gamma]$, we have that $\ker \psi_i$ is generated by the image of $1-\gamma$, so it is a cyclic module. Then, since $\Gamma$ is a $p$-group, the only simple $\F_p[\Gamma]$-module is $\F_p$, so 
		\[
			\ker \ab/\Rad(\ker \ab) = \left(\bigoplus_{i=2}^n \ker \psi_i \right)/ \Rad \left(\bigoplus_{i=2}^n \ker \psi_i \right)= \bigoplus_{i=2}^n \ker \psi_i / \Rad( \ker \psi_i)= \bigoplus_{i=2}^n \F_p
		\] 
		where $\Rad$ is for the radical of a module. By the Nakayama lemma, to show that the images of $[x_i, y_i]$ generate $\ker \ab$, it suffices to show that their images generate $\ker \ab / \Rad(\ker \ab)$.
		Thus, we consider the following quotient map
		\[
			\overline{\alpha}: \Conv_{i=2}^n I_i \longrightarrow \im \alpha / \Rad(\ker \ab) = \im \alpha / \bigoplus_{i=2}^n \Rad( \ker \psi_i) = \left( \bigoplus_{i=2}^n M_i / \Rad(\ker \psi_i) \right) \rtimes \Gamma
		\]
		and we want to show that the images of $[x_i, y_i]$ generate the commutator subgroup of the right-hand side. Note that 
		\[
			1 \longrightarrow \ker \ab/ \Rad(\ker \ab) \longrightarrow \im\overline{\alpha}  \longrightarrow \im \ab \longrightarrow 1
		\]
		is a central group extension.
		
		For $i\geq 2$, let $\overline{a}_i$ denote the image of $a_i \in M_i$ in $M_i /\Rad(\ker \psi_i)$, and we write $(b_i, \sigma_i)$ for $\overline{\alpha}(y_i)$, with $b_j \in \oplus_{j=2}^n M_j/\Rad( \ker \psi_j)$ and $\sigma_i \in \Gamma$. The condition $\langle \psi(y_i) \rangle = \langle \psi(x_1) \rangle$ implies that 
		\begin{enumerate}[label=(\roman*)]
			\item $b_i$ belongs to $\ker \ab / \Rad (\ker \ab)$, and
			\item\label{item:cond-gen} $\sigma_i$ is a generator of $\Gamma$.
		\end{enumerate}
		So we have
		\begin{eqnarray*}
			\overline{\alpha}([x_i, y_i]) &=& [(\overline{a}_i, \pi(x_i)), (b_i, \sigma_i)] \\
			&=& \left(\overline{a}_i -\sigma_i(\overline{a}_i) -b_i +\pi(x_i)(b_i), 1 \right) \\
			&=& \left( \overline{a}_i - \sigma_i(\overline{a}_i), 1 \right),
		\end{eqnarray*}
		where the second equality follows by the multiplication rule of wreath products, and the last equality uses the fact that $b_i \in \ker \ab/ \Rad(\ker \ab)$ is in the center of $\im \overline{\alpha}$.
		Because $a_i$ is a generator of $M_i$, $\overline{a}_i$ is a generator of $M_i/ \Rad(\ker \psi_i)$, so it follows by \ref{item:cond-gen} that $\overline{\alpha}([x_i, y_i])$ generates the component 
		$\ker \psi_i / \Rad(\ker \psi_i)$ of $\ker \ab/\Rad(\ker \ab)$.
		Since the above is true for all $2\leq i \leq n$, we see that all $\overline{\alpha}([x_i, y_i])$ for $2\leq i \leq n$ generate $\ker \ab / \Rad (\ker \ab)$. So we finish the proof by the argument in the preceding paragraph.
	\end{proof}
	
	For a finite prime $\frakp$ of $\Q$, by class field theory, if $\Nm(\frakp)\equiv 1 \mod p^m$, then there exists a unique field $E_{p^m}(\frakp)$ such that $\Gal(E_{p^m}(\frakp)/\Q)\cong C_{p^m}$ and $\Ram^f(E_{p^m}(\frakp)/\Q)=\{\frakp\}$. 
	
	\begin{lemma}\label{lem:recip}
		Let $\frakp$ and $\frakq$ be two finite primes of $\Q$, and assume $\Nm(\frakp)\equiv 1 \mod p$. Then $\frakq$ splits completely in $E_{p}(\frakp)/\Q$ if and only if $\frakp$ splits completely in $\Q(\mu_p, \sqrt[p]{\frakq})$.
	\end{lemma}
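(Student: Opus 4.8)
The plan is to show that both splitting conditions are equivalent to the single arithmetic condition that $\Nm(\frakq)$ is a $p$-th power modulo $\Nm(\frakp)$. Since $\Nm(\frakp)\equiv 1\pmod p$ already forces $\frakp\nmid p$, let $\ell$ and $q$ denote the rational primes lying under $\frakp$ and $\frakq$. I would first dispatch the degenerate case $\frakp=\frakq$: then $\frakq$ ramifies in $E_p(\frakp)$, while $\frakp=\ell$ ramifies in $\Q(\sqrt[p]{q})\subseteq\Q(\mu_p,\sqrt[p]{q})$, so both sides of the claimed equivalence are false. Hence I may assume $\frakp\neq\frakq$ from now on.

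For the extension $E_p(\frakp)$, I would argue that $E_p(\frakp)\subseteq \Q(\mu_\ell)$. It is abelian over $\Q$, hence cyclotomic by Kronecker--Weber, and its conductor is supported at $\ell$ (and possibly the infinite place, which matters only when $p=2$); tameness at $\ell$, which holds because $\ell\neq p$, forces the $\ell$-exponent of the conductor to equal $1$. So $E_p(\frakp)$ is the unique degree-$p$ subextension of $\Q(\mu_\ell)/\Q$, namely the fixed field of $\bigl((\Z/\ell)^{\times}\bigr)^{p}$ inside $\Gal(\Q(\mu_\ell)/\Q)\cong(\Z/\ell)^{\times}$ (which is cyclic of order $\ell-1$, divisible by $p$). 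As $\frakq$ is unramified in $\Q(\mu_\ell)$, the prime $\frakq$ splits completely in $E_p(\frakp)$ exactly when $\Frob_\frakq$ lands in that index-$p$ subgroup, i.e.\ when $q\bmod\ell$ is a $p$-th power in $(\Z/\ell)^{\times}$.

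For the extension $\Q(\mu_p,\sqrt[p]{q})$: since $\ell\equiv 1\pmod p$, the prime $\frakp$ splits completely in $\Q(\mu_p)/\Q$, and the completion of $\Q(\mu_p)$ at any prime $\frakl$ above $\frakp$ is $\Q_\ell$. Because $\Q(\mu_p,\sqrt[p]{q})/\Q$ is Galois, $\frakp$ splits completely in it if and only if $\frakl$ splits completely in the Kummer extension $\Q(\mu_p,\sqrt[p]{q})/\Q(\mu_p)$, if and only if $q\in(\Q_\ell^{\times})^{p}$. Since $\ell\nmid q$ and $\ell\neq p$, Hensel's lemma applied to $x^{p}-q$ (whose derivative is a unit at any root in $\Z_\ell$) shows $q\in(\Q_\ell^{\times})^{p}$ if and only if $q\bmod\ell$ is a $p$-th power in $\F_\ell^{\times}$. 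This matches the condition found above, so the two splitting statements are equivalent.

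I do not expect a serious obstacle here; the one place deserving explicit care is the reduction $E_p(\frakp)\subseteq\Q(\mu_\ell)$, in particular noting that when $p=2$ the field $E_2(\frakp)$ may be ramified at $\infty$ without this affecting its finite conductor, so the cyclotomic identification still holds uniformly in $p$. The remainder is a routine assembly of Kronecker--Weber, the description of Frobenius in cyclotomic fields, and Hensel's lemma; one could instead frame the argument via the norm-residue symbol at $\ell$, but the cyclotomic computation seems the cleanest.
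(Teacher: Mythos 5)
Your proof is correct and takes essentially the same route as the paper: both arguments reduce the two splitting conditions to the statement that $q$ is a $p$-th power modulo $\ell$, the paper by invoking the Artin reciprocity law and the identification of the local completion of $\Q(\mu_p,\sqrt[p]{q})$ above $\ell$ with $\Q_\ell(\sqrt[p]{q})$, and you by unpacking the same content via Kronecker--Weber, the cyclotomic description of Frobenius, and Hensel's lemma. Your explicit handling of the degenerate case $\frakp=\frakq$ and of the $p=2$ archimedean conductor is additional care that the paper leaves implicit.
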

	
	\begin{proof}
		By the Artin reciprocity law, $\frakq$ splits completely in $E_{p}(\frakp)/\Q$ if and only if $\frakq$ is a $p$th power in $\F_\frakp$. The latter is equivalent to $\Q_\frakp(\sqrt[p]{\frakq})=\Q_{\frakp}$. By the assumption $\Nm(\frakp)\equiv 1 \mod p$, the local completion of $\Q(\mu_p, \sqrt[p]{\frakq})$ at $\frakp$ is $\Q_{\frakp}(\sqrt[p]{\frakq})$. Finally, note that $\frakp$ splits completely in $\Q(\mu_{p}, \sqrt[p]{\frakq})$ if and only if the local completion at $\frakp$ of the extension $\Q(\mu_{p}, \sqrt[p]{\frakq})/\Q$ is the trivial extension, so we finish the proof.
	\end{proof}

	\begin{theorem}\label{thm:lb-cyclic}
		Let $\Gamma$ be a cyclic $p$-group. For any ramification type $(\Gamma, n, \{I_i\}_{i=1}^n, I_{\infty})$ and any $K \in \calS(\Gamma, n, \{I_i\}_{i=1}^n, I_{\infty})$, we have
		\[
			\dim_{\F_p} \Cl(K)[p] \geq \begin{cases}
				n-2 & \text{if $p=2$, $n\geq 2$, $I_{\infty}=1$ and $|I_i|=2$ for some $i$},\\
				n-1 & \text{otherwise}.
			\end{cases}
		\]
		The equality holds for infinitely many fields in $\calS(\Gamma, n, \{I_i\}_{i=1}^n, I_{\infty})$.
	\end{theorem}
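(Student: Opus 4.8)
The plan is to turn both inequalities into linear algebra over the $\F_p[\Gamma]$-module $M:=\ker\pi/\Phi(\ker\pi)$, where $\pi\colon \calF:=\Conv_{i=1}^n I_i\to\Gamma$ is the homomorphism \eqref{eq:const-pi}; after reindexing we take $I_1=\Gamma$, which is possible because the subgroups of a cyclic $p$-group form a chain, so one of the $I_i$ must equal $\Gamma$. By Lemma~\ref{lem:cyc-structure}, $M\simeq\bigoplus_{i=2}^n M_i$ with $M_i=\F_p[\Gamma]/(\sum_{g\in I_i}g)$, and since $\F_p[\Gamma]\simeq\F_p[t]/(t^{|\Gamma|})$ is uniserial with $\sum_{g\in I_i}g=t^{\,|\Gamma|-|\Gamma|/|I_i|}$, each $M_i\simeq\F_p[t]/(t^{m_i})$ with $m_i=|\Gamma|-|\Gamma|/|I_i|\ge1$; in particular $\Rad(M)=tM=\bigoplus_{i=2}^n tM_i$ has codimension $n-1$ in $M$. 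Using $\dim_{\F_p}\Cl(K)[p]=\dim_{\F_p}\Cl(K)/p\Cl(K)$, for odd $p$ (so $I_\infty=1$ automatically) Corollary~\ref{cor:Q} with $S=\Ram(K/\Q)$, $S'=T=\O$ identifies this with $\dim_{\F_p}M/R_M$, where $R_M$ is the $\F_p[\Gamma]$-submodule generated by the images of the local relators $[x_{\frakp_i},y_{\frakp_i}]$; these lie in $[\calF,\calF]$, so their images lie in the commutator subgroup of $\im\alpha\simeq M\rtimes\Gamma$, which equals $(\gamma-1)M=tM$. Hence $R_M\subseteq tM$ and $\dim_{\F_p}\Cl(K)[p]\ge\dim_{\F_p}M/tM=n-1$. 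For $p=2$ with $I_\infty\simeq C_2$ the field $K$ is totally complex, so $K_{S_\infty}(2)=K_\emptyset(2)$ and $\Cl(K)=\Cl^+(K)$; the same computation applies (the extra relator $x_\infty^2$ maps to $2\bar x_\infty=0$ in $M$), giving $n-1$ again.

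\textbf{The $p=2$ totally real case.} Here $I_\infty=1$, so $x_\infty\in\ker\pi$, and by Remark~\ref{rmk:Q} one has $\Cl(K)/2\Cl(K)=M/(R_M+\F_2[\Gamma]\bar x_\infty)$ with $\bar x_\infty$ the image of $x_\infty$ in $M$. Since $\F_2[\Gamma]\bar x_\infty$ is cyclic and $R_M\subseteq tM$, this forces $\dim_{\F_2}\Cl(K)[2]\ge n-2$ unconditionally, and $\ge n-1$ whenever $\bar x_\infty\in tM$. I would then observe that $\bar x_\infty\in tM$ iff the image of $x_\infty$ in the cosocle $M/tM$ vanishes; via the five‑term sequence $M/tM$ embeds into $\calF^{\ab}\otimes\F_2\simeq\bigoplus_{i=1}^n\F_2$, and the image of $x_\infty$ there is the tuple $(\chi_i(c))_i$, where $c$ is complex conjugation and $\chi_i$ is the quadratic character of $\Q$ attached to the $i$-th factor — ramified only at $\frakp_i$ (and possibly $\infty$) and unramified at $2$ because $K_{S_\infty}(2)/\Q$ is. Thus $\chi_i$ cuts out $\Q(\sqrt{\frakp_i})$ when $\frakp_i\equiv1\pmod4$ and $\Q(\sqrt{-\frakp_i})$ when $\frakp_i\equiv3\pmod4$, so $\chi_i(c)\ne0$ exactly when $\frakp_i\equiv3\pmod4$. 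Since $|I_i|\ge4$ forces $|I_i|\mid\frakp_i-1$ and hence $\frakp_i\equiv1\pmod4$, we get $\bar x_\infty\in tM$ whenever every $|I_i|\ge4$, which yields the bound $n-1$ outside the exceptional regime and the bound $n-2$ in general.

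\textbf{Sharpness.} For the equality statement I would construct, for each ramification type, infinitely many $K$ with $R_M=tM$ — and, in the exceptional case, additionally with $\bar x_\infty\notin tM$. By Lemma~\ref{lem:lb-cyclic}, $R_M=tM$ holds once the Frobenius elements $s_{\frakp_i}(K/\Q)$ generate $\Gamma$ for $i=2,\dots,n$: one may then pick lifts $y_{\frakp_i}$ with $\langle\psi(y_{\frakp_i})\rangle=\langle\psi(x_1)\rangle$, since modifying a lift by $\ker\pi$ alters its image in $\calF^{\ab}\otimes\F_p$ by the hyperplane $\{\sum_j c_j=0\}$ and the obstruction to landing on the line $\langle\psi(x_1)\rangle$ is exactly the image of $s_{\frakp_i}$ in $\Gamma\otimes\F_p$. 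Such $K$ are built one prime at a time: fix $\frakp_1$ with $p^d\mid\frakp_1-1$, so that the $\frakp_1$-component of $K$ is $E_{p^d}(\frakp_1)$, and then choose $\frakp_2,\dots,\frakp_n$ successively so that $|I_i|\mid\frakp_i-1$, the character defining the $\frakp_i$-component has image of order exactly $|I_i|$, and the Frobenius of $\frakp_i$ relative to the other components generates $\Gamma$. The reciprocity Lemma~\ref{lem:recip} rewrites each condition ``$\frakp_j$ splits in $E_p(\frakp_i)$'' as ``$\frakp_i$ splits in $\Q(\mu_p,\sqrt[p]{\frakp_j})$'', so that all constraints can be imposed in one consistent order and satisfied by Chebotarev, with infinitely many admissible primes at each step. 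In the exceptional case one additionally requires some $\frakp_i$ with $|I_i|=2$ to satisfy $\frakp_i\equiv3\pmod4$, which by the previous paragraph makes $\bar x_\infty\notin tM$ and hence $\dim_{\F_2}\Cl(K)[2]=n-2$.

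\textbf{Main obstacle.} The hardest part is the sharpness construction: realizing a prescribed ramification type while simultaneously forcing all $n-1$ Frobenius elements to generate $\Gamma$ is a reciprocity ``juggling'' argument, because the Frobenius of each $\frakp_i$ depends on all the other chosen primes, and decoupling these conditions is precisely the role of Lemma~\ref{lem:recip}; one must also handle the degenerate subcase where several $I_i$ equal $\Gamma$, in which the ``Frobenius'' at a totally ramified prime is a free parameter fixed by the choice of $K$ rather than by a residue‑field computation, so generating $\Gamma$ must be arranged by choosing $K$ appropriately within its ramification type. A secondary technical point is the narrow‑versus‑wide class group bookkeeping in the $p=2$ totally real case, i.e.\ the precise identification of $\bar x_\infty$ with the archimedean genus character $(\chi_i(c))_i$.
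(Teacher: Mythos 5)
Your lower-bound argument is in the same spirit as the paper's (genus theory: all tame relators die in the abelianization, so $R_M\subseteq tM$), and the module-theoretic bookkeeping via the uniserial structure of $\F_p[\Gamma]$ is a reasonable, somewhat more explicit, packaging. Two small imprecisions there: when $I_\infty\simeq C_2$ the element $x_\infty$ is \emph{not} in $\ker\pi$ (its image in $\Gamma$ generates $I_\infty$), so ``$x_\infty^2$ maps to $2\bar x_\infty=0$ in $M$'' is not well-posed — the right statement is that $\alpha(x_\infty^2)=(1+\gamma^{|\Gamma|/2})m\in\Rad(M)$, which still lands in $tM$; and the embedding $M/tM\hookrightarrow\calF^{\ab}\otimes\F_p$ needs a dimension count rather than a bare appeal to the five-term sequence.

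The serious gap is in the sharpness argument. You write that $R_M=tM$ holds ``once the Frobenius elements $s_{\frakp_i}(K/\Q)$ generate $\Gamma$,'' because one may ``pick lifts $y_{\frakp_i}$ with $\langle\psi(y_{\frakp_i})\rangle=\langle\psi(x_1)\rangle$, since modifying a lift by $\ker\pi$ alters its image \dots.'' This misidentifies the freedom available. In Corollary~\ref{cor:Q}, $y_{\frakp_i}$ is chosen in $\phi^{-1}\bigl(s_{\frakp_i}(K_{S_\infty}(p)/\Q)\bigr)$, i.e.\ it lifts the Frobenius in the \emph{full} pro-$p$ tower group, so it is fixed only up to $\ker\phi$, not up to $\ker\pi$. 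And $\ker\phi\subseteq\Phi(\Conv I_i)=\ker\psi$ (all the local relators and $x_\infty^2$ lie in the Frattini subgroup), so $\psi(y_{\frakp_i})$ is \emph{uniquely determined} by the arithmetic of $\frakp_i$ — there is no lift to choose. The hypothesis $\langle\psi(y_i)\rangle=\langle\psi(x_1)\rangle$ of Lemma~\ref{lem:lb-cyclic} is therefore a concrete splitting condition (roughly: $\frakp_i$ must split completely in $E_p(\frakp_j)$ for $j\neq1$ and be inert in $E_p(\frakp_1)$), not something guaranteed by the much weaker requirement that the Frobenius in $\Gal(K/\Q)$ generate $\Gamma$. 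That weaker requirement only controls the image of $\psi(y_i)$ under $(\F_p)^n\to\Gamma/\Phi(\Gamma)$; with an arbitrary such $\psi(y_i)$ the auxiliary element $b_i$ in the proof of Lemma~\ref{lem:lb-cyclic} is no longer central modulo $\Rad(\ker\mathrm{ab})$, and the computation giving $\overline\alpha([x_i,y_i])=(\overline a_i-\sigma_i(\overline a_i),1)$ breaks down, so the relators need not generate $tM$. The paper closes this gap by an inductive Chebotarev selection with three conditions — splitting in $\Q(\mu_{|I_{i+1}|},\sqrt[p]{\frakp_1},\dots,\sqrt[p]{\frakp_i})$, splitting in $E_p(\frakp_j)$ for $2\le j\le i$, and inertness in $E_p(\frakp_1)$ — and Lemma~\ref{lem:recip} is used precisely to convert the first condition into the missing splitting statements for the previously chosen primes. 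Your construction imposes only ``the Frobenius of $\frakp_i$ relative to the other components generates $\Gamma$,'' which is not enough to trigger Lemma~\ref{lem:lb-cyclic}, so the claimed sharpness does not follow.
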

	
	\begin{proof}
		Without loss of generality, we assume $|I_1| \geq |I_2| \geq \ldots \geq |I_n|$, so $I_1=\Gamma$. First, it follows by class field theory that the set $\calS(\Gamma, n, \{I_i\}_{i=1}^n, I_{\infty})$ is non-empty. Given a field $K$ of this ramification type, by Corollary~\ref{cor:Q} and Remark~\ref{rmk:Q}, $\Gal(K_{\O}(p)/\Q)$ is the quotient of $\Conv_{i=1}^n I_i$ modulo relators at primes in $\Ram^f(K/\Q)$ and $S_{\R}(\Q)$. We note that
		\[
			\Nm(\frakp) \equiv 1 \mod |\calT_{\frakp}(K/\Q)| \text{ for all } \frakp \in \Ram^f(K/\Q).
		\]
		So the relator $(x_{\frakp})^{\Nm(\frakp)} y_{\frakp} x_{\frakp}^{-1} y_{\frakp}^{-1}$ at a nonarchimedean prime is contained in the commutator subgroup of $\Conv_{i=1}^n I_i$. The relator at the archimedean prime is $x_{\infty}$ if $I_{\infty}=1$ and is $x_{\infty}^2$ if $I_{\infty}\neq 1$. When $p=2$, $x_{\infty}^2$ is contained in the Frattini subgroup of $\Conv_{i=1}^n I_i$. Note that $(\Conv_{i=1}^n I_i)^{\ab} \simeq \prod_{i=1}^n I_i$ and each $I_i$ is cyclic, and the image of $x_{\infty}$ in this abelianization has order at most 2. If $p=2$ and $I_{\infty}=1$, the quotienting-out-$x_{\infty}$ action can reduce the generator rank by at most 1 only when $n\geq 2$ and $|I_i|=2$ for some $i$.
		So we see that 
		\begin{eqnarray*}
			&&\dim_{\F_p}\Gal(K_{\O}(p)/\Q)^{\ab}/(\Gal(K_{\O}(p)/\Q)^{\ab})^p \\
			&\geq&  \begin{cases}
				n-1 & \text{if $p=2$, $n\geq 2$, $I_{\infty}=1$ and $|I_i|=2$ for some $i$,}\\
				n & \text{otherwise.}
			\end{cases}
		\end{eqnarray*}
		Since $\Gal(K_{\O}(p)/\Q)^{\ab}$ corresponds to the maximal abelian extension of $\Q$ that is unramified over $K$ (this is called the genus field of $K$), the inequality in the theorem immediately follows. 
		
		In the rest of the proof, we will construct infinitely many fields for which the equality in the theorem holds. 
		
		{\bf Case I: $p$ is odd. }We first consider the case when $p$ is odd to avoid the complexity of ramification at the archimedean place. We will construct a set of prime numbers $\{\frakp_1, \ldots, \frakp_n\}$ by induction. First, pick a prime $\frakp_1$ of $\Q$ such that $\Nm(\frakp) \equiv 1 \mod |I_1|$. Assume we have $\frakp_1, \ldots, \frakp_i$ for some $1\leq i <n$. We pick a prime $\frakp_{i+1}$ satisfying the followings
		\begin{enumerate}[label=(\alph*)]
					\item\label{item:pcond-1} $\frakp_{i+1}$ splits completely in $\Q(\mu_{|I_{i+1}|}, \sqrt[p]{\frakp_1}, \ldots, \sqrt[p]{\frakp_i})/\Q$,
				\item\label{item:pcond-2} $\frakp_{i+1}$ splits completely in $E_p(\frakp_j)/\Q$ for any $2\leq j \leq i$, and  
			\item\label{item:pcond-3} $\frakp_{i+1}$ is completely inert in $E_{p}(\frakp_1)/\Q$ (i.e., the Frobenius element at $\frakp_{i+1}$ generates the whole Galois group of $E_{p}(\frakp_1)/\Q$).
		\end{enumerate}
		Note that $\frakp_{i+1}$ splits completely in $\Q(\mu_{|I_{i+1}|})/\Q$ if and only if $\Nm(\frakp_{i+1})\equiv 1 \mod |I_{i+1}|$, so the requirement \ref{item:pcond-1} implies that $E_{|I_{i+1}|}(\frakp_{i+1})$ (and hence $E_p(\frakp_{i+1})$) exists. Because the extensions in \ref{item:pcond-1}, \ref{item:pcond-2} and \ref{item:pcond-3} are pairwise disjoint, it follows by Chebotarev's density theorem that there exist infinitely many primes satisfying all the requirements above. 
		
		So we have a set $\{\frakp_1, \ldots, \frakp_n\}$. There exists a subfield $K$ of the composition field $\prod_{i=1}^n E_{|I_i|}(\frakp_i)$ defined by the following quotient map (i.e., $K$ is the subfield fixed by the kernel of the following map)
		\begin{equation}\label{eq:const-K}
			\Gal \left(\prod_{i=1}^n E_{|I_i|}(\frakp_i) /\Q \right) = \prod_{i=1}^n \Gal(E_{|I_i|}(\frakp_i)/ \Q) = \prod_{i=1}^n I_i \longrightarrow \Gamma,
		\end{equation}
		where the last arrow is defined by the embeddings of subgroup $I_i$ into $\Gamma$ for all $i$. One can check by our construction that $K$ is a field in $\calS(\Gamma, n, \{I_i\}_{i=1}^n, I_{\infty})$. Note that $K$ is totally real as $p$ is odd. So we have a quotient map
		\[
			\Conv_{i=1}^n I_i \overset{\phi}{\longrightarrow} \Gal(K_{\O}(p)/\Q) \longrightarrow \Gal(K/\Q),
		\]
		where the first arrow is the map $\phi$ in Corollary~\ref{cor:Q} and the second is the natural projection between Galois groups. 
		Let $y_i$ be as defined in Corollary~\ref{cor:Q}.
		Recall that we defined a quotient map $\psi$ at the beginning of this subsection. By Lemma~\ref{lem:recip} and the requirements \ref{item:pcond-1} and \ref{item:pcond-2}, for each $2\leq i \neq j \leq n$, we have $\frakp_i$ splits completely in $E_p(\frakp_j)$. This together with the requirement \ref{item:pcond-3} implies that 
		\[
			\langle \psi(y_i) \rangle = \langle \psi(x_1) \rangle \text{ for each } i \geq 2
		\] 
		as subgroups of $\im \psi$. So by Lemma~\ref{lem:lb-cyclic}, the images of $x_i^{\Nm(\frakp_i)} y_i x_i^{-1} y_i^{-1}$ for $2 \leq i \leq n$ under the map $\alpha$ generate the commutator subgroup of $\im \alpha$. Note that the image of $x_1^{\Nm(\frakp_1)} y_1 x_1^{-1} y_1^{-1}$ is obviously contained in the commutator subgroup of $\im \alpha$.
		Then, by Lemma~\ref{lem:cyc-structure} and Corollary~\ref{cor:Q}, we have $\dim_{\F_p}\Cl(K)[p]=n-1$. Finally, since there are infinitely many choices of the set $\{\frakp_1, \ldots, \frakp_n\}$, there are infinitely many $K$ for which the equality in the theorem holds. 
		
		{\bf Case II.1: $p=2$ and $|I_{\infty}|=2$.} Note that, if $2^m \parallel \Nm(\frakp)-1$, then $E_{2^m}(\frakp)$ is imaginary and $E_{2^{m-1}}(\frakp)$ is real. We just repeat the argument in Case I, and when choosing $\{\frakp_1, \ldots, \frakp_n\}$, we further require
		\[
			|I_1| \parallel \Nm(\frakp_1)-1 \quad \text{and} \quad |I_i| \not\parallel \Nm(\frakp_i)-1 \text{ for $i\geq 2$}.
		\]
		(These conditions can be rewritten as ``the Frobenius element at $\frakp_i$ in $\Gal(\Q(\mu_{2|I_i|})/\Q)$ generate a particular subgroup'', so the primes $\frakp_i$'s exist by Chebotarev's density theorem.)
		So $E_{|I_1|}(\frakp_1)$ is imaginary and $E_{|I_i|}(\frakp_i)$ is real for $i \geq 2$. Then the field $K$ defined by \eqref{eq:const-K} is imaginary, so is in $\calS(\Gamma, n, \{I_i\}_{i=1}^n, I_{\infty})$. Similarly to Case I, the images of $x_i^{\Nm(\frakp_i)} y_i x_i^{-1} y_i^{-1}$ for $1\leq i \leq n$ under $\alpha$ generate the commutator subgroup of $\im \alpha$. Since $x_{\infty}^2$ is contained in the Frattini subgroup of $\Conv_{i=1}^n I_i$, so modulo $x_{\infty}^2$ does not reduce the generator rank. Then the generator rank of $\Gal(K_{\O}(2)/\Q)$ is $n$, so $\dim_{\F_2} \Cl(K)[2]=n-1$.
		
		{\bf Case II.2: $p=2$ and $I_{\infty}=1$.} When $n=1$, the theorem is obvious. So we assume $n\geq 2$. When choosing $\{\frakp_1, \ldots, \frakp_n\}$, we additionally require
		\[
			|I_i| \parallel \Nm(\frakp_i)-1 \text{ for $i=1$ and $n$}, \quad \text{and} \quad |I_i| \not \parallel \Nm(\frakp_i)-1 \text{ for other $i$.}
		\]
		Then one can check that $K$ is totally real, and hence is in $\calS(\Gamma, n, \{I_i\}_{i=1}^n ,1)$. In the abelianization 
		\[
			I_1 \times I_2 \times \cdots \times I_{n-1} \times I_n
		\]
		of $\Conv_{i=1}^n I_i$, the image of $x_{\infty}$ generate the subgroup $C_2 \times 1 \times \cdots \times 1 \times C_2$. So modulo-$x_{\infty}$ reduces the generator rank if and only if $I_n$ is $C_2$. So we see that $\dim_{\F_2}\Cl(K)[2]$ achieves the lower bound given in the theorem.
	\end{proof}
	
\subsection{Upper bound}
	For a number field $k$ and $\frakp$ a prime of $\Q$, we denote
	\[
		k_{\{\frakp\}}^{\el} := \,\begin{aligned}&\text{the maximal elementary abelian $p$-extension} \\ &\text{of $k$ that is ramified at only primes above $\frakp$}. \end{aligned}
	\]

	\begin{lemma}\label{lem:for-wreath}
		Let $k/\Q$ be Galois such that $\Cl(k)[p]=1$, and let $e_1, \ldots, e_r$ denote a basis of $\calO_k^{\times}$. If $\frakp$ splits completely in $k(\mu_p, \sqrt[p]{e_1}, \ldots, \sqrt[p]{e_r})/\Q$, then there is an isomorphism
		\[
			\Gal(k^{\el}_{\{\frakp\}}/\Q) \simeq \F_p \wr \Gal(k/\Q)
		\]
		that is compatible with respect to the natural surjections from both sides to $\Gal(k/\Q)$.
	\end{lemma}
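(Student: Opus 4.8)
The plan is to identify $\Gal(k^{\el}_{\{\frakp\}}/k)$, as a $\Gal(k/\Q)$-module, with the regular representation $\F_p[\Gal(k/\Q)]$ via class field theory, and then to split the resulting extension of $\Gal(k/\Q)$ using the vanishing $H^2(\Gal(k/\Q),\F_p[\Gal(k/\Q)])=0$.

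Write $G:=\Gal(k/\Q)$, $n:=[k:\Q]$, and $L:=k(\mu_p,\sqrt[p]{e_1},\dots,\sqrt[p]{e_r})$. First I would extract the easy consequences of the hypothesis: since $k$ and $\Q(\mu_p)$ both lie in $L$, the prime $\frakp$ splits completely in $k/\Q$ and in $\Q(\mu_p)/\Q$, hence also in $F:=k(\mu_p)/\Q$; in particular $\frakp\neq p$, $\frakp\equiv 1\pmod p$, there are exactly $n$ primes $\frakP_1,\dots,\frakP_n$ of $k$ above $\frakp$, each with residue field $\F_\frakp$, and $G$ acts on $\{\frakP_1,\dots,\frakP_n\}$ simply transitively (the decomposition groups are trivial). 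Because each $\frakP_i$ is prime to $p$, any elementary abelian $p$-extension of $k$ unramified outside $\{\frakP_i\}$ and at infinity is tamely ramified and has conductor dividing $\frakm:=\prod_i\frakP_i$; hence class field theory gives a $G$-equivariant isomorphism $\Gal(k^{\el}_{\{\frakp\}}/k)\cong\Cl_{\frakm}(k)/p$, where $\Cl_{\frakm}(k)$ is the ray class group of modulus $\frakm$ (no archimedean part).

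Next I would run the ray class sequence $\calO_k^\times\to(\calO_k/\frakm)^\times\to\Cl_{\frakm}(k)\to\Cl(k)\to 1$ through $-\otimes_{\Z}\F_p$. Since $\Cl(k)$ is finite with $\Cl(k)[p]=1$, its order is prime to $p$, so the mod-$p$ reductions of $\Cl_{\frakm}(k)$ and of $Q:=\coker(\calO_k^\times\to(\calO_k/\frakm)^\times)$ coincide, whence $\Cl_{\frakm}(k)/p\cong\coker\big(\calO_k^\times\otimes\F_p\to(\calO_k/\frakm)^\times\otimes\F_p\big)$, $G$-equivariantly. The key step — the only place the splitting hypothesis enters — is to show this map is zero, i.e. that each $e_j$ is a $p$-th power in every residue field $\calO_k/\frakP_i=\F_\frakp$. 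Here I would pass up to $F=k(\mu_p)$: a prime $\frakq$ of $F$ above $\frakP_i$ has residue field $\F_\frakp$ (as $\frakp$ splits completely in $F/\Q$), and since $\frakp$ splits completely in $L\supseteq F(\sqrt[p]{e_j})$, the prime $\frakq$ splits completely in $F(\sqrt[p]{e_j})/F$; the Kummer splitting criterion (valid because $\mu_p\subset F$, $\frakq\nmid p$, and $e_j\in\calO_F^\times$) then yields $e_j\in(\calO_F/\frakq)^{\times p}=\F_\frakp^{\times p}$. Consequently $\Gal(k^{\el}_{\{\frakp\}}/k)\cong(\calO_k/\frakm)^\times\otimes\F_p\cong\bigoplus_{i=1}^{n}\big(\F_\frakp^\times\otimes\F_p\big)$, and since $\F_\frakp^\times\otimes\F_p\cong\F_p$ (as $p\mid\frakp-1$) while $G$ permutes these $n$ summands via its simply transitive action on $\{\frakP_i\}$, this module is the regular representation $\F_p[G]$.

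Finally, $k^{\el}_{\{\frakp\}}/\Q$ is Galois (its defining property is stable under $\Gal(\overline{\Q}/\Q)$), so we get an extension $1\to\F_p[G]\to\Gal(k^{\el}_{\{\frakp\}}/\Q)\to G\to 1$ whose conjugation action on the kernel is the module structure just found. Its class lies in $H^2(G,\F_p[G])$, which vanishes by Shapiro's lemma since $\F_p[G]\cong\Ind_1^G\F_p$; hence the extension splits, giving $\Gal(k^{\el}_{\{\frakp\}}/\Q)\cong\F_p[G]\rtimes G=\F_p\wr G$, and the splitting is compatible with the projections to $G$ by construction, as required. I expect the class field theory bookkeeping in the middle two paragraphs — pinning down the correct modulus, its $G$-stability, and the correct invocation of the Kummer splitting criterion after the (harmless) descent to $F=k(\mu_p)$ — to be the only real point needing care; everything else is formal.
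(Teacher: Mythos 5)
Your proof is correct and follows the same overall strategy as the paper — identify $\Gal(k^{\el}_{\{\frakp\}}/k)$ as the regular representation $\F_p[\Gal(k/\Q)]$, then split the resulting group extension using the Shapiro vanishing $H^2(\Gal(k/\Q), \F_p[\Gal(k/\Q)])=0$ — but you reach the module identification by a somewhat more elementary, self-contained route. The paper works cohomologically with Shafarevich's group $V_{\O}(k)$ (the Pontryagin dual of $\B_{\O}(k)$): the hypothesis $\Cl(k)[p]=1$ gives $\calO_k^\times/\calO_k^{\times p}\cong V_{\O}(k)$ via [NSW, Cor.\ (10.7.2)], the splitting hypothesis forces $V_{\{\frakp\}}(k)=V_{\O}(k)$, and then the exact sequence of [NSW, Lem.\ (10.7.4)(i)] collapses to a $\Gal(k/\Q)$-equivariant isomorphism $H^1(G_{\{\frakp\}}(k),\F_p)\cong\bigoplus_{\frakP\mid\frakp}H^1(\calT_{\frakP},\F_p)^{\calG_{\frakP}}\cong\F_p[\Gal(k/\Q)]$. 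You instead write the ray class sequence for modulus $\frakm=\prod_{\frakP\mid\frakp}\frakP$, tensor with $\F_p$ (where the torsion of $\Cl(k)$ drops out because $\Cl(k)[p]=1$), and show the unit image in $(\calO_k/\frakm)^\times\otimes\F_p$ vanishes via the Kummer splitting criterion — a Pontryagin-dual rendering of the same computation. The two are genuinely equivalent; the paper's phrasing keeps the argument inside the $V_S$/$\B_S$ formalism it uses throughout and imports equivariance directly from the cited lemma, while yours is a touch more direct and does not presuppose familiarity with that machinery.
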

	
	\begin{proof}
		Recall the group
		\[
			V_S(k):= \ker \left( k^{\times}/k^{\times p} \longrightarrow \prod_{\frakP \in S} k_{\frakP}^{\times}/k_{\frakP}^{\times p} \times \prod_{\frakP \not\in S} k_{\frakP}^{\times} / U_{\frakP} k_{\frakP}^{\times p} \right)
		\]
		which is $V_{S}^S(k)$ in \eqref{eq:def-V} and is the Pontryagin dual of $\B_S(k):=\B_S^S(k)$.
		It's clear that $V_{\{\frakp\}}(k)$ naturally embeds into $V_{\O}(k)$. By \cite[Cor.(10.7.2)]{NSW}, the assumption $\Cl(k)[p]=1$ implies that the embedding $\calO_{k}^{\times} \hookrightarrow k^{\times}$ induces an isomorphism
		\[
			\calO_k^{\times}/\calO_k^{\times p} \overset{\sim}{\longrightarrow} V_{\O}(k).
		\]
		So,
		\[
			V_{\{\frakp\}}(k) \simeq \ker \left( \calO_k^{\times} / \calO_k^{\times p} \longrightarrow \prod_{\frakP \mid \frakp} k_{\frakP}^{\times}/k_{\frakP}^{\times p} \right).
		\]
		The assumption that $\frakp$ splits completely in $k(\mu_p, \sqrt[p]{e_1}, \ldots, \sqrt[p]{e_r})/\Q$ implies that the image of the local completion map $\calO^{\times}_k \hookrightarrow k_{\frakP}^{\times}$ is contained in $k_{\frakP}^{\times p}$ for every $\frakP$ of $k$ lying above $\frakp$. So
		\[
			\calO_k^{\times}/ \calO_k^{\times p} \longrightarrow k_{\frakP}^{\times} / k_{\frakP}^{\times p}
		\]
		is the zero map. Therefore, we have $V_{\O}(k)=V_{\{\frakp\}}(k)$, and hence $\B_{\O}(k)=\B_{\{\frakp\}}(k)$.
		
		Then, by \cite[Lem.(10.7.4)(i)]{NSW}, there is a short exact sequence
		\[
			0 \longrightarrow H^1(G_{\O}(k), \F_p) \longrightarrow H^1(G_{\{ \frakp\}}(k), \F_p) \longrightarrow \bigoplus_{\frakP \mid \frakp} H^1(\calT_{\frakP}, \F_p)^{\calG_{\frakP}} \longrightarrow 0,
		\]
		and moreover, this short exact sequence is equivariant with respect to the conjugation action by $\Gal(k/\Q)$ (one can check it from the proof of \cite[Lem.(10.7.4)(i)]{NSW}). Since $\Cl(k)[p]=1$, the $p$-part of $\Cl(k)$ is trivial, and hence $H^1(G_{\O}(k), \F_p)$ is trivial. The assumption that $\frakp$ splits completely in $k(\mu_p)/\Q$ also implies that $\frakp \neq p$, $k_{\frakP}=\Q_{\frakp}$ for every $\frakP \mid \frakp$, and $\Nm(\frakp)\equiv 1 \mod p$. So $H^1(\calT_{\frakP}, \F_p) \simeq \F_p$ with trivial $\calG_{\frakP}$-actions, and thus we have the following isomorphism of $\Gal(k/\Q)$-modules
		\[
			\bigoplus_{\frakP \mid \frakp} H^1(\calT_{\frakP}, \F_p)^{\calG_{\frakP}}\simeq \F_p[\Gal(k/\Q)].
		\]
		So
		\[
			\Gal(k^{\el}_{\{\frakp\}}/k) = H^1(G_{\{ \frakp\}}(k), \F_p)^{\vee} \simeq \F_p[\Gal(k/\Q)].
		\]
		Finally, as $H^2(\Gal(k/\Q), \F_p[\Gal(k/\Q)])=0$ by the Shapiro's lemma, it follows by the bijection between $H^2$ and $\Ext$ (\cite[Thm.(1.2.4)]{NSW})
		that the following extension is split
		\[
			1 \longrightarrow \Gal(k^{\el}_{\{\frakp\}}/k) \longrightarrow \Gal(k^{\el}_{\{\frakp\}}/\Q) \longrightarrow \Gal(k/\Q) \longrightarrow 1.
		\]
		So the lemma follows.
	\end{proof}
	
	\begin{proposition}\label{prop:ub-cyclic}
		Let $\Gamma$ be a cyclic $p$-group. For any ramification type $(\Gamma, n, \{I_i\}_{i=1}^n, I_{\infty})$, there are infinitely many $K \in \calS(\Gamma, n, \{I_i\}_{i=1}^n, I_{\infty})$ such that 
		\[
			\dim_{\F_p} \Cl(K)[p] \geq \sum_{i=1}^n \frac{|\Gamma|}{|I_i|} -1.
		\]
	\end{proposition}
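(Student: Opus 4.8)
The plan is to construct the desired fields inside a compositum $\prod_{i=1}^{n} E_{|I_i|}(\frakp_i)$ of cyclic-conductor fields for suitable rational primes $\frakp_i$, exactly as in the proof of Theorem~\ref{thm:lb-cyclic}, but with the auxiliary Chebotarev splitting conditions now engineered to make the local relators as \emph{redundant} as possible rather than as independent as possible. We may assume $I_1 = \Gamma$ (since the subgroups of a cyclic group are totally ordered, the hypothesis that the $I_i$ generate $\Gamma$ forces one of them to equal $\Gamma$). For $K \in \calS(\Gamma, n, \{I_i\}_{i=1}^n, I_\infty)$, Corollary~\ref{cor:Q}, together with $\dim_{\F_p}\Cl(K)[p] = d(G_{\O}(K)(p)) = \dim_{\F_p}G_{\O}(K)(p)^{\ab}/p$, gives an isomorphism of $\F_p[\Gamma]$-modules
\[
  \Cl(K)/p\Cl(K)\;\cong\;\bigl(\ker\phi/\Phi(\ker\phi)\bigr)\big/\bigl\langle \bar r_1,\dots,\bar r_n\bigr\rangle_{\F_p[\Gamma]},
\]
where $\phi\colon\Conv_{i=1}^n I_i\to\Gamma$ is the map of \eqref{eq:const-pi}, $\bar r_i$ is the image of the $\frakp_i$-local relator, and the $\Gamma$-action is the (conjugation-induced) one. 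By Lemma~\ref{lem:cyc-structure}, $\ker\phi/\Phi(\ker\phi)\cong\bigoplus_{i=2}^n M_i$ with $M_i:=\F_p[\Gamma]/(\sum_{g\in I_i}g)$, of total dimension $\sum_{i=2}^n(|\Gamma|-|\Gamma|/|I_i|)$. Since $\sum_{i=1}^n|\Gamma|/|I_i|-1=\sum_{i=2}^n|\Gamma|/|I_i|$, it suffices to produce infinitely many $K$ with
\[
  \dim_{\F_p}\bigl\langle\bar r_1,\dots,\bar r_n\bigr\rangle_{\F_p[\Gamma]}\;\le\;(n-1)|\Gamma|-2\sum_{i=2}^n\frac{|\Gamma|}{|I_i|}.
\]

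Next I would compute the $\bar r_i$ explicitly. Taking $x_i$ to be a generator of the $i$-th factor $I_i$, whose order $|I_i|$ divides $\Nm(\frakp_i)-1$, one has $x_i^{\Nm(\frakp_i)}=x_i$, so the $\frakp_i$-local relator is simply the commutator $[x_i,y_i]$. Passing through the isomorphism $\rho$ of Lemma~\ref{lem:cyc-structure} and writing $\alpha(y_i)=(b_i,\sigma_i)$ (where $\sigma_i\in\Gamma$ is the image of the Frobenius lift $s_{\frakp_i}$ and $\alpha$ is the map of \eqref{eq:alpha}), a wreath-product computation gives $\bar r_i=(1-\sigma_i)a_i+(\pi(x_i)-1)b_i$ for $i\ge 2$ and $\bar r_1=(\gamma-1)b_1$, where $a_i$ is the chosen generator of $M_i$. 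The structural input is that $\F_p[\Gamma]\cong\F_p[u]/(u^{|\Gamma|})$ with $u=\gamma-1$ is a chain ring: every ideal is $\mm^j=(u^j)$, one has $\pi(x_i)-1\doteq u^{\,|\Gamma|/|I_i|}$ and $M_i\cong\F_p[u]/(u^{\,|\Gamma|-|\Gamma|/|I_i|})$, and the $\F_p$-dimension of the cyclic submodule generated by an element equals $|\Gamma|$ minus its $u$-adic valuation. Writing $c_i:=|\Gamma|/|I_i|$, it therefore suffices to arrange that $\bar r_1=0$ and that, for $i\ge 2$, the $M_i$-component of $\bar r_i$ has $u$-valuation $\ge 2c_i$ (with the remaining components correspondingly deep); then $\dim_{\F_p}\F_p[\Gamma]\bar r_i\le|\Gamma|-2c_i$ for $i\ge2$, and the displayed bound follows, modulo a routine check that the spans $\F_p[\Gamma]\bar r_i$ overlap only in high radical powers.

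The construction of the primes would be an inductive Chebotarev argument in the style of Theorem~\ref{thm:lb-cyclic}: choose $\frakp_1$ with $\Nm(\frakp_1)\equiv 1\pmod{|\Gamma|}$ (and, if convenient, with $E_{|\Gamma|}(\frakp_1)$ of $p$-class number prime to $p$), and then, having fixed $\frakp_1,\dots,\frakp_{i-1}$, choose $\frakp_i$ with $\Nm(\frakp_i)\equiv 1\pmod{|I_i|}$ and splitting completely in an appropriate compositum: of the fields $E_{|I_j|}(\frakp_j)$ for $j<i$ (forcing the decomposition group of $\frakp_i$ to be as small as possible, so that $\sigma_i$ lies deep in $\Gamma$, i.e. $1-\sigma_i\in\mm^{2c_i}$, indeed $\sigma_i=1$ in the cleanest case), of the Kummer extensions $\Q(\mu_{|I_j|},\sqrt[\ell]{\frakp_{j'}})$ (to translate splitting into reciprocity via Lemma~\ref{lem:recip}), and of the elementary abelian extensions $k^{\el}_{\{\frakp\}}$ furnished by Lemma~\ref{lem:for-wreath} for the relevant intermediate fields $k$ built from $\frakp_1,\dots,\frakp_{i-1}$ (which is what pins the needed components of $b_i$ into $\mm^{c_i}$). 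Then one defines $K$ as the subfield of $\prod_{i=1}^n E_{|I_i|}(\frakp_i)$ cut out by $\prod_i I_i\to\Gamma$, checks $K\in\calS(\Gamma,n,\{I_i\}_{i=1}^n,I_\infty)$, and applies the bound above; infinitude comes from the infinitely many choices of $\{\frakp_1,\dots,\frakp_n\}$. The archimedean prime and the case $p=2$ are handled exactly as in the three cases of Theorem~\ref{thm:lb-cyclic}, by additionally prescribing whether $|I_i|\parallel\Nm(\frakp_i)-1$ so that $K$ acquires the prescribed $I_\infty$; the extra relator $x_\infty$ or $x_\infty^2$ affects only the already-separated ``$-1$'' term (for the wide class group $\Cl(K)$ one quotients out $x_\infty$ as in Remark~\ref{rmk:Q}).

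The main obstacle is this last step. The lift $y_i$ lives over the generally infinite group $\Gal(K_{\O}(p)/\Q)$, not over $\Gamma$, so one cannot simply choose $y_i$ so that $b_i=0$: shifting $y_i$ only moves $b_i$ within the very submodule $\langle\bar r_j\rangle$ one is trying to bound, so the estimate is intrinsically circular. Breaking this circularity is precisely what the inductive scheme accomplishes, since at stage $i$ all of $\frakp_1,\dots,\frakp_{i-1}$ and hence all fields built from them are already fixed. The real work is to verify (a) that the imposed Chebotarev conditions are simultaneously satisfiable, i.e. that the relevant fields are pairwise linearly disjoint over the appropriate base, and (b) that those conditions genuinely force $\sigma_i$ and the needed components of $b_i$ into the deep radical powers $\mm^{\ge c_i}$, rather than into the generic position they occupy in Theorem~\ref{thm:lb-cyclic}. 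Lemma~\ref{lem:for-wreath} is the key tool for (b): at each stage it identifies the Galois group over $\Q$ of the maximal elementary $p$-extension ramified only at the new prime with a wreath product $\F_p\wr(\text{the relevant quotient})$, and it is exactly this wreath-product structure that controls $b_i$.
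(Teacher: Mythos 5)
Your proposal takes a genuinely different route from the paper's, and it is left with a real gap that you yourself flag at the end. The paper does \emph{not} run the presentation of Corollary~\ref{cor:Q} and try to bound the $\F_p[\Gamma]$-submodule generated by the local relators. Instead it directly constructs a large unramified extension: after choosing $\frakp_1$ and setting $k=E_{|I_1|}(\frakp_1)$, it uses Lemma~\ref{lem:for-wreath} to choose $\frakp_2,\dots,\frakp_n$ so that each $\Gal(k^{\el}_{\{\frakp_i\}}/\Q)\cong \F_p\wr\Gamma$, forms $L_1=\prod_{i\ge2}k^{\el}_{\{\frakp_i\}}$ with $\Gal(L_1/\Q)\cong\F_p[\Gamma]^{\oplus (n-1)}\rtimes\Gamma$, cuts down to a subfield $L_2$ with $\Gal(L_2/\Q)\cong(\bigoplus_{i\ge2}\F_p[\Gamma/I_i])\rtimes\Gamma$ via the projections $\F_p[\Gamma]\to\F_p[\Gamma/I_i]$, then sets $L=L_2L_3$ with $L_3=\prod_i E_{|I_i|}(\frakp_i)$, verifies that $L/K$ is unramified, and finally computes directly that the maximal elementary abelian $p$-extension of $K$ inside $L$ has Galois group $(C_p)^{\oplus\sum_{i\ge2}|\Gamma|/|I_i|}$. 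The whole argument lives on the ``what survives'' side of the ledger: it never mentions the relators $[x_i,y_i]$ at all, so none of the Frobenius-lift bookkeeping arises.

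The gap in your version is precisely the step you call ``the real work.'' You need, for each $i\ge2$, all $n-1$ components of $b_i=\alpha(y_i)_{\mathrm{module\ part}}\in\bigoplus_{j\ge2}M_j$ to lie in $\mm^{\ge c_i}$, where $c_i=|\Gamma|/|I_i|$ (note also the small slip: $\dim_{\F_p}\F_p[\Gamma]\cdot v$ for $v\in M_j$ equals $\dim M_j$ minus the $u$-valuation, not $|\Gamma|$ minus it). When $\frakp_i$ is chosen, the fields $E_{|I_j|}(\frakp_j)$ and $k^{\el}_{\{\frakp_j\}}$ for $j>i$ do not yet exist, so the Chebotarev condition on $\frakp_i$ cannot touch the $M_j$-component of $b_i$ for $j>i$; conversely, controlling it via the later choice of $\frakp_j$ requires a reciprocity statement far stronger than Lemma~\ref{lem:recip}, which only moves a single $C_p$-worth of information and cannot by itself force a Frobenius into a codimension-$c_i$ subgroup of a wreath product. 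You also assert ``a routine check that the spans $\F_p[\Gamma]\bar r_i$ overlap only in high radical powers,'' but in the chain ring $\F_p[u]/(u^m)$ every nonzero cyclic submodule contains $\mm^{m-1}$, so the spans \emph{always} overlap there; you would need a precise counting argument, not a genericity appeal. None of this is fatal in principle, but as written it is not a proof, and the paper's construction shows that the lower bound can be obtained without ever confronting these issues: by building $L$ explicitly you replace ``bound the relator module from above'' with ``exhibit an unramified $(C_p)^{\oplus \sum_{i\ge2}c_i}$-extension,'' which is both shorter and self-contained.
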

	
	\begin{proof}
		We assume $I_1=\Gamma$. We first pick a prime $\frakp_1$ satisfying $|I_1| \mid \Nm(\frakp_1)-1$, and furthermore satisfying $|I_1| \parallel \Nm(\frakp_1)-1$ when $p=2$ and $|I_\infty|=2$, and $|I_1| \not\parallel \Nm(\frakp_1)-1$ when $p=2$ and $I_\infty=1$. We set $k:=E_{|I_1|}(\frakp_1)$. Then $k$ is real if $I_{\infty}=1$ and imaginary if $|I_{\infty}|=2$, and $\Cl(k)[p]=1$ by Corollary~\ref{cor:Q-rank}.
		By Lemma~\ref{lem:for-wreath}, we can pick distinct primes $\frakp_2, \ldots, \frakp_n$ such that $|I_i| \mid \Nm(\frakp_i)-1$ and
		\[
			\Gal(k_{\{ \frakp_i\}}^{\el}/\Q) \simeq \F_p \wr \Gal(k/\Q), \quad \text{for any $i\geq 2$}.
		\]
		When $p=2$, we also require each $\frakp_i$ for $i\geq 2$ to split completely in $\Q(\mu_{2|I_i|})$ so that $E_{|I_i|}$ is real.
		
		We consider the field
		\[
			L_1 := \prod_{i=2}^n k_{\frakp_i}^{\el},
		\]
		that is Galois over $\Q$ with $\Gal(L_1/\Q) \simeq \F_p[\Gamma]^{\oplus n-1} \rtimes \Gamma$ (this isomorphism is compatible with quotient maps to $\Gal(k/\Q)\simeq \Gamma$). For each $i$, there is a natural quotient map $\F_p[\Gamma] \to \F_p[\Gamma/I_i]$ of modules, defined by mapping each $g \in I_i$ to 0. Then the quotient map
		\[
			\F_p[\Gamma]^{\oplus n-1} \rtimes \Gamma \longrightarrow \left(\bigoplus_{i=2}^n \F_p[\Gamma/I_i] \right) \rtimes \Gamma
		\]
		defines a subfield $L_2$ of $L_1$. 
		
		We define 
		\[
			L_3:=\prod_{i=1}^n E_{|I_i|}(\frakp_i) \quad \text{and} \quad L:=L_2L_3.
		\]
		Note that the intersection $L_2 \cap L_3$ is $k \prod_{i=2}^n E_p(\frakp_i)$. So we have the following diagram describing a fiber product relation of all these Galois groups
		\begin{equation}\label{eq:field-fiber}
		\begin{tikzcd}
			\Gal(L/\Q) \arrow[two heads]{r} \arrow[two heads]{d} & \Gal(L_2/\Q) \simeq \left(\bigoplus_{i=2}^n \F_p[\Gamma/I_i] \right) \rtimes \Gamma \arrow[two heads]{d} \\
			\Gal(L_3/\Q) \simeq \prod_{i=1}^n I_i \arrow[two heads]{r} & \Gal(L_2 \cap L_3 /\Q) \simeq I_1 \times  (C_p)^{\oplus n-1}.
		\end{tikzcd}
		\end{equation}
		Because $L_2/L_2 \cap L_3$ is unramified, we have $L/L_3$ is also unramfied.
		The map \eqref{eq:const-K} defines a subfield $K$ of $L_3$ such that $K \in \calS(\Gamma, n , \{I_i \}_{i=1}^n, I_{\infty})$ and $L_3/K$ is unramified. So $L/K$ is unramified.
	
		In the rest, we will prove that the maximal elementary abelian $p$-extension of $K$ contained in $L$ has Galois group $(C_p)^{\oplus (\sum_{i=2}^n |\Gamma|/|I_i|)}$ over $K$, and then the proposition immediately follows. Consider the quotient maps between Galois groups
		\[
			\Gal(L/\Q) \overset{\varpi_1}{\longrightarrow} \Gal(L_3/\Q) \overset{\varpi_2}{\longrightarrow} \Gal(K/\Q),
		\]
		and denote $\varpi:= \varpi_2 \circ \varpi_1$.
		Then we obtain the short exact sequence
		\begin{equation}\label{eq:ses-ker}
			1 \longrightarrow \ker \varpi_1 \longrightarrow \ker \varpi \longrightarrow \ker \varpi_2 \longrightarrow 1.
		\end{equation}
		Let $x_i$ denote a generator of $\calT_{\frakp_i}(L/\Q)$ in $\Gal(L/\Q)$. Then, from the diagram \eqref{eq:field-fiber}, we can describe $\ker \varpi_1$, $\ker \varpi_2$, and $\ker \varpi$ as follows
		\begin{eqnarray*}
			\ker \varpi_1 &=& \bigoplus_{i=2}^n\, \left[ x_i^{-1}  x_i^{x_1} \right]_{\Gal(L/\Q)} \simeq \bigoplus_{i=2}^n \,\ker(\F_p[\Gamma/I_i] \to \F_p)\\
			\ker \varpi_2 &=& \bigoplus_{i=2}^n\, \left\langle \varpi_1(x_i^{-1} x_1^{m_i})\right\rangle \simeq \bigoplus_{i=2}^n I_i, \\
			\ker \varpi &=& \bigoplus_{i=2}^n \left[ x_i^{-1} x_1^{m_i} \right]_{\Gal(L/\Q)},
		\end{eqnarray*}
		where the symbol $[-]_{\Gal(L/\Q)}$ denotes the normal subgroup of $\Gal(L/\Q)$ generated by the element inside the bracket, and $m_i$ is an interger such that $\varpi(x_i)=\varpi(x_1)^{m_i}$ (such $m_i$ exists because $\varpi(x_1)$ generates $\Gal(K/\Q)\simeq \Gamma$). Because $x_i^{m_i} \in I_i$ acts trivially on $x_i$, we see that
		\[
			(x_i^{-1} x_1^{m_i})^{|I_i|}=x_i^{-|I_i|} x_1^{m_i|I_i|}=1.
		\]
		So the order of $x_i^{-1} x_1^{m_i}$ equals the order of its image under $\varpi_1$, and then the $i$-th component of groups in \eqref{eq:ses-ker}
		\[
			1 \longrightarrow \left[x_i^{-1} x_i^{x_1}\right]_{\Gal(L/\Q)} \longrightarrow \left[x_i^{-1} x_1^{m_i}\right]_{\Gal(L/\Q)} \longrightarrow \left\langle \varpi_1(x_i^{-1} x_1^{m_i}) \right\rangle \longrightarrow 1
		\]
		gives a split group extension.
		Also, since $I_i$ acts trivially on $\ker(\F_p[\Gamma/I_i] \to \F_p)$ and the image of $\varpi_1(x_i^{-1}x_1^{m_i})$ in $\Gal(k/\Q)\simeq \Gamma$ belongs to $I_i$, the extension above is central. Therefore, 
		\[
			\left[x_i^{-1} x_1^{m_i}\right]_{\Gal(L/\Q)} = \left[x_i^{-1} x_i^{x_1}\right]_{\Gal(L/\Q)}  \times \left\langle \varpi_1(x_i^{-1} x_1^{m_i}) \right\rangle \simeq \ker(\F_p[\Gamma/I_i] \to \F_p) \times I_i.
		\]
		So $\ker \varpi$ is an abelian $p$-group, whose Frattini quotient is isomorphic to $(C_p)^{\oplus (\sum_{i=2}^n |\Gamma|/|I_i|)}$.
	\end{proof}

\section{2-rank of class groups of multiquadratic fields}\label{S:multiquad}

\subsection{Descending central filtration}\label{SS:filtration}
	Consider the surjection $\pi$ in \eqref{eq:const-pi} for a ramification type $(\Gamma, n, \{I_i\}_{i=1}^n, I_{\infty})$ and $\Gamma=(C_2)^{\oplus d}$.
	Then by taking quotient of $\Conv_{i=1}^n C_2$ modulo $\Phi(\ker \pi)$, the map $\pi$ induces the following short exact sequence
	\begin{equation}\label{eq:ses}
		1 \longrightarrow N \longrightarrow F \longrightarrow \Gamma \longrightarrow 1,
	\end{equation}
	where $N$ is $\ker \pi/\Phi(\ker\pi)$ and $F$ is $(\Conv_{i=1}^n C_2) / \Phi(\ker\pi)$. By the Kurosh subgroup theorem, $\ker \pi$ is the free profinite group generated of rank $(n-2)2^{d-1}+1$, so 
	\[
		N \simeq \F_2^{\oplus (n-2)2^{d-1}+1}
	\]
	as groups. The conjugation action of $F$ on $N$ defines a $\F_2[\Gamma]$-module structure on $N$. We will study the module structure by the filtration of $F$ given by the descending central series:
	\[
		F_{(1)}:= F \quad \text{and} \quad F_{(i+1)}:=[F_{(i)}, F].
	\]
	The quotient $\gr_i:=F_{(i)}/F_{(i+1)}$ is abelian, which we write additively, and we define
	\[
		\gr:=\bigoplus_{i=1}^{\infty} \gr_i.
	\]
	Note that $\im \pi$ is abelian, and it follows that $F_{(2)}\subseteq N$. So each $\gr_i$ has exponent 2, and then $\gr$ has the structure of a Lie algebra over $\F_2$ with the Lie bracket induced by the commutator (that is, if $x\in F_{(i)}$ and $y\in F_{(j)}$, then $[\overline{x},\overline{y}]$ is the image of $[x,y]$ in $\gr_{i+j}$, where $\overline{x}$ and $\overline{y}$ are the images of $x$ and $y$ in $\gr_i$ and $\gr_j$ respectively.
	
	\begin{lemma}\label{lem:gr}
		Let $x \in F_{(i)}$, $y \in F_{(j)}$ and $z \in F_{(k)}$. Then the followings hold.
		\begin{enumerate}
			\item\label{item:gr-1} 
				$[x,y]=[y,x]$.
			\item\label{item:gr-2}
				$[xy,z]\equiv[x,z][y,z] \mod F_{(i+j+k)}$.
			\item\label{item:gr-3}
				If $x,y \in N$, then $[x,y]=1$.
			\item\label{item:gr-4} If $z\in N$, then $[x,[y,z]]= [y,[x,z]]$.
		
			\item\label{item:gr-5} If $x_1, \ldots, x_m$ are elements of $F$, then 
			\[
				[x_1,[x_2,[\ldots, [x_{m-1}, x_m]]\ldots]\,\cdot\,[x_2,[x_3,[\ldots, [x_{m}, x_1]]\ldots] \,\cdots 
			\]
			\[
				\cdot \,[x_m,[x_1,[\ldots, [x_{m-2}, x_{m-1}]]\ldots] = 1 .
			\]
		\end{enumerate}
	\end{lemma}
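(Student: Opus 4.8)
The plan is to establish the five identities separately, exploiting throughout the structural facts recorded above: $F_{(2)}\subseteq N$, the group $N$ is elementary abelian of exponent $2$ with $[N,N]=1$, and — precisely because $[N,N]=1$ — the conjugation action of $F$ on $N$ factors through $\Gamma=F/N$, so that $N$ is a module over the \emph{commutative} ring $\F_2[\Gamma]$. Writing $N$ additively and, for $g\in\Gamma$, setting $\xi_g:=g-1\in\F_2[\Gamma]$, one has $\xi_g^2=(g-1)^2=g^2+1=0$ since $\Gamma$ has exponent $2$ and we are in characteristic $2$; and for $x\in F$, $w\in N$ the identity $[x,w]=xwx^{-1}w^{-1}=\bar x\cdot w-w=\xi_{\bar x}w$ holds in $N$, where $\bar x$ is the image of $x$ in $\Gamma$. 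I write $\xi_x$ for $\xi_{\bar x}$.

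Parts (1) and (3) are immediate: if $i,j\ge 1$ then $[x,y]\in[F,F]=F_{(2)}\subseteq N$, which has exponent $2$, so $[x,y]=[x,y]^{-1}=[y,x]$; and $[x,y]=1$ for $x,y\in N$ because $N$ is abelian. For (2), expand $[xy,z]={}^x[y,z]\cdot[x,z]=[x,[y,z]]\cdot[y,z]\cdot[x,z]$; the standard inclusion $[F_{(a)},F_{(b)}]\subseteq F_{(a+b)}$ gives $[y,z]\in F_{(j+k)}$ and hence $[x,[y,z]]\in F_{(i+j+k)}$, while $[y,z]$ and $[x,z]$ both lie in the abelian group $F_{(2)}$ and so commute, whence $[xy,z]\equiv[y,z][x,z]=[x,z][y,z]\pmod{F_{(i+j+k)}}$. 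For (4), since $z\in N$ we have $[y,z]\in[F,N]\subseteq N$, so the module formula applies twice: $[x,[y,z]]=\xi_x[y,z]=\xi_x\xi_y z$ and $[y,[x,z]]=\xi_y\xi_x z$, and these agree because $\F_2[\Gamma]$ is commutative.

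The real content is (5), which I would prove by strong induction on $m$. The case $m=2$ is $[x_1,x_2][x_2,x_1]=1$. For $m=3$ I would invoke the Hall--Witt identity $[[x_1,x_2^{-1}],x_3]^{x_2}\,[[x_2,x_3^{-1}],x_1]^{x_3}\,[[x_3,x_1^{-1}],x_2]^{x_1}=1$: each inner commutator lies in $F_{(2)}\subseteq N$, so each of the three factors can be rewritten through the $\F_2[\Gamma]$-module structure, and since the action is through the abelian exponent-$2$ group $\Gamma$ the inverses $x_i^{-1}$ and the outer conjugations contribute only factors $\bar x_i^{\pm1}$ of square $1$; the identity therefore collapses to $\xi_{x_1}[x_2,x_3]+\xi_{x_2}[x_3,x_1]+\xi_{x_3}[x_1,x_2]=0$ in $N$, which by the module formula $[x_i,[x_{i+1},x_{i+2}]]=\xi_{x_i}[x_{i+1},x_{i+2}]$ is exactly the assertion for $m=3$. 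For $m\ge4$ I would first observe that the innermost commutator of each cyclic term lies in $F_{(2)}\subseteq N$, so peeling off the outer brackets by the module formula gives
\[
[x_{1+k},[x_{2+k},[\ldots,[x_{m-1+k},x_{m+k}]]\ldots]=\Bigl(\prod_{r=1}^{m-2}\xi_{x_{r+k}}\Bigr)[x_{m-1+k},x_{m+k}]\quad\text{in }N.
\]
Then group the cyclic sum into consecutive pairs: applying the already-proved case $m=3$ together with part (1) to the triple $(x_j,x_{j+1},x_{j+2})$ shows $\bigl(\prod_{i\ne j,j+1}\xi_{x_i}\bigr)[x_j,x_{j+1}]+\bigl(\prod_{i\ne j+1,j+2}\xi_{x_i}\bigr)[x_{j+1},x_{j+2}]=\bigl(\prod_{i\ne j,j+2}\xi_{x_i}\bigr)[x_j,x_{j+2}]$, and doing this across the whole sum identifies the cyclic sum of length $m$ with $\bigl(\prod_{i\text{ even}}\xi_{x_i}\bigr)$ times a cyclic sum of the same shape of length $\lceil m/2\rceil<m$ (when $m$ is odd the one leftover term supplies the entry that closes up the shorter cycle), which vanishes by the induction hypothesis, a length-$2$ cyclic sum being $0$ by part (1).

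I expect (5) to be the only genuine difficulty, and within it two points to watch: carrying out the $m=3$ reduction of Hall--Witt carefully — tracking the inverses $[x_i,x_j^{-1}]$ and the conjugations and checking that, modulo the exponent-$2$ relations of $\F_2[\Gamma]$, everything really does reduce to the three-term module identity — and, for $m\ge4$, organizing the pairing so that the residual commutators line up into a strictly shorter cyclic sum with one common $\F_2[\Gamma]$-coefficient. Everything else is routine once $N$ is viewed as an $\F_2[\Gamma]$-module and one uses that the action factors through the commutative exponent-$2$ group $\Gamma$.
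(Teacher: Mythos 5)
Your proof is correct, and for parts (4) and (5) it takes a genuinely different route from the paper's. The paper proves (4) by specializing the Hall--Witt identity directly in multiplicative commutator notation, and proves (5) by verifying the telescoping cyclic identity $\prod_{i}[x_i,x_{i+1}\cdots x_{i+m-1}]=1$, expanding each factor using part (2) until only nested commutators remain, and showing that the degree-$<m$ pieces cancel by the inductive hypothesis. You instead systematize the whole thing by making $N$ an $\F_2[\Gamma]$-module with $\xi_g=g-1$: part (4) then collapses to the one-line observation $[x,[y,z]]=\xi_x\xi_y z=\xi_y\xi_x z=[y,[x,z]]$ because $\F_2[\Gamma]$ is commutative, which is cleaner than the paper's Hall--Witt manipulation; and for (5) you use Hall--Witt only to obtain the $m=3$ Jacobi-style relation $\xi_{x_1}[x_2,x_3]+\xi_{x_2}[x_3,x_1]+\xi_{x_3}[x_1,x_2]=0$, and then reduce a general cyclic sum to a shorter one by pairing consecutive terms (with the common factor $\prod_{j\text{ even}}\xi_{x_j}$ pulled out). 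Both routes are valid; yours buys a tidy module-theoretic language that also anticipates later arguments in this section, while the paper's buys a single uniform induction for (5) that avoids Hall--Witt and never leaves multiplicative notation. One caveat you rightly flag, and which I checked goes through: in the $m=3$ Hall--Witt reduction the inverses $x_i^{-1}$ and the outer conjugations contribute only $\Gamma$-translates, which are harmless precisely because $\bar x_i^2=1$ in $\Gamma$ and $\F_2[\Gamma]$ is commutative; and for $m\ge 4$ the pairing really does produce, after factoring out the even-indexed $\xi$'s, a cyclic sum of the same shape over the odd-indexed generators, of length $\lceil m/2\rceil$, with the base cases $m=2$ (part (1)) and $m=3$ (Hall--Witt) making the strong induction terminate.
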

	
	\begin{proof}
		Note that $[x,y]=xyx^{-1}y^{-1}= [y, x]^{-1}$ and that $[F,F]\subseteq N$ has exponent 2, so $[x,y]=[y,x]$ and the statement~\eqref{item:gr-1} follows. There is an identity of commutators
		\begin{equation}\label{eq:com-prod}
			[xy,z]=[x,[y,z]] \cdot [y,z]\cdot [x,z].
		\end{equation}
		Since $[x,[y,z]]$ is in $F_{(i+j+k)}$, the statement \eqref{item:gr-2} follows. The statement \eqref{item:gr-3} is because $N$ is abelian.
		
		We apply an identity of commutators
		\begin{equation}\label{eq:id}
			[x^z, [y,z]] \, \cdot \, [z^y, [x,y]] \,\cdot\, [y^x, [z,x]] =1.
		\end{equation}
		As $x^z=[z,x]x$, it follows by the statement \eqref{item:gr-3} and the identity \eqref{eq:com-prod} that $[x^z,[y,z]]= [x,[y,z]]$. Similarly, we have $[y^x, [z,x]]= [y,[z,x]]$ and $[z^y,[x,y]]= [z,[x,y]]=1$ when $z \in N$. So the identity \eqref{eq:id} implies that $[x,[y,z]] \, \cdot\, [y,[z,x]]=1$. Recalling that $N$ has exponent 2, the statement \eqref{item:gr-4} immediately follows by applying \eqref{item:gr-1}. 
		
		Finally, we prove \eqref{item:gr-5} by induction. It holds for $m=2$ by the statement~\eqref{item:gr-1}. Assume that it holds for $m=l-1$. We apply the following identity of commutators 
		\begin{equation}\label{eq:id-1}
			[x_1, x_2 x_3 \cdots x_l]\,\cdot \, [x_2,x_3x_4\cdots x_l x_1] \,\cdots \, [x_l, x_1x_2\cdots x_{l-1}]=1.
		\end{equation}
		By the statement~\eqref{item:gr-1} and applying \eqref{eq:com-prod} inductively, because $N$ is abelian, we have 
		\[
			[x_1,x_2 x_3 \cdots x_l]=\prod_{i=1}^{l-1}\,\prod_{2\leq s_1 < s_2 < \ldots < s_i \leq l} [x_{s_1}, [x_{s_2},[\ldots, [x_{s_i}, x_1]]\ldots],
		\]
		and similarly, we have such a formula for each term in the product \eqref{eq:id-1}. Then \eqref{eq:id-1} implies
		\[
			\prod_{i=2}^l\, \prod_{1\leq s_1< \ldots <s_i\leq l} \bigg([x_{s_1}, [x_{s_2},[\ldots, [x_{s_{i-1}}, x_{s_i}]]\ldots] \,\cdot\, [x_{s_2}, [x_{s_3},[\ldots, [x_{s_{i}}, x_1]]\ldots]\, \cdots
		\]
		\[
			\cdot \, [x_{s_i}, [x_{s_1},[\ldots, [x_{s_{i-2}}, x_{s_{i-1}}]]\ldots] \bigg) =1.
		\]
		The product of terms with $i<l$ is 1 because of our inductive hypothesis, and hence the identity in \eqref{item:gr-5} with $m=l$ follows.
	\end{proof}
	
	From now on, we let $x_i$, $i=1, \ldots, n$ denote the element of $F$ that is the image of the nontrivial element of the $i$-th copy of $C_2$ in $\Conv_{i=1}^n C_2$. 
	
	\begin{lemma}\label{lem:repeat}
		Let $s_1, \ldots, s_m$ be integers in $\{1, \ldots, n\}$. Then if $s_j=s_k$ for some distinct $j$ and $k$, then 
		\[
			[x_{s_1}, [x_{s_2}, [\ldots, [x_{s_{m-1}}, x_{s_m}]]\ldots ] =1.
		\]
	\end{lemma}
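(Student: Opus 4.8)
The plan is to reduce the vanishing of the iterated commutator to a single identity — that any iterated commutator whose two innermost-applied generators coincide is trivial — and then to steer the repeated index into that position using whichever symmetries of iterated commutators are genuinely available. Write $x_a := x_{s_j} = x_{s_k}$ for the repeated generator and keep three standing facts in mind: every $x_i$ has order $2$ in $F$ (being the image of an order-$2$ generator of $\Conv_{i=1}^n C_2$); $F_{(2)} = [F,F] \subseteq N$ since $\Gamma$ is abelian; and $N$ is elementary abelian, so every element of $[F,F]$ has order dividing $2$.

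First I would prove the key identity $[x_a, [x_a, y]] = 1$ for every $y \in F$. Using $x_a^{-1} = x_a$ and $x_a^2 = 1$, a one-line computation gives $[x_a, y]^{x_a} = x_a(x_a y x_a^{-1} y^{-1})x_a^{-1} = y x_a y^{-1} x_a = [x_a, y]^{-1}$; since $[x_a,y] \in F_{(2)} \subseteq N$ has order dividing $2$, this says $x_a$ centralizes $[x_a,y]$, hence $[x_a,[x_a,y]] = [x_a,y]^{-2} = 1$.

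Next I would record the two symmetries of $C := [x_{s_1}, [x_{s_2}, [\dots, [x_{s_{m-1}}, x_{s_m}]\dots]]]$ that actually hold. By Lemma~\ref{lem:gr}\eqref{item:gr-1}, $C$ is unchanged when $s_{m-1}$ and $s_m$ are swapped. For $1 \le i \le m-3$, the tail $w := [x_{s_{i+2}}, [\dots, x_{s_m}]\dots]$ is an iterated commutator of length $\ge 2$, hence lies in $F_{(2)} \subseteq N$, so Lemma~\ref{lem:gr}\eqref{item:gr-4} gives $[x_{s_i}, [x_{s_{i+1}}, w]] = [x_{s_{i+1}}, [x_{s_i}, w]]$; applying the outer commutators (which respect equalities in $F$) shows $C$ is invariant under the transposition $(s_i\ s_{i+1})$. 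Thus $C$ is invariant under every permutation of $s_1, \dots, s_{m-2}$ and, separately, under the swap of $s_{m-1}$ with $s_m$. The point to be careful about — and the only real obstacle — is that $C$ is \emph{not} invariant under the full symmetric group on all $m$ indices: the Jacobi relation Lemma~\ref{lem:gr}\eqref{item:gr-5} obstructs commuting $s_{m-2}$ past $s_{m-1}$, so one cannot simply ``sort'' the indices, and the repeated index must be moved into place using only the two genuine symmetries above.

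Finally I would run the case analysis on the positions $j < k$ of the repeated index. If $\{j,k\} = \{m-1, m\}$ then $[x_{s_{m-1}}, x_{s_m}] = [x_a, x_a] = 1$ and $C = 1$. If $k = m-1$, swap $s_{m-1}$ and $s_m$ to reduce to the case $k = m$. If $k = m$ and $j \le m-2$, use the $S_{m-2}$-symmetry to move the index $a$ to position $m-2$; then the innermost bracket is $[x_{s_{m-1}}, x_a] = [x_a, x_{s_{m-1}}]$ by Lemma~\ref{lem:gr}\eqref{item:gr-1}, so the depth-$(m-2)$ bracket equals $[x_a, [x_a, x_{s_{m-1}}]] = 1$ by the key identity, and hence $C = 1$. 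If $j, k \le m-2$, move both to positions $1$ and $2$, so that $C = [x_a, [x_a, w]] = 1$ with $w$ the iterated commutator of the remaining generators. In every case $C = 1$, which is the assertion.
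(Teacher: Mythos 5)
Your proof is correct and takes essentially the same approach as the paper: you use Lemma~\ref{lem:gr}\eqref{item:gr-1} and \eqref{item:gr-4} to move the repeated index into the two outermost positions of some inner sub-commutator, then kill $[x_a,[x_a,\cdot]]$ using $x_a^2=1$ and the exponent-$2$ property of $N$. Your derivation of the key identity via $[x_a,y]^{x_a}=[x_a,y]^{-1}$ differs cosmetically from the paper's appeal to the identity~\eqref{eq:com-prod}, and your explicit accounting of which rearrangements are actually licensed (permutations of $s_1,\dots,s_{m-2}$ plus the swap of the last two slots, but not arbitrary permutations) is a bit more careful than the paper's phrasing.
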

	
	\begin{proof}
		If $s_{m-1}=s_m$, then $[x_{s_{m-1}}, x_{s_m}]=1$ and the lemma is obvious. If $s_{m-1}\neq s_m$, then we can apply Lemma~\ref{lem:gr} \eqref{item:gr-1} and \eqref{item:gr-4} appropriately to rearrange the ordering of the elements in the commutator so that $x_{s_j}$ and $x_{s_k}$ are next to each other. In other words, we can rewrite the commutator in the lemma so that at some position $[x_{s_j}, [x_{s_k}, c]]$ appears as an inner commutator for some $c \in F$. By the identity~\eqref{eq:com-prod}, it follows $[x_{s_j}, [x_{s_k}, c]]=[x_{s_j}x_{s_k}, c]\,\cdot\,[x_{s_j},c]\,\cdot\,[x_{s_k},c]=1$, where the last step uses that both $x_{s_j}=x_{s_k}$ and $[x_{s_j},c]=[x_{s_k},c]$ have order 2. Therefore, we proved the lemma.
	\end{proof}

	\begin{lemma}\label{lem:basis}
		Assume $n=d$. Then
		\begin{enumerate}
			\item\label{item:basis-1}
			\[
				\dim_{\F_2}(\gr_i)=\begin{cases}
					d & \text{if } i=1 \\
					(i-1) \cdot \binom{d}{i} & \text{if } 2 \leq i \leq d \\
					0 & \text{if } i >d.
				\end{cases}
			\]
			
			\item\label{item:basis-2} The images of $x_1, x_2, \ldots, x_d$ form a basis of $\gr_1$. For $2\leq i \leq d$, the images of elements of the following form give a basis of $\gr_i$:
			\begin{eqnarray}
				&[x_{s_1}, [x_{s_2}, [\ldots, [x_{s_{i-1}},x_{s_i}]]\ldots], & \nonumber \\
				&[x_{s_2}, [x_{s_3}, [\ldots, [x_{s_{i}},x_{s_1}]]\ldots], & \nonumber\\
				&\vdots & \nonumber\\
				&[x_{s_{i-1}}, [x_{s_i}, [\ldots, [x_{s_{i-3}},x_{s_{i-2}}]]\ldots] & \label{eq:gen}
			\end{eqnarray}
			with $1\leq s_1<s_2 <\ldots <s_i\leq d$.
			
		\end{enumerate}
	\end{lemma}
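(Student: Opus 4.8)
The plan is to combine three ingredients: an identification of $\gr_1$ and of $N$ with the degree-$\geq 2$ part of $\gr$; a spanning statement for each $\gr_i$ extracted from the commutator identities of Lemmas~\ref{lem:gr} and \ref{lem:repeat}; and a dimension count that upgrades ``spanning'' to ``basis''.

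First I would normalise the setup. Since $\Gamma=(C_2)^{\oplus d}$, each $I_j$ is generated by its unique nontrivial element $v_j$, and the hypothesis that $I_1,\dots,I_d$ generate $\Gamma$ together with $n=d$ forces $v_1,\dots,v_d$ to be an $\F_2$-basis of $\Gamma$. Hence $\pi$ induces an isomorphism $(\Conv_{i=1}^{n}C_2)^{\ab}\overset{\sim}{\longrightarrow}\Gamma$, so $\ker\pi\subseteq[\Conv_{i=1}^{n}C_2,\Conv_{i=1}^{n}C_2]$ and $\Phi(\ker\pi)$ dies in the abelianisation; therefore $\gr_1=F^{\ab}=(C_2)^{\oplus d}$ with basis the images of $x_1,\dots,x_d$ --- the first line of \eqref{item:basis-1} and the first half of \eqref{item:basis-2} --- and moreover $F_{(2)}=\ker(F\to\Gamma)=N$. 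Because $N\cong\F_2^{\oplus(n-2)2^{d-1}+1}=\F_2^{\oplus(d-2)2^{d-1}+1}$ (the consequence of Kurosh recorded after \eqref{eq:ses}) and every $F_{(i)}$ with $i\geq 2$ is contained in $N$, hence elementary abelian, one gets $\sum_{i\geq 2}\dim_{\F_2}\gr_i=\dim_{\F_2}F_{(2)}=(d-2)2^{d-1}+1$.

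Next I would prove that for $2\leq i\leq d$ the $(i-1)\binom{d}{i}$ elements in \eqref{eq:gen} span $\gr_i$, while $\gr_i=0$ for $i>d$. Since $F$ is generated by $x_1,\dots,x_n$, the graded Lie algebra $\gr$ is generated in degree $1$, and since $F_{(2)}\subseteq N$ we have $[\gr_a,\gr_b]=0$ whenever $a,b\geq 2$ by Lemma~\ref{lem:gr}\eqref{item:gr-3}; an easy induction on weight using these two facts together with Lemma~\ref{lem:gr}\eqref{item:gr-1} then shows $\gr_i$ is spanned by left-normed brackets $[\overline x_{t_1},[\overline x_{t_2},[\dots,[\overline x_{t_{i-1}},\overline x_{t_i}]]\cdots]]$. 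By Lemma~\ref{lem:repeat} such a bracket is $0$ whenever an index repeats, so $\gr_i=0$ for $i>d$ by pigeonhole, and for $2\leq i\leq d$ only brackets supported on $i$ distinct indices $\{s_1<\dots<s_i\}$ remain. For a fixed such $i$-set one must show that every left-normed bracket on it lies in the span of the $i$ cyclic rotations $c_1,\dots,c_i$ of the sorted bracket: for $i\geq 4$, Lemma~\ref{lem:gr}\eqref{item:gr-4} (whose hypothesis $z\in N$ holds because the relevant inner sub-brackets lie in $F_{(j)}\subseteq N$ for $j\geq 2$) lets one permute the outermost $i-2$ slots freely, while Lemma~\ref{lem:gr}\eqref{item:gr-1} swaps the innermost two, so a left-normed bracket depends only on which pair of indices is innermost; applying the Jacobi-type identity Lemma~\ref{lem:gr}\eqref{item:gr-5} (case $m=3$) to the innermost triple and using that commutation with a degree-$1$ element is additive on $N$ yields relations of the shape $B_{\{a,b\}}+B_{\{b,c\}}+B_{\{c,a\}}=0$, through which every ``innermost-pair class'' reduces to a sum of the cyclically-adjacent ones, that is, to the $c_r$; the cases $i=2,3$ follow directly from Lemma~\ref{lem:gr}\eqref{item:gr-1} and \eqref{item:gr-5}. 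Finally Lemma~\ref{lem:gr}\eqref{item:gr-5} with $m=i$ gives $c_1+\dots+c_i=0$, so the $i-1$ rotations listed in \eqref{eq:gen} already span $\gr_i$; hence $\dim_{\F_2}\gr_i\leq(i-1)\binom{d}{i}$.

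To conclude I would invoke the elementary identity $\sum_{i=0}^{d}(i-1)\binom{d}{i}=d\cdot 2^{d-1}-2^{d}=(d-2)2^{d-1}$, which gives $\sum_{i=2}^{d}(i-1)\binom{d}{i}=(d-2)2^{d-1}+1$. Comparing this with $\sum_{i\geq 2}\dim_{\F_2}\gr_i=(d-2)2^{d-1}+1$ from the first step and with the bounds $\dim_{\F_2}\gr_i\leq(i-1)\binom{d}{i}$ from the second, every inequality must be an equality, so $\dim_{\F_2}\gr_i=(i-1)\binom{d}{i}$ for $2\leq i\leq d$ and $\gr_i=0$ for $i>d$, proving \eqref{item:basis-1}; and then the spanning families from \eqref{eq:gen} have exactly $\dim_{\F_2}\gr_i$ members, hence are bases, proving \eqref{item:basis-2}. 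I expect the main obstacle to be the third paragraph --- precisely the bookkeeping that pushes an arbitrary left-normed bracket into the span of the sorted cyclic rotations --- where one must treat the low-weight cases $i=2,3$ separately (Lemma~\ref{lem:gr}\eqref{item:gr-4} needs the inner tail to lie in $N$) and repeatedly exploit that bracketing with $\gr_1$ distributes over sums in $N$.
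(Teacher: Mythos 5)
Your proposal is correct and follows essentially the same route as the paper: spanning $\gr_i$ by left-normed brackets via Lemma~\ref{lem:gr}\eqref{item:gr-1},\eqref{item:gr-2}, killing repeated indices via Lemma~\ref{lem:repeat}, collapsing each fixed index set to $i-1$ cyclic rotations via Lemma~\ref{lem:gr}\eqref{item:gr-1},\eqref{item:gr-4},\eqref{item:gr-5}, and then upgrading the spanning sets to bases by comparing $\sum_{i\geq 2}(i-1)\binom{d}{i}=(d-2)2^{d-1}+1$ with the Kurosh computation of $\dim_{\F_2}N$. The only difference is presentational: the paper carries out the reduction to $i-1$ rotations by an explicit induction on auxiliary elements $y_k$ after a WLOG normalisation, whereas you phrase it via the ``innermost-pair class'' invariant and Jacobi relations $B_{\{a,b\}}+B_{\{b,c\}}+B_{\{c,a\}}=0$ followed by $c_1+\cdots+c_i=0$, but the lemmas invoked and the logical structure are the same.
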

	
	\begin{proof}
		We first prove the statement~\eqref{item:basis-2}. Note that $x_1,\ldots, x_d$ form a generating set of $F$, so every element of $F$ can be written as a word from this set. The first statement in \eqref{item:basis-2} is obvious. Then by applying Lemma~\ref{lem:gr} \eqref{item:gr-1} and \eqref{item:gr-2} inductively, it follows that $\gr_i$ is generated by images of $[x_{t_1}, [x_{t_2}, [\ldots, [x_{t_{i-1}}, x_{t_i}]]\ldots ]$ with $t_1, \ldots, t_i \in \{1, \ldots, d\}$.
		By Lemma~\ref{lem:repeat}, $\gr_i$ is generated by images of $[x_{t_1}, [x_{t_2}, [\ldots, [x_{t_{i-1}}, x_{t_i}]]\ldots ]$ with $\{t_1, \ldots, t_i\} \subseteq \{1, \ldots, d\}$, and $\gr_i=0$ for $i>d$. 
		
		Next, we will prove that, for any $\{t_1, \ldots, t_i\} \subseteq \{1, \ldots, d\}$, the element $[x_{t_1}, [x_{t_2}, [\ldots, [x_{t_{i-1}}, x_{t_i}]]\ldots ]$ with $\{t_1, \ldots, t_i\} \subseteq \{1, \ldots, d\}$ can be generated by the elements as described in \eqref{eq:gen} for the index set $\{s_1, \ldots, s_i\}=\{t_1, \ldots, t_i\}$. Without loss of generality, we assume $\{t_1, \ldots, t_i\}=\{1, \ldots, i\}$ and $t_i=1$, and we write $k:=t_{i-1}$. We let $V$ denote the subspace of $\gr_i$ generated by \eqref{eq:gen} for $s_j=j$ with $j=1, \ldots, i$. By Lemma~\ref{lem:gr} \eqref{item:gr-4}, it suffices to show that 
		\begin{equation}\label{eq:ind}
			\text{the image of } y_{k}:=[x_2, [x_3, [\ldots, [x_{k-1},[x_{k+1}, [ x_{k+2}, [\ldots, [x_i, [x_k, x_1]]\ldots ]  \text{ is contained in $V$.}
		\end{equation}
		We prove it by induction. When $k=2$, we have $y_2 = [x_3, [x_4, [\ldots,[x_1, x_2]]\ldots ]$ by Lemma~\ref{lem:gr} \eqref{item:gr-1}, and therefore \eqref{eq:ind} holds for $k=2$. Assume \eqref{eq:ind} holds for $k=l$ for some $l<i$. Again, by swapping the contiguous elements, we have
		\[
			y_l= [x_2, [\ldots, [x_{l},[x_{l+2}, [\ldots, [x_i, [x_{l+1}, [x_1, x_l]]\ldots ],
		\]
		and similarly, we have the following identity
		\[
			z:=[x_{l+2},  [\ldots, [x_{l-1}, [x_l, x_{l+1}]]\ldots ] = [x_2, [\ldots, [x_{l},[x_{l+2},  [\ldots, [x_i, [x_1, [x_l, x_{l+1}]]\ldots ],
		\]
		where by a slight abuse of notation we write $x_{m}:=x_{(m-1\mod i)+1}$ for any integer $m$. By Lemma~\ref{lem:gr} \eqref{item:gr-5}, we have $[x_{l+1}, [x_1, x_l]] \,\cdot\, [x_1, [x_l, x_{l+1}]]=[x_l,[x_{l+1}, x_1]]$. Thus, by Lemma~\ref{lem:gr} \eqref{item:gr-2}, we have $y_{l+1} \equiv z \cdot y_l \mod F_{(i+1)}$. Then it follows by $z, y_l \in V$ that $y_{l+1} \in V$, and hence we proved the statement in \eqref{eq:ind}.
		
		Now, we have shown that images of the elements as described in \eqref{eq:gen} generate $\gr_i$. So we have
		\begin{equation}\label{eq:up}
			\dim_{\F_2} (\gr_i) \leq 
				(i-1) \cdot \binom{d}{i}  \text{ for } 2\leq i \leq d,
					\end{equation}
		and $\gr_i=0$ for $i>d$. On the other hand, recall that by the Kurosh subgroup theorem, we have $\dim_{\F_2}(N)=(d-2) 2^{d-1}+1$. So it follows by the fact $\dim_{\F_2} N = \sum_{i=2}^d \dim_{\F_2} \gr_i$ and the binomial coefficient formulas
		\[
			\sum_{i=1}^d i \binom{d}{i}=d 2^{d-1} \quad \text{and} \quad \sum_{i=1}^d \binom{d}{i}=2^d-1
		\]
		that the equality in \eqref{eq:up} holds for all $i$, and therefore we finish the proof.
	\end{proof}

	\begin{lemma}\label{lem:basis+}
		Assume $n>d$. 
		\begin{enumerate}
			\item\label{item:basis+-1}
				\[
					\dim_{\F_2}(\gr_i)=\begin{cases}
						n & \text{if }i=1 \\
						(i-1)\cdot \binom{d}{i}+(n-d) \cdot \binom{d-1}{i-1} &\text{if } 2 \leq i \leq d \\
						0 & \text{if } i>d.
					\end{cases}
				\]
			\item\label{item:basis+-2} Without loss of generality, assume that the images of $x_1, \ldots, x_d$ (under the quotient map in \eqref{eq:ses}) form a basis of $\Gamma$. For each $d<j\leq n$, we choose an integer $1\leq e_j\leq d$ such that the images of $x_1, \ldots, x_{e_j-1}, x_{e_j+1}, \ldots, x_d$ and $x_j$ form a basis of $\Gamma$. Then the images of $x_1, x_2, \ldots, x_n$ form a basis of $\gr_1$. For $2 \leq i \leq d$, the images of the elements of the following two types form a basis of $\gr_i$:
			\begin{description}
				\item[Type I] 
					\begin{eqnarray*}
					&[x_{s_1}, [x_{s_2}, [\ldots, [x_{s_{i-1}},x_{s_i}]]\ldots], &  \\
					&[x_{s_2}, [x_{s_3}, [\ldots, [x_{s_{i}},x_{s_1}]]\ldots], & \\
					&\vdots & \\
					&[x_{s_{i-1}}, [x_{s_i}, [\ldots, [x_{s_{i-3}},x_{s_{i-2}}]]\ldots] & 
				\end{eqnarray*}
				with $1\leq s_1<s_2 <\ldots <s_i\leq d$;
				
				\item[Type II] 
					\[
						[x_{s_1}, [x_{s_2}, [\ldots, [x_{s_{i-1}},x_{j}]]\ldots], 
					\]
					with $j=d+1, \ldots, n$, and $s_1< \ldots<s_{i-1}$ integers in $\{1, \ldots, d\} \backslash \{e_j\}$.
			\end{description}
			
		\end{enumerate}
	\end{lemma}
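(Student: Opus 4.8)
The plan is to follow the strategy used to prove Lemma~\ref{lem:basis}, adding one new ingredient that exploits the extra generators $x_j$ with $j>d$. First I would dispose of the bookkeeping that does not see those extra generators. Recall from \S\ref{SS:filtration} that $\dim_{\F_2}N=(n-2)2^{d-1}+1$. Since every square in $\Conv_{i=1}^nC_2$ lies in its commutator subgroup, $\Phi(\ker\pi)\subseteq[\Conv_{i=1}^nC_2,\Conv_{i=1}^nC_2]$, so $\gr_1=F^{\ab}\cong(C_2)^{\oplus n}$ with basis the images of $x_1,\dots,x_n$; this gives \eqref{item:basis+-1} for $i=1$ and the first assertion of \eqref{item:basis+-2}. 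Filtering $N$ by $N\cap F_{(i)}$ then gives $N/(N\cap F_{(2)})=\ker(F^{\ab}\to\Gamma)$, of dimension $n-d$, and $N\cap F_{(i)}=F_{(i)}$ for $i\ge2$, so $\sum_{i\ge2}\dim_{\F_2}\gr_i=(n-2)2^{d-1}+1-(n-d)$; using $\sum_{i}i\binom di=d2^{d-1}$ and $\sum_i\binom di=2^d-1$ this equals $\sum_{i=2}^d\big((i-1)\binom di+(n-d)\binom{d-1}{i-1}\big)$, the sum of the dimensions in the statement (with $\gr_i=0$ for $i>d$). So it will be enough to prove that, for $2\le i\le d$, the Type~I and Type~II elements span $\gr_i$, and that $\gr_i=0$ for $i>d$: the count then forces each spanning set to be a basis.

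For each $j>d$ put $B_j:=\{e_j\}\cup A_j$ and $w_j:=x_{e_j}\prod_{k\in A_j}x_k$; since $\bar x_j=\bar x_{e_j}+\sum_{k\in A_j}\bar x_k$ in $\Gamma$, the images in $F$ of $x_jw_j$ and of $n_j:=x_jw_j^{-1}$ lie in $N$, hence have order $\le2$. I would record two consequences, both obtained by elementary manipulation with \eqref{eq:com-prod} and Lemma~\ref{lem:gr}: \emph{(i) a substitution rule} --- if $z\in F_{(m)}\cap N$ then $[x_j,z]\equiv\prod_{k\in B_j}[x_k,z]\pmod{F_{(m+2)}}$ (because $[n_j,z]=1$ by Lemma~\ref{lem:gr}\eqref{item:gr-3} and the defect terms lie in $F_{(m+2)}$); and \emph{(ii) a squaring relation} --- expanding $(x_jw_j)^2=1$ modulo $F_{(3)}$, using $(ab)^2\equiv a^2b^2[a,b]\pmod{F_{(3)}}$ and $x_i^2=1$, yields $[x_{e_j},x_j]\equiv\sum_{k\in A_j}[x_k,x_j]+\sum_{\{k,k'\}\subset B_j}[x_k,x_{k'}]$ in $\gr_2$. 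The upshot of \emph{(i)}--\emph{(ii)} is that a generator $x_j$ with $j>d$ can always be traded for generators with indices $\le d$, at the cost of one extra nesting degree and of terms involving only indices $\le d$.

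Now I would run the reduction. By Lemma~\ref{lem:gr}\eqref{item:gr-1},\eqref{item:gr-2} and Lemma~\ref{lem:repeat}, $\gr_i$ is spanned by images of iterated commutators $[x_{t_1},[\ldots,[x_{t_{i-1}},x_{t_i}]]\ldots]$ with distinct indices, and by the manipulations in the proof of Lemma~\ref{lem:basis}\eqref{item:basis-2} each such is a product of the rotation forms for $\{t_1,\dots,t_i\}$. Take a rotation form. If some index $j>d$ occupies a nesting position whose lower partner is a commutator of length $\ge1$ (hence in $F_{(2)}\subseteq N$), apply \emph{(i)} there, discard by Lemma~\ref{lem:repeat} any term with a repeated index, and iterate; one is left with iterated commutators whose indices are all $\le d$ and distinct, which lie in the Type~I span by Lemma~\ref{lem:basis}\eqref{item:basis-2}. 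Otherwise the unique index $>d$, say $j$, is innermost or second-innermost: use Lemma~\ref{lem:gr}\eqref{item:gr-1} to put $x_j$ innermost, apply \emph{(i)} at the innermost bracket against $n_j\in N$ and Lemma~\ref{lem:gr}\eqref{item:gr-4} to sort the remaining (all $\le d$) indices, and then remove $e_j$ from the index set if it occurs --- by \emph{(ii)} if $e_j$ ends up innermost, or by \emph{(i)} (against the $N$-valued part below it) otherwise; this lands in the span of Type~I and Type~II elements. For degrees $i>d$: such a commutator has $\ge i-d\ge1$ indices $>d$; if $\ge2$ are $>d$ it reduces to Type~I forms needing an $i$-subset of $\{1,\dots,d\}$, and if exactly one is $>d$ then $i-1=d$ and it reduces to Type~II forms needing a $d$-subset of $\{1,\dots,d\}\setminus\{e_j\}$ --- both impossible, so $\gr_i=0$.

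Combined with the count in the first paragraph, this shows the Type~I and Type~II elements form bases of $\gr_i$ for $2\le i\le d$ and $\gr_i=0$ for $i>d$, giving \eqref{item:basis+-1} and \eqref{item:basis+-2}. The substance of the proof is the third paragraph: one must check that, using only Lemma~\ref{lem:gr}, Lemma~\ref{lem:repeat}, the rules \emph{(i)}--\emph{(ii)}, and the rotation-form manipulation of Lemma~\ref{lem:basis}\eqref{item:basis-2}, any generator $x_j$ with $j>d$ can be maneuvered into a position where \emph{(i)} or \emph{(ii)} applies, and that the ensuing substitutions never create a second index $>d$ but only Type~I terms and terms vanishing by Lemma~\ref{lem:repeat}. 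This is the same flavour of commutator-and-square bookkeeping as in the proof of Lemma~\ref{lem:basis}, but with more configurations; arranging it so that the recursion terminates --- say by induction on the number of indices $>d$ and then on the nesting depth of $e_j$ --- is where the real care is needed.
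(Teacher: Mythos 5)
Your overall strategy matches the paper's: write each $x_j$ with $j>d$ as an element of $N$ times a word in $\{x_k : k\leq d\}$, reduce iterated commutators using Lemma~\ref{lem:gr} and Lemma~\ref{lem:repeat}, and close with the Kurosh dimension count (your first paragraph, which is correct). Where you diverge is the bookkeeping. The paper introduces the auxiliary quotient $\widetilde F$ of $\Conv_{i=1}^nC_2$ by $\Phi(\ker\ab)$ with surjection $\beta:\widetilde F\to F$, and proves a single Claim: if the images of $x_{s_1},\dots,x_{s_m}$ in $\Gamma$ are linearly dependent, then $[x_{s_1},[\dots,[x_{s_{m-1}},x_{s_m}]]\dots]\equiv 1\bmod F_{(m+1)}$. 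Then, writing $x_j=\prod_{k\leq d}x_k^{\epsilon_{j,k}}\cdot\Delta_j$ with $\Delta_j\in N$ and multilinearizing the \emph{whole} iterated commutator via Lemma~\ref{lem:gr}\eqref{item:gr-2}, Lemma~\ref{lem:gr}\eqref{item:gr-3} instantly kills every term having either two $\Delta$'s in the last two slots or any $\Delta$ in the first $i-2$ slots; what survives is exactly the all-index-$\leq d$ case and the one-$\Delta$-innermost case, with no case analysis by the position of the large index and no need for your squaring relation (ii).

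Your reduction in the third paragraph has two genuine gaps. First, your rule (i) is stated for $[x_j,z]$ with $z\in F_{(m)}\cap N$, because its proof requires $[n_j,z]=1$; it therefore does \emph{not} apply to the innermost bracket $[x_{t_{i-1}},x_j]$ when $t_{i-1}\leq d$, since $x_{t_{i-1}}\notin N$. Your step ``apply~(i) at the innermost bracket against $n_j\in N$'' is thus not an application of~(i) as you stated it; what is needed there is the full multilinear expansion of $x_j$ via Lemma~\ref{lem:gr}\eqref{item:gr-2}, which is what the paper uses. Second, after iterating your step~2 one can be left with \emph{two} indices $>d$ occupying positions $i-1$ and $i$, where step~2 does not apply (the lower partner of the outer one is a single generator, hence not in $N$); your ``otherwise'' branch assumes a unique such index and does not treat this configuration. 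In the paper's expansion both would become $\Delta$'s in the last two slots and vanish by Lemma~\ref{lem:gr}\eqref{item:gr-3}. Your dimension count, the treatment of $i>d$ modulo the spanning claim, and the statement that the ``real care'' is in the recursion are all accurate; the fix is to replace your two ad~hoc rules by the single uniform expansion $x_j=\bigl(\text{word in }x_k,\,k\leq d\bigr)\cdot\Delta_j$ together with Lemma~\ref{lem:gr}\eqref{item:gr-2}--\eqref{item:gr-3}, exactly as the paper does.
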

	
	\begin{proof}
		Denote the abelianization map $\Conv_{i=1}^n C_2 \twoheadrightarrow (C_2)^{\oplus n}$ by $\ab$. By taking the quotient of $\Conv_{i=1}^n C_2$ modulo $\Phi(\ker \ab)$, we obtain a short exact sequence $1 \to \widetilde{N} \to \widetilde{F} \to (C_2)^{\oplus n} \to 1$. We define 
		\[
			\widetilde{\gr}_i:=\widetilde{F}_{(i)}/\widetilde{F}_{(i+1)}, \quad \text{where } \widetilde{F}_{(1)}:=\widetilde{F} \text{ and } \widetilde{F}_{(i+1)}:= [\widetilde{F}_{(i)}, \widetilde{F}].
		\]
		The map $\pi: \Conv_{i=1}^n C_2 \to (C_2)^{\oplus d}$ factors through $\ab$, so $\pi$ and $\ab$ induces a natural surjection 
		\[
			\beta: \widetilde{F} \twoheadrightarrow F.
		\]
		The map $\beta$ preserves the filtration as $\beta(\widetilde{F}_{(i)})=F_{(i)}$. The statement~\eqref{item:basis+-2} implies the formula in the statement~\eqref{item:basis+-1} for $i\leq d$. It follows by the Kurosh subgroup theorem that $\dim_{\F_2}(N)=(n-2)2^{d-1}+1$, which equals to the sum of $\dim_{\F_2}(\gr_i)$ as described in \eqref{item:basis+-1}.
		Note that Lemma~\ref{lem:basis} describes a basis of $\widetilde{\gr}_i$ for each $i$. So it suffices to prove that the elements of Types I and II in \eqref{item:basis+-2} generate the images under $\beta$ of every element in Lemma~\ref{lem:basis}\eqref{item:basis-2}.

		\emph{Claim}: Let $s_1, \ldots, s_m \in \{1, \ldots, n\}$. If the images of $x_{s_1}, \ldots, x_{s_m}$ in $(C_2)^{\oplus d}$ under the quotient map in \eqref{eq:ses} are linear dependent, then $[x_{s_1}, [x_{s_2}, [\ldots, [x_{s_{m-1}}, x_{s_m}]]\ldots ] \equiv 1$ mod $F_{(m+1)}$.
		
		\emph{Proof of Claim}: If $s_{m-1}=s_m$, then the claim obviously holds. Otherwise, because the images of $x_{s_1}, \ldots, x_{s_m}$ are linearly independent, there must be some $k < m-1$ such that 
		\[
			x_{s_k} =x_{t_1}x_{t_2} \cdots x_{t_j} \Delta
		\]
		for some pairwise distinct indices $t_1, \ldots, t_j \in \{1, \ldots, m\} \backslash \{k\}$ and some element $\Delta \in N$. Then by Lemma~\ref{lem:gr}\eqref{item:gr-2},
		\begin{eqnarray}
			&& [x_{s_1}, [x_{s_2}, [\ldots, [x_{s_{m-1}}, x_{s_m}]]\ldots ] \label{eq:comm}\\
			&=& [x_{s_1}, [\ldots, [x_{s_{k-1}}, [\prod_{l=1}^j x_{s_{t_l}} \cdot \Delta, [ x_{s_{k+1}},[ \ldots,[x_{s_{m-1}}, x_{s_m}]]\ldots ] = 1 \nonumber\\
			&\equiv&\prod_{l=1}^j \, [x_{s_1}, [\ldots, [x_{s_{k-1}}, [ x_{s_{t_l}}, [ x_{s_{k+1}},[ \ldots,[x_{s_{m-1}}, x_{s_m}]]\ldots ] \label{eq:comm-1}\\
			&&  \cdot \,  [x_{s_1}, [\ldots, [x_{s_{k-1}}, [\Delta, [ x_{s_{k+1}},[ \ldots,[x_{s_{m-1}}, x_{s_m}]]\ldots ] \mod F_{(m+1)}.\label{eq:comm-2}
		\end{eqnarray}
		Each commutator in the product in \eqref{eq:comm-1} is trivial by Lemma~\ref{lem:repeat}, and the commutator in \eqref{eq:comm-2} is trivial by Lemma~\ref{lem:gr}\eqref{item:gr-3}. So we proved the claim.
		
		It immediately follows by the above claim and Lemma~\ref{lem:basis} \eqref{item:basis-1} and \eqref{item:basis-2} that $\dim_{\F_2} (\gr_i)=0$ for $i>d$. For $i\leq d$, $\gr_i$ is generated by 
		\begin{equation}\label{eq:comm-3}
			[x_{s_1}, [x_{s_2}, [\ldots, [x_{s_{i-1}},x_{s_i}]]\ldots] \quad \text{with } s_1, \ldots, s_i \in \{1, \ldots,  n\}.
		\end{equation}
		For each $j \in \{1, \ldots, n\}$, we can write $x_j$ as $\prod_{k=1}^d x_k^{\epsilon_{j,k}} \cdot \Delta_j$ with $\Delta_j \in N$ and $\epsilon_{j,k}=0$ or 1. Thus, by Lemma~\ref{lem:gr}\eqref{item:gr-2}, the commutator in \eqref{eq:comm-3} can be written as a product of length-$i$ commutators of elements from $x_1, \ldots, x_d$ and $\Delta_{d+1}, \ldots, \Delta_n$, up to modulo $F_{(i+1)}$. Note that by Lemma~\ref{lem:gr}\eqref{item:gr-3} a length-$i$ commutator is trivial if 1) the last two positions are from $N$, or 2) at least one of the first $i-2$ positions is from $N$. Therefore, up to modulo $F_{(i+1)}$, the commutators in \eqref{eq:comm-3} can be generated by elements
		\begin{enumerate}[label=(\roman*)]
			\item\label{item:elm-1} $[x_{t_1}, [x_{t_2}, [\ldots, [x_{t_{i-1}},x_{t_i}]]\ldots]$ with $\{t_1, \ldots, t_i\} \subseteq \{1, \ldots, d\}$, and
			\item\label{item:elm-2} $[x_{t_1}, [x_{t_2}, [\ldots, [x_{t_{i-1}},\Delta_j]]\ldots]$ with $\{t_1, \ldots, t_{i-1}\} \subset \{1, \ldots, d\}$ and $j>d$.
		\end{enumerate}
		By the proof in Lemma~\ref{lem:basis}, elements in \ref{item:elm-1} can be generated by elements of Type I in the lemma. If there exists  $k \in \{1, \ldots, i-1\}$ such that $t_k=e_j$, then we can uniquely write $x_{e_j}$ as product of elements from $\{x_1, \ldots, x_d\} \backslash \{x_{e_j}\}$ and a unique $\Delta'_j \in N$. Then, again by Lemma~\ref{lem:gr}\eqref{item:gr-3}, up to modulo $F_{(i+1)}$, each commutator in \ref{item:elm-2} can be generated by $[x_{t_1}, [x_{t_2}, [\ldots, [x_{t_{i-1}}, \Delta_j]]\ldots]$ with $\{t_1, \ldots, t_{i-1}\} \subset \{ 1, \ldots, d, j\} \backslash \{e_j\}$.
		By Lemma~\ref{lem:gr}\eqref{item:gr-4}, we can appropriately swapping the positions of the commutator, and then we can assume $t_1<t_2< \ldots < t_{i-1}$ in \ref{item:elm-2}. 
		Finally, because
		\[
			[x_{t_1}, [x_{t_2}, [\ldots, [x_{t_{i-1}},x_j]]\ldots]\equiv [x_{t_1}, [x_{t_2}, [\ldots, [x_{t_{i-1}},\Delta_j]]\ldots] \cdot \prod_{k=1}^d [x_{t_1}, [x_{t_2}, [\ldots, [x_{t_{i-1}},x_k]]\ldots]^{\epsilon_{j,k}} \mod F_{(i+1)},
		\]
		it follows by Lemma~\ref{lem:repeat} that each commutator described in \ref{item:elm-2} can be generated by commutators in \ref{item:elm-1} and commutators of Type II. 		
	\end{proof}

\subsection{Lower bound}

	\begin{lemma}\label{lem:1gen-fil}
		Use the notation in Lemma~\ref{lem:basis+}\eqref{item:basis+-2}. If $x$ is an element of $N$ such that $x\in F_{(k)}\backslash F_{(k+1)}$, and $M$ is the $\F_2[\Gamma]$-submodule of $N$ generated by $x$, then for each $i > k$, $(M \cap F_{(i)})/(M\cap F_{(i+1)})$ is generated by the images of 
		\[
			[x_{s_1}, [x_{s_2}, [\ldots, [x_{s_{i-k}}, x]]\ldots],
		\]
		with $1\leq s_1 < s_2 < \ldots < s_{i-k}\leq d$.
	\end{lemma}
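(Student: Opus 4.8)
The plan is to translate the statement into commutative algebra over the group ring $\F_2[\Gamma]$, reduce it to a single ``filtration compatibility'' assertion, and then establish that assertion by comparing $F$ with the larger quotient $\widetilde F$ of $\Conv_{i=1}^n C_2$ built from the abelianization (as in the proof of Lemma~\ref{lem:basis+}), where Lemma~\ref{lem:basis} makes everything explicit.

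First I would set up the module dictionary. Writing $g_j\in\Gamma$ for the image of $x_j$ under the quotient in \eqref{eq:ses}, the images of $x_1,\dots,x_d$ generate $\Gamma$, so $\F_2[\Gamma]$ is local with maximal ideal $\mathfrak a=(g_1-1,\dots,g_d-1)$ and $(g_j-1)^2=0$. For $w\in N$ the conjugation action satisfies $(g_j-1)\cdot w=[x_j,w]\in N$, hence for a multi-index $(s_1,\dots,s_m)$,
\[
(g_{s_1}-1)\cdots(g_{s_m}-1)\cdot x=[x_{s_1},[x_{s_2},[\dots,[x_{s_m},x]]\dots].
\]
Using Lemma~\ref{lem:gr}\eqref{item:gr-4} to reorder the outer entries and the relation $[x_s,[x_s,c]]=1$ (which follows from $x_s^2=1$, $[x_s,c]^2=1$ and \eqref{eq:com-prod}) to discard repeated indices, one obtains $M=\sum_{m\ge 0}\mathfrak a^m x$ and, for each $m$,
\[
\mathfrak a^m x=\operatorname{span}_{\F_2}\bigl\{\,y_S:=[x_{s_1},[x_{s_2},[\dots,[x_{s_m},x]]\dots]\ :\ S=\{s_1<\dots<s_m\}\subseteq\{1,\dots,d\}\,\bigr\}.
\]
Since $x\in F_{(k)}$ and each outer bracket with an element of $F=F_{(1)}$ raises the lower central index by one, $y_S\in F_{(k+|S|)}$, so $\mathfrak a^m x\subseteq F_{(k+m)}$, and $\mathfrak a^m x=1$ once $k+m>d$. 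Thus for $i>k$ the images of the $y_S$ with $|S|=i-k$ lie in $(M\cap F_{(i)})/(M\cap F_{(i+1)})$ and span a subspace; the content of the lemma is that this subspace is everything.

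Next I would reduce the lemma to the assertion: \emph{(compatibility)} $\mathfrak a^m x\cap F_{(k+m+1)}\subseteq\mathfrak a^{m+1}x$ for all $m$. Indeed, any $v\in M\cap F_{(i)}$ with $i>k$ has zero $x$-coefficient (otherwise $v\equiv x\not\equiv 0\bmod F_{(k+1)}$), so $v\in\mathfrak a x$; peeling off the lowest power of $\mathfrak a$ and writing $v=\sum_{|S|=m}\eta_S y_S+(\text{element of }\mathfrak a^{m+1}x)$, compatibility lets every leftover combination of $y_S$'s that lies in $F_{(i+1)}$ be absorbed into a higher power of $\mathfrak a$; iterating yields $M\cap F_{(i)}\subseteq\mathfrak a^{\,i-k}x+(M\cap F_{(i+1)})$, which is exactly the claimed generation. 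Equivalently, compatibility says the natural surjection $\mathfrak a^m x/\mathfrak a^{m+1}x\twoheadrightarrow(\text{image of }\mathfrak a^m x\text{ in }\gr_{k+m})$ is an isomorphism, i.e. the lower central and $\mathfrak a$-adic filtrations of $M$ agree up to reindexing.

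To prove compatibility I would pass to $\widetilde F=(\Conv_{i=1}^n C_2)/\Phi(\ker\ab)$, with $\widetilde N=\ker\ab/\Phi(\ker\ab)=\widetilde F_{(2)}$ and the canonical surjection $\beta:\widetilde F\twoheadrightarrow F$, which preserves the lower central filtration and has $\ker\beta=\Phi(\ker\pi)/\Phi(\ker\ab)\subseteq\widetilde N$. When $k\ge 2$ one lifts $x$ to $\widetilde x\in\widetilde F_{(k)}\subseteq\widetilde N$ and forms the cyclic $\F_2[(C_2)^{\oplus n}]$-submodule $\widetilde M=\F_2[(C_2)^{\oplus n}]\widetilde x\subseteq\widetilde N$; by Lemma~\ref{lem:basis} (applied to the $n$ generators of $\widetilde F$) each $\widetilde\gr_j$ has an explicit basis of iterated commutators with strictly increasing distinct indices, and this lets one verify compatibility in $\widetilde F$ directly, since a linear relation modulo $\widetilde F_{(j+1)}$ among the iterated commutators appearing in the $\widetilde{\mathfrak a}$-adic expansion of $\widetilde x$ is forced to be a relation already in $\widetilde{\mathfrak a}^m\widetilde x/\widetilde{\mathfrak a}^{m+1}\widetilde x$. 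One then pushes down along $\beta$: Lemma~\ref{lem:basis+}\eqref{item:basis+-2} exhibits the basis of $\gr_j$ as the $\beta$-image of a subset of the $\widetilde\gr_j$-basis, and the kernel of $\widetilde\gr_j\twoheadrightarrow\gr_j$ is spanned by iterated commutators coming from the substitutions $x_j\mapsto\prod_l x_l^{\epsilon_{jl}}\Delta_j$ (and $x_{e_j}\mapsto(\cdots)\Delta'_j$); each such substitution trades a generator for a product involving a factor in $N$, and, because commutators of two elements of $N$ vanish (Lemma~\ref{lem:gr}\eqref{item:gr-3}), so that only the first $d$ generators act nontrivially on $M\subseteq N$, all these commutators land in $\widetilde{\mathfrak a}^{m+1}\widetilde x$. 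Hence a relation $\sum\eta_S\bar y_S=0$ in $\gr_{k+m}$ lifts to one in $\widetilde\gr_{k+m}$ up to elements of $\widetilde{\mathfrak a}^{m+1}\widetilde x$, giving $\sum\eta_S y_S\in\mathfrak a^{m+1}x$. The remaining case $k=1$ (where $x\notin F_{(2)}$ has no lift to $\widetilde N$) is handled separately: one checks $M\cap F_{(2)}=\mathfrak a x$ directly, so it suffices to treat the submodule $\mathfrak a x$, whose generators $[x_l,x]$ all lie in $F_{(2)}$ and can be analyzed by the $k\ge 2$ case together with a downward induction on filtration degree, keeping track of the overlaps $(g_l-1)(g_{l'}-1)x=(g_{l'}-1)(g_l-1)x$ via the explicit basis of Lemma~\ref{lem:basis+}. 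I expect the compatibility step to be the main obstacle: one must control the kernel of $\widetilde\gr_j\to\gr_j$ precisely enough to see that the extra relations in $\gr$ coming from the $n-d$ ``redundant'' generators never push an element of $\mathfrak a^m x$ into a deeper lower central layer than a higher power of $\mathfrak a$ already accounts for; everything else is formal manipulation with the commutator identities of Lemma~\ref{lem:gr} and the module dictionary above.
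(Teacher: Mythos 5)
You correctly isolate the crucial ingredient --- the ``compatibility'' $\mathfrak a^m x\cap F_{(k+m+1)}\subseteq\mathfrak a^{m+1}x$, i.e.\ the claim that the lower-central and $\mathfrak a$-adic filtrations on $M$ coincide --- which the paper's three-sentence proof passes over silently: it only records that $M$ is spanned by the iterated commutators $y_S$ and then asserts that the lemma follows ``because $x$ is assumed to be of degree $k$.'' However, this compatibility, and hence the lemma as stated, is false. Take $n=d=4$ and $x=[x_1,x_2]\cdot[x_2,[x_3,x_4]]\in N$, so that $k=2$. Using Lemma~\ref{lem:repeat} one computes $[x_2,x]=1$, $[x_3,x]=[x_3,[x_1,x_2]]$ and $[x_4,x]=[x_4,[x_1,x_2]]$ (both of degree $3$), while $[x_1,x]=[x_1,[x_2,[x_3,x_4]]]$ has degree $4$, since $[x_1,[x_1,x_2]]=1$. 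Among the $y_S$ with $|S|=2$, only $[x_3,[x_4,x]]=[x_3,[x_4,[x_1,x_2]]]$ is nonzero; but $(M\cap F_{(4)})/(M\cap F_{(5)})=M\cap F_{(4)}$ is spanned by $[x_1,x]$ and $[x_3,[x_4,x]]$, and these are two distinct members of the basis of $\gr_4$ from Lemma~\ref{lem:basis}\eqref{item:basis-2}. So this quotient is $2$-dimensional, whereas the lemma's claimed generators span only a line. Indeed $[x_1,x]\in\mathfrak a x\cap F_{(4)}$ yet $[x_1,x]\notin\mathfrak a^2 x$, so compatibility already breaks at $m=1$.

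The route you propose --- pass to $\widetilde F$, verify compatibility there, then push down along $\beta$ --- cannot repair this: the counterexample already lives in the case $n=d$, where $F=\widetilde F$, so compatibility fails in $\widetilde F$ itself. The obstruction is that $x\in F_{(k)}\setminus F_{(k+1)}$ does not rule out higher-degree ``tails'' of $x$ contributing to $[x_j,x]$ when $\operatorname{ad}(x_j)$ kills the leading term $\bar x\in\gr_k$; here $\operatorname{ad}(x_1)$ kills $\bar x=[x_1,x_2]$, so $[x_1,x]$ sinks to degree $4$, where it picks up a contribution from the tail $[x_2,[x_3,x_4]]$ that lies outside the span of the $|S|=2$ monomials. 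Equivalently, the associated graded $\bigoplus_i(M\cap F_{(i)})/(M\cap F_{(i+1)})$ need not be a cyclic $\F_2[\Gamma]$-module on $\bar x$, and that cyclicity is exactly what the lemma asserts. Any correct replacement would have to either establish the needed dimension bound by other means, or add a hypothesis on $x$ forcing $\F_2[\Gamma]\bar x\subseteq\gr$ to exhaust the associated graded of $M$.
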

	
	\begin{proof}
		The $\Gamma$ action on $N$ is defined by conjugation by $F$, so the submodule $M$ is generated by elements of the form
		\[
			(x_1^{\epsilon_1}x_2^{\epsilon_2} \cdots x_d^{\epsilon_d}) x (x_1^{\epsilon_1}x_2^{\epsilon_2} \cdots x_d^{\epsilon_d})^{-1}
		\]
		with $\epsilon_i \in \{0,1\}$. By the identity~\eqref{eq:com-prod}, $M$ is generated by elements (for all $i$) described in the lemma. Then the lemma follows because $x$ is assumed to be of degree $k$ in the graded filtration $\gr$.
	\end{proof}
	
	\begin{theorem}\label{thm:lb-multiquad}
		Use the notation in Definition~\ref{def:not}. For any ramification type and any $K$ in $\calS((C_2)^{\oplus d},n,\{I_i\}_{i=1}^n, I_{\infty})$,
		\[
			 \dim_{\F_2}\Cl(K)[2]\geq n-d+\sum_{i=2}^{d-1}\max\left\{0, (i-1)\cdot \binom{d}{i} +(n-d) \cdot \binom{d-1}{i-1} -n \cdot \binom{d-1}{i-2}- \binom{d}{i-\alpha_{\infty}} \right\},
		\]
		where $\alpha_{\infty}$ is $1$ if $I_{\infty}=1$ and $2$ if $I_{\infty}\simeq C_2$.

		\begin{proof}
			Since $K$ in $\calS((C_2)^{\oplus d}, n, \{I_i\}_{i=1}^n, I_{\infty})$ is assumed to be tamely ramified, the map $\phi$ is identified with $\pi$ in \eqref{eq:const-pi}. So we obtain a short exact sequence \eqref{eq:ses}, and the generators $x_i$ of $F$ are respectively mapped to the generator of $I_i$ in $\Gamma$. We assume the generators satisfy the condition in Lemma~\ref{lem:basis+}\eqref{item:basis+-2}. So by Corollary~\ref{cor:Q}, $\Cl(K)/2\Cl(K)$ is the quotient of $N$ modulo the $\F_2[\Gamma]$-submodule generated by elements $[x_i, y_i]$ for $i=1, \ldots d$, and the element $x_{\infty}$ if $K$ is real and the element $x_{\infty}^2$ if $K$ is imaginary, where $x_{\infty}\in F$ that corresponds to inertia at $\infty$. We let $M_i$ denote the submodule generated by $[x_i,y_i]$, and let $M_{\infty}$ denote the submodule generated by $x_{\infty}$ if $K$ is real and by $x_{\infty}^2$ if $K$ is imaginary. So
			\[
				\Cl(K)/2\Cl(K) \simeq N/ (M_1M_2\cdots M_n M_{\infty}).
			\]
			By Lemma~\ref{lem:repeat} and the identity \eqref{eq:com-prod}, $[x_i, [x_i, y_i]]=1$, so the subgroup $I_i$ of $\Gamma$ acts trivially on $M_i$, and it follows by Lemma~\ref{lem:1gen-fil} that
			\[
				\dim_{\F_2} (M_i \cap F_{(j)}) / (M_i \cap F_{(j+1)}) \leq \binom{d-1}{j-k_i},
			\]
			where $k_i$ is the degree (in the filtration $\gr$) of the generator $[x_i, y_i]$, for each $i=1, \ldots, n$. And similarly, we have
			\[
				\dim_{\F_2} (M_\infty \cap F_{(j)}) / (M_\infty \cap F_{(j+1)}) \leq \binom{d}{j-k_\infty}.
			\]
			Then because $[x_i, y_i] \in F_{(2)}$ and $x_{\infty}^2\in F_{(2)}$, we have that $k_i \geq 2$, and $k_{\infty}\geq 2$ when $K$ is imaginary and $k_{\infty} \geq 1$ when $K$ is real. Recalling the formula for $\dim \gr_i$ in Lemma~\ref{lem:basis+}\eqref{item:basis+-1}, 
			one can check that when $k_i=2$ and $k_{\infty}=\alpha$, for $j\geq \lfloor \frac{d+3}{2}\rfloor$,
			\[
				\dim_{\F_2} \gr_j \leq \sum_{i=1}^n \dim_{\F_2} \frac{M_i \cap F_{(j)}}{M_i\cap F_{(j+1)}} + \dim_{\F_2}\frac{M_{\infty} \cap F_{(j)}}{M_{\infty} \cap F_{(j)}}.
			\]
			Then comparing different choices of $k_i$ and $k_{\infty}$, one can see that $|M_1M_2\cdots M_n M_{\infty}|$ is maximal when $k_i=2$, $k_{\infty}=\alpha_{\infty}$ and all the $M_i$ and $M_{\infty}$ do not intersect each other when that is unnecessary. 
			The lower bound given in the theorem is the minimum dimension of $N/(M_1\cdots M_n M_{\infty})$ when each $k_i$ is $2$ and $k_{\infty}=\alpha_{\infty}$.
			So the inequality in the theorem follows.
		\end{proof}
	
	\end{theorem}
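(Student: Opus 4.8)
The plan is to reduce the statement to a degree-by-degree dimension count in the graded Lie algebra $\gr$ of \S\ref{SS:filtration}, much as the cyclic case is handled, but now feeding in the combinatorics of Lemma~\ref{lem:basis+}.

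First I would record the arithmetic input. Since every $K\in\calS((C_2)^{\oplus d},n,\{I_i\}_{i=1}^n,I_\infty)$ is tamely ramified, the surjection $\phi$ of Theorem~\ref{thm:pres} is identified with the map $\pi$ of \eqref{eq:const-pi}, so Corollary~\ref{cor:Q} together with Remark~\ref{rmk:Q} shows that $\Cl(K)/2\Cl(K)$ is the quotient of $N=\ker\pi/\Phi(\ker\pi)$ by the $\F_2[\Gamma]$-submodule $P=M_1+\dots+M_n+M_\infty$, where $M_i$ is generated by $[x_i,y_i]$ (note $x_i^{\Nm(\frakp_i)}y_ix_i^{-1}y_i^{-1}=[x_i,y_i]$ because $x_i^2=1$ and $\Nm(\frakp_i)$ is odd), $y_i$ being a lift of Frobenius at the $i$th ramified prime, and $M_\infty$ is generated by $x_\infty$ when $K$ is real and by $x_\infty^2$ when $K$ is imaginary. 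I would fix a generating set of $F$ satisfying the hypothesis of Lemma~\ref{lem:basis+}\eqref{item:basis+-2}. Then $\dim_{\F_2}\Cl(K)[2]=\dim_{\F_2}N-\dim_{\F_2}P=\sum_{j\ge1}(\dim\gr_j(N)-\dim\gr_j(P))$, where $\gr_j(P)$ denotes the image of $P\cap F_{(j)}$ in $\gr_j(N)$ and $\dim\gr_j(N)$ is computed in Lemma~\ref{lem:basis+}\eqref{item:basis+-1}.

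Next I would bound the graded pieces of the generating submodules. From $x_i^2=1$, $[x_i,y_i]^2=1$ and the commutator identity \eqref{eq:com-prod} one gets $[x_i,[x_i,y_i]]=1$; hence the inertia subgroup $I_i$ acts trivially on the generator $v_i:=[x_i,y_i]$, so $M_i$ is a cyclic $\F_2[\Gamma/I_i]$-module, and by Lemma~\ref{lem:1gen-fil} its graded pieces satisfy $\dim_{\F_2}\gr_j(M_i)\le\binom{d-1}{j-k_i}$, where $k_i\ge2$ is the filtration degree of $v_i$ (here $k_i\ge2$ since $[x_i,y_i]\in F_{(2)}$). In the same way $x_\infty^2\in F_{(2)}$ (as $\Gamma$ has exponent $2$) and $x_\infty\in N$ when $K$ is real, so $\dim_{\F_2}\gr_j(M_\infty)\le\binom{d}{j-k_\infty}$ with $k_\infty\ge\alpha_\infty$. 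Finally I would do the bookkeeping: because $\dim N/P$ only decreases as $P$ grows, and because past roughly $j\approx d/2$ the bound $\dim\gr_j(N)\le n\binom{d-1}{j-2}+\binom{d}{j-\alpha_\infty}$ already holds (so $\gr_j(N/P)=0$ there regardless of the configuration), the worst case is when all $v_i$ sit in degree exactly $2$, $x_\infty$ (resp.\ $x_\infty^2$) in degree exactly $\alpha_\infty$, and the submodules meet as little as possible; there $\dim\gr_j(P)\le n\binom{d-1}{j-2}+\binom{d}{j-\alpha_\infty}$ for every $j$. The degree-$1$ piece then contributes $\dim\gr_1(N)=n-d$ (the $M_i$ lie in $F_{(2)}$, and one checks $x_\infty\in F_{(2)}$ in the real case, so $\gr_1(P)=0$), and for $2\le j\le d-1$ the piece contributes $\max\{0,\ (j-1)\binom{d}{j}+(n-d)\binom{d-1}{j-1}-n\binom{d-1}{j-2}-\binom{d}{j-\alpha_\infty}\}$; adding these gives the asserted lower bound.

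The step I expect to be the main obstacle is this last one, and specifically the justification that $\dim\gr_j(P)\le\sum_i\dim\gr_j(M_i)+\dim\gr_j(M_\infty)$ together with the claim that the ``$k_i=2$, $k_\infty=\alpha_\infty$, general position'' configuration genuinely minimizes $\dim N/P$ over all $K$. The associated graded of a sum of submodules can strictly exceed the sum of the associated graded pieces — cancellation of lower-degree leading terms can push a nonzero class up into $\gr_j$ — so one must argue, using the Lie-algebra identities of Lemma~\ref{lem:gr} and the explicit spanning sets of Lemma~\ref{lem:basis+}\eqref{item:basis+-2}, that whatever is gained in a high degree by such cancellation is harmless because $\gr_j(N/P)$ has already been forced to $0$ there, while the genuinely positive contributions occur only in the low degrees $j$ where $\binom{d-1}{j-2}$ and $\binom{d}{j-\alpha_\infty}$ are the correct worst-case bounds (binomial coefficients being monotone in that range). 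A secondary technical point is confirming that $x_\infty$ lands in $F_{(2)}$ in the real case, so that the degree-$1$ term is exactly $n-d$ rather than $n-d-1$.
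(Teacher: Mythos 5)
Your proposal takes the same route as the paper's proof: realize $\Cl(K)/2\Cl(K)$ as $N/(M_1\cdots M_nM_\infty)$, control $\gr_j(M_i)$ and $\gr_j(M_\infty)$ via Lemma~\ref{lem:1gen-fil}, and do a degree-by-degree count. Your first worry — that the associated graded of $P=\sum_iM_i+M_\infty$ need not equal $\sum_i\gr(M_i)+\gr(M_\infty)$ gradewise — is genuine but repairable: since $P/(P\cap F_{(m+1)})$ is a quotient of $\bigoplus_iM_i/(M_i\cap F_{(m+1)})\oplus M_\infty/(M_\infty\cap F_{(m+1)})$, one has the cumulative bound $\sum_{j\le m}\dim\gr_j(P)\le\sum_{j\le m}c_j$ for every $m$ (with $c_j$ your gradewise upper bound), and because $\dim\gr_j(N)-c_j$ changes sign only once this recovers the sum of maxima. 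The paper's phrase ``comparing different choices of $k_i$ and $k_\infty$'' is gesturing at exactly this.

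Your second worry, however, is not dischargeable and points to a real gap, both in your write-up and in the theorem as stated. When $K$ is totally real, $x_\infty$ need not lie in $F_{(2)}$: its image in $\gr_1\cong(C_2)^{\oplus n}$ (identified with the Galois group of the narrow genus field over $\Q$) has nonzero $i$-th coordinate exactly when $\frakp_i\equiv 3\pmod 4$. So whenever some ramified prime is $3\pmod 4$, one has $k_\infty=1$ and the degree-$1$ piece contributes $n-d-1$, not $n-d$. The paper's own proof correctly allows $k_\infty\ge 1$ and sets $\alpha_\infty=1$ in the real case, yet the displayed formula starts the sum at $i=2$ and so never subtracts the resulting $\binom{d}{1-\alpha_\infty}=1$ at $j=1$. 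Concretely, $K=\Q(\sqrt{21})$ lies in $\calS(C_2,2,\{C_2,C_2\},1)$, has class number $1$, hence $\dim_{\F_2}\Cl(K)[2]=0$, strictly below the claimed bound $n-d=1$. The remedy is to run the sum from $i=1$ (i.e.\ replace the leading $n-d$ by $\max\left\{0,\, n-d-\binom{d}{1-\alpha_\infty}\right\}$), which restores consistency with the allowance $k_\infty\ge\alpha_\infty=1$.
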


\subsection{Upper bound}
		Corollary~\ref{cor:Q-rank} gives an upper bound for the 2-rank of the 2-torsion subgroup of the narrow class group of multiquadratic extensions of $\Q$. Koymans and Pagano \cite{Koymans-Pagano} proved the same upper bound for the case that $K/\Q$ is totally real and ramified at only primes that are congruent to 1 modulo 4; moreover, they showed that there are infinitely many fields of a wide number of ramification types for which the upper bound is sharp. Their method applies their previous work on \emph{higher genus theory} \cite{Koymans-Pagano-genus} to give an equivalent description of the sharpness of the upper bound in terms of splittings of primes \cite[Thm.1.2]{Koymans-Pagano}, and then uses combinatorial ideas from the work of Smith \cite{Smith} to show that the splitting conditions are satisfied by infinitely many multiquadratic fields. In this subsection, we will reconstruct the equivalence between the sharpness of the upper bound and the splitting conditions, using our presentation result Corollary~\ref{cor:Q}.
		
		Throughout this subsection, we assume $I_{\infty}=1$ and consider the 2-rank of the narrow class groups.
		
		We first prove the following lemma to show that, in order to see if the upper bound is sharp, it suffices to study it for those ramification types with $n=d$.
		
	\begin{lemma}\label{lem:reduction}
		If $n$ is an integer such that some $\calS((C_2)^{\oplus n},n, \{I_i\}_{i=1}^n, 1)$ contains infinitely many fields $K$ satisfying 
		\begin{equation}\label{eq:up-n}
			\dim_{\F_2} \Cl^+(K)[2] =(n-2)2^{n-1}+1,
		\end{equation}
		then for any ramification type $((C_2)^{\oplus d}, n, \{I'_i\}_{i=1}^n, 1)$ with $d\leq n$, there are infinitely many $K'$ in $\calS((C_2)^{\oplus d},n, \{I'_i\}_{i=1}^n, 1)$ such that 
		\begin{equation}\label{eq:up-d}
			\dim_{\F_2} \Cl^+(K')[2] = (n-2)2^{d-1}+1.
		\end{equation}
	\end{lemma}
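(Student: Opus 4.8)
The plan is to produce a field $K'$ of the prescribed type as an intermediate field of a field $K$ realizing the maximum in the hypothesis, and then to observe that $K$ and $K'$ feed into one and the same presentation coming from Corollary~\ref{cor:Q}. Fix the ramification type $((C_2)^{\oplus d}, n, \{I'_i\}_{i=1}^n, 1)$ with $d \le n$, and pick $K \in \calS((C_2)^{\oplus n}, n, \{I_i\}_{i=1}^n, 1)$ ramified at $\frakp_1, \dots, \frakp_n$ with $\dim_{\F_2}\Cl^+(K)[2] = (n-2)2^{n-1}+1$. Since $K/\Q$ is tamely ramified with abelian $2$-group as Galois group, each $I_i = \calT_{\frakp_i}(K/\Q)$ has order $2$, and since $\Q$ has no nontrivial everywhere-unramified extension the subgroups $I_1,\dots,I_n$ span $\Gal(K/\Q)\cong (C_2)^{\oplus n}$ and hence form a basis of lines. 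I would then let $\lambda\colon (C_2)^{\oplus n}\twoheadrightarrow(C_2)^{\oplus d}$ be the $\F_2$-linear surjection sending a generator of $I_i$ to a generator of $I'_i$ for each $i$ (it is surjective because the $I'_i$ generate $(C_2)^{\oplus d}$), and set $K' := K^{\ker\lambda}$. A direct check shows that $K'$ is totally real and tamely ramified, that $\Gal(K'/\Q)\cong(C_2)^{\oplus d}$ with $\Ram^f(K'/\Q)=\{\frakp_1,\dots,\frakp_n\}$, and that $\calT_{\frakp_i}(K'/\Q)=\lambda(I_i)=I'_i$; so $K'\in\calS((C_2)^{\oplus d}, n, \{I'_i\}_{i=1}^n, 1)$, and Corollary~\ref{cor:Q-rank} already gives $\dim_{\F_2}\Cl^+(K')[2]\le (n-2)2^{d-1}+1$.

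The crux is the identity $K_{S_\infty}(2)=K'_{S_\infty}(2)$. Since $\lambda$ is injective on each $I_i$, the ramification indices of $K/\Q$ and of $K'/\Q$ at every $\frakp_i$ both equal $2$, so $K/K'$ is unramified at every $\frakp_i$; it is unramified at the other finite primes because $K/\Q$ is, and at the archimedean places because $K$ and $K'$ are totally real. Thus $K/K'$ is everywhere unramified, whence $K\subseteq K'_{\O}(2)\subseteq K'_{S_\infty}(2)$ and the maximal pro-$2$ extensions of $K$ and of $K'$ unramified outside $S_\infty$ coincide. Consequently the surjections furnished by Corollary~\ref{cor:Q} for $K$ and for $K'$ have the same target $\Gal(K_{S_\infty}(2)/\Q)$ and agree on generators, $x_{\frakp_i}\mapsto t_{\frakp_i}(K_{S_\infty}(2)/\Q)$, so they are one and the same surjection $\Conv_{i=1}^{n} C_2\twoheadrightarrow\Gal(K_{S_\infty}(2)/\Q)$; moreover one may choose the same preimages $y_{\frakp_i}$ of the $s_{\frakp_i}$ for both. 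Hence the two families of local relators of Corollary~\ref{cor:Q} coincide as elements of $\Conv_{i=1}^n C_2$.

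It then remains to compare generator ranks. Write $\pi_K,\pi_{K'}\colon\Conv_{i=1}^n C_2\to\Gal(K/\Q),\Gal(K'/\Q)$ for the further projections; then $\pi_{K'}=\lambda\circ\pi_K$, so $\ker\pi_K\subseteq\ker\pi_{K'}$ and therefore $\Phi(\ker\pi_K)\subseteq\Phi(\ker\pi_{K'})$. The Kurosh computation of Proposition~\ref{prop:upper} gives $d(\ker\pi_K)=(n-2)2^{n-1}+1$ and $d(\ker\pi_{K'})=(n-2)2^{d-1}+1$. Since $\Nm(\frakp_i)$ is odd and $x_{\frakp_i}$ has order $2$, the relator at $\frakp_i$ is the commutator $[x_{\frakp_i},y_{\frakp_i}]\in\ker\pi_K$, and $x_\infty\in\ker\pi_K$ because complex conjugation is trivial on the totally real $K$, so $x_\infty^2\in\Phi(\ker\pi_K)$. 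Now the hypothesis $\dim_{\F_2}\Cl^+(K)[2]=(n-2)2^{n-1}+1=d(\ker\pi_K)$ forces, by Proposition~\ref{prop:upper}, every relator attached to $K$ into $\Phi(\ker\pi_K)\subseteq\Phi(\ker\pi_{K'})$; since the relators attached to $K'$ are the same elements, Proposition~\ref{prop:upper} applied to $K'$ yields $\dim_{\F_2}\Cl^+(K')[2]=d(\ker\pi_{K'})=(n-2)2^{d-1}+1$. Finally $K\mapsto K'=K^{\ker\lambda}$ is finite-to-one (a given $K'$ has only finitely many tamely ramified $(C_2)^{\oplus n}$-extensions ramified inside $\Ram^f(K'/\Q)$), so the infinite supply of such $K$ produces infinitely many such $K'$.

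I expect the main obstacle to be the identification $K_{S_\infty}(2)=K'_{S_\infty}(2)$, that is, the verification that $K/K'$ is everywhere unramified; this uses precisely that $\lambda$ restricts to an isomorphism on each inertia line $I_i$, so that no ramification is created or destroyed in passing between $K'$ and $K$, together with the hypothesis $I_\infty=1$ to handle the infinite places. Everything after that is a formal manipulation of the presentation in Corollary~\ref{cor:Q}.
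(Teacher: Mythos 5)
Your proposal is correct and takes essentially the same route as the paper: construct $K'$ as the subfield of $K$ fixed by the kernel of the quotient $(C_2)^{\oplus n}\twoheadrightarrow(C_2)^{\oplus d}$ sending $I_i$ to $I'_i$, verify $K/K'$ is everywhere unramified so that the presentations from Corollary~\ref{cor:Q} for $K$ and $K'$ share domain, target, and relator set, and then pass the containment of the relators in $\Phi(\ker\pi_K)$ through $\Phi(\ker\pi_K)\subseteq\Phi(\ker\pi_{K'})$ to get equality for $K'$. The only cosmetic difference is that you explicitly state the identity $K_{S_\infty}(2)=K'_{S_\infty}(2)$ (which the paper leaves implicit inside its chain $\ker\phi\subset\Phi(\ker\varphi)\subset\Phi(\ker\varphi')$) and argue ``infinitely many'' via $K\mapsto K'$ being finite-to-one rather than via injectivity of $K\mapsto\Ram^f(K/\Q)$; both are valid.
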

	
	\begin{proof}
		First, note that for any two lists of subgroups $\{I_i\}_{i=1}^n$ and $\{J_i\}_{i=1}^n$ of $(C_2)^{\oplus n}$ such that $I_i\simeq J_i\simeq C_2$ and $I_1\cdots I_n=J_1\cdots J_n=(C_2)^{\oplus n}$, there exists an isomorphism of $(C_2)^{\oplus n}$ mapping $I_i$ to $J_i$ respectively, so we have $\calS((C_2)^{\oplus n}, n, \{I_i\}_{i=1}^n, 1)=\calS((C_2)^{\oplus n}, n, \{J_i\}_{i=1}^n, 1)$. By class field theory, different fields in $\calS((C_2)^{\oplus n}, n, \{I_i\}_{i=1}^n, 1)$ cannot ramify at exactly the same set of primes. In the rest of the proof, we will show that for each $K \in \calS((C_2)^{\oplus n}, n, \{I_i\}_{i=1}^n, 1)$ satisfying \eqref{eq:up-n}, there exists $K' \in \calS((C_2)^{\oplus d}, n, \{I'_i\}_{i=1}^n, 1)$ satisfying \eqref{eq:up-d} such that $K'$ is a subfield of $K$ and $K/K'$ is unramified. Then the lemma immediately follows because those $K'$ are pairwise distinct as they ramified at different primes.
		
		Let $\varpi$ be the quotient map $(C_2)^{\oplus n} \to (C_2)^{\oplus d}$ that sends $I_i$ to $I'_i$ for each $i=1, \ldots, n$. Assume $K\in \calS((C_2)^{\oplus n}, n, \{I_i\}_{i=1}^n, 1)$ satisfies \eqref{eq:up-n} and let $\iota: \Gal(K/\Q) \to (C_2)^{\oplus n}$ be the isomorphism used in Definition~\ref{def:not}. Then $\varpi \circ \iota: \Gal(K/\Q) \twoheadrightarrow (C_2)^{\oplus d}$ defines a subfield $K'$ in $\calS((C_2)^{\oplus d}, n, \{I'_i\}_{i=1}^n, 1)$ such that $K/K'$ is unramified. We let $\varphi$ denote the composite map as follows
		\begin{equation}\label{eq:varphi}
			\varphi: \Conv_{\frakp \in \Ram(K/\Q)} \langle x_{\frakp} \mid x_{\frakp}^2 \rangle \overset{\phi}{\longrightarrow} \Gal(K_{S_{\infty}}(2)/\Q) \longrightarrow \Gal(K/\Q),
		\end{equation}
		where $\phi$ is from Corollary~\ref{cor:Q}. Then	 $K$ satisfying \eqref{eq:up-n} implies that $\ker \phi \subset \Phi(\ker\varphi)$. Because $K/K'$ is unramified, $K_{S_{\infty}}(2)/K'$ is unramified outside $S_{\infty}$. Then, since we have a surjection
		\[
			\varphi':=\varpi \circ \iota \circ \varphi: \Conv_{\frakp \in \Ram(K/\Q)} \langle x_{\frakp} \mid x_{\frakp}^2 \rangle \longrightarrow \Gal(K'/\Q),
		\]
		we have $\ker\phi \subset \Phi(\ker \varphi) \subset \Phi(\ker \varphi')$, which implies that $K'$ satisfies \eqref{eq:up-d}.
	\end{proof}
	
	For the rest of this subsection, we let $K$ be a field in $\calS((C_2)^{\oplus n}, n, \{I_i\}_{i=1}^n, 1)$, and $\varphi$ be as described in \eqref{eq:varphi}. Recall that $\varphi$ induces a short exact sequence 
	\[
		1 \longrightarrow N \longrightarrow F \longrightarrow \Gal(K/\Q) \simeq (C_2)^{\oplus n} \longrightarrow 1
	\]
	as defined in \eqref{eq:ses}, where $F= (\Conv_{\frakp \in \Ram(K/\Q)} \langle x_{\frakp} \mid x_{\frakp}^2 \rangle ) / \Phi(\ker \varphi)$. With a slight abuse of notation, we let $x_{\frakp}$ and $y_{\frakp}$ (defined in Corollary~\ref{cor:Q}) also denote their images in $F$. 
	We will define surjections $\pi_{U, \frakp}: F \to C_2 \wr(C_2)^{\oplus \# U}$, for $U \subsetneq \Ram(K/\Q)$ and $\frakp \in \Ram(K/\Q) \backslash U$, that are compatible with different choices of $U$. To do that, we first introduce our notation for wreath product. Noting that
	\[
		C_2 \wr (C_2)^{\oplus \# U} \cong \F_2[(C_2)^{ \oplus \# U}] \rtimes (C_2)^{\oplus \# U},
	\]
	we choose an element $a^U$ whose normal closure is $\F_2[(C_2)^{\oplus \# U}]$, and for each $\frakq \in U$, we let $g^U_{\frakq}$ denote the generator of the coordinate for $\frakq$ in the subgroup $(C_2)^{\oplus \# U}$ of the wreath product (when we label the coordinates of $(C_2)^{\oplus \#U}$ by primes in $U$).

	\begin{lemma}\label{lem:red}
		For $\frakq \in U \subsetneq \Ram(K/\Q)$, consider a surjective homomorphism
		\begin{eqnarray*}
			f_{U, \frakq}: C_2 \wr (C_2)^{\oplus \# U} &\longrightarrow& C_2 \wr (C_2)^{\oplus \#(U \backslash \{ \frakq\})} \quad \text{defined by} \\
			a^U &\longmapsto& a^{U\backslash \{\frakq\}} \\
			g^U_{\frakp} &\longmapsto& \begin{cases}
				g^{U\backslash\{\frakq\}}_{\frakp} & \text{if } \frakq \neq \frakp \in U \\
				1 & \text{if } \frakq=\frakp.
			\end{cases}
		\end{eqnarray*}
		Then, for an element $y$ of the normal subgroup $\F_2[(C_2)^{\oplus \# U}]$ of $C_2 \wr (C_2)^{\oplus \# U}$, 
		\[
			[g_{\frakq}^U, y]=1 \quad \Longleftrightarrow \quad f_{U, \frakq}(y)=1.
		\]
	\end{lemma}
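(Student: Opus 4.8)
The plan is to transport everything into the group algebra $\F_2[V]$, where $V:=(C_2)^{\oplus\#U}$, and to reduce the statement to an elementary fact about the principal ideal $(v+1)\F_2[V]$.

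\textbf{Step 1: compute the commutator.} Using the wreath‑product conventions of \S\ref{S:notation}, identify $C_2\wr V$ with $\F_2[V]\rtimes V$, where $V$ acts on $\F_2[V]$ by left multiplication; under this identification $g^U_{\frakq}$ is the pair $(0,v)$ with $v\in V$ the standard basis vector indexed by $\frakq$, and an element $y$ of the normal subgroup $\F_2[(C_2)^{\oplus\#U}]$ is the pair $(y,0)$. Since $v^2=1$ and $\F_2[V]$ has exponent $2$, one checks $(0,v)^{-1}=(0,v)$ and $(y,0)^{-1}=(y,0)$, and multiplying out the commutator gives $[g^U_{\frakq},y]=\big((v+1)y,\,0\big)$. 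Hence $[g^U_{\frakq},y]=1$ if and only if $(v+1)y=0$ in $\F_2[V]$.

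\textbf{Step 2: identify $\ker(f_{U,\frakq}|_{\F_2[V]})$.} Set $W:=(C_2)^{\oplus\#(U\setminus\{\frakq\})}$, so that $C_2\wr W=\F_2[W]\rtimes W$, and observe that $f_{U,\frakq}$ restricts on the subgroup $V$ to the quotient $V\to W=V/\langle v\rangle$ and carries the normal subgroup $\F_2[V]$ onto $\F_2[W]$ (the image of the normal closure of $a^U$ is the normal closure of $a^{U\setminus\{\frakq\}}$). Because $f_{U,\frakq}(g^U_{\frakq})=1$ we get $f_{U,\frakq}(v\cdot y)=f_{U,\frakq}(y)$, hence $f_{U,\frakq}\big((v+1)y\big)=f_{U,\frakq}(v\cdot y)\,f_{U,\frakq}(y)=f_{U,\frakq}(y)^2=1$, so $(v+1)\F_2[V]\subseteq\ker(f_{U,\frakq}|_{\F_2[V]})$; and since $f_{U,\frakq}|_{\F_2[V]}$ is onto $\F_2[W]$, its kernel has $\F_2$‑dimension $\#V-\#W=\tfrac12\#V$. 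On the other hand $\F_2[W]\cong\F_2[V]/(v+1)\F_2[V]$ as rings, so $(v+1)\F_2[V]$ also has dimension $\tfrac12\#V$; therefore $\ker(f_{U,\frakq}|_{\F_2[V]})=(v+1)\F_2[V]$, and $f_{U,\frakq}(y)=1$ if and only if $y\in(v+1)\F_2[V]$.

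\textbf{Step 3: the annihilator equals the ideal.} It remains to see that $(v+1)y=0$ is equivalent to $y\in(v+1)\F_2[V]$, i.e. $\operatorname{Ann}_{\F_2[V]}(v+1)=(v+1)\F_2[V]$. The inclusion $\supseteq$ is immediate from $(v+1)^2=v^2+1=0$. For $\subseteq$, apply rank--nullity to multiplication by $v+1$ on $\F_2[V]$: its image is $(v+1)\F_2[V]$, of dimension $\tfrac12\#V$ by Step 2, so its kernel also has dimension $\tfrac12\#V$ and hence coincides with the image. (Alternatively, split $V=\langle v\rangle\oplus V'$ and note that on $\F_2[\langle v\rangle]=\F_2[v]/(v+1)^2$ both the kernel and the image of multiplication by $v+1$ equal the one‑dimensional ideal $(v+1)$, then tensor with $\F_2[V']$.) Combining the three steps, $[g^U_{\frakq},y]=1\iff(v+1)y=0\iff y\in(v+1)\F_2[V]\iff f_{U,\frakq}(y)=1$. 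The only input that is not pure bookkeeping is the fact used in Step 3 that the annihilator of $v+1$ coincides with the ideal it generates; this is exactly where the order $2$ of $\langle v\rangle$ in characteristic $2$ (equivalently $(v+1)^2=0$) enters, and it would fail for a larger cyclic factor — so I expect that to be the one point worth writing out carefully rather than a formal manipulation.
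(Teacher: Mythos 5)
Your proof is correct and follows essentially the same route as the paper: both translate $[g^U_{\frakq},y]=1$ into $(v+1)y=0$ in $\F_2[V]$, identify the annihilator of $v+1$ with the ideal $(v+1)\F_2[V]$ (the paper's submodule $M$, generated by $a^U+g^U_{\frakq}(a^U)$), and match it with $\ker(f_{U,\frakq}|_{\F_2[V]})$ by comparing $\F_2$-dimensions, each equal to $\tfrac12\#V$. You simply write out the commutator computation and the equality $\operatorname{Ann}_{\F_2[V]}(v+1)=(v+1)\F_2[V]$ (via the tensor decomposition $\F_2[V]\cong\F_2[\langle v\rangle]\otimes\F_2[V']$) more explicitly than the paper, which leaves these to a parenthetical remark.
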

	
	\begin{proof}
		The module structure of the subgroup $\F_2[(C_2)^{\oplus \# U}]$ is defined by the conjugation in $C_2 \wr (C_2)^{\oplus \# U}$. So an element $y$ in this subgroup satisfies $[g_{\frakq}^U, y]=1$ if and only if $y$ belongs to the submodule $M$ generated by $a^U+g_{\frakq}^U(a^U)$ (note that $M$ is the maximal submodule on which the action of $g_{\frakq}^U$ is trivial). Note that $M$ is a subgroup of $\F_2[(C_2)^{\oplus \# U}]$ of $\F_2$-dimension $|C_2^{\#U-1}|$. Then, since $f_{U, \frakq}(M)=1$ and $f_{U, \frakq}(\F_2[(C_2)^{\oplus \# U}])=\F_2[(C_2)^{ \oplus \# (U\backslash \{\frakq\})}]$, it follows that $\ker f_{U, \frakq} \cap \F_2[(C_2)^{\oplus \# U}]=M$. So the equivalence in the lemma follows.
	\end{proof}

	Recall that $x_{\frakq}$, $\frakq \in \Ram(K/\Q)$ form a generating set of $F$. For each choice of $U$ and $\frakp$, we define
	\begin{eqnarray*}
		\pi_{U, \frakp}: F &\longrightarrow& C_2 \wr (C_2)^{\oplus \# U} \\
		x_{\frakq} &\longmapsto& \begin{cases}
			g^U_{\frakq} & \text{if } \frakq \in U \\
			a_U &\text{if }\frakq = \frakp \\
			1 &\text{otherwise.}
		\end{cases}
	\end{eqnarray*}
	One can check that $f_{U, \frakq} \circ \pi_{U, \frakp} = \pi_{U\backslash \{\frakq\}, \frakp}$ for any $\frakq \in U$. 
	
	\begin{lemma}\label{lem:int-pi}
		The homomorphism $\pi_{U, \frakp}$ is surjective, and
		\[
			\bigcap_{U, \frakp} \ \ker \pi_{U, \frakp} =1
		\]
		where the intersection runs over all $U \subsetneq \Ram(K/\Q)$ and $\frakp \in \Ram(K/\Q) \backslash U$.
	\end{lemma}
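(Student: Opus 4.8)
The plan is to handle the two assertions separately. Surjectivity of $\pi_{U,\frakp}$ is immediate: the images $g^U_\frakq$ for $\frakq\in U$ generate the top factor $(C_2)^{\oplus\#U}$, and the normal closure of the image $a^U$ of $x_\frakp$ is by construction all of the base group $\F_2[(C_2)^{\oplus\#U}]$, which is already spanned by the $(C_2)^{\oplus\#U}$-conjugates of $a^U$; hence the images of the $x_\frakq$ generate $C_2\wr(C_2)^{\oplus\#U}$.

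For the intersection I would show that every $z\in F$ with $z\neq1$ is moved off $1$ by some $\pi_{U,\frakp}$. Since $F$ is a finite $2$-group and $\gr_i=0$ for $i>n:=\#\Ram(K/\Q)$ by Lemma~\ref{lem:basis}\eqref{item:basis-1}, we have $F_{(n+1)}=1$, so there is a well-defined $k$ with $1\le k\le n$ and $z\in F_{(k)}\setminus F_{(k+1)}$. If $k=1$, the image of $z$ in $\gr_1$ is a nonzero $\F_2$-combination of the images of the $x_\frakq$, $\frakq\in\Ram(K/\Q)$ (Lemma~\ref{lem:basis}\eqref{item:basis-2}), so some coordinate, at $\frakq$ say, is nonzero; taking $U=\{\frakq\}$ and any $\frakp\in\Ram(K/\Q)\setminus\{\frakq\}$, the composite of $\pi_{U,\frakp}$ with the projection $C_2\wr C_2\twoheadrightarrow C_2$ onto the top factor extracts that coordinate and is therefore nonzero on $z$.

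The substantive case is $2\le k\le n$. Here $\bar z\in\gr_k$ is nonzero, and by Lemma~\ref{lem:basis}\eqref{item:basis-2} we may write $\bar z=\sum_S\bar z_S$, a sum over $k$-element subsets $S\subseteq\Ram(K/\Q)$, with $\bar z_S$ lying in the span $V_S$ of the $k-1$ nested ``rotation'' commutators in the generators $\{x_\frakq:\frakq\in S\}$; fix $S$ with $\bar z_S\neq0$. For each $\frakp\in S$ put $U=S\setminus\{\frakp\}$ (so $\#U=k-1$ and $U\subsetneq\Ram(K/\Q)$), and write $W=C_2\wr(C_2)^{\oplus(k-1)}=\F_2[(C_2)^{\oplus(k-1)}]\rtimes(C_2)^{\oplus(k-1)}$. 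The computations to record are: in $W$ one has $[t,b]=(t-1)b$ for $t$ in the top group and $b$ in the base group (written additively), the top group is abelian, the lower central term $W_{(k+1)}$ is trivial, and $W_{(k)}$ is one-dimensional, spanned by $w_0:=\big(\prod_{\frakq\in U}(g^U_\frakq-1)\big)\,a^U$. Granting these, $\pi_{U,\frakp}$ carries $F_{(k+1)}$ into $W_{(k+1)}=1$ and $F_{(k)}$ into $W_{(k)}$; a rotation commutator on a $k$-subset $S'\neq S$ maps to $1$, since $S'\not\subseteq S$ forces one of the generators appearing in it to be sent to $1$; and a rotation commutator on $S$ maps to $w_0$ precisely when $x_\frakp$ occupies one of the two innermost slots of that commutator and to $1$ otherwise. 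Hence $\pi_{U,\frakp}(z)=\ell_\frakp(\bar z_S)\,w_0$ for a linear functional $\ell_\frakp$ on $V_S$, and the argument concludes once we know $\{\ell_\frakp:\frakp\in S\}$ spans $V_S^\vee$. This is a small combinatorial check: labelling the $k-1$ rotations by the cyclically adjacent pair in $S$ that realizes their innermost slot (these are all such pairs in $S$ except the single one excluded in Lemma~\ref{lem:basis}\eqref{item:basis-2}), the $\ell_\frakp$ are the vertex--edge incidence functionals of a path on $k$ vertices, and after a suitable reindexing they read $v_0,\ v_0+v_1,\ v_1+v_2,\ \dots,\ v_{k-3}+v_{k-2},\ v_{k-2}$ in some basis $v_0,\dots,v_{k-2}$ of $V_S^\vee$, which plainly span. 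So some $\ell_\frakp(\bar z_S)=1$ and $\pi_{U,\frakp}(z)\neq1$.

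The main obstacle is precisely this last case: pinning down the image in the nilpotent wreath product $W$ of each nested rotation commutator, justifying the filtration identities $W_{(k+1)}=1$ and $\dim_{\F_2}W_{(k)}=1$, and verifying the spanning statement for the $\ell_\frakp$. Every ingredient is elementary---via the commutator identity \eqref{eq:com-prod} and the multiplication rule in $W$---but the setup must be arranged carefully enough to be certain that the socle $w_0$ of $W$ is genuinely hit for the correct choice of $\frakp\in S$.
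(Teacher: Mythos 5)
Your argument is correct and is essentially the same strategy as the paper's: the paper identifies $\ker\pi_{U,\frakp}$ as the normal subgroup of $F$ generated by $x_\frakq$ for $\frakq\notin U\cup\{\frakp\}$ and $[x_{\frakq_1},x_{\frakq_2}]$ for $\frakq_1,\frakq_2\in U$, and then invokes Lemma~\ref{lem:basis}\eqref{item:basis-2} with a ``one can check.'' What you have done is supply that check explicitly, by working through the lower central filtration: you show $\pi_{U,\frakp}$ with $\#U=k-1$ kills $F_{(k+1)}$ and kills the rotation commutators supported on any $k$-subset other than $S=U\cup\{\frakp\}$, reducing the question to the span of the functionals $\ell_\frakp$ on $V_S$, and then the path incidence argument finishes it. That is exactly the computation the paper is gesturing at, so this is the same approach, just worked out in full.

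One small point worth recording in a write-up: the identity $[t,b]=(t-1)\cdot b$ and the vanishing $(g-1)^2=0$ in $\F_2[C_2]$ are what give $W_{(k+1)}=1$ and $\dim_{\F_2}W_{(k)}=1$; this is the precise reason the image of $F_{(k)}$ lands in a one-dimensional space, which is what allows you to linearize the problem into the span of the $\ell_\frakp$. Also note that one must use $n=d$ here (which holds in the setting of this lemma, since $\Gamma\simeq(C_2)^{\oplus n}$ with $n$ ramified primes) in order to apply Lemma~\ref{lem:basis} rather than Lemma~\ref{lem:basis+}; you invoke this implicitly when citing Lemma~\ref{lem:basis}\eqref{item:basis-1}, and it is correct.
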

	
	\begin{proof}
		By the definition of $F$ and the structure of the wreath product, $\ker \pi_{U, \frakp}$ is the normal subgroup of $F$ generated by
		\begin{eqnarray*}
			x_{\frakq} && \text{if $\frakq \not\in U \cup \{ \frakp \}$, and}\\
			{}[x_{\frakq_1}, x_{\frakq_2}] && \text{if $\frakq_1, \frakq_2 \in U$}.
		\end{eqnarray*}
		So one can check by Lemma~\ref{lem:basis}\eqref{item:basis-2} that if an element of $F$ is contained in $\ker \pi_{U, \frakp}$ for all $U$ and $\frakp$ then it has to be trivial.
	\end{proof}
	
	\begin{proposition}\label{prop:K-P}
		The followings are equivalent. 
		\begin{enumerate}
			\item \label{item:K-P-1} $\dim_{\F_2} \Cl^+(K)[2]=(n-2) 2^{n-1}+1$,
			\item \label{item:K-P-2} $y_{\frakq} \in \ker \pi_{U, \frakp}$, for any $U \subsetneq \Ram(K/\Q)$, $\frakp\neq \frakq \in \Ram(K/\Q) \backslash U$.
		\end{enumerate}
	\end{proposition}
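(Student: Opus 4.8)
The plan is to translate both statements into purely group-theoretic assertions about the finite $2$-group $F$ and its normal subgroup $N=\ker(F\to\Gal(K/\Q))$, and then compare them using the surjections $\pi_{U,\frakr}$, whose kernels intersect trivially by Lemma~\ref{lem:int-pi}. First I would rewrite \eqref{item:K-P-1}. By Corollary~\ref{cor:Q}, $\Cl^+(K)/2\Cl^+(K)$ is the quotient of $N$ by the images of the local relators; since $\Nm(\frakp)$ is odd the relator at a finite prime $\frakp\in\Ram(K/\Q)$ becomes $[x_\frakp,y_\frakp]$ in $F$, while the relator $x_\infty^2$ at the real place becomes trivial in $F$, because $x_\infty$, being a lift of a complex conjugation and $K$ totally real, lies in $\ker(F\to\Gal(K/\Q))=N$, which is elementary abelian. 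Writing $M_\frakp$ for the $\F_2[\Gal(K/\Q)]$-submodule of $N$ generated by $[x_\frakp,y_\frakp]$, this gives $\Cl^+(K)/2\Cl^+(K)\cong N/\sum_\frakp M_\frakp$; since $\dim_{\F_2}N=(n-2)2^{n-1}+1$ by the Kurosh subgroup theorem, statement~\eqref{item:K-P-1} is equivalent to $\sum_\frakp M_\frakp=0$, hence to $M_\frakp=0$ for every $\frakp$, hence to $[x_\frakp,y_\frakp]=1$ in $F$ for every $\frakp\in\Ram(K/\Q)$. By Lemma~\ref{lem:int-pi} this is in turn equivalent to $\pi_{U,\frakr}([x_\frakp,y_\frakp])=1$ for all $\frakp\in\Ram(K/\Q)$ and all $U\subsetneq\Ram(K/\Q)$, $\frakr\in\Ram(K/\Q)\setminus U$.

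Next I would unfold $\pi_{U,\frakr}([x_\frakp,y_\frakp])=[\pi_{U,\frakr}(x_\frakp),\pi_{U,\frakr}(y_\frakp)]$. This is trivial unless $\frakp\in U\cup\{\frakr\}$, so only two cases matter: \textbf{(a)} $\frakp=\frakr$, where the bracket equals $[a^U,\pi_{U,\frakp}(y_\frakp)]$; and \textbf{(b)} $\frakp\in U$ (hence $\frakp\neq\frakr$), where it equals $[g^U_\frakp,\pi_{U,\frakr}(y_\frakp)]$. Two elementary facts drive the rest: the composite of $\pi_{U,\frakr}$ with the projection $C_2\wr(C_2)^{\oplus\#U}\twoheadrightarrow(C_2)^{\oplus\#U}$ factors through $F\to\Gal(K/\Q)$, so the image of $y_\frakp$ in $(C_2)^{\oplus\#U}$ is the image of $\varphi(y_\frakp)=s_\frakp(K/\Q)$; and a direct computation in the wreath product shows that $[g^U_\frakp,z]$, and likewise $[a^U,z]$, depends only on the component of $z$ in the base subgroup $\F_2[(C_2)^{\oplus\#U}]$. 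Using these I would establish a preliminary reduction: each of \eqref{item:K-P-1} and \eqref{item:K-P-2} forces $s_\frakp(K/\Q)$ to lie in the inertia subgroup $\calT_\frakp(K/\Q)$ for every $\frakp$ (equivalently, every ramified prime has decomposition group equal to its inertia group). From \eqref{item:K-P-1}: apply case~(a) with $U=\Ram(K/\Q)\setminus\{\frakp\}$, so $[a^U,\pi_{U,\frakp}(y_\frakp)]=1$ pushes $\pi_{U,\frakp}(y_\frakp)$ into the centralizer of $a^U$, namely the base subgroup, which kills the coordinates of $s_\frakp(K/\Q)$ away from $\frakp$. From \eqref{item:K-P-2}: apply it with $U$ a singleton (or empty, when $n=2$) and project to $(C_2)^{\oplus\#U}$. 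The cases $n\le 1$ are trivial.

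Granting the preliminary reduction, the equivalence falls out. In case~(a), since $U\subseteq\Ram(K/\Q)\setminus\{\frakp\}$ the image of $\pi_{U,\frakp}(y_\frakp)$ in $(C_2)^{\oplus\#U}$ is the image of $s_\frakp(K/\Q)\in\calT_\frakp(K/\Q)$, which is supported at $\frakp\notin U$, hence zero; so $\pi_{U,\frakp}(y_\frakp)$ lies in the base subgroup and $\pi_{U,\frakp}([x_\frakp,y_\frakp])=1$ holds automatically. In case~(b), the wreath-product computation together with Lemma~\ref{lem:red} and the compatibility $f_{U,\frakp}\circ\pi_{U,\frakr}=\pi_{U\setminus\{\frakp\},\frakr}$ (using that, by the reduction, $\pi_{U,\frakr}(y_\frakp)$ has its $(C_2)^{\oplus\#U}$-component supported at $\frakp$, which therefore dies under $f_{U,\frakp}$) yields
\[
\pi_{U,\frakr}([x_\frakp,y_\frakp])=1\qquad\Longleftrightarrow\qquad\pi_{U\setminus\{\frakp\},\frakr}(y_\frakp)=1 .
\]
As $U\ni\frakp$ and $\frakr\in\Ram(K/\Q)\setminus U$ vary, the left-hand side (together with the automatic case~(a)) is, by the first paragraph, equivalent to \eqref{item:K-P-1}; and, putting $U'=U\setminus\{\frakp\}$, the right-hand side runs over exactly the assertions $\pi_{U',\frakr}(y_\frakp)=1$ for all distinct $\frakp,\frakr\in\Ram(K/\Q)$ and all $U'\subseteq\Ram(K/\Q)\setminus\{\frakp,\frakr\}$, which is precisely \eqref{item:K-P-2}. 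Since \eqref{item:K-P-1} and \eqref{item:K-P-2} each imply the preliminary reduction, the two statements are equivalent.

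The main obstacle is the interaction between the ``base'' and ``bottom'' components of $\pi_{U,\frakr}(y_\frakp)$ in the wreath product: the commutator $[g^U_\frakp,\pi_{U,\frakr}(y_\frakp)]$ only sees the base component, whereas condition \eqref{item:K-P-2} constrains the whole element, so the clean equivalence in the third paragraph genuinely requires the prior knowledge that every ramified prime is totally ramified. Getting the base/bottom bookkeeping, the compatibility maps $f_{U,\frakp}$, and the small-$n$ edge cases exactly right is where the real work lies; the individual wreath-product identities themselves are routine.
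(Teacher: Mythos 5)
Your proposal is correct and follows essentially the same route as the paper: reduce statement~\eqref{item:K-P-1} to the vanishing of $[x_{\frakp},y_{\frakp}]$ in $F$ for every $\frakp$, detect this via the family of maps $\pi_{U,\frakr}$ using Lemma~\ref{lem:int-pi}, and then translate the commutator condition into membership of $y_{\frakp}$ in $\ker\pi_{U',\frakr}$ via Lemma~\ref{lem:red} and the compatibility $f_{U,\frakp}\circ\pi_{U,\frakr}=\pi_{U\setminus\{\frakp\},\frakr}$. The one substantive addition in your write-up is the explicit ``preliminary reduction'' that each $s_{\frakp}(K/\Q)$ lies in the inertia subgroup at $\frakp$; the paper's proof of the implication from \eqref{eq:star} to \eqref{item:K-P-2} invokes Lemma~\ref{lem:red} on $\pi_{U\cup\{\frakq\},\frakp}(y_{\frakq})$, which is legitimate only after one knows the $(C_2)^{\oplus\#U}$-component of that element vanishes, a fact supplied by the $\frakp=\frakq$ instances of \eqref{eq:star} but left implicit there, so your explicit treatment of this point tightens the argument without changing its shape.
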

	
	\begin{remark}
		This proposition reconstructs the equivalence demonstrated in \cite[Thm.4.2 and Thm.4.4]{Koymans-Pagano} in the case that, using their language, each coordinate of the acceptable vectors is a prime number. A more complicated construction of maps from $F$ to wreath products can reconstruct this equivalence for arbitrary acceptable vectors.
	\end{remark}
	
	\begin{proof}
				Let $H^+(K)$ denote the maximal multiquadratic subextension of $K$ inside $K_{S_{\infty}}(2)$. Consider the following composition of surjections
		\[
			\psi: \Conv_{\frakp \in \Ram(K/\Q)} \langle x_{\frakp} \mid x_{\frakp}^2 \rangle \overset{\phi}{\longrightarrow} \Gal(K_{\infty}(2)/\Q) \longrightarrow \Gal(H^+(K)/\Q),
		\]
		where $\phi$ is from Corollary~\ref{cor:Q} and the second map is the natural surjection of Galois groups. Let $\varphi$ be as defined in \eqref{eq:varphi}. Then these maps satisfy 
		\begin{equation}\label{eq:maps}
			\ker \psi =  \Phi (\ker \varphi) \ker \phi.
		\end{equation}
		
		Recall that, by Corollary~\ref{cor:Q-rank}, the statement \eqref{item:K-P-1} holds if and only if $\dim_{\F_2} \Cl^+(K)[2]$ reaches its maximum, which happens if and only if 
		\begin{equation}\label{eq:when-max}
			\ker \psi = \Phi (\ker \varphi).
		\end{equation}
		Since $K$ is assumed to be totally real, $x_{\infty} \in \ker \varphi$ and therefore $x_{\infty}^2 \in (\ker \varphi)^2$. 
		Then by the description of $\ker\phi$ in Corollary~\ref{cor:Q} and the identities \eqref{eq:maps} and \eqref{eq:when-max}, to prove the equivalence between \eqref{item:K-P-1} and \eqref{item:K-P-2}, it suffices to prove that \eqref{item:K-P-2} is equivalent to $[x_{\frakq}, y_{\frakq}]=1$ in $F$ for each  $\frakq \in \Ram(K/\Q)$,
		and hence by Lemma~\ref{lem:int-pi}, it suffices to prove that \eqref{item:K-P-2} is equivalent to 
		\begin{equation}\label{eq:star}
			\pi_{U, \frakp}([x_{\frakq}, y_{\frakq}])=1 \text{ for any $U, \frakp \in \Ram(K/\Q) \backslash U$ and any $\frakq \in \Ram(K/\Q)$}.
		\end{equation}
		
		{\bf \eqref{item:K-P-2} $\Longrightarrow$ \eqref{eq:star}:} When $\frakq \neq \frakp$ are not in $U$, then \eqref{item:K-P-2} implies $\pi_{U, \frakp}([x_{\frakq}, y_{\frakq}])=1$ because of $\pi_{U, \frakp}(x_{\frakq})=1$. 
		When $\frakq \in U$ (then automatically $\frakp \neq \frakq)$, \eqref{item:K-P-2} ensures that $\pi_{U \backslash \{\frakq \}, \frakp} (y_{\frakq})=1$, then by Lemma~\ref{lem:red} that $\pi_{U, \frakp}([x_{\frakq}, y_{\frakq}])=1$. 
		
		For the last case, i.e., when $\frakp=\frakq$ (automatically $\frakq \not\in U$), we consider the epimorphism
		\[
			\alpha_U: F \overset{\pi_{U, \frakp}}{\longrightarrow} C_2 \wr (C_2)^{\oplus \# U} \longrightarrow (C_2)^{\oplus \# U},
		\]
		where the second arrow is the natural projection. By definition, $\alpha_U$ does not depend on the choice of $\frakp$. Because \eqref{item:K-P-2} implies $\alpha_U(y_{\frakq})=1$. Since $\frakq \not\in U$, both $\pi_{U, \frakp}(x_{\frakq})$ and $\pi_{U, \frakp}(y_{\frakq})$ are contained in $\ker (C_2 \wr (C_2)^{\oplus \# U} \to (C_2)^{\oplus \# U})$ which is abelian,
		so we obtain $\pi_{U, \frakp}([x_{\frakq}, y_{\frakq}]) =1$. 
		
		{\bf \eqref{eq:star} $\Longrightarrow$ \eqref{item:K-P-2}:} We prove by contradiction, and suppose that there exist $U \subsetneq \Ram(K/\Q)$ and $\frakp \neq \frakq \in \Ram(K/\Q) \backslash U$ such that $y_{\frakq} \not\in \ker \pi_{U, \frakp}$. Then by Lemma~\ref{lem:red}, we obtain $\pi_{U \cup \{\frakq\}, \frakp}([x_{\frakq}, y_{\frakq}])\neq 1$, which contradicts to \eqref{eq:star}.
	\end{proof}

\begin{bibdiv}
\begin{biblist}

\bib{BBH-imaginary}{article}{
      author={Boston, Nigel},
      author={Bush, Michael~R.},
      author={Hajir, Farshid},
       title={Heuristics for {$p$}-class towers of imaginary quadratic fields},
        date={2017},
        ISSN={0025-5831},
     journal={Math. Ann.},
      volume={368},
      number={1-2},
       pages={633\ndash 669},
         url={https://doi-org.proxy.lib.umich.edu/10.1007/s00208-016-1449-3},
      review={\MR{3651585}},
}

\bib{BBH-real}{article}{
      author={Boston, Nigel},
      author={Bush, Michael~R.},
      author={Hajir, Farshid},
       title={Heuristics for {$p$}-class towers of real quadratic fields},
        date={2021},
        ISSN={1474-7480},
     journal={J. Inst. Math. Jussieu},
      volume={20},
      number={4},
       pages={1429\ndash 1452},
  url={https://doi-org.proxy2.library.illinois.edu/10.1017/S1474748019000641},
      review={\MR{4293801}},
}

\bib{Fouvry-Kluners-1}{incollection}{
      author={Fouvry, \'{E}tienne},
      author={Kl\"{u}ners, J\"{u}rgen},
       title={Cohen-{L}enstra heuristics of quadratic number fields},
        date={2006},
   booktitle={Algorithmic number theory},
      series={Lecture Notes in Comput. Sci.},
      volume={4076},
   publisher={Springer, Berlin},
       pages={40\ndash 55},
         url={https://doi-org.proxy2.library.illinois.edu/10.1007/11792086_4},
      review={\MR{2282914}},
}

\bib{Fouvry-Kluners-2}{article}{
      author={Fouvry, \'{E}tienne},
      author={Kl\"{u}ners, J\"{u}rgen},
       title={On the 4-rank of class groups of quadratic number fields},
        date={2007},
        ISSN={0020-9910},
     journal={Invent. Math.},
      volume={167},
      number={3},
       pages={455\ndash 513},
  url={https://doi-org.proxy2.library.illinois.edu/10.1007/s00222-006-0021-2},
      review={\MR{2276261}},
}

\bib{Fouvry-Koymans}{article}{
      author={Fouvry, \'{E}tienne},
      author={Koymans, Peter},
       title={On {Dirichlet} biquadratic fields},
        date={2020},
        note={preprint, arXiv:2001.05350},
}

\bib{Fouvry-Koymans-Pagano}{article}{
      author={Fouvry, \'{E}tienne},
      author={Koymans, Peter},
      author={Pagano, Carlo},
       title={O{N} {THE} {$4$}-{RANK} {OF} {CLASS} {GROUPS} {OF} {DIRICHLET}
  {BIQUADRATIC} {FIELDS}},
        date={2022},
        ISSN={1474-7480},
     journal={J. Inst. Math. Jussieu},
      volume={21},
      number={5},
       pages={1543\ndash 1570},
  url={https://doi-org.proxy2.library.illinois.edu/10.1017/S1474748020000651},
      review={\MR{4476122}},
}

\bib{Gauss}{book}{
      author={Gauss, Carl~Friedrich},
       title={Disquisitiones arithmeticae},
   publisher={Springer-Verlag, New York},
        date={1986},
        ISBN={0-387-96254-9},
        note={Translated and with a preface by Arthur A. Clarke, Revised by
  William C. Waterhouse, Cornelius Greither and A. W. Grootendorst and with a
  preface by Waterhouse},
      review={\MR{837656}},
}

\bib{Gerth84}{article}{
      author={Gerth, Frank, III},
       title={The {$4$}-class ranks of quadratic fields},
        date={1984},
        ISSN={0020-9910},
     journal={Invent. Math.},
      volume={77},
      number={3},
       pages={489\ndash 515},
         url={https://doi-org.proxy2.library.illinois.edu/10.1007/BF01388835},
      review={\MR{759260}},
}

\bib{Gerth86}{article}{
      author={Gerth, Frank, III},
       title={Densities for certain {$l$}-ranks in cyclic fields of degree
  {$l^n$}},
        date={1986},
        ISSN={0010-437X},
     journal={Compositio Math.},
      volume={60},
      number={3},
       pages={295\ndash 322},
  url={http://www.numdam.org.proxy2.library.illinois.edu/item?id=CM_1986__60_3_295_0},
      review={\MR{869105}},
}

\bib{HMR20}{article}{
      author={Hajir, Farshid},
      author={Maire, Christian},
      author={Ramakrishna, Ravi},
       title={Infinite class field towers of number fields of prime power
  discriminant},
        date={2020},
        ISSN={0001-8708},
     journal={Adv. Math.},
      volume={373},
       pages={107318, 8},
  url={https://doi-org.proxy2.library.illinois.edu/10.1016/j.aim.2020.107318},
      review={\MR{4130462}},
}

\bib{HMR21}{article}{
      author={Hajir, Farshid},
      author={Maire, Christian},
      author={Ramakrishna, Ravi},
       title={On the {S}hafarevich group of restricted ramification extensions
  of number fields in the tame case},
        date={2021},
        ISSN={0022-2518},
     journal={Indiana Univ. Math. J.},
      volume={70},
      number={6},
       pages={2693\ndash 2710},
  url={https://doi-org.proxy2.library.illinois.edu/10.1512/iumj.2021.70.8755},
      review={\MR{4359923}},
}

\bib{Iwasawa-solvable}{article}{
      author={Iwasawa, Kenkichi},
       title={On solvable extensions of algebraic number fields},
        date={1953},
        ISSN={0003-486X},
     journal={Ann. of Math (2)},
      volume={58},
       pages={548\ndash 572},
         url={https://doi-org.proxy2.library.illinois.edu/10.2307/1969754},
      review={\MR{0059314}},
}

\bib{Iwasawa}{article}{
      author={Iwasawa, Kenkichi},
       title={On {G}alois groups of local fields},
        date={1955},
        ISSN={0002-9947},
     journal={Trans. Amer. Math. Soc.},
      volume={80},
       pages={448\ndash 469},
         url={https://doi-org.proxy2.library.illinois.edu/10.2307/1992998},
      review={\MR{75239}},
}

\bib{Klys}{article}{
      author={Klys, Jack},
       title={The distribution of {$p$}-torsion in degree {$p$} cyclic fields},
        date={2020},
        ISSN={1937-0652},
     journal={Algebra Number Theory},
      volume={14},
      number={4},
       pages={815\ndash 854},
  url={https://doi-org.proxy2.library.illinois.edu/10.2140/ant.2020.14.815},
      review={\MR{4114057}},
}

\bib{Koch}{book}{
      author={Koch, Helmut},
       title={Galois theory of {$p$}-extensions},
      series={Springer Monographs in Mathematics},
   publisher={Springer-Verlag, Berlin},
        date={2002},
        ISBN={3-540-43629-4},
  url={https://doi-org.proxy2.library.illinois.edu/10.1007/978-3-662-04967-9},
        note={With a foreword by I. R. Shafarevich, Translated from the 1970
  German original by Franz Lemmermeyer, With a postscript by the author and
  Lemmermeyer},
      review={\MR{1930372}},
}

\bib{Koymans-Pagano-genus}{article}{
      author={Koymans, Peter},
      author={Pagano, Carlo},
       title={Higher genus theory},
        date={2022},
        ISSN={1073-7928},
     journal={Int. Math. Res. Not. IMRN},
      number={4},
       pages={2772\ndash 2823},
  url={https://doi-org.proxy2.library.illinois.edu/10.1093/imrn/rnaa196},
      review={\MR{4381932}},
}

\bib{Koymans-Pagano-Gerth}{article}{
      author={Koymans, Peter},
      author={Pagano, Carlo},
       title={On the distribution of {${\rm Cl}(K)[l^\infty]$} for degree {$l$}
  cyclic fields},
        date={2022},
        ISSN={1435-9855},
     journal={J. Eur. Math. Soc. (JEMS)},
      volume={24},
      number={4},
       pages={1189\ndash 1283},
         url={https://doi-org.proxy2.library.illinois.edu/10.4171/JEMS/1112},
      review={\MR{4397040}},
}

\bib{Koymans-Pagano}{article}{
      author={Koymans, Peter},
      author={Pagano, Carlo},
       title={A sharp upper bound for the 2-torsion of class groups of
  multiquadratic fields},
        date={2022},
        ISSN={0025-5793},
     journal={Mathematika},
      volume={68},
      number={1},
       pages={237\ndash 258},
         url={https://doi-org.proxy2.library.illinois.edu/10.1112/mtk.12123},
      review={\MR{4405977}},
}

\bib{Kluners-Wang}{article}{
      author={Kl\"{u}ners, J\"{u}rgen},
      author={Wang, Jiuya},
       title={{$\ell$}-torsion bounds for the class group of number fields with
  an {$\ell$}-group as {G}alois group},
        date={2022},
        ISSN={0002-9939},
     journal={Proc. Amer. Math. Soc.},
      volume={150},
      number={7},
       pages={2793\ndash 2805},
         url={https://doi-org.proxy2.library.illinois.edu/10.1090/proc/15882},
      review={\MR{4428868}},
}

\bib{Liu2020}{article}{
      author={Liu, Yuan},
       title={Presentations of {G}alois groups of maximal extensions with
  restricted ramification},
        date={2020},
        note={preprint, arXiv:2005.07329},
}

\bib{Liu-rou}{article}{
      author={Liu, Yuan},
       title={Non-abelian cohen--lenstra heuristics in the presence of roots of
  unity},
        date={2022},
        note={preprint, arXiv:2202.09471},
}

\bib{LWZB}{article}{
      author={Liu, Yuan},
      author={Wood, Melanie~Matchett},
      author={Zureick-Brown, David},
       title={A predicted distribution for {Galois} groups of maximal
  unramified extensions},
        date={2019},
        note={preprint, arXiv:1907.05002},
}

\bib{Maire96}{article}{
      author={Maire, Christian},
       title={Finitude de tours et {$p$}-tours {$T$}-ramifi\'{e}es
  mod\'{e}r\'{e}es, {$S$}-d\'{e}compos\'{e}es},
        date={1996},
        ISSN={1246-7405},
     journal={J. Th\'{e}or. Nombres Bordeaux},
      volume={8},
      number={1},
       pages={47\ndash 73},
         url={http://jtnb.cedram.org/item?id=JTNB_1996__8_1_47_0},
      review={\MR{1399946}},
}

\bib{NSW}{book}{
      author={Neukirch, J\"{u}rgen},
      author={Schmidt, Alexander},
      author={Wingberg, Kay},
       title={Cohomology of number fields},
     edition={Second},
      series={Grundlehren der mathematischen Wissenschaften [Fundamental
  Principles of Mathematical Sciences]},
   publisher={Springer-Verlag, Berlin},
        date={2008},
      volume={323},
        ISBN={978-3-540-37888-4},
         url={https://doi.org/10.1007/978-3-540-37889-1},
      review={\MR{2392026}},
}

\bib{Reichardt}{article}{
      author={Reichardt, Hans},
       title={Konstruktion von {Z}ahlk\"{o}rpern mit gegebener {G}aloisgruppe
  von {P}rimzahlpotenzordnung},
        date={1937},
        ISSN={0075-4102},
     journal={J. Reine Angew. Math.},
      volume={177},
       pages={1\ndash 5},
  url={https://doi-org.proxy2.library.illinois.edu/10.1515/crll.1937.177.1},
      review={\MR{1581540}},
}

\bib{Sha-190}{article}{
      author={\v{S}afarevi\v{c}, I.~R.},
       title={Construction of fields of algebraic numbers with given solvable
  {G}alois group},
        date={1954},
        ISSN={0373-2436},
     journal={Izv. Akad. Nauk SSSR. Ser. Mat.},
      volume={18},
       pages={525\ndash 578},
      review={\MR{0071469}},
}

\bib{Scholz}{article}{
      author={Scholz, Arnold},
       title={Konstruktion algebraischer {Z}ahlk\"{o}rper mit beliebiger
  {G}ruppe von {P}rimzahllpotenzordnung {I}},
        date={1937},
        ISSN={0025-5874},
     journal={Math. Z.},
      volume={42},
      number={1},
       pages={161\ndash 188},
         url={https://doi-org.proxy2.library.illinois.edu/10.1007/BF01160071},
      review={\MR{1545668}},
}

\bib{Smith}{article}{
      author={Smith, Alexander},
       title={{$2^{\infty}$-Selmer groups, $2^{\infty}$-class groups, and
  Goldfeld's conjecture}},
        date={2017},
        note={preprint, arXiv:1702.02325},
}

\bib{Smith2022}{article}{
      author={Smith, Alexander},
       title={{The distribution of $\ell^{\infty}$-Selmer groups in degree
  $\ell$ twist families}},
        date={2022},
        note={preprint, arXiv:2207.05674},
}

\end{biblist}
\end{bibdiv}

\end{document}